\documentclass[11pt,a4paper,reqno]{amsart}

\usepackage{amsfonts}
\usepackage{amsmath}
\usepackage{amssymb}
\usepackage{hyperref}
\usepackage{graphicx}
\usepackage{epstopdf}

\usepackage{amsthm}

\usepackage{mathrsfs}
\usepackage{dutchcal}

\newcommand{\Div}{\operatorname{div}}
\newcommand{\capacity}{\operatorname{Cap}}

\newcommand{\supp}{\operatorname{supp}}
\newcommand{\tail}{\operatorname{tail}}
\newcommand{\head}{\operatorname{head}}

\numberwithin{equation}{section}
\newtheorem{theorem}{Theorem}[section]
\newtheorem{lemma}[theorem]{Lemma}
\newtheorem{remark}[theorem]{Remark}
\newtheorem{corollary}[theorem]{Corollary}
\newtheorem{proposition}[theorem]{Proposition}

\newtheorem{example}[theorem]{Example}

\newcommand{\HH}{H}

\allowdisplaybreaks

\begin{document}
\author{Qingsong Gu}
\address{School of Mathematics, Nanjing University, Nanjing 210093, P. R. China} \email{qingsonggu@nju.edu.cn}

\author{Lu Hao}
\address{Universit\"{a}t Bielefeld, Fakult\"{a}t f\"{u}r Mathematik, Postfach 100131, D-33501, Bielefeld, Germany}
\email{lhao@math.uni-bielefeld.de}

\author{Xueping Huang}
\address{School of Mathematics and Statistics, Nanjing University of Information Science and Technology,
	Nanjing 210044, P. R. China}
\email{hxp@nuist.edu.cn}
	
\author{Yuhua Sun}
\address{School of Mathematical Sciences and LPMC, Nankai University, 300071
		Tianjin, P. R. China}
\email{sunyuhua@nankai.edu.cn}
\title[Lane--Emden Inequalities]{Flow Decomposition, Green Testing, and Lane--Emden Inequalities on Weighted Graphs}

\thanks{\noindent
 Q. Gu was supported by the National Natural Science Foundation of China (Grant No. 12101303 and 12171354).
 L. Hao was funded by the Deutsche Forschungsgemeinschaft (DFG, German Research Foundation), Project-ID 317210226, SFB 1283.
 X. Huang was supported by
	the National Natural Science Foundation of China (Grant No. 11601238).
 Y. Sun was funded by the National Natural Science Foundation of
	China (Grant No. 12371206) and the Fundamental Research Funds for the Central Universities, No. 050-63263078.}

\subjclass[2020]{Primary 35J91, 35R02; Secondary 31C20}
\keywords{flow decomposition, relative capacity, Green testing, volume growth}

\begin{abstract}
We study positive solutions of the superlinear Lane--Emden inequality
\[
-\Delta u\ge \sigma u^q,\qquad q>1,
\]
on infinite locally finite weighted graphs and connected domains.  When the
Dirichlet Green function is finite, the existence of a positive solution is equivalent to
\[
G_\Omega\bigl(\sigma g_\Omega(o,\cdot)^q\bigr)(x)
\le C g_\Omega(o,x)
\]
for some pole \(o\in\Omega\).  Under Green function estimates, this yields
sharp existence criteria and the Serrin-type exponents on \(\mathbb Z^d\) and
orthant domains.

For nonexistence, the principal method is flow decomposition.  Its basic
estimate bounds Green energy from below in terms of the relative capacities
of intrinsic balls.  
For \(\sigma>0\), set
\(\nu=\sigma\mu\).  We show that if \(d_\rho\) is a complete \(\nu\)-adapted path metric
and
\[
\int_1^\infty
\frac{r^{2q-1}}{\nu(B_{d_\rho}(o,r))^{q-1}}\,dr=\infty,
\]
then every nonnegative solution is identically zero. The proof combines a flow decomposition of the acyclic Green current, a
pathwise Hardy estimate, and a relative capacity estimate.  It requires none
of \textnormal{(VD)}, \textnormal{(PI)}, \textnormal{(P$_0$)}, or the
\textnormal{(3G)} condition.
\end{abstract}

\maketitle
\tableofcontents

\section{Introduction}

Liouville-type theorems for semilinear elliptic inequalities ask how the
large-scale geometry of an underlying space restricts positive supersolutions.
In Euclidean spaces this principle is expressed through critical exponents,
whereas on manifolds and graphs the role of dimension is played by volume
growth and the behavior of Green functions.

The present paper studies this question for the Lane--Emden-type inequality
\[
-\Delta u\ge \sigma u^q,\qquad q>1,
\]
on infinite, locally finite weighted graphs.  We use two methods with
distinct roles.  Green testing characterizes the existence of positive
solutions.  Flow decomposition is the principal method for nonexistence: it
gives a lower bound for Green energy in terms of relative capacities and
leads to the sharp volume-growth Liouville theorem stated below.  The theorem
is formulated using a complete intrinsic metric adapted to
\(\nu=\sigma\mu\).  When \(\sigma\equiv1\), it immediately implies that
\[
\sum_{n\ge1}\frac{n^{2q-1}}{\mu(B(o,n))^{q-1}}=\infty
\]
forces every nonnegative solution of \(-\Delta u\ge u^q\) to vanish.


The main new method is flow decomposition.  In a finite intrinsic ball, the
Green function is interpreted as a voltage, and the associated unit current
is decomposed into directed paths.  A one-dimensional Hardy estimate controls
the first-exit voltages along these paths, while Thomson's principle gives the
required relative capacity bound.  An intrinsic radial cutoff then yields the
volume-growth criterion.  This argument avoids \textnormal{(VD)},
\textnormal{(PI)}, \textnormal{(P$_0$)}, and \textnormal{(3G)}-type Green
function assumptions.


This result places the discrete problem within the broader Liouville theory for
elliptic inequalities.  For the classical inequality
\[
-\Delta u\ge u^q,
\]
the interaction between diffusion, nonlinearity, and geometry has been studied
extensively in Euclidean spaces and in related problems for equations,
inequalities, systems, higher order operators, and more general nonlinearities;
see, for example,
\cite{BVP01,BM98,AM19,AMP06,GS81,MP01,SZ02,WangXiao16}.  Boundary geometry may
change the critical threshold.  For instance, in the half-space
\(\mathbb R^d_+\), the inequality with Dirichlet boundary condition
\[
-\Delta u\ge u^q,\qquad u|_{\partial\mathbb R^d_+}=0,
\]
has the sharp threshold \(q=(d+1)/(d-1)\); see
\cite{BVP01,BM98,AM25,AM25-arx}.  On manifolds and graphs, the same question
naturally leads to volume-growth criteria and Green-function estimates, which
are the main themes of this paper.

A central tool in this circle of problems is the passage from a differential
inequality to a potential inequality.  In a typical form, one replaces
\[
-Lu\ge \nu
\]
with $\nu$ a Radon measure by
\[
u\ge K\nu,
\]
where \(K\) is the fundamental solution associated with the differential operator
\(L\).  This representation-formula viewpoint, together with a priori
estimates, provides a common language for many existence and nonexistence
results; see \cite{AM19,AMP06,KV99,MP01} and the references therein.

This potential-theoretic viewpoint provides the bridge from the Euclidean
case to geometric settings.  Once the inequality is expressed through the
Green operator, the decisive information is encoded in the global behavior of
Green functions and in the volume growth of metric balls.

On complete Riemannian manifolds,  for the inequality \(-\Delta u\ge u^q\), Grigor'yan and Sun \cite{GS14} proved a
volume-growth nonexistence criterion with sharp  logarithmic exponents.  Later, for the more general inequality with potential
\[
-\Delta u\ge u^q \sigma
\]
where  $\sigma$ is a Radon measure,
 Grigor'yan, Sun and Verbitsky \cite{GSV20} obtained
sharp integral-type nonexistence criteria assuming \textnormal{(VD)} and
\textnormal{(PI)} conditions.
  In the unweighted case, their nonexistence criterion reduces to
  \begin{equation}\label{eq:GSV-conjecture}
 \int^\infty
 \frac{r^{2q-1}}{\mu(B(o,r))^{q-1}}\,dr=\infty.
  \end{equation}
They conjectured that this divergence condition is sufficient for the nonexistence of positive solutions to \(-\Delta u\ge u^q\), even without \textnormal{(VD)} and 	\textnormal{(PI)} conditions.
This conjecture is a useful benchmark for the present work. 

The work \cite{GSV20} is built upon the potential-theoretic tools developed in
\cite{GV20,KV99}.  In this approach, nonlinear inequalities of the form
\[
u\ge K(u^q \sigma)
\]
are studied through weighted norm inequalities. Typical hypotheses include a weak maximum principle and a quasi-metric
condition, the latter being equivalent to a \textnormal{(3G)}-type condition
on the kernel. Under
such hypotheses one obtains sharp tests involving growth of Green functions. We also mention the differential approach of \cite{GV19}, which does not rely on such assumptions and partly motivates the present work.

In this paper we study this circle of questions on infinite locally finite weighted graphs.  We use the graph setting not merely as a discrete analogue of a manifold problem, but as a natural geometric framework in which potential theory, random walks, electrical networks, and nonlinear elliptic inequalities meet. This setting allows us to avoid regularity issues and to focus on the difficulties arising in estimates of growth-type quantities. The graph setting also has the advantage that it presents both local and nonlocal features. Methods developed in this discrete setting may also shed light on the corresponding continuous problems.
 For recent related work on semilinear elliptic equations and inequalities on
graphs, we refer to \cite{CHZ25,HS23,HLY20,MPS25} and the references therein.


We now state the main results.  Only the notation needed for the statements is recalled here; the background on random walks, killed kernels, Green functions, and intrinsic metrics is collected in Section~\ref{sec2}.  Let \((V,\mu)\) be an infinite, connected, locally finite
weighted graph.  Thus \(\mu_{xy}=\mu_{yx}>0\) if and only if \(x\sim y\), and
\[
\mu(x):=\sum_{y\sim x}\mu_{xy}.
\]
We use the normalized graph Laplacian
\[
\Delta f(x)
=
\frac1{\mu(x)}
\sum_{y\sim x}\mu_{xy}\bigl(f(y)-f(x)\bigr).
\]
Let \(\Omega\subset V\) be connected.  Whenever the full graph Laplacian is
applied to a function on \(\Omega\), the function is extended by zero to
\(V\setminus\Omega\).  Therefore the Dirichlet problem on \(\Omega\) can be
written as
\begin{equation}\label{di-1}
	\begin{cases}
		-\Delta u\ge \sigma u^q & \text{in }\Omega,\\
		u=0 & \text{on }\Omega^c,
	\end{cases}
\end{equation}
where \(q>1\) and $\sigma$ is a nonnegative nonzero function on $\Omega$.  If
\(\Omega=V\), this becomes
\begin{equation}\label{di-2}
	-\Delta u\ge \sigma u^q\qquad\text{in }V.
\end{equation}
If \((V,\mu)\) is parabolic, then \eqref{di-2} has no positive solution:
such a solution would be nonnegative and superharmonic, hence constant, while
the inequality at a vertex where \(\sigma>0\) rules out a positive constant.
Accordingly, the criteria below involving a finite whole-graph Green function
are stated in the non-parabolic case.

We write
\[
\nu(x):=\sigma(x)\mu(x),
\]
so that \(\nu\) is the measure naturally associated with the potential
\(\sigma\).

Let $g$ and \(g_\Omega\) denote the whole-graph Green function and the Dirichlet Green function on $\Omega$, respectively.  The Green
function \(g_\Omega\) is finite whenever either \(\Omega\ne V\), or \(\Omega=V\) and the
graph is non-parabolic; see Section~\ref{sec2}.

Our first result gives an exact Green-kernel characterization of positive
solutions.  In addition to the differential inequality, we consider the
associated integral inequality
\begin{equation}\label{ii}
	u(x)\ge G_\Omega(\sigma u^q)(x),
	\qquad x\in\Omega.
\end{equation}

\begin{theorem}\label{thm:green-testing-equivalence}
	Let \((V,\mu)\) be an infinite, connected, locally finite weighted graph, and
	let \(\Omega\subset V\) be connected.  Assume that either \(\Omega\ne V\), or
	\(\Omega=V\) and \((V,\mu)\) is non-parabolic.  Let
	\(0\not\equiv\sigma\in\ell^+(\Omega)\) and \(1<q<\infty\).  Then the following
	are equivalent:
	\begin{enumerate}
		\item[\textnormal{(I)}] The differential inequality \eqref{di-1}, with \eqref{di-2} when
		\(\Omega=V\), admits a positive solution.
		\item[\textnormal{(II)}] The integral inequality \eqref{ii} admits a positive
		solution.
	    \item[\textnormal{(III)}] For some, equivalently for every, \(o\in\Omega\),
		there exists \(C>0\) such that
		\begin{equation}\label{e1}
			G_\Omega\bigl(\sigma\,g_\Omega(o,\cdot)^q\bigr)(x)
			\le C g_\Omega(o,x),
			\qquad x\in\Omega.
		\end{equation}
	\end{enumerate}
\end{theorem}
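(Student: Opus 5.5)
We prove (I)$\Rightarrow$(II)$\Rightarrow$(III)$\Rightarrow$(I), and treat the ``some/every pole'' clause at the end.

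\emph{(I)$\Rightarrow$(II).} Let $u>0$ solve \eqref{di-1}, and set $f:=-\Delta u\ge \sigma u^q\ge0$ on $\Omega$. Take an exhaustion of $\Omega$ by finite connected sets $U_n$. On each $U_n$, $u$ is superharmonic and $u\ge0$ on $U_n^c$. The minimum principle therefore gives $u\ge G_{U_n}f$ on $U_n$. Since $G_{U_n}\uparrow G_\Omega$ by monotone convergence, we get $u\ge G_\Omega f\ge G_\Omega(\sigma u^q)$.

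\emph{(II)$\Rightarrow$(III).} This is the core step. Let $u>0$ solve \eqref{ii}. First, $u\ge G_\Omega(\sigma u^q)\not\equiv0$, because $\sigma\not\equiv0$ and $u>0$. A Harnack-type comparison for Green potentials of a finitely supported positive measure then gives $u\ge c\,g_\Omega(o,\cdot)$. Indeed, for fixed $z$ with $\sigma(z)>0$, $g_\Omega(\cdot,z)\ge c\,g_\Omega(\cdot,o)$ on $\Omega$, since both are harmonic off $\{z,o\}$ and finitely many vertices are handled by connectedness. By symmetry $g_\Omega(x,y)\mu(x)^{-1}$ type normalization is used as in Section~\ref{sec2}.

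Iterating $u\ge G_\Omega(\sigma u^q)$ with this lower bound does not by itself bound $G_\Omega(\sigma g^q)$ above. The standard route is the Kalton--Verbitsky / Grigor'yan--Verbitsky lower estimate for supersolutions of $u\ge G(\sigma u^q)+h$ with $h=c\,g_\Omega(o,\cdot)$. I would instead argue directly with the transform $\phi(t)=t^{1-q}$ type. Set $h:=g_\Omega(o,\cdot)$ and $w:=u/h$; let $P^h$ denote the Doob $h$-transformed walk, whose Green operator is $G^h f=h^{-1}G_\Omega(hf)$. The inequality becomes
\[
w\ge G^h(\sigma h^{q-1}w^q)+c
\]
for an $h$-transform kernel. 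This kernel satisfies the maximum principle but no (3G) condition is needed, because the argument only uses the convexity inequality for $\Phi(s)=\int_c^s t^{-q}\,dt$. Convexity, via a discrete chain rule along the $h$-walk, yields
\[
G^h(\sigma h^{q-1})\le \Phi(w)\le \Phi(\infty)=\frac{c^{1-q}}{q-1}.
\]
Multiplying by $h$ gives exactly \eqref{e1}.

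The main obstacle is proving the discrete chain rule $-\Delta^h\Phi(w)\ge\Phi'(w)(-\Delta^h w)$ for concave increasing $\Phi$. I would first establish it for finite $U_n$ where $w$ is a genuine supersolution, then pass to the limit. Concavity of $\Phi$ on graphs gives $\Phi(w(y))-\Phi(w(x))\le\Phi'(w(x))(w(y)-w(x))$, which is what is needed.

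\emph{(III)$\Rightarrow$(I).} Set $u=\varepsilon h$ with $h=g_\Omega(o,\cdot)$. Since $u$ must be a supersolution, use instead
\[
u:=\varepsilon\bigl(h+G_\Omega(\sigma h^q)\bigr),
\]
so that $h\le u/\varepsilon\le(1+C)h$. Then
\[
-\Delta u=\varepsilon\bigl(\delta_o/\mu(o)+\sigma h^q\bigr)\ge\varepsilon\sigma h^q\ge \varepsilon^{1-q}(1+C)^{-q}\sigma u^q.
\]
Choosing $\varepsilon$ small so that $\varepsilon^{1-q}(1+C)^{-q}\ge1$ gives (I). Positivity holds since $h>0$ on connected $\Omega$, and $u=0$ off $\Omega$.

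\emph{Some versus every pole.} Harnack comparability $g_\Omega(o,\cdot)\asymp g_\Omega(o',\cdot)$ holds away from $\{o,o'\}$. At the finitely many remaining points, both sides of \eqref{e1} are finite, because $G_\Omega(\sigma h^q)$ is finite by (III) at one pole and (I)$\Rightarrow$(II)$\Rightarrow$(III) holds for any pole. So (III) for one pole implies (III) for every pole.
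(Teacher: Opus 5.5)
The gap is in (II)$\Rightarrow$(III); your (I)$\Rightarrow$(II), your (III)$\Rightarrow$(I) and the pole-independence remark are correct. For (III)$\Rightarrow$(I), your supersolution $\varepsilon\bigl(h+G_\Omega(\sigma h^q)\bigr)$ works as well as the paper's $C^{q/(1-q)}G_\Omega(\sigma g_\Omega(o,\cdot)^q)$.

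\emph{First gap: you apply a pointwise argument to an integral supersolution.} Your concavity/chain-rule argument needs $-\Delta u\ge\sigma u^q$ at every vertex of $\Omega$. Hypothesis (II) gives only $u\ge G_\Omega(\sigma u^q)$, and an integral supersolution need not be superharmonic. For example, if $\sigma(x_0)=0$ at some vertex, adding $M\mathbf 1_{\{x_0\}}$ to a solution leaves $\sigma u^q$ unchanged and so preserves (II). For large $M$, however, it makes $-\Delta u<0$ at the neighbours of $x_0$. So your claim that ``$w$ is a genuine supersolution on $U_n$'' is false in general. The missing idea is to pass to the Green potential $\omega:=G_\Omega(\sigma u^q)$. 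It satisfies $-\Delta\omega=\sigma u^q\ge\sigma\omega^q$ in $\Omega$, because $u\ge\omega$. The transform must be applied to $\omega$ (plus possibly a multiple of a Green function), not to $u$.

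\emph{Second gap: the additive form is not derived.} The inequality $w\ge G^h(\sigma h^{q-1}w^q)+c$ does not follow from the two separate bounds $u\ge G_\Omega(\sigma u^q)$ and $u\ge c\,h$; lower bounds cannot be added. This matters. For $h=g_\Omega(o,\cdot)$, $\Phi(s)=\int_c^s t^{-q}\,dt$ and a differential supersolution $U=hw$, concavity gives
\[
-\Delta\bigl(h\Phi(w)\bigr)\ge\Phi'(w)(-\Delta U)+\bigl(\Phi(w)-w\Phi'(w)\bigr)(-\Delta h).
\]
Here $-\Delta h=\mathbf 1_{\{o\}}/\mu(o)$ charges the pole, and $\Phi(w)-w\Phi'(w)=\bigl(c^{1-q}-q\,w^{1-q}\bigr)/(q-1)$ is negative whenever $w(o)<q^{1/(q-1)}c$.

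\emph{How the paper closes both gaps.} It creates the additive term by scaling. Setting $v=\varepsilon u$ gives $v\ge G_\Omega(\sigma v^q)+(\varepsilon^{1-q}-1)G_\Omega(\sigma v^q)\ge\omega+a\,g_\Omega(p,\cdot)$, where $\sigma(p)>0$ and now $\omega=G_\Omega(\sigma v^q)$. Then $U=\omega+a\,g_\Omega(p,\cdot)$ satisfies the differential inequality $-\Delta U\ge\sigma U^q+f$ with $f=-\Delta\bigl(a\,g_\Omega(p,\cdot)\bigr)$. With $h=a\,g_\Omega(p,\cdot)$, $w=U/h\ge1$ and base point $1$ in $\Phi$, the pole coefficient becomes $\Phi(w)+(1-w)\Phi'(w)\ge\Phi(1)=0$ by concavity; this is Lemma~\ref{lem1}.

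Alternatively, you can skip the scaling. Once you have the differential supersolution $\omega\ge c_0\,g_\Omega(o,\cdot)$, choose the base point $c=q^{-1/(q-1)}c_0$ in $\Phi$, which makes $\Phi(w)-w\Phi'(w)\ge0$. With these two repairs your $h$-transform argument becomes essentially the paper's proof.
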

\begin{remark}\label{rem:positive-nonnegative}
By a positive solution we mean a function that is positive at every vertex of the domain.  In the following we also consider nonnegative solutions.  On a connected domain, if a nonnegative solution vanishes at one vertex, then it vanishes identically by the elementary maximum principle; hence the two formulations differ only by the zero solution.
\end{remark}

Theorem~\ref{thm:green-testing-equivalence} follows the potential-theoretic
philosophy of \cite{KV99} and \cite{GV19, GV20},
 but it also uses a
feature specific to graphs.  In the general kernel theory, weak maximum
principle or quasi-metric/\((3G)\)-type assumptions are often imposed in order
to pass from the nonlinear integral inequality to a Green-kernel testing
condition.  Here no such assumption is needed for
Theorem~\ref{thm:green-testing-equivalence}.
In this sense \eqref{e1} is the basic existence test in the graph setting. As applications, we determine the critical exponents on the integer lattice $\mathbb{Z}^d$ and on its $k$-orthant domains; see Section \ref{sec7} and Section \ref{sec-zk}.

The next result explains how \eqref{e1} (in the case $\Omega=V$) is related to the level-set criteria
appearing in \cite{GSV20, GV20}.

For \(r>0\), set
\[
A_r(o):=\{y\in V:g(o,y)>r^{-1}\}.
\]

\begin{theorem}\label{thm:green-level-criterion}
	Let \(1<q<\infty\), let \(0\not\equiv\sigma\in\ell^+(V)\), set
	\(\nu=\sigma\mu\), and assume that \((V,\mu)\) is non-parabolic.  If
	\eqref{di-2} admits a positive solution, then, for every
	\(o\in V\), \begin{equation}\label{et1}
		\sum_{y\in V}g(o,y)^q\,\nu(y)<\infty,
	\end{equation}
 and there exist
	\(r_0>0\) and \(C>0\) such that
	\begin{equation}\label{et2}
		\sup_{x\in V}
		\sum_{y\in A_r(o)} g(x,y)\,\nu(y)
		\le C r^{q-1},
		\qquad r>r_0.
	\end{equation}
	Conversely, if the Green function satisfies the \textnormal{\((3G)\)} inequality (see Subsection \ref{subsection:3G})
	and \eqref{et1}, \eqref{et2} hold for some \(o\in V\), then \eqref{di-2}
	admits a positive solution.
\end{theorem}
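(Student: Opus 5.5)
Both directions go through Theorem~\ref{thm:green-testing-equivalence} with \(\Omega=V\): a positive solution exists if and only if, for some (every) pole \(o\), condition \eqref{e1} holds with \(g_\Omega=g\):
\[
G\bigl(\sigma g(o,\cdot)^q\bigr)(x)=\sum_{y}g(x,y)g(o,y)^q\nu(y)\le C\,g(o,x).
\]
This uses the convention \(G f(x)=\sum_y g(x,y)f(y)\mu(y)\), so that \(\sigma\mu=\nu\) appears.

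\emph{Necessity.} Fix \(o\) and assume \eqref{e1} holds at \(o\).

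For \eqref{et1}, evaluate \eqref{e1} at \(x=o\):
\[
\sum_y g(o,y)^{q+1}\nu(y)\le C g(o,o).
\]
This has the wrong power. Instead I would use symmetry, \(g(x,y)\mu\)-reversible, i.e.\ \(g(x,y)=g(y,x)\) for the normalized kernel. The natural route is to take a neighbour-averaged version of \eqref{e1}, or to combine \eqref{e1} with the lower bound \(g(x,y)\ge c_{x}\,g(o,y)\) given by a Harnack-type comparison on a locally finite graph. For fixed \(x\) and \(o\), the ratio \(g(x,y)/g(o,y)\) is bounded below uniformly in \(y\): by the maximum principle, \(g(x,\cdot)\ge P_x(\text{hit } o)\,g(o,\cdot)\) off \(\{o\}\), and \(P_x(\text{hit } o)>0\) by connectedness. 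Thus
\[
c_x\sum_y g(o,y)^{q+1}\nu(y)\le C g(o,x).
\]
This still carries the power \(q+1\), so I would instead use the strong Markov property the other way: \(g(o,y)\le g(o,o)\). Hence
\[
\sum_y g(o,y)^q\nu(y)\le g(o,o)^{-1}\cdots,
\]
which also fails, since it only reduces powers when \(g\) is large.

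The correct trick is to evaluate \eqref{e1} at a point \(x\) and use \(g(x,y)\ge c_x\) on the finite set of \(y\) near \(o\), which is not summable information. So the cleanest path to \eqref{et1} goes through the solution \(u\) itself. Since \(-\Delta u\ge0\) with \(u>0\), the Riesz decomposition gives \(u\ge G(\sigma u^q)\) together with \(u\ge c\,g(o,\cdot)\) (\(u\) is superharmonic and positive, and \(u\ge \min\) comparison with \(g(o,\cdot)\) by the minimum principle applied outside \(o\)). Then
\[
\infty>u(o)\ge\sum_y g(o,y)\sigma(y)u(y)^q\mu(y)\ge c^q\sum_y g(o,y)^{1+q}\nu(y).
\]
This still gives the power \(q+1\). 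To drop one power, evaluate at a neighbour-free "far" comparison: use \(u(x)\ge \sum_y g(x,y)\nu(y)u(y)^q\) together with \(g(x,y)\ge c\,g(o,y)\)-type bounds, where \(g(x,y)\) is bounded below by \(g(x,o)\,g(o,y)/g(o,o)\) (last-exit through \(o\) is a lower bound). This yields only \(\sum g\,g^{q}\) again.

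I therefore expect \eqref{et1} to require an iteration. From \(u\ge G(\nu u^q/\mu)\) and \(u\ge c\,g(o,\cdot)\), bootstrap \(u\ge c'\,G(\nu g^q)\). Then use that \(G(\nu g^q)(o)\) finite together with the minimum principle gives \(\sum g^q\nu<\infty\), by testing against the equilibrium measure of \(\{o\}\): \(\sum_y g(o,y)f(y)\ge g(o,o)\,\sum_y P_y(\text{hit }o)f(y)\) and \(P_y(\text{hit }o)=g(y,o)/g(o,o)\). This is the main obstacle, and I would settle it first.

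For \eqref{et2}, on \(A_r(o)\) we have \(g(o,y)^q>r^{-(q-1)}g(o,y)\). Hence
\[
\sum_{A_r}g(x,y)\nu(y)\le r^{q-1}\sum_y g(x,y)g(o,y)^{q}\nu(y)\,/\,g(o,y)
\]
is wrong-directional. Instead write \(\nu(y)\le r^{q}g(o,y)^q\nu(y)\) on \(A_r\), so that the sum is at most \(r^q G(\sigma g^q)(x)\le C r^q g(o,x)\). To get \(r^{q-1}\) uniformly in \(x\), split by whether \(g(o,x)\le r^{-1}\) or \(x\in A_r\). In the second case use the maximum principle for the potential \(G(\nu 1_{A_r})\), which attains its supremum on \(A_r\); this is the domination principle on graphs. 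Combining with \eqref{e1} restricted to \(x\in A_r\), and using \(g(o,x)\le g(o,o)\), the bound needs one more factor \(r^{-1}\), obtained by choosing the splitting level carefully (\(r_0\) large).

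\emph{Sufficiency.} Under (3G), \(g(x,y)g(o,y)\lesssim g(o,x)\bigl[g(x,y)+g(o,y)\bigr]\)-type inequalities reduce \eqref{e1} to bounding \(\sum_y g(x,y)g(o,y)^{q-1}\cdot\frac{g(o,y)}{g(o,x)}\nu(y)\). Decompose \(y\) dyadically by level sets \(A_{2^{k+1}}\setminus A_{2^k}\). On \(\{g(o,y)\le g(o,x)\}\) (3G) gives \(g(x,y)g(o,y)/g(o,x)\lesssim g(x,y)+g(o,y)\). The \(g(o,y)\) term sums to \eqref{et1}, and the \(g(x,y)\) term is controlled layerwise by \eqref{et2}, giving \(\sum_k 2^{-k(q-1)}2^{k(q-1)}\)-type terms. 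That sum diverges as written, so I would use the layer bound for \(A_{2^k}\) summed by parts, giving a geometric series in \(2^{-k}\) after accounting for the extra factor \(g(o,y)\). On \(\{g(o,y)>g(o,x)\}\), bound \(g(x,y)\lesssim g(o,x)\) via (3G) and apply \eqref{et1}, or use \eqref{et2} with \(r=1/g(o,x)\) when \(r>r_0\), with the finitely many remaining \(x\) handled trivially. Theorem~\ref{thm:green-testing-equivalence} then produces the positive solution.
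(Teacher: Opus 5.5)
The reduction to the Green test \eqref{e1} through Theorem~\ref{thm:green-testing-equivalence} is the right framework. However, each of the three parts has a genuine gap.

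\textbf{The energy bound \eqref{et1}.} Every route you try produces $\sum_y g(o,y)^{q+1}\nu(y)$. Your closing step with the equilibrium measure of $\{o\}$ is a tautology: $g(o,o)\,\mathbb P_y(\text{hit }o)=g(y,o)=g(o,y)$, so it returns the same sum with the same power.

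The paper removes the extra power with a nonlinear test-function argument on finite domains (Lemma~\ref{lemma6.1}). Let $\psi$ be the Green potential in $\Omega$ of $\sigma g_\Omega(o,\cdot)^{q-1}$, normalized so that $\psi(o)=1$. Testing $-\Delta u\ge\sigma u^q$ against $\psi^{q/(q-1)}$ and using convexity and H\"older gives $u(o)^q\nu(o)\lesssim\bigl(\sum_{y\in\Omega}g_\Omega(o,y)^q\nu(y)\bigr)^{-1/(q-1)}$. Moving the pole to a point where $\sigma>0$ and exhausting $V$ then forces $u\equiv0$ if \eqref{et1} fails.

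Your equilibrium-measure idea does work if you use a large finite set $K$ instead of $\{o\}$. The measure $e_K(z)=\mu(z)\,\mathbb P_z(\text{no return to }K)$ on $K$ satisfies $\sum_{z\in K}g(y,z)e_K(z)=\mathbb P_y(\text{hit }K)$. With $\lambda(y):=\sigma(y)g(o,y)^q\mu(y)$, \eqref{e1} gives
\[
\lambda(K)\le\sum_y\lambda(y)\,\mathbb P_y(\text{hit }K)=\sum_{z\in K}e_K(z)\,G\bigl(\sigma g(o,\cdot)^q\bigr)(z)\le C\sum_{z\in K}e_K(z)\,g(o,z)=C\,\mathbb P_o(\text{hit }K)\le C .
\]
Letting $K\uparrow V$ yields \eqref{et1} directly from \eqref{e1}.

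\textbf{The localization bound \eqref{et2}.} Your estimate $\sum_{y\in A_r(o)}g(x,y)\nu(y)\le r^qG(\sigma g(o,\cdot)^q)(x)\le Cr^qg(o,x)$ is fine for $x\notin A_r(o)$. For $x\in A_r(o)$, though, it only gives $Cr^qg(o,o)$. The maximum principle does not help, because these bad points lie in the support $A_r(o)$. Enlarging $r_0$ cannot supply the missing factor $r^{-1}$, which is lost at points $y$ with $g(o,y)\gg r^{-1}$.

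The missing idea is to cut $A_r(o)$ into dyadic shells $E_j=\{2^jr^{-1}<g(o,\cdot)\le 2^{j+1}r^{-1}\}$. By \eqref{e1}, $G(\sigma\mathbf 1_{E_j})\le(2^j/r)^{-q}G(\sigma g(o,\cdot)^q)\le 2C(2^j/r)^{1-q}$ on $E_j$. Lemma~\ref{lem:wmp-graph} extends this bound to all of $V$, and summing over $j$ gives a geometric series $\lesssim r^{q-1}$.

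\textbf{The converse.} Your reduction $g(x,y)g(o,y)/g(o,x)\le\kappa\bigl(g(x,y)+g(o,y)\bigr)$ discards too much. It leaves $\sum_y g(x,y)g(o,y)^{q-1}\nu(y)$, which \eqref{et2} bounds only by a divergent sum of $O(1)$ layers, as you noticed.

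Your claim that $g(x,y)\lesssim g(o,x)$ on $\{g(o,y)>g(o,x)\}$ is false. In $\mathbb Z^d$ with $o=0$, $x=(n,0,\dots,0)$ and $y=(n-1,0,\dots,0)$, one has $g(o,y)>g(o,x)\to0$ for large $n$, while $g(x,y)$ is a fixed positive constant.

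The operative split is by the size of $g(x,y)$, not of $g(o,y)$:
\begin{itemize}
\item Where $g(x,y)\le 2\kappa g(x,o)$, \eqref{et1} bounds the contribution by $2\kappa g(x,o)\sum_y g(o,y)^q\nu(y)$.
\item Where $g(x,y)>2\kappa g(x,o)$, (3G) forces $(\kappa+\tfrac12)^{-1}g(x,o)<g(o,y)\le 2\kappa g(x,o)$. So you can replace $g(o,y)^q$ by $(2\kappa g(x,o))^q$ and apply \eqref{et2} once, at $r=(\kappa+\tfrac12)/g(x,o)$. This gives a bound $\lesssim g(x,o)^q r^{q-1}\asymp g(x,o)$.
\end{itemize}
Since $g(x,o)\le g(o,o)$, enlarging $\kappa$ makes $r>r_0$ for all $x$ at once, so no exceptional set is needed. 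This matters because your set of ``remaining'' $x$, those with $g(o,x)\ge r_0^{-1}$, need not be finite on a general graph.
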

\begin{remark}\label{rem:relation-Green-energy}
Integrability \eqref{et1} is a Green energy
condition, and \eqref{et2} is a uniform localization condition. Condition \eqref{et1} does not imply \eqref{et2} in general, and does not control how \(\nu\) may
concentrate on sparse Green level sets. This distinction is also useful conceptually: diagonal Green energy is related to \(L^q\)-Liouville properties, whereas positive solvability of the nonlinear inequality is governed by the stronger pointwise testing condition. See Section \ref{sec-example} for further discussion.
\end{remark}

Our second goal is to obtain sharp volume-growth criteria: to determine when volume growth alone rules out positive supersolutions.

In \cite{GHS23}, under a uniform ellipticity assumption
\textnormal{(P$_0$)} (see Subsection~\ref{subsection:conditions}), it was proved that the logarithmic volume bound
\[
\mu(B(o,n))
\le C
n^{\frac{2q}{q-1}}(\log n)^{\frac1{q-1}}
\]
implies nonexistence of positive solutions to the superlinear inequality
\[
-\Delta u\ge u^q.
\]
Motivated by the conjecture in \cite{GSV20} for the manifold case, in \cite{GHS23} it was conjectured that on an arbitrary weighted graph the divergence
condition
\[
\sum_{n=1}^{\infty}
\frac{n^{2q-1}}{\mu(B(o,n))^{q-1}}
=\infty
\]
implies the nonexistence of positive solutions.

One of the main contributions of the present paper is the following sharp
volume-growth Liouville theorem.  Its unweighted specialization resolves the
graph conjecture of \cite{GHS23} stated above.
For a general positive potential \(\sigma\), the graph distance does not
reflect the growth of \(\sigma\).  The appropriate replacement is an intrinsic
path metric adapted to the measure \(\nu=\sigma\mu\).  The relevant definitions
are recalled in Subsection~\ref{subsection:intrinsic-metrics}.

\begin{theorem}\label{thm:weighted-liouville}
Let \(q>1\), let \(0<\sigma\in\ell^+(V)\), and set \(\nu=\sigma\mu\).  Let
\(\rho\) be a \(\nu\)-adapted edge-length function and let \(d_\rho\) be the
associated intrinsic path metric.  Assume that \((V,d_\rho)\) is complete.  If,
for some \(o\in V\),
\begin{equation}\label{eq:weighted-volume-divergence}
\int_1^\infty
\frac{r^{2q-1}}{\nu(B_{d_\rho}(o,r))^{q-1}}\,dr
=\infty,
\end{equation}
then every nonnegative solution of \eqref{di-2} is identically zero.
\end{theorem}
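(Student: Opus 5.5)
Suppose \(u\ge0\) solves \eqref{di-2} and \(u\not\equiv0\). By Remark~\ref{rem:positive-nonnegative}, \(u>0\) everywhere, and \(u\) is superharmonic. If \((V,\mu)\) is parabolic we are already done, by the argument given before Theorem~\ref{thm:green-testing-equivalence}. So assume non-parabolicity. We work in the intrinsic balls \(B_r:=B_{d_\rho}(o,r)\). These are finite, because \((V,d_\rho)\) is complete and locally finite (Hopf--Rinow type property of intrinsic path metrics). Let \(g_r:=g_{B_r}\) be the Dirichlet Green function of \(B_r\).

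The first step is a localized Green-testing inequality. Since \(u\) is a positive supersolution on \(B_r\), we have \(u\ge G_{B_r}(\sigma u^q)\) on \(B_r\). Comparing \(u\) with \(g_r(o,\cdot)\) by the maximum principle gives \(u\ge c\,u(o)\,g_r(o,\cdot)/g_r(o,o)\). Iterating this as in the proof of Theorem~\ref{thm:green-testing-equivalence}, or more directly by testing against \(g_r(o,\cdot)\) with a Hölder argument, should give a bound on the Green energy of \(\nu\) in \(B_r\) that is uniform in the scale:
\[
\sum_{y\in B_r} g_r(o,y)^q\,\nu(y)\le C\,g_r(o,o)^{q-1}\cdot(\text{const depending on }u(o)),
\]
or a cleaner normalized version. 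I would aim for the form obtained by the standard test-function method: multiply \(-\Delta u\ge\sigma u^q\) by \(u^{-\alpha}\varphi^s\) with \(\varphi\) a cutoff. The main input is the radial cutoff \(\varphi(x)=\psi(d_\rho(o,x)/r)\). Because \(\rho\) is \(\nu\)-adapted, we have \(\sum_y\mu_{xy}|\nabla\varphi|^2\lesssim r^{-2}\nu(x)\). This yields, in the Mitidieri--Pohozaev style,
\[
\sum_{B_{r}}\sigma u^q\mu\ \lesssim\ \Bigl(\frac{\nu(B_{2r}\setminus B_r)}{r^{2q}}\Bigr)^{1/(q-1)}\cdot(\cdots).
\]
This is only the polynomial-growth argument, however. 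Getting the sharp integral \eqref{eq:weighted-volume-divergence} requires a genuine lower bound for the "energy" in terms of capacities.

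The sharp step uses flow decomposition. Fix \(r<R\). In \(B_R\), interpret \(g_R(o,\cdot)\) as a voltage and decompose the unit current from \(o\) into directed paths \(\gamma\) from \(o\) to \(B_R^c\), carrying weights \(w_\gamma\) with \(\sum w_\gamma=1\); the Green current is acyclic. Along each path, \(\rho\)-lengths accumulate to at least \(R\). The plan is to write the quantity \(\sum\sigma u^q\mu\) weighted by \(g_R\) as an average over paths. A one-dimensional Hardy inequality along each path then controls the voltages at first exit from each \(B_{s}\) in terms of the resistance increments. Thomson's principle converts the resulting path averages into the relative capacity \(\capacity(B_s,B_{R})\), which the \(\nu\)-adapted test function \(\bigl(R-d_\rho(o,\cdot)\bigr)_+/(R-s)\) bounds by \(\capacity(B_s,B_R)\le \nu(B_R)/(R-s)^2\). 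Combining these, we expect
\[
\int_1^R\frac{s^{2q-1}}{\nu(B_s)^{q-1}}\,ds\ \lesssim\ \Bigl(\sum_{B_R}g_R(o,\cdot)^q\nu\Bigr)^{-1}\text{-type bound}\ \le C(u),
\]
uniformly in \(R\). The intended form is a reverse inequality: the localized testing bound from the first step caps a quantity that the flow/Hardy estimate bounds from below by the divergent integral. Letting \(R\to\infty\) then contradicts \eqref{eq:weighted-volume-divergence}.

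I expect the main obstacle to be the pathwise Hardy estimate. It must match the exponents \(2q-1\) and \(q-1\) exactly, and it must aggregate correctly over the path decomposition, via Jensen/Hölder with the weights \(w_\gamma\). The difficulty is that paths exit the shells \(B_s\) at different voltages, and no volume doubling is available to compare them. My first attempt would be to parametrize by the first-exit voltage \(t_\gamma(s)\) of \(\gamma\) from \(B_s\), and to note that \(\sum_\gamma w_\gamma t_\gamma(s)\) relates to \(1/\capacity(B_s,B_R)\) through Thomson's principle. I would then apply the discrete Hardy inequality \(\int (\int_s^R a)^{q}\,b\ge\dots\) with weights \(b(s)=s^{2q-1}\) in its dual form.
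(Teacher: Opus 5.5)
Your architecture matches the paper's. You bound $L_R^\nu=\sum_{B_R}g_R^q\nu$ uniformly in $R$ when a solution exists, and you bound $L_R^\nu$ from below using flow decomposition of the Green current, a pathwise Hardy estimate, Thomson's principle and an intrinsic cutoff. The genuine gap is the capacity-to-volume step: as you set it up, it cannot produce the integral condition.

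Grant the relative-capacity estimate $L_R^\nu\ge c_q\int_0^R s\,\capacity_{B_R}(B_s)^{1-q}\,ds$. Your bound $\capacity_{B_R}(B_s)\le\nu(B_R)/(R-s)^2$ then only gives $L_R^\nu\gtrsim\nu(B_R)^{1-q}\int_0^R s(R-s)^{2q-2}\,ds\asymp R^{2q}/\nu(B_R)^{q-1}$. This contradicts a uniform bound on $L_R^\nu$ only if $\nu(B_{R_j})^{q-1}=o(R_j^{2q})$ along some sequence. It therefore misses exactly the borderline regimes where the theorem is sharp, such as $\nu(B_r)^{q-1}\asymp r^{2q}$ or $r^{2q}\log r$; for instance, the critical case $q=d/(d-2)$ on $\mathbb Z^d$ is lost.

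The missing idea is to bound the capacity at the scale of the inner ball, independently of $R$. For $2s<R$, extension by zero gives $\capacity_{B_R}(B_s)\le C_{\mathrm{eff}}(B_s,V\setminus B_{2s})$. Take the annular cutoff equal to $1$ on $B_s$, $0$ off $B_{2s}$, and $s^{-1}$-Lipschitz for $d_\rho$; combined with adaptedness, it bounds this by $\nu(B_{2s})/s^2$. Only then does $L_R^\nu\ge c_q\int_0^{R/2}s^{2q-1}\nu(B_{2s})^{1-q}\,ds$ follow.

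Consequently the weight $s^{2q-1}$ has no place in the Hardy step. What you need along each path is $H_\gamma\ge c_q\int_0^R s\,Z_s^{q-1}\,ds$, where $H_\gamma=\sum_i\ell_i^2V_i^q/\delta_i$ satisfies $\mathbb E H_\gamma\le L_R^\nu$ by adaptedness. First exits across edges longer than $s/2$ must be handled separately using $\delta_i\le V_i$.

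Two further steps would not go through as you describe them. First, you propose to extract the first-moment bound $\mathbb E Z_s\ge\capacity_{B_R}(B_s)^{-1}$ from Thomson's principle. By Jensen this yields $\mathbb E Z_s^{q-1}\ge\capacity_{B_R}(B_s)^{1-q}$ only when $q\ge2$; for $1<q<2$ concavity goes the wrong way. The paper proves the stronger $\mathbb E Z_s^{-1}\le\capacity_{B_R}(B_s)$ and concludes with H\"older. It does this by applying Thomson's principle to the superposition of the post-exit path segments, each reweighted by $1/Z_s$. The energy estimate uses Cauchy--Schwarz on each edge (a selected segment crosses $e$ with probability at most $\theta_e$), followed by telescoping.

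Second, consider the route you spell out for the uniform energy bound: inserting $u\ge u(o)g_r(o,\cdot)/g_r(o,o)$ into $u(o)\ge G_{B_r}(\sigma u^q)(o)$. This controls $\sum g_r^{q+1}\nu$ rather than $\sum g_r^q\nu$, which would shift the exponent by one. The Mitidieri--Pohozaev bound concerns $\sum\sigma u^q\mu$, which does not enter the flow argument at all. The paper's Lemma~\ref{lemma6.1} tests with $\psi^{q/(q-1)}$, where $\psi=G_{B_R}(\sigma g_R^{q-1})/L_R^\nu$, and uses H\"older to get $u(o)^q\nu(o)\le(q/(q-1))^{q/(q-1)}(L_R^\nu)^{-1/(q-1)}$. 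Running the (II)$\Rightarrow$(III) argument in $B_R$ also works, but only after integrating the resulting pointwise test against the equilibrium measure of $B_R$. So the final comparison is $\int_1^R s^{2q-1}\nu(B_s)^{1-q}\,ds\lesssim L_R^\nu\le C(u)$, not a bound in terms of $(L_R^\nu)^{-1}$.
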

The divergence condition
\eqref{eq:weighted-volume-divergence} is independent of the choice of the
base point; see Remark~\ref{rem:base-point-independence}.
\begin{remark}
In this intrinsic-metric result we assume that \(\sigma\) is strictly positive;
this is stronger than the standing nonnegative nonzero assumption in
Theorem~\ref{thm:green-testing-equivalence}. 
This restriction can be removed by using oriented adapted
edge lengths and the associated forward path distance. Under finiteness of
the forward balls, it gives the corresponding conductance and volume criteria
for arbitrary nonnegative, nonzero potentials; see
Remark~\ref{rem:degenerate-potentials}.
\end{remark}


\begin{corollary}\label{cor:unweighted-liouville}
Let \((V,\mu)\) be an infinite, connected, locally finite weighted graph, and
let \(q>1\).  If, for some \(o\in V\),
\begin{equation}\label{eq:unweighted-volume-divergence}
\sum_{n=1}^{\infty}
\frac{n^{2q-1}}{\mu(B(o,n))^{q-1}}
=\infty,
\end{equation}
then every nonnegative solution of
\begin{equation}\label{sdi}
-\Delta u\ge u^q\qquad\text{in }V
\end{equation}
is identically zero.
\end{corollary}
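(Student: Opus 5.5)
The plan is to derive Corollary~\ref{cor:unweighted-liouville} from Theorem~\ref{thm:weighted-liouville} with \(\sigma\equiv1\), so that \(\nu=\mu\). The whole task is to choose a \(\mu\)-adapted edge-length function \(\rho\) whose intrinsic path metric \(d_\rho\) is complete and whose balls are dominated by combinatorial balls. We then convert the series \eqref{eq:unweighted-volume-divergence} into the integral \eqref{eq:weighted-volume-divergence}.

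For the choice of \(\rho\), recall that a \(\nu\)-adapted edge length satisfies \(\sum_{y\sim x}\mu_{xy}\rho(x,y)^2\le \nu(x)\) for every \(x\). With \(\nu=\mu\) and \(\mu(x)=\sum_{y\sim x}\mu_{xy}\), the constant choice \(\rho\equiv1\) on edges gives
\[
\sum_{y\sim x}\mu_{xy}\rho(x,y)^2=\mu(x)=\nu(x).
\]
So \(\rho\equiv1\) is admissible, and this is exactly where the normalized Laplacian matters. If the paper's definition uses a constant such as \(\tfrac12\), we instead take \(\rho\equiv c\) for a suitable fixed constant \(c>0\).

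With this choice, \(d_\rho=c\,d\), where \(d\) is the graph distance. Since the graph is locally finite, closed \(d\)-balls are finite, so \((V,d_\rho)\) is complete. Moreover \(B_{d_\rho}(o,r)\subset B(o,\lceil r/c\rceil)\), and hence
\[
\mu\bigl(B_{d_\rho}(o,r)\bigr)\le \mu\bigl(B(o,\lceil r/c\rceil)\bigr).
\]

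Next we compare the integral with the series. For \(r\in[cn,c(n+1)]\) with \(n\ge1\), ball monotonicity gives
\[
\mu\bigl(B_{d_\rho}(o,r)\bigr)\le \mu\bigl(B(o,n+1)\bigr),
\qquad r^{2q-1}\ge (cn)^{2q-1}\ge C^{-1}(n+1)^{2q-1}.
\]
Summing over \(n\) then yields
\[
\int_c^\infty \frac{r^{2q-1}}{\mu\bigl(B_{d_\rho}(o,r)\bigr)^{q-1}}\,dr
\;\ge\; C^{-1}\sum_{m\ge2}\frac{m^{2q-1}}{\mu\bigl(B(o,m)\bigr)^{q-1}}=\infty.
\]
The finitely many omitted terms and the change of lower limit from \(c\) to \(1\) affect only a finite amount, so \eqref{eq:weighted-volume-divergence} holds. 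Theorem~\ref{thm:weighted-liouville} therefore applies, with \(\sigma\equiv1>0\) and \eqref{sdi} coinciding with \eqref{di-2}, and every nonnegative solution vanishes.

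The only real obstacle is bookkeeping. We must check the exact normalization in the paper's definition of \(\nu\)-adapted edge lengths, so that a constant multiple of the combinatorial length is admissible. We must also confirm that balls in the definition (open versus closed) do not disturb the comparison; this is harmless because only monotone comparisons with shifted radii are used.
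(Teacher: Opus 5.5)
Your proposal is correct and follows the paper's proof. You take \(\sigma\equiv1\) and \(\rho\equiv1\), which is exactly \(\mu\)-adapted under the paper's normalization \(\sum_{y\sim x}\mu_{xy}\rho(x,y)^2\le\nu(x)\), so the constant \(c\) is unnecessary; you observe that \(d_\rho=d\) is complete, compare the integral with the series on unit intervals, and apply Theorem~\ref{thm:weighted-liouville}. The only difference is that the paper records the two-sided comparison via \(B_{d_\rho}(o,r)=B(o,n)\) for \(r\in[n,n+1)\), whereas you prove only the lower bound on the integral, which is all that is needed.
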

\begin{remark}
	Corollary~\ref{cor:unweighted-liouville} is obtained from
	Theorem~\ref{thm:weighted-liouville} by taking \(\sigma\equiv1\) and the unit
	edge length \(\rho\equiv1\); the details are given in
	Section~\ref{sec:intrinsic-flow}.
	
Theorem~\ref{thm:weighted-liouville} and
Corollary~\ref{cor:unweighted-liouville} are directly related to \eqref{et1}, as
will be clear from the proofs below.  This is related to the so-called
\(L^q\)-Liouville property studied in \cite{GPS,HS2025}.  A more detailed
discussion is given in Section~\ref{sec-example}.
\end{remark}


The proof of Theorem~\ref{thm:weighted-liouville} begins with a relative
capacity estimate from which the volume condition follows. Fix the base point \(o\), and write
\[
B_r:=B_{d_\rho}(o,r),\qquad
g_R:=g_{B_R}(o,\cdot),\qquad
L_R^\nu:=\sum_{x\in B_R}g_R(x)^q\nu(x).
\]
For \(0<r<R\), let \(\capacity_{B_R}(B_r)\) denote the capacity of \(B_r\)
relative to the killed domain \(B_R\).  The central finite-domain estimate in
Section~\ref{sec:intrinsic-flow} is
\begin{equation}\label{eq:intro-relative-capacity-estimate}
L_R^\nu
\ge
c_q\int_0^R r\,\capacity_{B_R}(B_r)^{1-q}\,dr,
\qquad R>0.
\end{equation}
It follows from a flow decomposition of the Green current, the pathwise Hardy
estimate, and a capacity bound. The relative-capacity estimate implies the volume criterion and may retain information about geometric bottlenecks that ball volume does not record.

Theorem~\ref{thm:weighted-liouville} and
Corollary~\ref{cor:unweighted-liouville} give
nonexistence without geometric regularity assumptions.  The complementary
existence theory requires more precise information about Green functions, and
this is where Green testing is effective.

Under the standard assumptions
\textnormal{(VD)}, \textnormal{(PI)}, and \textnormal{(P$_0$)}, recalled in Section~\ref{sec2}, Green-function estimates \cite{Delmotte} allow us to transform the pointwise
Green test \eqref{e1} into explicit volume and potential growth conditions.

\begin{theorem}
	\label{thm:vd-pi-p0-potential-criterion}
	Let \(1<q<\infty\), let \(0\not\equiv\sigma\in\ell^+(V)\), and set
	\(\nu=\sigma\mu\).  Assume that \((V,\mu)\) satisfies \textnormal{(VD)},
	\textnormal{(PI)}, and \textnormal{(P$_0$)}.  Then \eqref{di-2} admits a
	positive solution if and only if there exists \(o\in V\)
	such that
	\begin{equation}\label{thm_e1}
		\sum_{n=1}^{\infty}
		\left[
		\sum_{m=n}^{\infty}
		\frac{m}{\mu(B(o,m))}
		\right]^{q-1}
		\frac{\nu(B(o,n))}{\mu(B(o,n))}\,n
		<\infty ,
	\end{equation}
	and there exists \(C>0\) such that, for all \(n\ge 1\),
	\begin{equation}\label{thm_e2}
		\sup_{x\in V}
		\left[
		\sum_{m=1}^{\infty}
		m\,\frac{\nu(B(x,m)\cap B(o,n))}{\mu(B(x,m))}
		\right]
		\left[
		\sum_{i=n}^{\infty}
		\frac{i}{\mu(B(o,i))}
		\right]^{q-1}
		\le C.
	\end{equation}
\end{theorem}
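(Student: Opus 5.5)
Under (VD), (PI) and (P$_0$) the graph is non-parabolic exactly when $\sum_m m/\mu(B(o,m))<\infty$. In that case Delmotte's estimates \cite{Delmotte} give two-sided Green function bounds
\[
g(x,y)\asymp \sum_{m\ge d(x,y)\vee 1}\frac{m}{\mu(B(x,m))}=:h(x,d(x,y)),
\]
with constants independent of $x,y$. Since (VD) makes $\mu(B(x,m))\asymp\mu(B(y,m))$ for $m\ge d(x,y)$, the kernel $h$ is essentially symmetric and monotone in the distance. If instead the graph is parabolic, the tail sums in \eqref{thm_e1}--\eqref{thm_e2} diverge, so both conditions fail. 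They must fail: for \eqref{thm_e1} to hold one needs every bracket to be finite. This matches the remark in the introduction that \eqref{di-2} has no positive solution in the parabolic case. So we may assume non-parabolicity and invoke Theorem~\ref{thm:green-level-criterion}. The (3G) inequality holds under these Green bounds: it is the standard quasi-metric property of $1/h$, which follows from (VD) by comparing $d(x,z)$ and $d(z,y)$ with $d(x,y)/2$. Hence positive solvability is equivalent to \eqref{et1} together with \eqref{et2}. It remains to translate these into \eqref{thm_e1} and \eqref{thm_e2}.

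For \eqref{et1} I would write $g(o,y)\asymp\phi(d(o,y))$ with $\phi(n):=\sum_{m\ge n}m/\mu(B(o,m))$. Then I sum $\phi(d(o,y))^q\nu(y)$ over annuli $B(o,n)\setminus B(o,n-1)$ and apply Abel summation. This gives
\[
\sum_y g(o,y)^q\nu(y)\asymp \sum_n\bigl(\phi(n)^q-\phi(n+1)^q\bigr)\nu(B(o,n)),
\]
up to the finite term at $n=1$ and a boundary term that vanishes when the sum converges, by monotonicity. Next, $\phi(n)^q-\phi(n+1)^q\asymp \phi(n)^{q-1}\,n/\mu(B(o,n))$, using $\phi(n+1)\ge\phi(n)-n/\mu(B(o,n))$. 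To see that $\phi(n+1)\asymp\phi(n)$, note that (VD) gives $\phi(n)\gtrsim\sum_{m=n}^{2n}m/\mu(B(o,m))\gtrsim n^2/\mu(B(o,n))$. This yields \eqref{et1}$\iff$\eqref{thm_e1}.

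For \eqref{et2}, the level sets $A_r(o)=\{g(o,\cdot)>r^{-1}\}$ are comparable to balls: $B(o,n)\subset A_{C\phi(n)^{-1}}$ and $A_{c\phi(n)^{-1}}\subset B(o,n)$. Choosing $r\asymp\phi(n)^{-1}$ turns the right side $r^{q-1}$ into $\phi(n)^{1-q}$, and $\phi(n)$ changes by bounded factors between consecutive $n$, so discrete $n$ suffices. Thus \eqref{et2} becomes $\sup_x\sum_{y\in B(o,n)}g(x,y)\nu(y)\lesssim\phi(n)^{1-q}$. Applying the same layer-cake computation centred at $x$ gives
\[
\sum_{y\in B(o,n)}g(x,y)\nu(y)\asymp\sum_m \frac{m}{\mu(B(x,m))}\,\nu\bigl(B(x,m)\cap B(o,n)\bigr),
\]
again by Abel summation, with $h(x,m)-h(x,m+1)=m/\mu(B(x,m))$. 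Multiplying by $\phi(n)^{q-1}$ gives exactly \eqref{thm_e2}. The step needing the most care is the comparison of level sets with balls with uniform constants, together with the fact that \eqref{et2} is required only for $r>r_0$. For small $n$ the expression in \eqref{thm_e2} is nevertheless controlled, since it is monotone in $n$ up to constants via the $n\ge n_0$ case.
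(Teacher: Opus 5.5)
Your proposal is correct and follows essentially the paper's route: Delmotte's two-sided Green bounds give non-parabolicity and (3G) (the paper's Lemma~\ref{lem3g}), you reduce to Theorem~\ref{thm:green-level-criterion}, you identify \eqref{et1} with \eqref{thm_e1} by summation by parts over spheres, and you identify \eqref{et2} with \eqref{thm_e2} by sandwiching Green level sets between balls and applying Tonelli. The differences are cosmetic. You use a two-sided Abel summation relying on $\phi(n)\asymp\phi(n+1)$, where the paper uses one-sided telescoping bounds. You also handle small $n$ by monotonicity of the first bracket in $n$ rather than by the bound $g(x,y)\le g(y,y)$.
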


The two conditions in Theorem~\ref{thm:vd-pi-p0-potential-criterion} are the
graph analogues of the two potential-growth conditions in the manifold setting \cite{GSV20}.  They correspond to conditions \eqref{et1} and \eqref{et2}, respectively.  When
\(\sigma\equiv1\), the second condition follows from the first, and the
criterion collapses to a single volume condition.

\begin{corollary}\label{cor:unweighted-one-condition}
	Under the assumptions of
	Theorem~\ref{thm:vd-pi-p0-potential-criterion}, \eqref{sdi} admits a positive
	solution if and only if, for some \(o\in V\),
	\begin{equation*}
		\sum_{n=1}^{\infty}
		\frac{n^{2q-1}}{\mu(B(o,n))^{q-1}}
		<\infty.
	\end{equation*}
\end{corollary}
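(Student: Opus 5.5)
Under (VD), (PI), (P$_0$) with $\sigma\equiv1$ we have $\nu=\mu$, and we would derive Corollary~\ref{cor:unweighted-one-condition} from Theorem~\ref{thm:vd-pi-p0-potential-criterion}, using Corollary~\ref{cor:unweighted-liouville} for the necessity direction. Throughout, put
\[
S(n):=\sum_{m\ge n}\frac{m}{\mu(B(o,m))}.
\]
Under (VD), (PI), (P$_0$) this sum is comparable to the Green function $g(o,x)$ for $d(o,x)\sim n$.

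\emph{Necessity.} If \eqref{sdi} has a positive solution, Corollary~\ref{cor:unweighted-liouville} (with no extra hypotheses) shows that the series $\sum n^{2q-1}\mu(B(o,n))^{1-q}$ must converge. This direction is therefore immediate.

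\emph{Sufficiency: reducing the series.} Assume $\sum n^{2q-1}\mu(B(o,n))^{1-q}<\infty$. With $\nu=\mu$, \eqref{thm_e1} reads $\sum_n S(n)^{q-1}n<\infty$. The plan is to show first that $S(n)\lesssim n^2/\mu(B(o,n))$. This uses the reverse volume doubling consequence of (VD) on a connected infinite graph: $\mu(B(o,R))/\mu(B(o,r))\ge c(R/r)^{\alpha}$ for some $\alpha>0$.

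Reverse doubling alone only gives $\alpha>0$, and we need effectively $\alpha>2$. I expect this to be the main obstacle. I would resolve it with a dyadic argument. Let $V_k=\mu(B(o,2^k))$. Convergence of the series is equivalent to $\sum_k 2^{2kq}V_k^{1-q}<\infty$, so $a_k:=4^kV_k^{-1}\to0$ summably in the sense that $\sum_k a_k^{q-1}4^k<\infty$. We have $S(2^j)\approx\sum_{k\ge j}a_k$. Then
\[
\sum_j 4^j\Bigl(\sum_{k\ge j}a_k\Bigr)^{q-1}.
\]
Since $V_k$ is nondecreasing, $a_{k+i}\le 4^ia_k$, but that bound goes the wrong way. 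Instead, reverse doubling gives $a_{k+i}\le C4^{i}2^{-\alpha i}a_k$. When $\alpha\le2$ this does not produce geometric decay, so I would argue directly instead. Bound $(\sum_{k\ge j}a_k)^{q-1}$ using a weighted Hölder/Hardy inequality with weights $2^{-\epsilon(k-j)}$. For $q-1\le1$ use subadditivity; for $q>2$ use Hölder with exponent $q-1$. This gives
\[
\sum_j 4^j\Bigl(\sum_{k\ge j}a_k\Bigr)^{q-1}\lesssim \sum_k a_k^{q-1}\sum_{j\le k}4^j2^{\epsilon'(k-j)}\lesssim\sum_k a_k^{q-1}4^k<\infty,
\]
a discrete Hardy inequality valid because $4^j$ grows geometrically. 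Hence \eqref{thm_e1} holds. The same Hardy computation, applied to the tail from $j$ onward, gives $4^jS(2^j)^{q-1}\to0$; we will use this below.

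\emph{Sufficiency: checking \eqref{thm_e2}.} With $\nu=\mu$, the first factor is $\sum_m m\,\mu(B(x,m)\cap B(o,n))/\mu(B(x,m))$. Terms with $m\le 2n$ contribute at most $\sum_{m\le 2n}m\lesssim n^2$. For $m>2n$ the intersection is nonempty only if $d(o,x)<n+m$. Then $\mu(B(x,m))\gtrsim\mu(B(o,m))$ by (VD), since $B(o,m)\subset B(x,3m)$. So these terms are bounded by $\mu(B(o,n))S(n)$. Thus the product in \eqref{thm_e2} is at most
\[
C\bigl(n^2+\mu(B(o,n))S(n)\bigr)S(n)^{q-1}.
\]
We already know $n^2S(n)^{q-1}\to0$. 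For the remaining term, $\mu(B(o,n))S(n)^q$, note that $S(n)\ge\sum_{m=n}^{2n}m/\mu(B(o,m))\gtrsim n^2/\mu(B(o,n))$ by (VD). Hence
\[
\mu(B(o,n))S(n)^q\lesssim \bigl(\mu(B(o,n))S(n)/n^2\bigr)\,n^2S(n)^{q-1}.
\]
It therefore suffices to show $\mu(B(o,n))S(n)\lesssim n^2$. I would instead bound $\mu(B(o,n))S(n)^q$ directly by the Hardy estimate with weight $V_j$ in place of $4^j$. The term $V_j(\sum_{k\ge j}a_k)^q$ is controlled via Hölder by $\sum_{k\ge j}V_ka_k^q\,2^{\epsilon(k-j)}$, using $V_j\le V_k$. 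Here $V_ka_k^q=4^{kq}V_k^{1-q}$ is summable, so the term is bounded. This establishes \eqref{thm_e2}, and Theorem~\ref{thm:vd-pi-p0-potential-criterion} then gives existence.
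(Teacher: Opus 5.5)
Your necessity argument via Corollary~\ref{cor:unweighted-liouville} is sound, and so is the dyadic Hardy estimate giving \eqref{thm_e1}. Your direct (VD) bound of the first factor in \eqref{thm_e2} by $C\bigl(n^2+\mu(B(o,n))S(n)\bigr)$ is also correct, and it avoids the Green estimates and weak maximum principle that the paper uses. The gap is in the final step, the uniform bound on $\mu(B(o,n))S(n)^q$.

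In the H\"older step a growing weight on the $a_k^q$ side is unavoidable: with $\epsilon=0$ the dual sum $\sum_{k\ge j}1$ diverges. So what you actually obtain is $\sup_j\sum_{k\ge j}V_ka_k^q\,2^{\epsilon(k-j)}$, and summability of $V_ka_k^q=4^{kq}V_k^{1-q}$ does not bound this. For instance, $V_k=4^{kq/(q-1)}k^{2/(q-1)}$ is monotone and doubling, and it gives $V_ka_k^q=k^{-2}$. Yet $\sum_{k\ge j}k^{-2}2^{\epsilon(k-j)}=\infty$ for every $j$ and every $\epsilon>0$. In this example $V_j\bigl(\sum_{k\ge j}a_k\bigr)^q\asymp j^{-2}$ is in fact bounded, so it is your estimate that is too lossy. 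This is exactly where the analogy with your $4^j$-weighted Hardy estimate breaks down. There the factor $2^{\epsilon'(k-j)}$ was absorbed because $4^j$ grows geometrically, whereas $V_j\le V_k$ supplies no growth at all.

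There are two ways to close the gap.

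\begin{enumerate}
\item Replace $V_j\le V_k$ by the reverse doubling you mentioned earlier, $V_j\le C2^{-\alpha(k-j)}V_k$, and take $\epsilon$ below the reverse-doubling exponent. The weighted sum is then dominated by $\sum_kV_ka_k^q<\infty$.
\item Telescope, as the paper does in \eqref{eq:volume-tail-control}. Since $S(m)-S(m+1)=m/\mu(B(o,m))$ and $S(m)\downarrow0$,
\[
\mu(B(o,n))S(n)^q\le q\,\mu(B(o,n))\sum_{m\ge n}S(m)^{q-1}\frac{m}{\mu(B(o,m))}\le q\sum_{m\ge n}m\,S(m)^{q-1}.
\]
The right-hand side is bounded by the series in \eqref{thm_e1}, which you have already shown converges. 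This route needs only monotonicity of $m\mapsto\mu(B(o,m))$.
\end{enumerate}
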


The proof is given at the end of Section~\ref{sec5}.

We close the introduction by emphasizing the distinct roles of the two
methods.  Flow decomposition is the principal method for nonexistence.  It
gives the relative capacity estimate
\eqref{eq:intro-relative-capacity-estimate} and the resulting volume-growth
criteria without requiring \textnormal{(VD)}, \textnormal{(PI)},
\textnormal{(P$_0$)}, or \textnormal{(3G)}.  Green testing is a
separate potential-theoretic method: it proves
Theorems~\ref{thm:green-testing-equivalence} and
\ref{thm:green-level-criterion} and, when the Green function can be estimated,
yields the existence criterion in
Theorem~\ref{thm:vd-pi-p0-potential-criterion}.


 Although flow decomposition is a classical tool in network theory \cite{FFbook}
and has many applications in probability \cite{LPbook}, its use in nonlinear
Liouville problems appears, to the best of our knowledge, to be new.
The method also seems flexible beyond the present semilinear elliptic setting. In the recent preprint \cite{GHHS26}, we use flow decomposition to obtain sharp nonexistence
criteria for quasilinear inequalities on weighted graphs.
In a forthcoming preprint
\cite{GHHS26-parabolic}, we prove sharp integral volume-growth criteria for
semilinear parabolic inequalities/equations on weighted graphs. 
In
another forthcoming preprint \cite{GHHS26-stochastic}, 
we obtain a proof by flow decomposition of the sharp volume-growth criterion for stochastic
completeness of general weighted graphs (cf. \cite{Folz14, Huang14, HKM20}).    
The new proof requires neither passage to metric graphs nor subdivision of edges.
 Together they
indicate that flow decomposition and capacitary estimates may apply broadly
to quasilinear elliptic, semilinear parabolic, and probabilistic problems.  We
also expect that this approach may be useful for the conjectural volume-growth criterion
\eqref{eq:GSV-conjecture} in the manifold setting \cite{GSV20}.

The paper is organized as follows.  Section~\ref{sec2} collects the necessary
preliminaries on weighted graphs. 
Section~\ref{sec3} proves
Theorem~\ref{thm:green-testing-equivalence}, the Green-testing criterion.  In
Section~\ref{sec4} we prove Theorem~\ref{thm:green-level-criterion}.  Section~\ref{sec5} derives the sharp volume and potential
criteria under \textnormal{(VD)}, \textnormal{(PI)}, and
\textnormal{(P$_0$)}.  Section~\ref{sec:intrinsic-flow} develops the
argument based on flow decomposition, proves the relative-capacity estimate and
establishes Theorem \ref{thm:weighted-liouville}.
Section~\ref{sec-example} gives examples illustrating the relations between nonexistence of positive solutions and the \(L^q\)-Liouville property.
 Finally,
Sections~\ref{sec7} and \ref{sec-zk} apply these sharp criteria to \(\mathbb Z^d\) and
to its orthant domains.  These applications make explicit the analogy between boundary geometry in manifolds and boundary geometry in discrete domains.

\section{Preliminaries}\label{sec2}

In this section, we collect preliminaries on weighted graphs, random walks,
Green functions, geometric assumptions, intrinsic metrics, flows, and effective
conductance, and fix notation.  
We include the relevant statements and, where
useful, brief proofs in order to fix conventions and make the later arguments
self-contained; no originality is claimed for these preliminaries.  For
background on random walks and Green functions, we refer to
\cite{MBbook,WWbook}; for flows and effective conductance on weighted graphs,
see \cite{FFbook,LPbook}.

\subsection{Weighted graphs and the normalized Laplacian}

Let \((V,E)\) be an infinite, connected, locally finite graph.  We write
\(x\sim y\) if \(x,y\in V\) are joined by an edge.  We only consider loopless graphs, that is, $x\not\sim x$, for all $x\in V$.  A symmetric edge weight
\(\mu:V\times V\to[0,\infty)\) is fixed, and we write
\[
\mu_{xy}:=\mu(x,y).
\]
We assume that \(\mu_{xy}=\mu_{yx}\) and that \(\mu_{xy}>0\) if and only if
\(x\sim y\).  The vertex measure is
\begin{equation*}
	\mu(x):=\sum_{y\sim x}\mu_{xy}.
\end{equation*}
We shall write \((V,\mu)\) for the weighted graph.

For a subset \(A\subset V\) and a vertex weight \(\omega:V\rightarrow[0,\infty)\), set
\[
\omega(A):=\sum_{x\in A}\omega(x).
\]
In particular,
\[
\mu(A)=\sum_{x\in A}\mu(x).
\]
For \(1\le p<\infty\), define
\[
\ell^p(A,\omega)
:=
\left\{
f:A\to\mathbb R:
\sum_{x\in A}|f(x)|^p\omega(x)<\infty
\right\},
\]
with norm
\[
\|f\|_{\ell^p(A,\omega)}
:=
\left(
\sum_{x\in A}|f(x)|^p\omega(x)
\right)^{1/p}.
\]
When \(A=V\), we simply write \(\ell^p(\omega)\).  We also write
\(\ell(A)\) for the space of all real functions on \(A\), \(\ell_0(A)\) for finitely supported functions, and \(\ell^+(A)\) for the nonnegative
functions.

The transition probabilities associated with \((V,\mu)\) are
\begin{equation}\label{eq:transition-probability}
	P(x,y):=
	\begin{cases}
		\dfrac{\mu_{xy}}{\mu(x)}, & x\sim y,\\[1ex]
		0, & x\not\sim y.
	\end{cases}
\end{equation}
The corresponding Markov operator is
\[
Pf(x):=\sum_{y\in V}P(x,y)f(y).
\]
The normalized graph Laplacian is
\begin{equation*}
	\Delta f(x)
	:=
	Pf(x)-f(x)
	=
	\frac1{\mu(x)}
	\sum_{y\sim x}\mu_{xy}\bigl(f(y)-f(x)\bigr).
\end{equation*}
Thus \(-\Delta=I-P\) is the nonnegative Laplacian.  The reversibility identity
\[
\mu(x)P(x,y)=\mu(y)P(y,x)=\mu_{xy}
\]
will be used repeatedly.

\subsection{Random walks and killed kernels}

Let \(\{X_n\}_{n\ge0}\) be the Markov chain/random walk with transition probabilities
\eqref{eq:transition-probability}.  Its \(n\)-step transition probabilities are
\[
P_n(x,y):=\mathbb P_x[X_n=y],
\]
and the corresponding discrete heat kernel is
\[
p_n(x,y):=\frac{P_n(x,y)}{\mu(y)}.
\]
By reversibility,
\[
p_n(x,y)=p_n(y,x),\qquad x,y\in V.
\]

Let \(\Omega\subset V\).  Its outer vertex boundary is
\[
\partial\Omega
:=
\{y\in V\setminus\Omega:\text{ there exists }x\in\Omega
\text{ such that }x\sim y\}.
\]
Functions on \(\Omega\) are always extended by zero to \(V\setminus\Omega\)
when the full graph Laplacian is applied.

Let
\[
\tau_\Omega:=\inf\{n\ge0:X_n\notin\Omega\}
\]
be the first exit time from \(\Omega\).  The killed transition probabilities are
\[
P_n^\Omega(x,y)
:=
\mathbb P_x[X_n=y,\ n<\tau_\Omega],
\qquad x,y\in V.
\]
We put \(P_n^\Omega(x,y)=0\) whenever \(x\notin\Omega\) or \(y\notin\Omega\).
The killed heat kernel is
\[
p_n^\Omega(x,y):=\frac{P_n^\Omega(x,y)}{\mu(y)}.
\]
Then
\[
p_n^\Omega(x,y)=p_n^\Omega(y,x),
\qquad x,y\in V.
\]

The killed Markov operator on \(\Omega\) is
\[
P^\Omega f(x)
:=
\sum_{y\in\Omega}P(x,y)f(y),
\qquad x\in\Omega,
\]
and the Dirichlet Laplacian on \(\Omega\) is
\begin{equation*}
	\Delta_\Omega f(x)
	:=
	P^\Omega f(x)-f(x)
	=
	\sum_{y\in\Omega}\frac{\mu_{xy}}{\mu(x)}f(y)-f(x),
	\qquad x\in\Omega.
\end{equation*}
If \(f\) is extended by zero outside \(\Omega\), then
\[
\Delta_\Omega f(x)=\Delta f(x),
\qquad x\in\Omega.
\]

\subsection{Green functions and Green operators}
Formally, we can define the Green function as
\begin{equation*}
	g(x,y):=\sum_{n=0}^{\infty}p_n(x,y),
	\qquad x,y\in V,
\end{equation*}
which may be identically $+\infty$. It is well known that the finiteness of the Green function, the existence of nonconstant nonnegative superharmonic functions, and the transience of the Markov chain \(\{X_n\}_{n\ge0}\) are equivalent (\cite{MBbook}). Such weighted graphs are also called non-parabolic.

Assuming non-parabolicity, the Green function is the fundamental solution to the Laplacian in the following sense
\[-\Delta_x g(x,y)=\frac1{\mu(y)}\mathbf 1_{\{y\}}(x),
\qquad x,y\in V.\]

Similarly, the Dirichlet Green function in \(\Omega\) is
\[
	g_\Omega(x,y):=\sum_{n=0}^{\infty}p_n^\Omega(x,y),
	\qquad x,y\in\Omega.
\]
It is convenient to view $g(x,y)$ as $g_V(x,y)$.

The Green function is symmetric:
\[
g_\Omega(x,y)=g_\Omega(y,x).
\]
Moreover, whenever \(g_\Omega(\cdot,y)\) is finite,
\begin{equation}\label{eq:green-fundamental-solution}
	-\Delta_x g_\Omega(x,y)
	=
	\frac{1}{\mu(y)}\mathbf 1_{\{y\}}(x),
	\qquad x,y\in\Omega.
\end{equation}

The Green operator on \(\Omega\) is
\[
	G_\Omega f(x)
	:=
	\sum_{y\in\Omega}g_\Omega(x,y)f(y)\mu(y),
	\qquad x\in\Omega.
\]
Thus, formally, for \(f\in \ell^{+}(\Omega)\) such that \(G_\Omega f\) is well defined,
\[
-\Delta_\Omega G_\Omega f=f.
\]
If \(\Omega=V\), we write \(G:=G_V\).

The following standard fact will be used without further comment (see \cite[Theorem 1.31]{MBbook}).

\begin{proposition}\label{gu-lemma}
	Assume that \(\Omega\subset V\) is connected.  If either \(\Omega\neq V\)
	or \((V,\mu)\) is non-parabolic, then
	\[
	0<g_\Omega(x,y)<\infty,
	\qquad x,y\in\Omega.
	\]
\end{proposition}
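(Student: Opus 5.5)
We split the claim into positivity and finiteness, using the probabilistic representation $g_\Omega(x,y)=\sum_{n\ge0}P_n^\Omega(x,y)/\mu(y)$ and the symmetry $g_\Omega(x,y)=g_\Omega(y,x)$. Recall $P_n^\Omega(x,y)=\mathbb P_x[X_n=y,\ n<\tau_\Omega]$.

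\emph{Positivity.} Since $\Omega$ is connected, for $x,y\in\Omega$ there is a path $x=x_0\sim x_1\sim\cdots\sim x_n=y$ with all $x_i\in\Omega$. Then
\[
P_n^\Omega(x,y)\ge\prod_{i=0}^{n-1}P(x_i,x_{i+1})>0,
\]
because the walk following this path never leaves $\Omega$ before time $n$. Hence $g_\Omega(x,y)\ge P_n^\Omega(x,y)/\mu(y)>0$.

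\emph{Finiteness.} We first reduce to the diagonal. By the strong Markov property at the hitting time of $y$,
\[
g_\Omega(x,y)=\mathbb P_x[\text{hit }y\text{ before }\tau_\Omega]\,g_\Omega(y,y)\le g_\Omega(y,y).
\]
Next, $\mu(y)g_\Omega(y,y)$ is the expected number of visits to $y$ before $\tau_\Omega$. By the strong Markov property, the number of returns to $y$ before exit is geometric with success parameter
\[
p:=\mathbb P_y[\text{no return to }y\text{ before }\tau_\Omega].
\]
This gives $\mu(y)g_\Omega(y,y)=1/p$, so it suffices to show $p>0$.

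If $\Omega=V$, then $\tau_\Omega=\infty$ and $p>0$ is exactly transience, which is equivalent to non-parabolicity as recalled above.

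If $\Omega\ne V$, connectedness of the infinite graph $V$ gives a vertex $z\in V\setminus\Omega$ with some $w\in\Omega$, $w\sim z$. Choose a path $y=y_0\sim\cdots\sim y_m=w$ inside $\Omega$. Pass to a loop-erased version so that $y$ appears only at the start; if $w=y$, take $m=0$. Then
\[
p\ge\prod_{i=0}^{m-1}P(y_i,y_{i+1})\,P(w,z)>0,
\]
since along this trajectory the walk exits $\Omega$ at $z$ without returning to $y$. This holds even if $(V,\mu)$ is parabolic.

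The only delicate point is this last step. The path must avoid revisiting $y$, and its first exit from $\Omega$ must occur at the final step; both are ensured by taking a simple path inside $\Omega$ followed by one boundary edge.
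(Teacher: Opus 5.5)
Your proof is correct. Positivity via a path inside $\Omega$, the diagonal reduction $g_\Omega(x,y)=\mathbb P_x[\text{hit } y \text{ before } \tau_\Omega]\,g_\Omega(y,y)$, the identity $\mu(y)g_\Omega(y,y)=1/p$, and $p>0$ (by transience when $\Omega=V$, and by a simple path in $\Omega$ followed by one exit edge when $\Omega\ne V$) are all sound. The paper gives no proof of its own and cites \cite[Theorem 1.31]{MBbook}; your argument is essentially the standard escape-probability proof behind that reference, written out in full.
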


\subsection{The \texorpdfstring{$(3G)$}{(3G)} condition}\label{subsection:3G}

We say that the Green function $g(\cdot,\cdot)$ satisfies the \textnormal{$(3G)$} inequality
if there exists $\kappa\ge 1$ such that
\begin{equation}\label{3g-pre}
	\frac{1}{g(x,y)}\le \kappa\left(\frac{1}{g(x,z)}+\frac{1}{g(z,y)}\right),
	\qquad x,y,z\in V.
\end{equation}

In the literature, the following equivalent form is also frequently used:
\[
\min\{g(x,z),g(y,z)\}\le C g(x,y), \qquad x,y,z\in V.
\]
It is equivalent to \eqref{3g-pre} up to changing the constant.

We also record a simple comparison between Green functions with different
poles.  This is useful when one states pole-dependent criteria such as
\eqref{e1}.

\begin{lemma}\label{lem_pole}
	Let \(\Omega\subset V\) be connected, and assume that \(g_\Omega\) is finite.
	For any \(o,o'\in\Omega\), there exists a constant \(C=C(o,o',\Omega)>1\)
	such that
	\[
	C^{-1}g_\Omega(o,x)
	\le
	g_\Omega(o',x)
	\le
	Cg_\Omega(o,x),
	\qquad x\in\Omega.
	\]
\end{lemma}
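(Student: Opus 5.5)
The plan is to use the Harnack-type inequality that comes directly from the structure of the Green function as a positive superharmonic function, namely \(-\Delta_x g_\Omega(x,o)=\mu(o)^{-1}\mathbf 1_{\{o\}}(x)\) from \eqref{eq:green-fundamental-solution}. It suffices to prove \(g_\Omega(o',x)\le C g_\Omega(o,x)\) for all \(x\in\Omega\), because exchanging the roles of \(o\) and \(o'\) then yields the other inequality. The case \(o=o'\) is trivial. Since \(\Omega\) is connected, I will join \(o'\) to \(o\) by a path \(o'=z_0\sim z_1\sim\dots\sim z_k=o\) lying in \(\Omega\), and iterate a one-step comparison between neighbours along it.

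\emph{One-step comparison.} Fix adjacent vertices \(z\sim w\) in \(\Omega\). The function \(h:=g_\Omega(\cdot,x)\) is nonnegative and finite by Proposition~\ref{gu-lemma}, and, once extended by zero outside \(\Omega\), it is superharmonic at every vertex of \(\Omega\). In particular, at the vertex \(w\) we have
\[
h(w)\ge P h(w)\ge P(w,z)\,h(z)=\frac{\mu_{wz}}{\mu(w)}\,h(z).
\]
If \(w=x\) the inequality \(-\Delta h\ge0\) still holds, since \(-\Delta h(x)=\mu(x)^{-1}>0\). This gives
\[
g_\Omega(z,x)\le \frac{\mu(w)}{\mu_{wz}}\,g_\Omega(w,x)
\]
for every \(x\in\Omega\), with a constant that does not depend on \(x\).

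\emph{Iteration.} Applying the one-step bound along the path with \(z=z_{i}\) and \(w=z_{i+1}\) gives
\[
g_\Omega(o',x)\le \prod_{i=0}^{k-1}\frac{\mu(z_{i+1})}{\mu_{z_iz_{i+1}}}\; g_\Omega(o,x),
\]
which is the required bound with a constant depending only on \(o\), \(o'\) and \(\Omega\). Taking the maximum of this constant and the analogous one for the reversed path, and enlarging it to exceed \(1\), yields the two-sided statement.

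\emph{Main obstacle.} The only point needing care is that superharmonicity of \(g_\Omega(\cdot,x)\) holds at every vertex of \(\Omega\), including the pole and vertices adjacent to \(\partial\Omega\). This follows from \eqref{eq:green-fundamental-solution} together with the zero extension outside \(\Omega\), since dropping the nonnegative terms of \(P h(w)\) only decreases it. I do not expect any genuine difficulty beyond this check.
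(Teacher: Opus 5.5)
Your proof is correct and is essentially the paper's argument. The paper observes that the killed walk follows a path in \(\Omega\) joining the two poles with some probability \(a>0\), and applies the strong Markov property to get \(g_\Omega(o',x)\ge a\,g_\Omega(o,x)\); your iterated inequality \(g_\Omega(w,x)\ge P(w,z)\,g_\Omega(z,x)\) is the same fact written one step at a time (superharmonicity is the one-step Markov property), and it yields exactly this bound with \(a\) equal to the product of the transition probabilities along the path.
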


\begin{proof}
	Choose a path in \(\Omega\) from \(o'\) to \(o\).  The killed walk follows
	this path with a positive probability \(a>0\), and hence, by the strong
	Markov property,
	\[
	g_\Omega(o',x)\ge a\,g_\Omega(o,x),
	\qquad x\in\Omega.
	\]
	Interchanging \(o\) and \(o'\) gives the reverse comparison.
\end{proof}

\subsection{Graph distance and geometric assumptions}\label{subsection:conditions}

For any two vertices \(x\ne y \in V\), let \(d(x, y)\) denote the minimal number of edges along a path connecting \(x\) and \(y\). Set $d(x,x)=0$ for any $x\in V$.
This defines a natural metric on \(V\), which is called the graph distance.

For \(o\in V\) and
\(r\ge0\), set
\[
B(o,r):=\{x\in V:d(o,x)\le r\}.
\]

We shall use the following three assumptions.

The graph \((V,\mu)\) satisfies the volume doubling condition if there exists
\(C_D>0\) such that
\begin{equation*}\tag{VD}\label{VD}
	\mu(B(x,2r))\le C_D\mu(B(x,r)),
	\qquad x\in V,\ r>0.
\end{equation*}

The graph \((V,\mu)\) satisfies the scale-invariant Poincar\'e inequality if
there exists \(C_P>0\) such that, for all \(x_0\in V\), \(r>0\), and
\(f\in\ell(V)\),
\begin{equation*}\tag{PI}\label{PI}
	\sum_{x\in B(x_0,r)} |f(x)-f_B|^2\mu(x)
	\le
	C_P r^2
	\sum_{x,y\in B(x_0,2r)}
	\mu_{xy}\bigl(f(y)-f(x)\bigr)^2,
\end{equation*}
where
\[
f_B
:=
\frac{1}{\mu(B(x_0,r))}
\sum_{x\in B(x_0,r)}f(x)\mu(x).
\]

The graph \((V,\mu)\) satisfies \textnormal{(P$_0$)} if there exists
\(p_0\ge1\) such that
\begin{equation*}\tag{P$_0$}\label{p0}
	\frac{\mu_{xy}}{\mu(x)}\ge \frac1{p_0},
	\qquad x\sim y.
\end{equation*}

Delmotte \cite{Delmotte} studied heat kernel and Green-function estimates under conditions \textnormal{(VD)}, \textnormal{(PI)}, and \textnormal{(P$_0$)}. See Lemma \ref{lem:Green-estimate} below for the relevant result for Green functions.

\subsection{Intrinsic metrics adapted to a potential}\label{subsection:intrinsic-metrics}

This subsection introduces the intrinsic metric used in Theorem \ref{thm:weighted-liouville}.

Let \(0<\sigma\in\ell^+(V)\), and set
\[
\nu(x):=\sigma(x)\mu(x).
\]
A positive symmetric edge-length function \(\rho:E\to(0,\infty)\) is called
\(\nu\)-adapted if
\begin{equation}\label{eq:adapted-weight}
	\sum_{y\sim x}\mu_{xy}\rho(x,y)^2\le \nu(x),
	\qquad x\in V.
\end{equation}
\begin{remark}
Such an edge-length function always exists.  For example, one may take
\[
\rho(x,y)=\sqrt{\min\{\sigma(x),\sigma(y)\}}.
\]
\end{remark}

We then define the associated path metric.
Set \(d_\rho(x,x)=0\). For \(x\ne y\), define
\[
d_\rho(x,y)
:=
\inf\left\{
\sum_{i=1}^n\rho(x_{i-1},x_i):
x=x_0\sim x_1\sim\cdots\sim x_n=y
\right\}.
\]
This construction is the so-called intrinsic path metric in the literature, and can be viewed as an analogue of  the geodesic distance function on a Riemannian manifold. In particular, \eqref{eq:adapted-weight} is analogous to
\[\lvert\nabla d(x, \cdot)\rvert\le 1.\]

For \(o\in V\) and \(r>0\), we write
\[
B_{d_{\rho}}(o,r):=\{x\in V:{d_{\rho}}(o,x)\le r\},
\qquad
\nu(B_{d_{\rho}}(o,r))
:=
\sum_{x\in B_{d_{\rho}}(o,r)}\nu(x).
\]
\begin{remark}\label{rem:base-point-independence}
Let \(o,o'\in V\) and set \(D=d_\rho(o,o')\). Then, for every \(r>D\),
\[
B_{d_\rho}(o,r-D)
\subset B_{d_\rho}(o',r)
\subset B_{d_\rho}(o,r+D).
\]
Consequently, the divergence condition
\[
\int_1^\infty
\frac{r^{2q-1}}
{\nu(B_{d_\rho}(o,r))^{q-1}}\,dr
=\infty
\]
is independent of the choice of the base point. Indeed, the two inclusions,
together with a change of variables, show that divergence for one base point
implies divergence for every other base point.
\end{remark}
The following result serves as a discrete Hopf-Rinow type theorem (\cite{HKMW13}, \cite{Keller15}, \cite[Theorem 11.16]{KLWBook}).
\begin{proposition}\label{prop:complete-finite-balls}
Let $(V,\mu)$ be a weighted graph. Let $0<\sigma\in \ell^{+}(V)$, and set $\nu =\sigma\mu$. Suppose that $d_{\rho}$ is the intrinsic path metric associated with a $\nu$-adapted edge-length function $\rho$. Then $(V,d_{\rho})$ is complete if and only if every distance ball $B_{d_{\rho}}(o,r)$ is finite.
\end{proposition}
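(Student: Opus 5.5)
The plan is to prove both implications directly, using only local finiteness and strict positivity of \(\rho\). The adaptedness condition \eqref{eq:adapted-weight} plays no role here. The preliminary observation is that \(d_\rho\) is a genuine metric whose points are all isolated. For \(x\in V\) put \(\varepsilon_x:=\min_{y\sim x}\rho(x,y)>0\); the minimum is over finitely many edges by local finiteness. Every path from \(x\) to a vertex \(y\ne x\) begins with an edge at \(x\), so \(d_\rho(x,y)\ge\varepsilon_x\). Hence \(d_\rho\) separates points (symmetry and the triangle inequality are immediate from the path definition), and \(B_{d_\rho}(x,\varepsilon_x/2)=\{x\}\).

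\emph{Finite balls imply completeness.} Let \((x_n)\) be a Cauchy sequence. It is bounded, so it lies in some ball \(B_{d_\rho}(o,r)\), which is finite by assumption. Let \(\delta>0\) be the minimum of \(d_\rho\) over distinct pairs in this finite set. The Cauchy property with threshold \(\delta\) forces \(x_n=x_m\) for all large \(n,m\). Hence the sequence is eventually constant and converges.

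\emph{Completeness implies finite balls.} Suppose \(B_{d_\rho}(o,r)\) is infinite. For each \(x\) in this ball choose a path from \(o\) to \(x\) of \(\rho\)-length at most \(r+1\); by deleting cycles we may take it simple, which only shortens it. Let \(T\) be the rooted tree whose nodes are all finite prefixes of these chosen paths, ordered by extension. Then:
\begin{itemize}
\item \(T\) is infinite, since the paths have infinitely many distinct endpoints;
\item \(T\) is locally finite, since a node ending at a vertex \(v\) has at most \(\deg v<\infty\) children.
\end{itemize}
By K\"onig's lemma, \(T\) contains an infinite branch. This branch is an infinite simple path \(o=x_0\sim x_1\sim x_2\sim\cdots\), and each finite prefix is a prefix of some chosen path. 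Therefore \(\sum_{i\ge1}\rho(x_{i-1},x_i)\le r+1\).

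It follows that \(d_\rho(x_n,x_m)\le\sum_{i=n+1}^{m}\rho(x_{i-1},x_i)\to0\), so \((x_n)\) is Cauchy. By completeness it converges to some \(x\in V\). Since \(x\) is isolated with radius \(\varepsilon_x\), convergence forces \(x_n=x\) for all large \(n\). This contradicts the fact that the \(x_n\) are pairwise distinct, so every ball must be finite.

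The only step requiring care is the compactness argument producing the infinite ray: one must check that the prefix tree is both infinite and locally finite, which is where local finiteness of the graph and simplicity of the chosen paths enter. Everything else is routine.
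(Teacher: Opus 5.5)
Your proof is correct. The paper itself gives no proof of this proposition: it quotes it as a discrete Hopf--Rinow theorem from the literature on intrinsic metrics on graphs.

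Your argument is a self-contained replacement. For the direction from finite balls to completeness, you observe that a Cauchy sequence is trapped in a finite set and is therefore eventually constant. For the converse, you apply K\"onig's lemma to the tree of prefixes of simple paths of $\rho$-length at most $r+1$. This extracts an infinite simple ray of finite total length, that is, a Cauchy sequence of pairwise distinct vertices. Such a sequence cannot converge, because every vertex is isolated: $d_\rho(x,y)\ge\min_{z\sim x}\rho(x,z)>0$ for $y\ne x$.

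This makes it clear that only local finiteness, connectedness and strict positivity of $\rho$ matter. As you note, the adaptedness condition plays no role, and you never need to talk about geodesics.

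The cited theorem gives more: the rest of the Hopf--Rinow package, including equivalence with geodesic completeness and, once balls are finite, the existence of distance-realizing paths. The paper does use the latter in Section~\ref{sec:intrinsic-flow}, to show that $B_r$ is connected. There, however, it rederives it directly from finiteness of balls, using the same reasoning about finitely many simple paths. So your proof together with that paragraph covers everything the paper actually uses.
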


In the special case that $\sigma \equiv 1$, the unit edge length $\mathbf{1}_E$ is $\mu$-adapted, and the graph metric $d$ is itself an intrinsic path metric. Note that by local finiteness, $(V,d)$ is always complete.

The notion of intrinsic metrics for general Dirichlet forms was introduced in \cite{FLW}, which was motivated by considerations of L\'{e}vy processes (cf. \cite{MU}). For applications of intrinsic metrics to stochastic completeness of weighted graphs, we refer to \cite{Folz14, Huang14}.

\subsection{Flows and effective conductance on finite weighted graphs}
\label{subsection:finite-flows}

We recall the flow-theoretic notions used in
Section~\ref{sec:intrinsic-flow}.  Let \((W,\widetilde\mu)\) be a finite
connected weighted graph.  Thus \(W\) is a finite vertex set,
\(\widetilde\mu_{xy}=\widetilde\mu_{yx}\ge0\), and we write
\(x\sim_W y\) when \(\widetilde\mu_{xy}>0\).  The edge weight
\(\widetilde\mu_{xy}\) is viewed as the conductance of \(\{x,y\}\).  In the
applications below, such a graph is obtained from the ambient weighted graph \((V,\mu)\) by
restriction, wiring, and adjoining finitely many boundary vertices.  Let
\[
\overrightarrow E_W
:=
\{(x,y)\in W\times W:x\sim_W y\}
\]
be the set containing both orientations of every edge.  If \(e=(x,y)\), then
\[
\bar e=(y,x),
\qquad
\tail(e)=x,
\qquad
\head(e)=y,
\qquad
\widetilde\mu_e=\widetilde\mu_{xy}.
\]

A flow on \((W,\widetilde\mu)\) is an antisymmetric function
\[
\Phi:\overrightarrow E_W\longrightarrow\mathbb R,
\qquad
\Phi(\bar e)=-\Phi(e).
\]
Its divergence at \(x\in W\) is
\begin{equation*}\label{eq:flow-divergence}
\Div\Phi(x)
:=
\sum_{y\sim_W x}\Phi(x,y).
\end{equation*}
Antisymmetry gives
\[
\sum_{x\in W}\Div\Phi(x)=0.
\]
Let \(A,K\subset W\) be nonempty and disjoint.  We say that \(\Phi\) is a
flow from \(A\) to \(K\) if
\[
\Div\Phi=0\quad\text{on }W\setminus(A\cup K),
\qquad
\Div\Phi\ge0\quad\text{on }A,
\qquad
\Div\Phi\le0\quad\text{on }K.
\]
Its strength is
\begin{equation*}\label{eq:flow-strength}
I(\Phi)
:=
\sum_{x\in A}\Div\Phi(x)
=
-\sum_{x\in K}\Div\Phi(x).
\end{equation*}
A flow of strength one is called a unit flow.  The directed support of
\(\Phi\) is the set of oriented edges on which \(\Phi(e)>0\).  The flow is
called acyclic if its directed support contains no directed cycle.

The energy of a flow is
\begin{equation*}\label{eq:flow-energy}
\mathcal D_W(\Phi)
:=
\frac12\sum_{e\in\overrightarrow E_W}
\frac{\Phi(e)^2}{\widetilde\mu_e}.
\end{equation*}
For a function \(f:W\to\mathbb R\), put
\[
\nabla_e f=f(\tail(e))-f(\head(e)),
\qquad
\mathcal E_W(f)
:=
\frac12\sum_{e\in\overrightarrow E_W}
\widetilde\mu_e|\nabla_e f|^2.
\]
The current generated by \(f\) is
\begin{equation*}\label{eq:current-from-potential}
i_f(e):=\widetilde\mu_e\nabla_e f.
\end{equation*}
Then
\[
\mathcal D_W(i_f)=\mathcal E_W(f).
\]
If
\[
\widetilde\mu(x):=\sum_{y\sim_W x}\widetilde\mu_{xy}
\]
and \(\Delta_W\) denotes the normalized Laplacian on
\((W,\widetilde\mu)\), then
\begin{equation*}\label{eq:current-divergence-laplacian}
\Div i_f(x)
=
\sum_{y\sim_W x}\widetilde\mu_{xy}\bigl(f(x)-f(y)\bigr)
=
-\widetilde\mu(x)\Delta_W f(x).
\end{equation*}
Thus current conservation is equivalent to harmonicity away from the source
and sink sets.  Moreover, for every flow \(\Phi\) and every function \(f\),
summation by parts gives
\begin{equation}\label{eq:flow-integration-by-parts}
\sum_{x\in W}f(x)\Div\Phi(x)
=
\frac12\sum_{e\in\overrightarrow E_W}\Phi(e)\nabla_e f.
\end{equation}
If the nonzero edges of \(i_f\) are oriented from the larger value of \(f\)
to the smaller one, then \(f\) decreases strictly along every directed edge.
Consequently, the resulting directed current is acyclic.

To wire a set of vertices in a finite weighted graph means to identify them
to a single vertex, delete the resulting loops, and replace parallel edges by
one edge whose weight is the sum of their weights.  For disjoint nonempty sets
\(A,K\subset W\), define their effective conductance by
\begin{equation}\label{eq:effective-conductance-definition}
C_{\mathrm{eff}}^W(A,K)
:=
\inf\left\{
\mathcal E_W(f):
 f|_A=1,\ f|_K=0
\right\}.
\end{equation}
Equivalently, one may wire \(A\) and \(K\) separately before taking the
infimum.  We omit the superscript \(W\) when the finite weighted graph is
clear.  The effective resistance is
\[
R_{\mathrm{eff}}^W(A,K):=C_{\mathrm{eff}}^W(A,K)^{-1}.
\]
The minimizer \(h\) in \eqref{eq:effective-conductance-definition} is the
equilibrium potential: it equals one on \(A\), zero on \(K\), takes values
in \([0,1]\), and is harmonic on \(W\setminus(A\cup K)\).  Its current
\(i_h\) is a flow from \(A\) to \(K\), and its strength and energy both
equal \(C_{\mathrm{eff}}^W(A,K)\).

\begin{proposition}
\label{prop:dirichlet-thomson}
Let \(A,K\subset W\) be nonempty and disjoint. If \(\Phi\) is any
flow from \(A\) to \(K\) of strength \(I\), then
\begin{equation}\label{eq:thomson-principle}
\mathcal D_W(\Phi)
\ge
\frac{I^2}{C_{\mathrm{eff}}^W(A,K)}.
\end{equation}
In particular,
\[
R_{\mathrm{eff}}^W(A,K)
=
\min\left\{
\mathcal D_W(\Phi):
\Phi\text{ is a unit flow from }A\text{ to }K
\right\}.
\]
\end{proposition}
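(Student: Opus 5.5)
The plan is to prove the inequality \eqref{eq:thomson-principle} by pairing \(\Phi\) against the equilibrium potential and applying Cauchy--Schwarz; the variational formula for \(R_{\mathrm{eff}}^W\) then follows because the equilibrium current attains the bound. Let \(h\) be the equilibrium potential from \eqref{eq:effective-conductance-definition}, so that \(h=1\) on \(A\), \(h=0\) on \(K\), and \(\mathcal E_W(h)=C:=C_{\mathrm{eff}}^W(A,K)\). If \(C=0\), the right-hand side of \eqref{eq:thomson-principle} is \(+\infty\) unless \(I=0\). Since \(W\) is finite and connected and \(A,K\) are nonempty, however, \(C>0\): a function with zero energy is constant, and no constant equals \(1\) on \(A\) and \(0\) on \(K\). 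So we may assume \(C>0\).

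The first step is to apply the summation by parts formula \eqref{eq:flow-integration-by-parts} with \(f=h\). On the left-hand side, \(\Div\Phi=0\) off \(A\cup K\), \(h=1\) on \(A\), and \(h=0\) on \(K\), so
\[
\sum_{x\in W}h(x)\Div\Phi(x)=\sum_{x\in A}\Div\Phi(x)=I .
\]
Hence
\[
I=\frac12\sum_{e\in\overrightarrow E_W}\Phi(e)\nabla_e h .
\]
Next we write \(\Phi(e)\nabla_e h=\bigl(\Phi(e)/\sqrt{\widetilde\mu_e}\bigr)\bigl(\sqrt{\widetilde\mu_e}\,\nabla_e h\bigr)\) and apply Cauchy--Schwarz to the sum \(\tfrac12\sum_{e\in\overrightarrow E_W}\). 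This gives
\[
I^2\le \mathcal D_W(\Phi)\,\mathcal E_W(h)=\mathcal D_W(\Phi)\,C ,
\]
which is \eqref{eq:thomson-principle}. Only edges with \(\widetilde\mu_e>0\) appear in \(\overrightarrow E_W\), so no division by zero occurs.

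For the minimization statement, take unit flows (\(I=1\)); the inequality gives \(\mathcal D_W(\Phi)\ge R_{\mathrm{eff}}^W(A,K)\). For attainment, recall from the preliminaries that \(i_h\) is a flow from \(A\) to \(K\) whose strength and energy both equal \(C\). Then \(\Phi_0:=i_h/C\) is a unit flow, and by quadratic scaling of the energy,
\[
\mathcal D_W(\Phi_0)=\frac{C}{C^2}=C^{-1}=R_{\mathrm{eff}}^W(A,K).
\]
So the infimum is attained, which justifies writing \(\min\). If one wants to check the strength claim for \(i_h\) directly, apply \eqref{eq:flow-integration-by-parts} with \(\Phi=i_h\) and \(f=h\). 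Harmonicity of \(h\) off \(A\cup K\) gives \(\Div i_h=0\) there, so
\[
I(i_h)=\sum_{x\in W} h\,\Div i_h=\mathcal E_W(h)=C .
\]
The sign conditions \(\Div i_h\ge0\) on \(A\) and \(\le0\) on \(K\) hold because \(h\) attains its maximum \(1\) on \(A\) and its minimum \(0\) on \(K\), since \(0\le h\le1\).

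There is no substantial obstacle here. The only points needing care are positivity of \(C\) and the factor \(\tfrac12\) in the oriented edge sums: it must be the same factor in \(\mathcal D_W\), \(\mathcal E_W\) and \eqref{eq:flow-integration-by-parts}, which it is, so Cauchy--Schwarz applies with matching normalizations.
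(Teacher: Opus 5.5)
Your proof is correct and follows essentially the same route as the paper. Both pair \(\Phi\) with the equilibrium potential \(h\) via \eqref{eq:flow-integration-by-parts}, apply Cauchy--Schwarz to get \(I\le \mathcal D_W(\Phi)^{1/2}C_{\mathrm{eff}}^W(A,K)^{1/2}\), and attain the bound with the normalized current \(i_h/C_{\mathrm{eff}}^W(A,K)\). Your remark that \(C_{\mathrm{eff}}^W(A,K)>0\) by connectedness is a small detail that the paper leaves implicit.
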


\begin{proof}
Let \(h\) be the equilibrium potential.  The Euler--Lagrange equation for
\eqref{eq:effective-conductance-definition} gives
\(\Div i_h=0\) on \(W\setminus(A\cup K)\).  Since \(0\le h\le1\), the
current leaves \(A\) and enters \(K\).  By
\eqref{eq:flow-integration-by-parts}, its strength is
\[
I(i_h)
=
\sum_{x\in W}h(x)\Div i_h(x)
=
\mathcal E_W(h)
=
C_{\mathrm{eff}}^W(A,K).
\]
For an arbitrary flow \(\Phi\) of strength \(I\), the same identity and
the Cauchy--Schwarz inequality give
\[
I
=
\frac12\sum_{e\in\overrightarrow E_W}\Phi(e)\nabla_e h
\le
\mathcal D_W(\Phi)^{1/2}
C_{\mathrm{eff}}^W(A,K)^{1/2},
\]
which proves \eqref{eq:thomson-principle}.  Finally,
\[
\Phi_h:=\frac{i_h}{C_{\mathrm{eff}}^W(A,K)}
\]
is a unit flow and satisfies
\[
\mathcal D_W(\Phi_h)=R_{\mathrm{eff}}^W(A,K),
\]
so equality is attained.
\end{proof}

For these principles with the same energy normalization, see
\cite[Chapter 2, Section 4]{LPbook}.

\begin{lemma}
\label{lem:finite-acyclic-flow-decomposition}
Let \(\Phi\) be an acyclic flow from \(A\) to \(K\) of strength \(I>0\) on
a finite weighted graph.  After orienting every edge in the direction in which
\(\Phi\) is positive, there exist finitely many directed paths
\(\gamma_1,\ldots,\gamma_N\), each starting in \(A\) and ending in \(K\),
and positive numbers \(a_1,\ldots,a_N\) such that
\begin{equation}\label{eq:finite-path-decomposition}
\Phi(e)
=
\sum_{j:e\in\gamma_j}a_j,
\qquad
\sum_{j=1}^N a_j=I,
\end{equation}
for every edge \(e\) in the directed support.  Consequently, if \(\Phi\) is
a unit flow, the weights \(a_j\) define a probability measure on directed
\(A\)-to-\(K\) paths satisfying
\begin{equation}\label{eq:path-edge-marginal}
\mathbb P(\gamma\text{ uses }e)=\Phi(e).
\end{equation}
For a flow of general strength \(I\), the corresponding probability is
\(\Phi(e)/I\).
\end{lemma}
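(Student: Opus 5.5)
We argue by induction on the number of edges in the directed support of \(\Phi\), peeling off one path at a time; this is the standard path-peeling argument for acyclic flows. Let \(S\) be the directed support, i.e.\ the set of oriented edges with \(\Phi(e)>0\), oriented so that \(\Phi\) is positive along each. The key observation is that every vertex \(x\notin K\) that is reached by \(S\) from \(A\) has an outgoing edge in \(S\). For \(x\in W\setminus(A\cup K)\), the zero divergence forces
\[
\sum_{e\in S,\,\tail(e)=x}\Phi(e)=\sum_{e\in S,\,\head(e)=x}\Phi(e),
\]
so an incoming edge forces an outgoing one. For \(x\in A\), \(\Div\Phi(x)\ge0\) gives the same conclusion whenever some flow passes through \(x\) or \(x\) is a source.

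We first start a path. Since \(I>0\), some \(x_0\in A\) has \(\Div\Phi(x_0)>0\), hence an outgoing edge in \(S\). We then follow outgoing edges of \(S\). Because \(S\) is acyclic and \(W\) is finite, no vertex repeats, so the walk must terminate. It can only terminate at a vertex with no outgoing support edge, and by conservation such a vertex lies in \(K\). We stop the first time we hit \(K\). This yields a directed path \(\gamma\) from \(A\) to \(K\) inside \(S\).

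Next we subtract the path. Let \(a=\min_{e\in\gamma}\Phi(e)>0\), and define \(\Phi'=\Phi-a\,\mathbf 1_\gamma\), extended antisymmetrically to reversed edges. Then \(\Phi'\) satisfies:
\begin{enumerate}
\item[(i)] \(\Phi'\) is conservative on \(W\setminus(A\cup K)\), since interior path vertices lose \(a\) of inflow and \(a\) of outflow;
\item[(ii)] \(\Phi'\ge0\) on \(S\), with support \(S'\subsetneq S\), since the minimizing edge drops out;
\item[(iii)] \(\Phi'\) is still acyclic, because \(S'\subset S\);
\item[(iv)] \(\Div\Phi'\) decreases by \(a\) at the start vertex in \(A\), which remains \(\ge0\) because \(a\) is at most the outflow there minus the inflow there;
\item[(v)] \(\Div\Phi'\) increases by \(a\) at the end vertex in \(K\), which remains \(\le0\).
\end{enumerate}
The subtle point is (iv). Since \(\gamma\) stops at its first entry into \(K\) and we may start it at a vertex of \(A\) with positive divergence, we need \(a\le\Div\Phi(x_0)\). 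This may fail, so we instead take
\[
a=\min\Bigl(\min_{e\in\gamma}\Phi(e),\ \Div\Phi(x_0),\ -\Div\Phi(x_{\mathrm{end}})\Bigr).
\]
Termination now uses the potential \(|S|+\#\{x:\Div\Phi(x)\neq0\}\), which strictly decreases at each step. The end vertex has \(\Div\Phi<0\): indeed, if \(x_{\mathrm{end}}\in K\) has zero divergence and an incoming edge, it has an outgoing edge and we may continue. We therefore stop at a \(K\)-vertex of negative divergence, which must exist along the walk by the same conservation argument, with \(K\)-vertices treated as pass-through when balanced. Because the continued walk stays acyclic, the finiteness argument still applies.

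Finally, \(\Phi'\) is an acyclic flow from \(A\) to \(K\) of strength \(I-a\). If \(I-a=0\), the divergences vanish identically; an acyclic, conservative, nonnegative flow must then be zero, since any support edge could be extended forever in both directions, contradicting acyclicity and finiteness. Iterating the subtraction gives \(\Phi=\sum_j a_j\mathbf 1_{\gamma_j}\) on \(S\) with \(\sum_j a_j=I\). For a unit flow, the weights \(a_j\) define a probability measure on paths with \(\mathbb P(\gamma\ni e)=\sum_{j:e\in\gamma_j}a_j=\Phi(e)\), and in general this probability equals \(\Phi(e)/I\). The main obstacle is the bookkeeping in the choice of \(a\) so that the signs of the divergence on \(A\) and \(K\) are preserved; the rest is routine.
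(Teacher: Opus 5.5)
Your proof is correct and is essentially the paper's greedy path-peeling argument. The paper adjoins a super-source and a super-sink whose edges carry \(\Div\Phi(x)\) on \(A\) and \(-\Div\Phi(x)\) on \(K\); your three-term minimum and your potential \(|S|+\#\{x:\Div\Phi(x)\neq0\}\) are exactly that device written out by hand, with the auxiliary edges corresponding to the vertices of nonzero divergence. In a final write-up, state only the corrected rule (start at a vertex of \(A\) with positive divergence and stop at the first vertex of \(K\) with negative divergence). The first stopping rule you give, with \(a=\min_{e\in\gamma}\Phi(e)\), can violate the sign conditions in (iv)--(v).
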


\begin{proof} We can apply a standard greedy algorithm.
One may adjoin a super-source connected to each vertex of \(A\) with edge
flow \(\Div\Phi(x)\), and a super-sink connected from each vertex of \(K\)
with edge flow \(-\Div\Phi(x)\).  It is therefore enough to consider a single
source and a single sink.  Starting at the source, follow edges carrying
positive flow.  Flow conservation guarantees a positive outgoing edge at
every intermediate vertex.  Since the directed support is finite and
acyclic, the path must reach the sink.  Subtract from every edge of this path
the minimum flow carried by its edges.  At least one edge then disappears
from the directed support.  Repeating the procedure terminates after finitely
many steps and gives \eqref{eq:finite-path-decomposition}.  The sum of the
path weights is the original strength.
\end{proof}

This decomposition is generally highly non-unique: different choices of paths
in the preceding procedure can produce different collections of paths and
different probability laws.  In Section~\ref{sec:intrinsic-flow}, we fix an
arbitrary decomposition and use only that it is supported on directed
source-to-sink paths and satisfies the edge-marginal identity
\eqref{eq:path-edge-marginal}.  If
\(\gamma=(x_0,e_0,x_1,\ldots,e_{m-1},x_m)\) and
\(0\le\tau<\varsigma\le m\), the segment of \(\gamma\) from \(\tau\) to
\(\varsigma\) means
\[
(x_\tau,e_\tau,x_{\tau+1},\ldots,e_{\varsigma-1},x_\varsigma).
\]
For the standard finite flow decomposition, see
\cite[Chapter I, Section 2, Theorem 2.2]{FFbook}; for the random-path
formulation of a finite acyclic unit flow, see
\cite[Chapter 3, Section 1, Proposition 3.2]{LPbook}.

\subsection{Notation}

Throughout the paper, \(C,c,C_1,c_1,\ldots\) denote positive constants whose
values may change from line to line.  Unless otherwise stated, these constants
may depend on fixed parameters, such as \(q\), but not on the variables under
consideration.  We write
\(
f\lesssim g
\)
if \(f\le Cg\).  We write \(f\gtrsim g\) if \(g\lesssim f\), and
\(f\asymp g\) if both inequalities hold.

\section{Proof of Theorem \ref{thm:green-testing-equivalence}}
\label{sec3}

The proof of Theorem \ref{thm:green-testing-equivalence} uses the following auxiliary results.

\begin{lemma}\label{lem_finite_rep}
    Let \(U\subset V\) be a finite subset, and let \(u \in \ell_0(V)\).
    Define
    \[
        h=u-G_U(-\Delta u).
    \]
    Then \(h\) is the unique solution of
    \begin{equation*}
    	\begin{cases}
    		-\Delta h = 0, \quad\text{in } U,\\
    		h|_{\partial U}=u.
    	\end{cases}
    \end{equation*}
    In particular, if \(u=0\) on \(\partial U\), then
    \[
        u(x)=G_U(-\Delta u)(x)\qquad \text{for all }x\in U.
    \]
\end{lemma}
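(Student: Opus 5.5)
The plan has two parts. First, verify that $h$ solves the Dirichlet problem. Then prove uniqueness of that solution by the maximum principle. Here $U$ is finite and $u$ is finitely supported. Hence $-\Delta u$ is finitely supported and $G_U(-\Delta u)$ is a finite sum. It is well defined on $U$ and extended by zero outside $U$, since $g_U(x,y)$ is only defined for $x,y\in U$ and is finite by Proposition~\ref{gu-lemma} (here $U\ne V$ because $V$ is infinite). Note that $G_U$ only sees the values of $-\Delta u$ on $U$. So we set $f:=(-\Delta u)\mathbf 1_U$ and $w:=G_Uf$, which vanishes on $V\setminus U$.

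\textbf{Existence.} The key identity is $-\Delta w=f$ on $U$, and I would derive it from the fundamental-solution property \eqref{eq:green-fundamental-solution}. Since $w$ is extended by zero off $U$, for $x\in U$ we have $\Delta w(x)=\Delta_U w(x)$. By linearity and finiteness of the sum,
\[
-\Delta_U w(x)=\sum_{y\in U}\bigl(-\Delta_{U,x}g_U(x,y)\bigr)f(y)\mu(y)=\sum_{y\in U}\frac{\mathbf 1_{\{y\}}(x)}{\mu(y)}f(y)\mu(y)=f(x).
\]
Therefore $-\Delta h=-\Delta u+\Delta w=0$ on $U$. Since $w=0$ on $\partial U\subset V\setminus U$, we get $h|_{\partial U}=u|_{\partial U}$. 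Note that $h$ agrees with $u$ outside $U$. The values of $h$ on $U\cup\partial U$ are the only ones entering $\Delta h$ on $U$, so the Dirichlet problem is posed on $U\cup\partial U$.

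\textbf{Uniqueness.} Suppose $h_1,h_2$ both solve the problem, and set $v=h_1-h_2$. Then $v$ is harmonic in $U$ and vanishes on $\partial U$. Since $U$ is finite, $v$ attains its maximum $M$ over $U\cup\partial U$. Assume $M>0$. Then the maximum is attained at some $x\in U$. Harmonicity at $x$ says that $v(x)$ is a $P(x,\cdot)$-average of neighbouring values, all at most $M$, so $v=M$ on every neighbour of $x$. Propagate this along a path from $x$ to the outside of $U$. Such a path exists because $V$ is connected and infinite while $U$ is finite. The first vertex of the path outside $U$ lies in $\partial U$, where $v=0$, which is a contradiction. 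Hence $v\le0$, and by symmetry $v\ge0$. The final claim follows: if $u=0$ on $\partial U$, then $h\equiv0$ is a solution, so by uniqueness $u=G_U(-\Delta u)$ on $U$. Note that $U$ need not be connected, since the propagation argument runs component by component.

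The only delicate point is the formal one. We must check that the fundamental-solution identity is used with the correct zero-extension convention, so that $\Delta$ and $\Delta_U$ agree on $U$. We must also check that all sums are finite, so that interchanging $\Delta$ and the sum is legitimate. Both follow from finiteness of $U$.
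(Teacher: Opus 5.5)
Your proposal is correct and follows essentially the same route as the paper: the fundamental-solution identity \eqref{eq:green-fundamental-solution}, combined with the zero-extension convention, gives $-\Delta h=0$ in $U$ and $h=u$ on $\partial U$, and your explicit maximum-principle argument spells out the uniqueness step that the paper states only as ``since $U$ is finite, $h\equiv0$ in $U$.'' One small point: Proposition~\ref{gu-lemma} is stated for connected domains, so for a disconnected finite $U$ you should apply it to each connected component, noting that $g_U(x,y)=0$ when $x$ and $y$ lie in different components, to conclude that $g_U$ is finite.
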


\begin{proof}
	Since \(G_U(-\Delta u)=0\) on \(\partial U\), we have
	\(h|_{\partial U}=u\).
	
    Fixing \(y\in U\), we have
    \[
        -\Delta_U g_U(x,y)=\frac{1}{\mu(y)}\mathbf{1}_{\{y\}}(x)\qquad \text{for }x\in U.
    \]
    Using the fact that \(G_U(-\Delta u)=0\) on \(V\setminus U\) and under the zero-extension convention, we obtain
    \[
        -\Delta G_U(-\Delta u)(x)=-\Delta_U G_U(-\Delta u)(x)=-\Delta u(x)
        \qquad \text{for }x\in U.
    \]
    Hence,
    \[
        -\Delta h=0\qquad \text{in }U.
    \]

    If \(u=0\) on \(\partial U\), then \(h=0\) on \(\partial U\) as well. Since \(U\) is finite,  \(h\equiv 0\) in \(U\). Therefore
    \[
        u(x)=G_U(-\Delta u)(x)\qquad \text{for all }x\in U.
    \]
\end{proof}
\begin{lemma}\label{lem_rev}
	Let \(\Omega\subset V\) be connected and assume that \(g_\Omega\) is finite.
	Let \(u\in\ell^+(\Omega)\), extended by zero to \(V\setminus\Omega\), such that
	\[
	-\Delta u=-\Delta_\Omega u\ge0\quad\text{in }\Omega.
	\]
	Then
	\[
	G_\Omega(-\Delta u)(x)\le u(x),\qquad x\in\Omega.
	\]
\end{lemma}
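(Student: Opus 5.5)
We prove Lemma~\ref{lem_rev} by exhausting \(\Omega\) by finite sets, representing \(u\) on each of them through Lemma~\ref{lem_finite_rep}, and passing to the limit by monotone convergence. Set \(f:=-\Delta u\ge0\) on \(\Omega\). Choose an increasing sequence of finite connected sets \(U_1\subset U_2\subset\cdots\) with \(\bigcup_k U_k=\Omega\).

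\emph{Step 1: comparison on a finite set.} Fix \(k\) and put \(w:=u\mathbf 1_{U_k\cup\partial U_k}\). This function is finitely supported by local finiteness, and it agrees with \(u\) on \(U_k\) and its neighbours. Hence \(-\Delta w=f\) on \(U_k\). By Lemma~\ref{lem_finite_rep},
\[
u=w=G_{U_k}(f)+h_k \quad\text{on } U_k,
\]
where \(h_k\) is harmonic in \(U_k\) with boundary values \(u|_{\partial U_k}\ge0\). The maximum principle on the finite set \(U_k\) gives \(h_k\ge0\). Therefore
\[
G_{U_k}f(x)\le u(x),\qquad x\in U_k .
\]

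\emph{Step 2: monotone limit.} The killed kernels \(p_n^{U_k}(x,y)\) are nondecreasing in \(k\), since \(\{n<\tau_{U_k}\}\) increases with \(k\). They converge to \(p_n^\Omega(x,y)\), because \(\tau_{U_k}\uparrow\tau_\Omega\): a path of finite length staying in \(\Omega\) eventually lies in some \(U_k\). By monotone convergence, first in \(n\) and then in the sum over \(y\) (all terms are nonnegative),
\[
g_{U_k}(x,y)\uparrow g_\Omega(x,y) \quad\text{and}\quad G_{U_k}f(x)\uparrow G_\Omega f(x).
\]
Letting \(k\to\infty\) in Step 1 then yields \(G_\Omega(-\Delta u)(x)\le u(x)\) for every \(x\in\Omega\).

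The only delicate point is Step 1, namely that the boundary contribution \(h_k\) is nonnegative. This requires the finite-set maximum principle for harmonic functions together with the nonnegativity of \(u\) on \(\partial U_k\). Note that \(\partial U_k\) may include vertices of \(\Omega^c\), where \(u=0\) by the zero extension, so \(u\ge0\) there as well. The finiteness of \(g_\Omega\) is not even needed for the inequality itself; it only guarantees that the right-hand object is a genuine function.
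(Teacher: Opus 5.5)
Your argument is correct, but it takes a different route from the paper. The paper does not exhaust $\Omega$ at all. It telescopes the killed semigroup,
\[
\sum_{n=0}^{N}P^\Omega_n(-\Delta_\Omega u)(x)=u(x)-P^\Omega_{N+1}u(x)\le u(x),
\]
where each term is a finite sum by local finiteness. It then lets $N\to\infty$ by monotone convergence and identifies $\sum_n P^\Omega_n v$ with $G_\Omega v$ by Fubini.

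You truncate in space instead of in time. On each finite $U_k$ you invoke Lemma~\ref{lem_finite_rep} and discard the boundary term $h_k(x)=\mathbb E_x[u(X_{\tau_{U_k}})]\ge 0$ via the minimum principle. You then pass to the limit through $g_{U_k}\uparrow g_\Omega$. The discarded remainders $P^\Omega_{N+1}u$ and $h_k$ play identical roles, and both are nonnegative for the same reason: $u\ge 0$ on $\Omega$ and $u=0$ off $\Omega$.

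The paper's version is more economical, since it needs no exhaustion, no appeal to Lemma~\ref{lem_finite_rep}, and no maximum principle. Yours stays entirely within finite Dirichlet problems, mirrors the exhaustion already used in the proof of Lemma~\ref{lem:wmp-graph}, and makes the harmonic remainder explicit. Your closing observation that finiteness of $g_\Omega$ is not needed for the inequality (read in the extended-valued sense) applies equally to the paper's proof.
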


\begin{proof}
	Recalling that \(-\Delta_{\Omega} = I_{\Omega} - P^{\Omega}\), for any integer $N>1$, we compute
	\begin{align*}
		\sum_{n=0}^{N} P^{\Omega}_n(-\Delta_{\Omega}u) (x)=u(x)-P^{\Omega}_{N+1} u(x)\le u(x).
	\end{align*}
	We can pass to the limit and obtain
	 \[
	 G_\Omega(-\Delta_\Omega u)(x)=\sum_{n=0}^{\infty} P^{\Omega}_n (-\Delta_{\Omega} u)(x)\le u(x)
	 \]
	 via monotone convergence, since \(-\Delta_\Omega u\ge0\). For any $v\in \ell^+(\Omega)$, Fubini's theorem gives
	\begin{align*}
	 	G_{\Omega} v(x)=\sum_{y\in \Omega}g_{\Omega}(x,y)v(y)\mu(y)=\sum_{y\in \Omega}\sum_{n=0}^{\infty}P_{n}^{\Omega}(x,y)v(y)=\sum_{n=0}^{\infty} P^{\Omega}_n v(x).
	 \end{align*}
     Under the zero-extension convention,
     \(\Delta u=\Delta_\Omega u\) on \(\Omega\).  Hence
	\begin{equation*}
			G_{\Omega}(-\Delta u)(x)
			= G_{\Omega}(-\Delta_{\Omega} u)(x)\le u(x), \qquad x\in \Omega.
	\end{equation*}
\end{proof}

\begin{lemma}\label{lem1}
	Assume that \(\Omega\subset V\) is connected and that \(g_\Omega\) is finite.
Let \(0\not\equiv \sigma\in\ell^+(\Omega)\) and
\(0\not\equiv f\in\ell^+(\Omega)\). Let
	\(u\in\ell^+(\Omega)\), extended by zero outside \(\Omega\), satisfy
	\[
	u|_{\partial\Omega}=0,\qquad
	-\Delta u\ge \sigma u^q+f \quad \text{in }\Omega.
	\]
	Set \(h=G_\Omega f\), and assume \(h(x)<\infty\) for all \(x\in\Omega\).
    \begin{enumerate}
        \item[(i)] If \(1<q<\infty\), then necessarily
        \begin{equation}\label{eq-necessity}
 G_{\Omega}(\sigma h^q)(x)
<
\frac{h(x)}{q-1}
\quad \text{for all }x\in\Omega.
        \end{equation}

        Moreover,
        \[
            u(x)\ge h(x)\left[
            1-(q-1)\frac{G_{\Omega}(\sigma h^q)(x)}{h(x)}
            \right]^{-\frac{1}{q-1}}
            \quad \text{for all }x\in\Omega.
        \]

        \item[(ii)] If \(0<q<1\), then
        \[
            G_{\Omega}(\sigma h^q)(x)
            \le
            \frac{h(x)}{1-q}
            \left[
                \left(\frac{u(x)}{h(x)}\right)^{1-q}-1
            \right]
            \quad \text{for all }x\in\Omega.
        \]
        Equivalently,
        \[
            u(x)\ge h(x)\left[
            1+(1-q)\frac{G_{\Omega}(\sigma h^q)(x)}{h(x)}
            \right]^{\frac{1}{1-q}}
            \quad \text{for all }x\in\Omega.
        \]
    \end{enumerate}
\end{lemma}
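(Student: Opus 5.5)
The plan is the standard Grigor'yan--Verbitsky argument: compare \(u\) with \(h\) through the ratio \(w=u/h\) and use a discrete convexity inequality for the Laplacian.

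\textbf{Step 1: \(u\ge h\).} By Lemma~\ref{lem_rev}, applied to \(u\) (which satisfies \(-\Delta u\ge0\)), we have \(u\ge G_\Omega(-\Delta u)\). Since \(-\Delta u\ge \sigma u^q+f\) and \(G_\Omega\) is monotone, this gives \(u\ge G_\Omega(\sigma u^q)+h\). In particular \(u\ge h>0\) on \(\Omega\); positivity of \(h\) follows from \(f\not\equiv0\) and Proposition~\ref{gu-lemma}.

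\textbf{Step 2: the differential inequality for \(\phi(w)\).} For \(1<q<\infty\) put
\[
\phi(s)=\frac{1-s^{1-q}}{q-1},\qquad s\ge1,
\]
and let \(w=u/h\ge1\). The function \(\phi\) is increasing and concave, with \(\phi'(s)=s^{-q}\). The key discrete identity is that, for \(x\in\Omega\),
\[
-\Delta(h\,\phi(w))(x)\ge \phi'(w(x))\bigl(-\Delta u(x)\bigr)+\bigl(\phi(w(x))-w(x)\phi'(w(x))\bigr)\bigl(-\Delta h(x)\bigr).
\]
To prove it, write \(\mu(x)\bigl(-\Delta(h\phi(w))(x)\bigr)=\sum_y\mu_{xy}\bigl(h(x)\phi(w(x))-h(y)\phi(w(y))\bigr)\). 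Then apply concavity pointwise,
\[
\phi(w(y))\le\phi(w(x))+\phi'(w(x))\bigl(w(y)-w(x)\bigr),
\]
multiply by \(h(y)\ge0\), and regroup using \(h(y)w(y)=u(y)\). Vertices \(y\notin\Omega\) have \(u(y)=h(y)=0\), so they contribute harmlessly; this needs care at the boundary. The coefficient \(\phi(s)-s\phi'(s)\) is nonnegative for \(s\ge1\), and \(-\Delta h=f\ge0\). Hence
\[
-\Delta(h\phi(w))\ge w^{-q}\bigl(\sigma u^q+f\bigr)\ge \sigma h^q .
\]

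\textbf{Step 3: invert with the Green operator.} The function \(v=h\phi(w)\) satisfies \(0\le v\le h/(q-1)\), and \(-\Delta v\ge \sigma h^q\ge0\). Lemma~\ref{lem_rev} applied to \(v\) gives
\[
G_\Omega(\sigma h^q)\le h\phi(w)=\frac{h}{q-1}\bigl(1-w^{1-q}\bigr)<\frac{h}{q-1},
\]
where the strict inequality holds because \(w<\infty\). This is \eqref{eq-necessity}. Solving for \(w\) yields the stated lower bound for \(u\).

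\textbf{Case \(0<q<1\).} Take instead \(\phi(s)=(s^{1-q}-1)/(1-q)\). It is again concave and increasing with \(\phi'(s)=s^{-q}\), and \(\phi(s)-s\phi'(s)\ge0\) for \(s\ge1\) still holds: at \(s=1\) it equals \(-1\cdot\)? Checking, \(\phi(1)-\phi'(1)=-1<0\). So this coefficient term must be handled differently. I would instead drop it by noting that \(\phi(w)-w\phi'(w)\) is multiplied by \(f\) and compensated by the \(w^{-q}f\) term: the total coefficient of \(f\) is \(\phi(w)-w\phi'(w)+\phi'(w)\). Reconsidering, the clean regrouping gives \(\phi'(w)(-\Delta u)+(\phi(w)-w\phi'(w))f\). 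With \(-\Delta u\ge\sigma u^q+f\), the \(f\)-coefficient becomes \(\phi(w)+\phi'(w)(1-w)\), which is \(\ge0\) by concavity, since \(\phi(1)=0\). This works in both cases.

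\textbf{Main obstacle.} The hardest step is the discrete chain-rule inequality at boundary-adjacent vertices, together with ensuring that \(v\) satisfies the hypotheses of Lemma~\ref{lem_rev}, namely finiteness and nonnegativity. Both are handled by the zero extension and the bound \(v\le Cu\).
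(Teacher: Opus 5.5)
Your argument is correct and is essentially the paper's proof: with $\Phi(s)=(1+(1-q)s)^{1/(1-q)}$ the paper's convex function, your $\phi$ is $\Phi^{-1}$, so your $h\phi(u/h)$ is exactly the paper's $hv$. Your tangent-line inequality for the concave $\phi$ is the same inequality as the paper's Taylor step for $\Phi$, and both proofs finish with Lemma~\ref{lem_rev}.

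The one thing to fix is the Step~2 claim that $\phi(s)-s\phi'(s)\ge0$. It fails for $q>1$ as well (it equals $-1$ at $s=1$). In both ranges you must use the regrouped coefficient $\phi(w)+\phi'(w)(1-w)\ge0$ that you found afterwards; this is the paper's inequality $(\Phi(s)-1)/\Phi'(s)\le s$ read through the inverse.
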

\begin{remark}
	We include the case $0<q<1$  for completeness, although only
	the case \(q>1\) is used in this paper.
	\end{remark}
\begin{proof}
	Note that $u>0$ in $\Omega$ since $f\not \equiv 0$.
Since $0\not\equiv \sigma\in \ell^+(\Omega) $ and $\Omega$ is connected, it follows that
	\(G_\Omega(\sigma u^q)>0\).
	
    If \(\Omega\) is finite, then Lemma \ref{lem_finite_rep} gives
    \[
        u(x)=G_{\Omega}(-\Delta u)(x)\ge G_{\Omega}(\sigma u^q+f)(x)>G_{\Omega}f(x)=h(x)
        \quad \text{for all }x\in\Omega.
    \]
    If \(\Omega\) is infinite, then Lemma \ref{lem_rev} yields
    \[
        u(x)\ge G_{\Omega}(\sigma u^q+f)(x)>G_{\Omega}f(x)=h(x)
        \quad \text{for all }x\in\Omega.
    \]
    Since \(\Omega\) is connected and \(f\not\equiv 0\), we have
    \(h=G_\Omega f>0\) in \(\Omega\). Hence \(0<h\le u<\infty\).

    Define
    \[
        \phi(s)=\bigl(1+(1-q)s\bigr)^{\frac1{1-q}},
    \]
    where \(s\ge 0\) if \(0<q<1\), and \(0\le s<\frac1{q-1}\) if \(1<q<\infty\).
    Then
    \[
        \phi(0)=1,\qquad \phi'(s)=\phi(s)^q,\qquad \phi''(s)=q\,\phi(s)^{2q-1}\ge 0.
    \]
    Set
    \[
        v(x)=\phi^{-1}\left(\frac{u(x)}{h(x)}\right),\qquad x\in\Omega.
    \]
    Extend \(v\) by zero to \(V\setminus\Omega\).
    Since \(u/h> 1\), we have \(v>0\) in \(\Omega\), and in the case \(q>1\),
    \[
        0<v(x)<\frac1{q-1}.
    \]

    For brevity, write
    \[
        \nabla_{xy}f=f(y)-f(x).
    \]
    We compute
    \begin{align*}
        \Delta\bigl(h\phi(v)\bigr)(x)
        &=\sum_{y\sim x}P(x,y)h(y)\nabla_{xy}\phi(v)+\phi(v(x))\Delta h(x),\\
        \Delta(hv)(x)
        &=\sum_{y\sim x}P(x,y)h(y)\nabla_{xy}v+v(x)\Delta h(x).
    \end{align*}
    By Taylor's formula, for some \(\xi_{xy}\) between \(v(x)\) and \(v(y)\),
    \[
        \nabla_{xy}\phi(v)
        =\phi'(v(x))\nabla_{xy}v+\frac{\phi''(\xi_{xy})}2(\nabla_{xy}v)^2.
    \]
    Hence
    \begin{align*}
        -\Delta(hv)(x)
        &=\frac{-\Delta u(x)}{\phi'(v(x))}
        +\frac{1}{2\phi'(v(x))}
        \sum_{y\sim x}\phi''(\xi_{xy})P(x,y)h(y)(\nabla_{xy}v)^2\\
        &\quad
        +\left(\frac{\phi(v(x))}{\phi'(v(x))}-v(x)\right)\Delta h(x),
    \end{align*}
    where we used \(u=h\phi(v)\).

    Since \(h=G_{\Omega}f\), we have
    \[
        -\Delta h=f\quad \text{in }\Omega.
    \]
    Therefore,
    \begin{align*}
        -\Delta(hv)(x)
        &\ge \frac{\sigma(x)h(x)^q\phi(v(x))^q}{\phi'(v(x))}
        +\frac{1}{2\phi'(v(x))}
        \sum_{y\sim x}\phi''(\xi_{xy})P(x,y)h(y)(\nabla_{xy}v)^2\\
        &\quad
        +\left(
        \frac{\phi(v(x))-1}{\phi'(v(x))}-v(x)
        \right)\Delta h(x).
    \end{align*}

    Since \(\phi\) is convex and \(\phi(0)=1\), for every \(s>0\),
    \[
        \frac{\phi(s)-1}{s}\le \phi'(s),
    \]
    that is,
    \[
        \frac{\phi(s)-1}{\phi'(s)}-s\le 0.
    \]
    Also,
    \[
        \phi'(s)=\phi(s)^q,\qquad \phi''(s)\ge 0.
    \]
    Since \(\Delta h=-f\le 0\), it follows that
    \[
        -\Delta(hv)\ge \sigma h^q\quad \text{in }\Omega.
    \]

    If \(\Omega\) is finite, then Lemma \ref{lem_finite_rep} gives
    \[
        hv(x)=G_{\Omega}(-\Delta(hv))(x)\ge G_{\Omega}(\sigma h^q)(x)
        \quad \text{for all }x\in\Omega.
    \]
    If \(\Omega\) is infinite, then applying Lemma \ref{lem_rev} again, we obtain
    \begin{equation}\label{eq-necessary}
    	 hv(x)\ge G_{\Omega}(\sigma h^q)(x)\quad \text{for all }x\in\Omega.
    \end{equation}

    If \(1<q<\infty\), since $0\le v<\frac{1}{q-1}$,
    together with \eqref{eq-necessary}, this leads to the necessity of \eqref{eq-necessity}. Note that
    \[
        v(x)=\frac{1-\left(\frac{h(x)}{u(x)}\right)^{q-1}}{q-1},
    \]
    which yields the estimate in (i).  If \(0<q<1\), then
    \[
        v(x)=\frac{\left(\frac{u(x)}{h(x)}\right)^{1-q}-1}{1-q},
    \]
    which yields the estimate in (ii). The displayed pointwise bounds are just equivalent reformulations.
\end{proof}

We are now ready to prove Theorem \ref{thm:green-testing-equivalence}.
\begin{proof}[Proof of Theorem \ref{thm:green-testing-equivalence}]
We complete the proof by showing that (I) $\Rightarrow$ (II), (II) $\Rightarrow$ (III), and     (III) $\Rightarrow$ (I).

    (I) $\Rightarrow$ (II).
    Suppose \(u\) is a positive solution to \eqref{di-1}.
     Applying Lemma \ref{lem_rev} to the solution \(u\), we obtain
    \[
        u \ge G_{\Omega}(-\Delta u) \ge G_{\Omega}(\sigma u^q)\quad\text{in } \Omega,
    \]
    so \( u \) is a positive solution to \eqref{ii}.

   \medskip

   \noindent
   \textnormal{(II)} \(\Rightarrow\) \textnormal{(III)}.
   Assume \(u\) is a positive solution to \eqref{ii}.
   Let \(v = \varepsilon u\) with \(\varepsilon \in (0,1)\).
   Then, by \eqref{ii}, we have
   \begin{align*}
   	v
   	&= \varepsilon u
   	\ge \varepsilon\, G_{\Omega}(\sigma u^q)
   	= \varepsilon^{1 - q} G_{\Omega}(\sigma v^q) \\
   	&= G_{\Omega}(\sigma v^q)
   	+(\varepsilon^{1 - q}-1)G_{\Omega}(\sigma v^q).
   \end{align*}

   Since \(0\not\equiv\sigma\), choose \(p\in\Omega\) such that
   \(\sigma(p)>0\). Define
   \[
   \omega(x):=G_{\Omega}(\sigma v^q)(x),
   \]
   and
   \[
   h(x):=(\varepsilon^{1-q}-1)v(p)^q\nu(p)g_\Omega(p,x).
   \]
   Since \(\Omega\) is connected, \(g_\Omega(p,x)>0\) for all
   \(x\in\Omega\), and hence \(h>0\) in \(\Omega\). Moreover,
   \[
   \omega(x)
   =\sum_{y\in\Omega}g_\Omega(x,y)\sigma(y)v(y)^q\mu(y)
   \ge v(p)^q\nu(p)g_\Omega(p,x).
   \]
   Therefore
   \[
   v\ge \omega+(\varepsilon^{1-q}-1)\omega
   \ge \omega+h>0
   \qquad \text{in }\Omega.
   \]

   Using the property
   \(-\Delta G_\Omega(\sigma v^q)=\sigma v^q\) in \(\Omega\), we have
   \[
   -\Delta(\omega+h)
   =-\Delta G_\Omega(\sigma v^q)+f
   =\sigma v^q+f
   \ge \sigma(\omega+h)^q+f,
   \]
   where
   \[
   f(x):=(\varepsilon^{1-q}-1)v(p)^q\sigma(p)\mathbf 1_{\{p\}}(x)
   =-\Delta h(x).
   \]
   Applying Lemma~\ref{lem1} to \(\omega+h\) gives
   \[
   G_\Omega(\sigma h^q)(x)
   <\frac{1}{q-1}h(x),
   \qquad x\in\Omega.
   \]
   Set
   \[
   a:=(\varepsilon^{1-q}-1)v(p)^q\nu(p)>0.
   \]
   Since \(h=a\,g_\Omega(p,\cdot)\), the previous estimate yields
   \[
   G_\Omega\bigl(\sigma g_\Omega(p,\cdot)^q\bigr)(x)
   \le \frac{a^{1-q}}{q-1}g_\Omega(p,x),
   \qquad x\in\Omega.
   \]
   Thus \eqref{e1} holds with \(p\). By Lemma \ref{lem_pole}, \eqref{e1} holds for every \(o\in\Omega\).
   This proves \textnormal{(III)}.

   \medskip

   \noindent
   \textnormal{(III)} \(\Rightarrow\) \textnormal{(I)}.
   Assume that \eqref{e1} holds for some \(o\in\Omega\). Set
   \[
   U(x):=
   C^{\frac{q}{1-q}}
   G_\Omega\bigl(\sigma g_\Omega(o,\cdot)^q\bigr)(x).
   \]
   Since \(0\not\equiv\sigma\), choose \(p\in\Omega\) such that
   \(\sigma(p)>0\). Since \(\Omega\) is connected, we have
   \(g_\Omega(x,p)>0\) and \(g_\Omega(o,p)>0\) for all \(x\in\Omega\).
   Consequently,
   \[
   \begin{aligned}
   	G_\Omega\bigl(\sigma g_\Omega(o,\cdot)^q\bigr)(x)
   	&=\sum_{y\in\Omega}
   	g_\Omega(x,y)\sigma(y)g_\Omega(o,y)^q\mu(y) \\
   	&\ge
   	g_\Omega(x,p)\sigma(p)g_\Omega(o,p)^q\mu(p)
   	>0.
   \end{aligned}
   \]
   Hence \(U(x)>0\) for all \(x\in\Omega\).

   Moreover, by \eqref{e1},
   \[
   U(x)\le C^{\frac{q}{1-q}} C g_\Omega(o,x)
   = C^{\frac{1}{1-q}}g_\Omega(o,x),
   \qquad x\in\Omega.
   \]
   Therefore
   \[
   U(x)^q
   \le C^{\frac{q}{1-q}}g_\Omega(o,x)^q,
   \qquad x\in\Omega.
   \]
   It follows that
   \[
   -\Delta U
   = C^{\frac{q}{1-q}}\sigma g_\Omega(o,\cdot)^q
   \ge \sigma U^q
   \qquad \text{in }\Omega.
   \]
   Therefore, \(U\) is a positive solution to \eqref{di-1}.
   This completes the proof.
\end{proof}

\section{Proof of Theorem \ref{thm:green-level-criterion}}\label{sec4}

We first record the maximum principle that will be used in the proof of
Theorem~\ref{thm:green-level-criterion}.

\begin{lemma}
\label{lem:wmp-graph}
Let \(\Omega\subset V\) be connected and assume that \(g_\Omega\) is finite, and let \(f\in\ell^+(\Omega)\) be supported in a set \(A\subset \Omega\). If \(M\ge0\) and
\[
        G_{\Omega}f(x)\le M,\qquad  x\in A,
\]
then
\[
        G_{\Omega}f(x)\le M,\qquad x\in \Omega.
\]
\end{lemma}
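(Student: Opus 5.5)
The idea is to use the strong Markov property at the first hitting time of \(A\). We represent \(G_\Omega f\) probabilistically via the killed random walk and note that, since \(f\) vanishes off \(A\), any contribution to \(G_\Omega f(x)\) must come after the walk has entered \(A\).

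By the Fubini computation in the proof of Lemma~\ref{lem_rev}, for \(x\in\Omega\),
\[
G_\Omega f(x)=\sum_{n=0}^\infty P_n^\Omega f(x)=\mathbb E_x\Bigl[\sum_{n=0}^{\tau_\Omega-1} f(X_n)\Bigr],
\]
where \(f\) is extended by zero outside \(\Omega\). Let \(T_A:=\inf\{n\ge0:X_n\in A\}\). Because \(f=0\) off \(A\), every term with \(n<T_A\) vanishes. Hence the sum runs only over \(T_A\le n<\tau_\Omega\), and it is empty unless \(T_A<\tau_\Omega\).

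On the event \(\{T_A<\tau_\Omega\}\), the strong Markov property at \(T_A\) gives
\[
G_\Omega f(x)=\mathbb E_x\Bigl[\mathbf 1_{\{T_A<\tau_\Omega\}}\,\mathbb E_{X_{T_A}}\Bigl[\sum_{n=0}^{\tau_\Omega-1}f(X_n)\Bigr]\Bigr]
=\mathbb E_x\bigl[\mathbf 1_{\{T_A<\tau_\Omega\}}\,G_\Omega f(X_{T_A})\bigr].
\]
Here we use that the remaining lifetime after \(T_A\) is exactly \(\tau_\Omega\circ\theta_{T_A}\), since \(X_{T_A}\in A\subset\Omega\). Since \(X_{T_A}\in A\), the hypothesis gives \(G_\Omega f(X_{T_A})\le M\). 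Therefore
\[
G_\Omega f(x)\le M\,\mathbb P_x[T_A<\tau_\Omega]\le M.
\]
The nonnegativity of \(f\) justifies all interchanges by Tonelli, even when \(A\) is infinite or \(\Omega=V\). When \(\Omega=V\), \(\tau_\Omega=\infty\) and the argument is unchanged. Finiteness of \(g_\Omega\) is not really needed beyond making the quantities meaningful.

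The only delicate point is the strong Markov step applied to the killed chain, specifically the identification of the post-\(T_A\) sum with \(G_\Omega f(X_{T_A})\). If one wants to avoid probability, there is an analytic alternative. Compare \(w=G_\Omega f\) with \(M\) on the exhaustion \(\Omega_k\uparrow\Omega\) by finite sets, and use \(w_k=G_{\Omega_k}f\uparrow w\). The function \(w_k\) is harmonic in \(\Omega_k\setminus A\), vanishes outside \(\Omega_k\), and satisfies \(w_k\le w\le M\) on \(A\). The finite maximum principle (as in Lemma~\ref{lem_finite_rep}) then yields \(w_k\le M\) on \(\Omega_k\), and letting \(k\to\infty\) gives the claim.
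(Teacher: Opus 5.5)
Your proof is correct, and your main argument takes a genuinely different route from the paper. The paper argues analytically. For finite \(\Omega\) it shows that \(D=\{G_\Omega f>M\}\) lies in \(\Omega\setminus A\), where \(G_\Omega f\) is harmonic, and it propagates a maximum over \(D\) through the connected set \(\Omega\) to reach a contradiction. Infinite \(\Omega\) is then handled by an exhaustion by finite connected sets \(U_n\), using \(G_{U_n}(f\mathbf 1_{U_n})\le G_\Omega f\le M\) on \(A\cap U_n\) and \(g_{U_n}\uparrow g_\Omega\). Your ``analytic alternative'' is essentially this argument.

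Your primary argument instead uses the strong Markov property at \(T_A\) to get the identity \(G_\Omega f(x)=\mathbb E_x\bigl[\mathbf 1_{\{T_A<\tau_\Omega\}}\,G_\Omega f(X_{T_A})\bigr]\). This gives the estimate in one step on an arbitrary, possibly infinite, \(\Omega\), with no exhaustion and no use of connectedness. It even gives the sharper bound \(G_\Omega f(x)\le M\,\mathbb P_x[T_A<\tau_\Omega]\). The price is the probabilistic bookkeeping you flagged, and both facts you need do hold. First, \(\{T_A<\tau_\Omega\}\in\mathcal F_{T_A}\). Second, on this event \(X_n\in\Omega\) for all \(n\le T_A\), so \(\tau_\Omega=T_A+\tau_\Omega\circ\theta_{T_A}\). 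The paper's route trades this for the elementary maximum-principle framework it uses elsewhere, as in Lemmas~\ref{lem_finite_rep} and \ref{lem_rev}.
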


\begin{proof}
If \(f\equiv0\), the conclusion is immediate.  Hence assume that
\(f\not\equiv0\).
We first assume that \(\Omega\) is finite. Let \(u=G_\Omega f\), extended by zero to \(V\setminus\Omega\). Then \(-\Delta u=f\) in \(\Omega\). Suppose, to the contrary, that \(u>M\) somewhere in \(\Omega\), and set
\[
        D:=\{x\in\Omega:u(x)>M\}.
\]
Since \(u\le M\) on \(A\) and \(\operatorname{supp}f\subset A\), we have \(D\subset \Omega\setminus A\). Hence \(u\) is harmonic at every vertex of \(D\).

Choose \(x_0\in D\) where \(u\) attains its maximum on \(D\). Since \(\Delta u(x_0)=0\),
\[
        u(x_0)=\sum_{y\sim x_0}\frac{\mu_{x_0y}}{\mu(x_0)}u(y).
\]
Noting that \(u(y)\le u(x_0)\) for any \(y\sim x_0\) and \(\sum\limits_{y\sim x_0}\mu_{x_0y}=\mu(x_0)\), we obtain \(u(y)=u(x_0)>M\) for any \(y\sim x_0\), thus every neighbor of \(x_0\) belongs to \(D\). Repeating the argument along paths, and using connectedness of \(\Omega\), would force every vertex of \(\Omega\) to lie in \(D\), contradicting \(u\le M\) on \(A\). Therefore \(D=\varnothing\), and \(G_\Omega f\le M\) on \(\Omega\).

If \(\Omega\) is infinite, choose an increasing exhaustion \((U_n)\) of
\(\Omega\) by finite connected sets containing a fixed vertex \(o\in\Omega\).
Put \(f_n=f\mathbf 1_{U_n}\) and \(A_n=A\cap U_n\). Since
\(g_{U_n}\le g_\Omega\),
\[
        G_{U_n}f_n(x)\le G_{\Omega}f(x)\le M,
        \qquad x\in A_n.
\]
By the finite-domain statement,
\[
        G_{U_n}f_n(x)\le M,
        \qquad x\in U_n.
\]
Letting \(n\to\infty\) and using
\(g_{U_n}(x,y)\uparrow g_{\Omega}(x,y)\) gives
\(G_{\Omega}f(x)\le M\) for every \(x\in \Omega\).
\end{proof}

For the proof of the necessary part of Theorem \ref{thm:green-level-criterion},
we shall use the following finite-domain Green-energy estimate.

For a finite connected subset \(\Omega\subset V\) with \(o\in\Omega\), define
\[
	L_\Omega^\nu(o)
	:=
	\sum_{y\in\Omega}g_\Omega(o,y)^q\nu(y).
\]

\begin{lemma}\label{lemma6.1}
	Let \(\Omega\subset V\) be finite and connected, and let \(o\in\Omega\).  If
	\(u\) is a nonnegative solution to \eqref{di-2}, then
	\begin{equation}\label{eq:finite-domain-testing}
		u(o)^q\nu(o)
		\le
		\left(\frac{q}{q-1}\right)^{\frac{q}{q-1}}
		\bigl(L_\Omega^\nu(o)\bigr)^{-\frac{1}{q-1}}.
	\end{equation}
	If \(L_\Omega^\nu(o)=0\), the right-hand side is understood as \(+\infty\).
\end{lemma}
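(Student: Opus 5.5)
The plan is to reduce to the finite domain \(\Omega\), extract from \(u\) an explicit lower bound proportional to \(g_\Omega(o,\cdot)\), feed it into an integrated form of the inequality \(-\Delta w\ge\sigma w^q\), and optimize over a splitting parameter. I have not checked the final bookkeeping: the step that produces exactly \(L_\Omega^\nu(o)\), rather than \(\sum_y g_\Omega(o,y)^{q+1}\nu(y)\), is only sketched.

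\emph{Reduction and lower bound.} First, if \(u(o)=0\), or \(\nu(o)=0\), or \(L_\Omega^\nu(o)=0\), there is nothing to prove, so assume all three are positive. By Remark~\ref{rem:positive-nonnegative}, \(u\) is then positive on \(V\). Let \(w:=u\mathbf 1_\Omega\). Extending by zero only lowers \(Pw\), so \(-\Delta w\ge-\Delta u\ge\sigma u^q=\sigma w^q\) in \(\Omega\). Since \(\Omega\) is finite and \(w=0\) on \(\partial\Omega\), Lemma~\ref{lem_finite_rep} gives \(w=G_\Omega m\) with \(m:=-\Delta w\ge\sigma w^q\). Keeping only the contribution of the vertex \(o\) gives
\[
w(y)\ge g_\Omega(y,o)\,\sigma(o)u(o)^q\mu(o)=a\,g_\Omega(o,y),\qquad a:=\nu(o)u(o)^q .
\]

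\emph{Splitting and substitution.} Fix \(\theta\in(0,1)\) and split the right-hand side as
\[
-\Delta w\ge \theta\,\sigma(o)u(o)^q\mathbf 1_{\{o\}}+(1-\theta)\sigma w^q .
\]
This puts us in the setting of Lemma~\ref{lem1}, with \(f=\theta\sigma(o)u(o)^q\mathbf 1_{\{o\}}\), \(h=G_\Omega f=\theta a\,g_\Omega(o,\cdot)\), and potential \((1-\theta)\sigma\). (If \(\sigma\) vanishes on \(\Omega\setminus\{o\}\), then \(L_\Omega^\nu(o)=g_\Omega(o,o)^q\nu(o)\) and the estimate can be checked directly.) Lemma~\ref{lem1}(i) gives the pointwise bound
\[
(1-\theta)\,G_\Omega(\sigma h^q)\le \frac{h}{q-1}\qquad\text{on }\Omega .
\]
After substituting \(h=\theta a\,g_\Omega(o,\cdot)\), this becomes
\[
(1-\theta)(\theta a)^{q-1}\,G_\Omega\bigl(\sigma g_\Omega(o,\cdot)^q\bigr)\le \frac{g_\Omega(o,\cdot)}{q-1}.
\]

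\emph{Main obstacle: obtaining \(L_\Omega^\nu(o)\) exactly.} Evaluating the last display at \(o\) gives the weight \(g_\Omega(o,y)^{q+1}\), which is the wrong quantity. To remove one Green factor, I would test against \(\mu\cdot\Delta\)-type functionals rather than evaluating at \(o\). The idea is to use the symmetric identity
\[
\sum_{x}\bigl(-\Delta_\Omega\varphi\bigr)(x)\,G_\Omega F(x)\,\mu(x)=\sum_x \varphi(x)F(x)\mu(x)
\]
with \(F=\sigma g_\Omega(o,\cdot)^q\) and \(\varphi\equiv1\) on \(\Omega\). In that case \(-\Delta_\Omega\varphi\) is the boundary conductance \(\sum_{z\notin\Omega}\mu_{xz}/\mu(x)\). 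The right-hand side is then exactly \(L_\Omega^\nu(o)\). The left-hand side is bounded, via the pointwise bound above, by \(\frac1{q-1}\) times \(\sum_x(-\Delta_\Omega\mathbf 1)(x)\,g_\Omega(o,x)\,\mu(x)\). This last quantity is the unit flux of \(g_\Omega(o,\cdot)\) out of \(\Omega\), which equals \(1\) by the summation by parts \eqref{eq:flow-integration-by-parts} applied to \(-\Delta_\Omega g_\Omega(o,\cdot)=\mathbf 1_{\{o\}}/\mu(o)\). This should yield
\[
(1-\theta)\theta^{q-1}a^{q-1}L_\Omega^\nu(o)\le \frac{1}{q-1}.
\]

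\emph{Optimization.} Maximizing \((1-\theta)\theta^{q-1}\) at \(\theta=(q-1)/q\) gives the value \(\frac1q\bigl(\frac{q-1}{q}\bigr)^{q-1}\). Hence
\[
a^{q-1}\le \frac{q}{q-1}\Bigl(\frac{q}{q-1}\Bigr)^{q-1}\bigl(L_\Omega^\nu(o)\bigr)^{-1}.
\]
Raising both sides to the power \(1/(q-1)\) gives \eqref{eq:finite-domain-testing}, with the constant \((q/(q-1))^{q/(q-1)}\). This matches the stated constant, which supports the splitting-and-optimization plan. The flux step, including the sign conventions for boundary terms, is where I would spend the verification effort.
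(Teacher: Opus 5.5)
Your proposal is correct, and the step you flagged goes through. On the finite set $\Omega$ the operator $-\Delta_\Omega$ is symmetric with respect to $\mu$, and $-\Delta_\Omega G_\Omega F=F$. Hence the functional $\Lambda(\phi):=\sum_{x\in\Omega}\mu(x)\bigl(-\Delta_\Omega\mathbf 1\bigr)(x)\phi(x)$ satisfies $\Lambda(G_\Omega F)=\sum_{x\in\Omega}F(x)\mu(x)$.
\begin{itemize}
\item Taking $F=\sigma g_\Omega(o,\cdot)^q$ gives $L_\Omega^\nu(o)$.
\item Taking $F=\mathbf 1_{\{o\}}/\mu(o)$ gives $\Lambda(g_\Omega(o,\cdot))=1$; this is the statement that the killed walk leaves $\Omega$ almost surely.
\item Since $\bigl(-\Delta_\Omega\mathbf 1\bigr)(x)=\sum_{z\notin\Omega}\mu_{xz}/\mu(x)\ge0$, you may integrate the pointwise bound against $\Lambda$.
\end{itemize}
This yields $(1-\theta)(\theta a)^{q-1}L_\Omega^\nu(o)\le 1/(q-1)$, and your optimization at $\theta=(q-1)/q$ gives exactly the stated constant. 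The parenthetical case distinction is unnecessary. You already assume $\sigma(o)>0$, so $(1-\theta)\sigma\not\equiv0$ on $\Omega$ and Lemma~\ref{lem1} applies directly.

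The paper's proof takes a different and more direct route. It uses the test function $\psi_\Omega=(L_\Omega^\nu(o))^{-1}G_\Omega\bigl(\sigma g_\Omega(o,\cdot)^{q-1}\bigr)$, normalized so that $\psi_\Omega(o)=1$ and $-\Delta\psi_\Omega=(L_\Omega^\nu(o))^{-1}\sigma g_\Omega(o,\cdot)^{q-1}$ in $\Omega$. It multiplies the inequality by $\psi_\Omega^{q/(q-1)}$, sums by parts, and applies the convexity bound $\Delta(\psi^{q/(q-1)})\ge\frac{q}{q-1}\psi^{1/(q-1)}\Delta\psi$ and H\"older. The result is $\sum_{x\in\Omega}u^q\psi_\Omega^{q/(q-1)}\nu\le(q/(q-1))^{q/(q-1)}(L_\Omega^\nu(o))^{-1/(q-1)}$, from which the estimate at $o$ is read off. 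That argument is self-contained: it needs no truncation of $u$ and no use of Lemma~\ref{lem1}. It also gives a weighted integral bound on all of $\Omega$, not only at $o$.

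Your route instead first proves a quantitative finite-domain version of the Green test \eqref{e1}. It uses the same splitting device as the step (II)$\Rightarrow$(III) of Theorem~\ref{thm:green-testing-equivalence}. You then integrate that test against the exit measure $\mu\,(-\Delta_\Omega\mathbf 1)$ to pass to the diagonal energy. This makes explicit that the energy bound follows from pointwise Green testing, in line with Remark~\ref{rem:relation-Green-energy}. The price is reliance on the heavier nonlinear change-of-variables argument of Lemma~\ref{lem1}.
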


\begin{proof}
	If \(L_\Omega^\nu(o)=0\), there is nothing to prove.  Assume that
	\(L_\Omega^\nu(o)>0\).  Define
	\[
	\psi_\Omega(x)
	:=
	\bigl(L_\Omega^\nu(o)\bigr)^{-1}
	\sum_{y\in\Omega}g_\Omega(x,y)g_\Omega(o,y)^{q-1}\nu(y),
	\qquad x\in\Omega,
	\]
	and extend \(\psi_\Omega\) by zero to \(V\setminus\Omega\).  Then
	\(\psi_\Omega\ge0\) and
	\[
	\psi_\Omega(o)
	=
	\bigl(L_\Omega^\nu(o)\bigr)^{-1}
	\sum_{y\in\Omega}g_\Omega(o,y)^q\nu(y)=1.
	\]
	Moreover, by \eqref{eq:green-fundamental-solution},
	\[
		-\Delta\psi_\Omega(x)
		=
		\bigl(L_\Omega^\nu(o)\bigr)^{-1}
		g_\Omega(o,x)^{q-1}\sigma(x),
		\qquad x\in\Omega.
	\]
	
	Multiplying \eqref{di-2} by
	\(\psi_\Omega^{\frac{q}{q-1}}\) and summing over \(V\) with respect to
	\(\mu\), we get
	\[
	\sum_{x\in V}\Delta u(x)\psi_\Omega(x)^{\frac{q}{q-1}}\mu(x)
	+
	\sum_{x\in V}u(x)^q\psi_\Omega(x)^{\frac{q}{q-1}}\nu(x)
	\le0.
	\]
	Hence, by summation by parts and the convexity inequality
	\[
	\Delta\left(\psi_\Omega^{\frac{q}{q-1}}\right)
	\ge
	\frac{q}{q-1}\psi_\Omega^{\frac1{q-1}}\Delta\psi_\Omega,
	\]
	we obtain
	\begin{align*}
		&\sum_{x\in\Omega}u(x)^q\psi_\Omega(x)^{\frac{q}{q-1}}\nu(x)\\
		&\le
		-\sum_{x\in V}u(x)
		\Delta\left(\psi_\Omega^{\frac{q}{q-1}}\right)(x)\mu(x)\\
		&\le
		-\frac{q}{q-1}\sum_{x\in V}u(x)\psi_\Omega(x)^{\frac1{q-1}}
		\Delta\psi_\Omega(x)\mu(x)\\
		&\le
		\frac{q}{q-1}\bigl(L_\Omega^\nu(o)\bigr)^{-1}
		\sum_{x\in\Omega}u(x)\psi_\Omega(x)^{\frac1{q-1}}
		g_\Omega(o,x)^{q-1}\nu(x).
	\end{align*}
	Applying H\"older's inequality to the last sum gives
	\begin{align*}
		&\sum_{x\in\Omega}u(x)^q\psi_\Omega(x)^{\frac{q}{q-1}}\nu(x)\\
		&\le
		\frac{q}{q-1}\bigl(L_\Omega^\nu(o)\bigr)^{-1}
		\left(\sum_{x\in\Omega}u(x)^q\psi_\Omega(x)^{\frac{q}{q-1}}\nu(x)\right)^{1/q}
		\left(\sum_{x\in\Omega}g_\Omega(o,x)^q\nu(x)\right)^{\frac{q-1}{q}}\\
		&=
		\frac{q}{q-1}\bigl(L_\Omega^\nu(o)\bigr)^{-1/q}
		\left(\sum_{x\in\Omega}u(x)^q\psi_\Omega(x)^{\frac{q}{q-1}}\nu(x)\right)^{1/q}.
	\end{align*}
	Therefore
	\[
	\sum_{x\in\Omega}u(x)^q\psi_\Omega(x)^{\frac{q}{q-1}}\nu(x)
	\le
	\left(\frac{q}{q-1}\right)^{\frac{q}{q-1}}
	\bigl(L_\Omega^\nu(o)\bigr)^{-\frac1{q-1}}.
	\]
	Since \(\psi_\Omega(o)=1\), this implies \eqref{eq:finite-domain-testing}.
\end{proof}

\begin{proof}[Proof of Theorem \ref{thm:green-level-criterion}]
	Assume first that \eqref{et1} fails for some $o\in V$, namely
	\[
	\sum_{y\in V}g(o,y)^q\nu(y)=\infty.
	\]
	Since \(0\not\equiv\sigma\), choose \(z\in V\) with \(\sigma(z)>0\).
	By Lemma~\ref{lem_pole},
	\[
	\sum_{y\in V} g(z,y)^q\nu(y)=\infty.
	\]
	Let $u$ be a positive solution to \eqref{di-2}.  For $R\ge1$, set
	$B_R=B(z,R)$.  Since
	$g_{B_R}(z,y)\uparrow g(z,y)$ as $R\to\infty$, the monotone convergence theorem
	gives
	\[
	L_{B_R}^\nu(z)=\sum_{y\in B_R}g_{B_R}(z,y)^q\nu(y)\longrightarrow\infty.
	\]
	Applying Lemma \ref{lemma6.1} with $\Omega=B_R$ and letting $R\to\infty$, we
	obtain $u(z)=0$.

	Then
	\[
	0\le -\Delta u(z)
	=\frac1{\mu(z)}\sum_{y\sim z}\mu_{zy}\bigl(u(z)-u(y)\bigr)
	=-\frac1{\mu(z)}\sum_{y\sim z}\mu_{zy}u(y)\le0.
	\]
	Thus $u(y)=0$ for every $y\sim z$.  Repeating the same argument along paths and
	using connectedness gives $u\equiv0$, contradicting the positivity of $u$.

	Next we prove \eqref{et2}. Fix \(o\in V\), set
\[
        h(y):=g(o,y),\qquad y\in V,
\]
and write \(A_r=A_r(o)=\{y\in V:h(y)>r^{-1}\}\). By Theorem~\ref{thm:green-testing-equivalence}, the existence of a positive solution implies
\begin{equation}\label{eq:green-shell-inline-input}
        G(\sigma h^q)(x)\le C h(x),
        \qquad x\in V.
\end{equation}
Fix \(r>0\) and put
\(t=r^{-1}\). Since \(h\le h(o)<\infty\), only finitely many of the shells
\[
        E_j:=\{y\in V:2^jt<h(y)\le 2^{j+1}t\},
        \qquad j=0,1,2,\ldots,
\]
are nonempty. For each \(j\), set
\[
        u_j(x):=G(\sigma\mathbf 1_{E_j})(x).
\]
On \(E_j\) we have \(h\ge 2^jt\). Therefore, by
\eqref{eq:green-shell-inline-input}, for every \(x\in V\),
\[
        u_j(x)
        \le (2^jt)^{-q}G(\sigma h^q\mathbf 1_{E_j})(x)
        \le C(2^jt)^{-q}h(x).
\]
In particular, if \(x\in E_j\), then \(h(x)\le 2^{j+1}t\), and hence
\[
        u_j(x)\le 2C(2^jt)^{1-q},
        \qquad x\in E_j.
\]
Since \(\sigma\mathbf 1_{E_j}\) is supported on \(E_j\), Lemma~\ref{lem:wmp-graph}
gives
\[
        \sup_{x\in V}u_j(x)\le 2C(2^jt)^{1-q}.
\]
Summing over \(j\), we obtain
\begin{align*}
        \sup_{x\in V}\sum_{y\in A_r(o)}g(x,y)\nu(y)
        &=\sup_{x\in V}G\bigl(\sigma\mathbf 1_{\{h>t\}}\bigr)(x)  \\
        &\le \sum_{j\ge0}\sup_{x\in V}u_j(x)  \\
        &\le 2C\sum_{j\ge0}(2^jt)^{1-q}
         =\frac{2C}{1-2^{1-q}}\,r^{q-1}.
\end{align*}
This proves \eqref{et2}.
	
	Now assume that \eqref{et1}, \eqref{et2}, and \((3G)\) hold. Fix \(x\in V\), and split the vertex set \(V\) into two parts:
	\begin{align*}
		A_1=\{y\in V: g(x,y)\le 2\kappa g(x,o)\}, \quad
		A_2=\{y\in V: g(x,y)> 2\kappa g(x,o)\},
	\end{align*}
	where \(\kappa\) is the constant in \eqref{3g-pre}. Let \(r_*\) be such that
	\eqref{et2} holds for all \(r>r_*\). Enlarging \(\kappa\), if necessary,
	we may assume that
	\[
	\left(\frac12+\kappa\right)g(o,o)^{-1}>r_*.
	\]
	From \eqref{et1},
	\begin{align}\label{t1-1}
		\sum_{y \in A_1} g(x,y)g(o,y)^q \sigma(y)\mu(y)\le 2\kappa g(x,o) \sum_{y \in A_1}g(o,y)^q \sigma(y)\mu(y)=Cg(x,o).
	\end{align}
	For the second part, since \(g(x,y)> 2\kappa g(x,o)\), using \eqref{3g-pre} we have
	\begin{align*}
		\frac{1}{g(x,o)}&\le \kappa\left(\frac{1}{g(x,y)}+\frac{1}{g(y,o)}\right)
		\le \frac{1}{2g(x,o)}+\frac{\kappa}{g(y,o)},
	\end{align*}
	which implies
	\begin{align}\label{t1-2}
		g(y,o)\le 2 \kappa g(x,o).
	\end{align}
	On the other hand,
	\begin{align*}
		\frac{1}{g(y,o)}&\le \kappa\left(\frac{1}{g(y,x)}+\frac{1}{g(x,o)}\right)
		= \kappa\left(\frac{1}{g(x,y)}+\frac{1}{g(x,o)}\right)
		\le \left(\frac{1}{2}+\kappa \right)\frac{1}{g(x,o)},
	\end{align*}
	which gives
	\begin{align}\label{t1-3}
		g(y,o)\ge \left(\frac{1}{2}+\kappa \right)^{-1}g(x,o).
	\end{align}
	By the maximum principle, \(g(x,o)\le g(o,o)\). Hence
	\(\left(\frac{1}{2}+\kappa \right)g(x,o)^{-1}>r_*\). Using
	\eqref{t1-2}, \eqref{t1-3}, and \eqref{et2}, we obtain
	\begin{align}\label{t1-4}
		\sum_{y\in A_2} g(x,y)g(o,y)^q\,\sigma(y)\mu(y)
		&\le (2 \kappa g(x,o))^{q}\sum_{y\in A_2} g(x,y)\sigma(y)\mu(y)
		\nonumber\\
		&\le (2 \kappa g(x,o))^{q}\sum_{y\in A_3} g(x,y)\sigma(y)\mu(y)
		\nonumber\\
		&\le C\, (2 \kappa g(x,o))^{q}(\tfrac{1}{2}+\kappa)^{q-1}g(x,o)^{-(q-1)}
		\nonumber\\
		&= C g(x,o),
	\end{align}
	where \(A_3:=\{y \in V: g(o,y) > (\tfrac{1}{2}+\kappa )^{-1}g(x,o)\}\).
	As established above, under condition \eqref{3g-pre} we have \(A_2\subset A_3\). Combining \eqref{t1-1} with \eqref{t1-4} yields \eqref{e1}. Hence, by
	Theorem~\ref{thm:green-testing-equivalence}, \eqref{di-2} admits a positive
	solution. This completes the proof.
\end{proof}

\begin{remark}
The necessity part of Theorem~\ref{thm:green-level-criterion} can be viewed as
a discrete counterpart of the Green level estimates in \cite{GSV20}, but the
proof is structurally simpler. In the manifold setting, the analogues of
\eqref{et1} and \eqref{et2} are obtained through weighted norm inequalities
and nonlinear iteration arguments for the Green operator. In the present graph
setting, once the Green test \eqref{e1} is available from
Theorem~\ref{thm:green-testing-equivalence}, the two necessary estimates follow
from elementary discrete ingredients: the energy condition \eqref{et1} follows
from the finite-domain Green-energy estimate Lemma~\ref{lemma6.1}, while the
localization estimate \eqref{et2} follows from a dyadic decomposition of the
level set \(A_r(o)\) and the weak maximum principle
Lemma~\ref{lem:wmp-graph}.
\end{remark}

\section{Proof of Theorem \ref{thm:vd-pi-p0-potential-criterion}}\label{sec5}

We first recall the Green-function estimates needed to translate
Theorem~\ref{thm:green-level-criterion} into volume conditions.

\begin{lemma}[{\cite[Proposition~4.2]{Delmotte}, \cite[Theorem~1.10]{HS2025}}]\label{lem:Green-estimate}	
    Assume that the weighted graph $(V,\mu)$ satisfies \hyperref[VD]{$(\mathrm{VD})$}, \hyperref[PI]{$(\mathrm{PI})$}, and \hyperref[p0]{$(\mathrm{P_0})$}. Then $(V,\mu)$ is non-parabolic if and only if
\[\sum_{n=1}^{\infty}\frac{n}{\mu(B(o,n))}<\infty\] for some $o\in V$. Moreover, for all $x,y\in V$,
	\[
  g(x,y)\asymp \sum\limits_{n=d(x,y)}^{\infty}\frac{n}{\mu(B(x,n))}.
	\]
\end{lemma}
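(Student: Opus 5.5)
The plan is to derive the lemma from Delmotte's two-sided Gaussian heat kernel estimates by summing over time. Under \textnormal{(VD)}, \textnormal{(PI)} and \textnormal{(P$_0$)}, \cite{Delmotte} shows that the discrete parabolic Harnack inequality holds. The heat kernel therefore satisfies, for \(n\ge d(x,y)\),
\[
p_n(x,y)\le \frac{C}{\mu(B(x,\sqrt n))}\exp\Bigl(-c\frac{d(x,y)^2}{n}\Bigr),
\qquad
p_n(x,y)+p_{n+1}(x,y)\ge \frac{c}{\mu(B(x,\sqrt n))}\exp\Bigl(-C\frac{d(x,y)^2}{n}\Bigr),
\]
with \(p_n(x,y)=0\) for \(n<d(x,y)\). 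The lower bound is stated for the pair \(n,n+1\) to avoid parity (bipartite) obstructions. Since the paper only cites this, I take these bounds as the input and write \(g(x,y)=\sum_{n\ge0}p_n(x,y)\).

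Set \(D=\max\{d(x,y),1\}\) and split the time sum at \(n=D^2\).
\begin{itemize}
\item \emph{Large times} \(n\ge D^2\). The Gaussian factor is comparable to \(1\). Grouping \(n\in[m^2,(m+1)^2)\), which contains about \(m\) integers, and using \textnormal{(VD)} to replace \(\mu(B(x,\sqrt n))\) by \(\mu(B(x,m))\), the sum is comparable to \(\sum_{m\ge D} m/\mu(B(x,m))\). The upper bound follows directly. For the lower bound, sum the pairwise estimate \(p_n+p_{n+1}\) over this range, losing only a factor \(2\).
\item \emph{Small times} \(d(x,y)\le n<D^2\). Only the upper bound is needed here. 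By \textnormal{(VD)}, \(\mu(B(x,D))/\mu(B(x,\sqrt n))\le C(D/\sqrt n)^{\alpha}\) for some \(\alpha>0\). Hence this part is bounded by
\[
\frac{C}{\mu(B(x,D))}\sum_{n\le D^2}\Bigl(\frac{D^2}{n}\Bigr)^{\alpha/2}e^{-cD^2/n}\lesssim \frac{D^2}{\mu(B(x,D))},
\]
since the sum is a Riemann sum of a function that is integrable near \(0\). By \textnormal{(VD)} again, \(D^2/\mu(B(x,D))\lesssim\sum_{m=D}^{2D}m/\mu(B(x,m))\), so this part is absorbed into the main series.
\item \emph{Near-diagonal cases} \(d(x,y)\in\{0,1\}\). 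Here the extra terms with \(n<D^2\) are finitely many, each at most \(C/\mu(x)\). By \textnormal{(P$_0$)} and \textnormal{(VD)}, \(\mu(x)\asymp\mu(B(x,1))\), and \(\mu(B(x,1))\asymp\mu(B(x,2))\). So these terms are \(\lesssim 1/\mu(B(x,1))\), which is itself comparable to the first term of the series.
\end{itemize}

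For the non-parabolicity criterion, take \(y=x\). The estimate just proved gives that \(g(x,x)<\infty\) if and only if \(\sum_n n/\mu(B(x,n))<\infty\). The choice of centre does not matter: for \(n\ge d(o,x)\), \textnormal{(VD)} gives \(\mu(B(x,n))\asymp\mu(B(o,n))\), and only finitely many terms differ. So convergence at some \(o\) is equivalent to convergence at every \(x\).

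The main obstacle is making the lower bound rigorous in the presence of parity and the \(n\ge d\) cutoff. I expect to handle it by working with \(p_n+p_{n+1}\) throughout, and by using \textnormal{(P$_0$)} to bound \(p_{n+1}\lesssim p_{n}\)-type comparisons (one step of the walk costs at most a factor \(p_0\)). The remaining work is routine dyadic bookkeeping with \textnormal{(VD)}.
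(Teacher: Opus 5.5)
Your derivation is correct and is the standard argument behind the results that the paper only cites; the paper gives no proof of its own. Summing the two-sided Gaussian bounds in time, grouping \(n\in[m^2,(m+1)^2)\) with \textnormal{(VD)}, and absorbing the times \(d(x,y)\le n<d(x,y)^2\) into \(\sum_{m=D}^{2D}m/\mu(B(x,m))\) gives both directions, and taking \(y=x\) gives the non-parabolicity criterion.

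Two points need fixing. First, the aside in your last paragraph is wrong: a comparison \(p_{n+1}\lesssim p_n\) fails on bipartite graphs such as \(\mathbb Z^d\). You never use it, though, because \(\sum_{n\ge D^2}p_n\ge\frac12\sum_{n\ge D^2}(p_n+p_{n+1})\) already handles parity. Second, Delmotte's condition \(\Delta(\alpha)\) requires a loop at every vertex, so his bounds do not literally cover this loopless setting. The cleanest way to import them is to pass to the lazy walk \(\frac12(I+P)\) with measure \(2\mu\). It satisfies his hypotheses and has exactly the same Green function \(g\).
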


\begin{lemma}\label{lem3g}
    Under the assumptions of Lemma~\ref{lem:Green-estimate}, further assuming that $(V,\mu)$ is non-parabolic,  define
    \begin{align*}
       l (x,y)= \left[\sum\limits_{n=d(x,y)}^{\infty}\frac{n}{\mu(B(x,n))}\right]^{-1},\quad \text{for all } x,y\in V.
    \end{align*}
    Then \(l(x,y)\) is finite and positive for all \(x,y\in V\), and there exists \(\kappa>0\) such that
    \begin{align*}
        l (x,y)\le \kappa[l (x,z)+l (z,y)], \text{ for all }x,y,z\in V.
    \end{align*}
\end{lemma}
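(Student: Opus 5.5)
The plan is to reduce everything to the Green function via Lemma~\ref{lem:Green-estimate}, which gives \(l(x,y)\asymp g(x,y)^{-1}\), and to handle the one remaining geometric point through a dyadic comparison that uses \textnormal{(VD)}. Write
\[
S(x,r):=\sum_{n\ge r}\frac{n}{\mu(B(x,n))},
\]
so that \(l(x,y)=S(x,d(x,y))^{-1}\).

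\emph{Finiteness and positivity.} By Lemma~\ref{lem:Green-estimate}, non-parabolicity gives \(S(o,1)<\infty\) for some \(o\). The bound \(S(x,1)<\infty\) for every \(x\) then follows directly from \(g(x,y)\asymp S(x,d(x,y))\) and \(g<\infty\) (Proposition~\ref{gu-lemma}). Alternatively, \(B(o,n)\subset B(x,n+d(o,x))\) together with \textnormal{(VD)} gives \(\mu(B(x,n))\gtrsim\mu(B(o,n))\) for large \(n\). The \(n=0\) term vanishes, so \(S(x,d(x,y))\le S(x,1)<\infty\), hence \(l(x,y)>0\). Every tail \(S(x,r)\) is a sum of infinitely many positive terms, so \(S(x,r)>0\) and \(l(x,y)<\infty\).

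\emph{Dyadic step.} The key estimate is
\[
S(x,r/2)\le C\,S(x,r),\qquad r\ge 2,
\]
with \(C\) depending only on \(C_D\). To prove it, group the sum into dyadic blocks \([k,2k)\). Using monotonicity of \(n\mapsto\mu(B(x,n))\), the block \([k,2k)\) contributes at most \(4k^2/\mu(B(x,k))\). The block \([2k,4k)\) contributes at least \(4k^2/\mu(B(x,4k))\), which is at least \(4k^2/(C_D^2\mu(B(x,k)))\) by \textnormal{(VD)}. So each block is controlled by the next one, and summing over \(k\) gives the claim. For \(r<2\), the quantity \(S(x,r/2)\) differs from \(S(x,r)\) by at most the single term \(1/\mu(B(x,1))\). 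This term is bounded by \(C\) times the next term \(2/\mu(B(x,2))\), again by \textnormal{(VD)}. I expect this step to be the main obstacle: one has to be careful with the integer endpoints and make sure the constant depends only on \(C_D\).

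\emph{Quasi-triangle inequality.} Fix \(x,y,z\). By the triangle inequality, either \(d(x,z)\ge d(x,y)/2\) or \(d(z,y)\ge d(x,y)/2\).

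In the first case, \(S(x,d(x,z))\le S(x,d(x,y)/2)\le C\,S(x,d(x,y))\), so \(l(x,y)\le C\,l(x,z)\).

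In the second case, I use symmetry. By Lemma~\ref{lem:Green-estimate} and the symmetry of \(g\),
\[
l(x,y)\asymp g(x,y)^{-1}=g(y,x)^{-1}\asymp l(y,x).
\]
Applying the first case with center \(y\) (since \(d(y,z)\ge d(y,x)/2\)) gives \(l(y,x)\le C\,l(y,z)\asymp l(z,y)\).

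Combining the two cases yields \(l(x,y)\le\kappa\,[\,l(x,z)+l(z,y)\,]\).
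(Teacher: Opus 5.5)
Your proof is correct. Its first case is the same as the paper's Case 1: both rest on the tail-doubling property $S(x,r/2)\lesssim S(x,r)$. The paper simply asserts this ``by \textnormal{(VD)}'', while you justify it with the dyadic block comparison.

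The genuine difference is in the second case, $d(z,y)\ge d(x,y)/2$.

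\emph{The paper's route.} It moves the center from $x$ to $z$ by a volume comparison. From $d(x,z)\le 4d(z,y)$ it gets $B(x,n)\subset B(z,5n)$ for $n\ge d(z,y)$. Hence $\mu(B(x,n))\lesssim\mu(B(z,n))$ and $R_x(d(z,y))\gtrsim R_z(d(z,y))$, and tail doubling finishes the case.

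\emph{Your route.} You swap the roles of $x$ and $y$. The symmetry of $g$, combined with the two-sided estimate of Lemma~\ref{lem:Green-estimate}, gives $l(x,y)\asymp l(y,x)$. This reduces the second case to the first with center $y$.

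\emph{Comparison.} Your route is shorter and conceptually cleaner. However, it imports Delmotte's Green estimate, and hence \textnormal{(PI)} and \textnormal{(P$_0$)}, for what is really a doubling fact. The paper's proof of the quasi-triangle inequality uses only \textnormal{(VD)}. You can keep your structure and still use only \textnormal{(VD)}. For $n\ge d(x,y)$ one has $B(x,n)\subset B(y,2n)$, so $\mu(B(x,n))\le C_D\,\mu(B(y,n))$, and symmetrically. This gives $S(x,d(x,y))\asymp S(y,d(x,y))$, i.e.\ $l(x,y)\asymp l(y,x)$, with no Green function at all. With that substitution your argument is arguably simpler than the paper's second case.

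A minor point: your separate treatment of $r<2$ is unnecessary. For integer $r\ge1$ the block argument starting at $\lceil r/2\rceil\ge1$ applies directly, and $S(x,1/2)=S(x,1)$ anyway.
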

\begin{proof}
Set
\[
R_x(r):=\sum_{n=r}^{\infty}\frac{n}{\mu(B(x,n))},\qquad r\ge0.
\]
Then \(l(x,y)=R_x(d(x,y))^{-1}\). By \hyperref[VD]{$(\mathrm{VD})$}, for every \(r\ge0\),
\[
R_x(r)\asymp R_x(2r).
\]
Moreover, if \(d(x,z)\le r\), then
\[
B(x,r)\subset B(z,2r)\qquad\text{and}\qquad B(z,r)\subset B(x,2r),
\]
and hence \(\mu(B(x,r))\asymp \mu(B(z,r))\). Consequently,
\[
R_x(r)\asymp R_z(r)\qquad\text{whenever } d(x,z)\le r.
\]
Let \(R=d(x,y)\) and \(k=\lfloor R/2\rfloor\). Then at least one of
\[
d(x,z)\ge k \qquad\text{or}\qquad d(z,y)\ge k
\]
holds. If \(d(x,z)\ge k\), then we have
\[
R_x(d(x,y))=R_x(R)\gtrsim R_x(k)\gtrsim R_x(d(x,z)),
\]
and therefore
\[
l(x,y)\lesssim l(x,z).
\]
If \(d(z,y)\ge k\), then \(R\le 2d(z,y)+1\) and
\[
d(x,z)\le d(x,y)+d(y,z)\le 4d(z,y).
\]
Hence for every \(n\ge d(z,y)\),
\[
B(x,n)\subset B(z,n+d(x,z))\subset B(z,5n).
\]
By \hyperref[VD]{$(\mathrm{VD})$}, this implies \(\mu(B(x,n))\lesssim \mu(B(z,n))\), so
\[
R_x(d(x,y))=R_x(R)\gtrsim R_x(2d(z,y))\gtrsim R_x(d(z,y))\gtrsim R_z(d(z,y)).
\]
Thus
\[
l(x,y)\lesssim l(z,y).
\]
Combining the above two cases, we conclude that there exists \(\kappa>0\) such that
\[
l(x,y)\le \kappa\bigl(l(x,z)+l(z,y)\bigr)
\]
for all \(x,y,z\in V\).
\end{proof}

\begin{proof}[Proof of Theorem \ref{thm:vd-pi-p0-potential-criterion}]
    Suppose that \eqref{di-2} admits a positive solution. In particular, $(V,\mu)$ is non-parabolic.
    By Theorem~\ref{thm:green-level-criterion}, we have
    \begin{align*}
        \sum_{x \in V} g(o, x)^q \sigma(x)\mu(x) < \infty,
    \end{align*}
     and for all \(r>r_0\),
    \begin{align}\label{et2_tm}
       \sup_{x\in V}\sum_{\substack{y \in V \\ g(o,y) > r^{-1}}} g(x,y)\,\sigma(y)\mu(y) \lesssim r^{q-1}.
    \end{align}

    From Lemma \ref{lem:Green-estimate}, we have
    \[
        g(o, x) \gtrsim \sum_{n = d(o, x)}^{\infty} \frac{n}{\mu(B(o, n))}.
    \]
    Hence, using Fubini's theorem, we obtain
    \begin{align*}
        \infty
        &> \sum_{x \in V} g(o, x)^q \sigma(x)\mu(x) \\
        &\gtrsim \sum_{x \in V}
            \Bigg[\sum_{m = d(o, x)}^{\infty}
            \frac{m}{\mu(B(o, m))}\Bigg]^q \sigma(x)\mu(x) \\
        &= \sum_{x \in V}
            \Bigg(\sum_{m = d(o, x)}^{\infty}
            \frac{m}{\mu(B(o, m))}\Bigg)
            \Bigg[\sum_{m = d(o, x)}^{\infty}
            \frac{m}{\mu(B(o, m))}\Bigg]^{q - 1} \sigma(x)\mu(x) \\
        &= \sum\limits_{i=0}^{\infty}\sum_{x\in V: ~d(o,x)=i}
            \Bigg(\sum_{n = i}^{\infty}
            \frac{n}{\mu(B(o, n))}\Bigg)
            \Bigg[\sum_{m = i}^{\infty}
            \frac{m}{\mu(B(o, m))}\Bigg]^{q - 1} \sigma(x)\mu(x) \\
       &\ge \sum_{n = 1}^{\infty} \sum_{i = 0}^{n}
            \frac{n}{\mu(B(o, n))}
            \Bigg[\sum_{m = n}^{\infty}
            \frac{m}{\mu(B(o, m))}\Bigg]^{q - 1}
            \nu(\{x\in V:d(o,x)=i\}) \nonumber\\
       &= \sum_{n = 1}^{\infty}
            \Bigg[\sum_{m = n}^{\infty}
            \frac{m}{\mu(B(o, m))}\Bigg]^{q - 1}
            \frac{n\, \nu(B(o, n))}{\mu(B(o, n))},
    \end{align*}
    where  \(\nu = \sigma \mu\). 
    This proves \eqref{thm_e1}.

    Next, the lower Green-function estimate in
    Lemma~\ref{lem:Green-estimate} gives
    \begin{align*}
        g(o, y)\ge C\sum_{m = d(o, y)}^{\infty}
            \frac{m}{\mu(B(o, m))}.
    \end{align*}

    Set
    \[
        T_j:=\sum_{m=j}^{\infty}\frac{m}{\mu(B(o,m))}.
    \]
    By Lemma~\ref{lem:Green-estimate}, the tail sums \(T_j\) are finite. Moreover,
    \(T_j\downarrow0\) and \(T_j-T_{j+1}=j/\mu(B(o,j))>0\) for \(j\ge1\).
    Choose \(N\) large enough such that \(C T_N<r_0^{-1}\).
    Then, for every integer \(n\ge N\), we may choose \(r>0\) such that

    \begin{align}\label{thmmain_1a}
    C T_n> \frac1r > C T_{n+1}.
    \end{align}
    For such \(n\), we have \(r>r_0\), and hence \eqref{et2_tm} applies.

    For any \(y \in V\) with \(d(o, y) \le n\), we have
    \begin{align*}
        g(o, y)
        &\ge C\sum_{m = d(o, y)}^{\infty}
            \frac{m}{\mu(B(o, m))} \\
        &\ge C\sum_{m = n}^{\infty}
            \frac{m}{\mu(B(o, m))} > \frac{1}{r}.
    \end{align*}
    Define
    \begin{align*}
        A_r = \{y \in V : g(o, y) > r^{-1}\}.
    \end{align*}
    Clearly, \(B(o, n) \subset A_r\).
    Then
    \begin{align*}
        \sum_{y \in A_r} g(x, y)\sigma(y)\mu(y)
        &\gtrsim \sum_{y \in B(o, n)}
            \sum_{m = d(x, y)}^{\infty}
            \frac{m}{\mu(B(x, m))}\sigma(y)\mu(y)
            \\
        &= \sum_{i = 0}^{\infty} \sum_{m = i}^{\infty}
            \frac{m}{\mu(B(x, m))}
            \sum_{\substack{y \in B(o, n) \\ d(x, y) = i}}
            \sigma(y)\mu(y)
            \\
        &= \sum_{m = 1}^{\infty}
            \frac{m}{\mu(B(x, m))}
            \nu(B(x, m) \cap B(o, n)),
    \end{align*}
    where again \(\nu = \sigma \mu\).

    Combining this with \eqref{et2_tm} and \eqref{thmmain_1a}, we deduce
    \begin{align*}
        \sum_{m = 1}^{\infty}
            \frac{m\nu(B(x, m) \cap B(o, n))}{\mu(B(x, m))}\lesssim r^{q - 1}
        < \Bigg[\sum_{m = n + 1}^{\infty}
        \frac{m}{\mu(B(o, m))}\Bigg]^{-(q - 1)}.
    \end{align*}
    From \hyperref[VD]{\((\mathrm{VD})\)}, it follows that
    \begin{align*}
        \sum_{m = n}^{\infty}
        \frac{m}{\mu(B(o, m))}
        \lesssim \sum_{m = n + 1}^{\infty}
        \frac{m}{\mu(B(o, m))}.
    \end{align*}
    Therefore,
    \begin{align*}
        \sum_{m = 1}^{\infty}
            \frac{m\nu(B(x, m) \cap B(o, n))}{\mu(B(x, m))}
        \Bigg[\sum_{i = n}^{\infty}
        \frac{i}{\mu(B(o, i))}\Bigg]^{q - 1}
        \lesssim 1,
    \end{align*}
    uniformly in \(x\in V\) and in all \(n\ge N\).
    It remains only to handle integers \(1\le n<N\).
    For fixed \(n\), using Lemma~\ref{lem:Green-estimate},
    \[
    \begin{aligned}
    	\sum_{m=1}^{\infty}
    	\frac{m\,\nu(B(x,m)\cap B(o,n))}{\mu(B(x,m))}
    	&=
    	\sum_{y\in B(o,n)}\nu(y)
    	\sum_{m\ge d(x,y)}
    	\frac{m}{\mu(B(x,m))}  \\
    	&\lesssim
    	\sum_{y\in B(o,n)}\nu(y)g(x,y)
    	\le
    	\sum_{y\in B(o,N)}\nu(y)g(y,y)<\infty.
    \end{aligned}
    \]
    It follows that, for \(1\le n<N\),
     \begin{align*}
    	\sum_{m = 1}^{\infty}
    	\frac{m\nu(B(x, m) \cap B(o, n))}{\mu(B(x, m))}
    	\Bigg[\sum_{i = n}^{\infty}
    	\frac{i}{\mu(B(o, i))}\Bigg]^{q - 1}
    	\lesssim C,
    \end{align*}
    where \(C=g(o,o)^{q-1}\sum_{y\in B(o,N)}\nu(y)g(y,y)\).

    This establishes
    \eqref{thm_e2} for all \(n\ge1\).

    Conversely, assume that \eqref{thm_e1} and \eqref{thm_e2} hold. Since $\sigma\not \equiv  0$, the finiteness of
    \eqref{thm_e1} implies that
    \[
    \sum_{m=n}^{\infty}\frac{m}{\mu(B(o,m))}<\infty
    \]
    for all sufficiently large \(n\).   By  Lemma \ref{lem:Green-estimate} and Lemma \ref{lem3g}, $(V,\mu)$ is non-parabolic, and \eqref{3g-pre} also holds.  It is sufficient to verify
     \begin{align*}
        \sum_{y \in V} g(o,y)^q \sigma(y)\mu(y) < \infty,
    \end{align*}
    and
    \begin{align*}
       \sup_{x\in V}\sum_{y\in A_{r}} g(x,y)\,\sigma(y)\mu(y) \lesssim r^{q-1},
    \end{align*}
    for all \(r>r_0\), where \(r_0=\kappa g(o,o)^{-1}\) and
    \(A_r:=\{y\in V:g(o,y)>r^{-1}\}\).

    Note that
    \begin{align*}
        g(o, x) \lesssim \sum_{n = d(o, x)}^{\infty} \frac{n}{\mu(B(o, n))}.
    \end{align*}
    Set
    \[
        a_n:=\frac{n}{\mu(B(o,n))},
        \qquad
        A_i:=\sum_{m=i}^{\infty}a_m.
    \]
    By Lemma~\ref{lem:Green-estimate}, \(A_i<\infty\) for all \(i\), \(A_i\downarrow0\),
    and \(A_i-A_{i+1}=a_i>0\) for \(i\ge1\).
    Since
    \[
        A_i^q
        =\sum_{n=i}^{\infty}(A_n^q-A_{n+1}^q)
        \le q\sum_{n=i}^{\infty}a_nA_n^{q-1},
    \]
    we obtain, by Fubini's theorem,
    \begin{align*}
        \sum_{x \in V} g(o, x)^q \sigma(x)\mu(x)
        &\lesssim \sum_{i=0}^{\infty}A_i^q\nu(\{x\in V:d(o,x)=i\}) \\
        &\le q\sum_{n = 1}^{\infty}a_nA_n^{q - 1}\nu(B(o,n)) \\
        &=q\sum_{n = 1}^{\infty}
            \Bigg[\sum_{m = n}^{\infty}
            \frac{m}{\mu(B(o, m))}\Bigg]^{q - 1}
            \frac{n\, \nu(B(o, n))}{\mu(B(o, n))}<\infty.
    \end{align*}

    To verify \eqref{et2}, let \(C>0\) be such that
    \[
        g(o,y)\le C\sum_{m=d(o,y)}^{\infty}\frac{m}{\mu(B(o,m))}
        \qquad\text{for all } y\in V.
    \]
    Enlarging \(r_0\), if necessary, we may assume that
    \(r^{-1}< C T_1\) for every \(r\ge r_0\).
    For such
    \(r\), choose \(n\) such that
    \begin{align*}
        C\sum_{m=n}^{\infty}\frac{m}{\mu(B(o,m))} > \frac1r
        \ge C\sum_{m=n+1}^{\infty}\frac{m}{\mu(B(o,m))}.
    \end{align*}
    If \(y\in A_r\), then \(g(o,y)>r^{-1}\), and hence
    \[
        C\sum_{m=d(o,y)}^{\infty}\frac{m}{\mu(B(o,m))}
        \ge g(o,y)> \frac1r
        \ge C\sum_{m=n+1}^{\infty}\frac{m}{\mu(B(o,m))}.
    \]
    Since the tail sum is decreasing in \(d(o,y)\), it follows that \(d(o,y)\le n\), that is,
    \[
        A_r\subset B(o,n).
    \]
    Therefore,
    \begin{align*}
        \sum_{y \in A_r} g(x, y)\sigma(y)\mu(y)
        &\lesssim \sum_{y \in B(o, n)}
            \sum_{m = d(x, y)}^{\infty}
            \frac{m}{\mu(B(x, m))}\sigma(y)\mu(y) \\
        &= \sum_{m = 1}^{\infty}
            \frac{m}{\mu(B(x, m))}
            \nu(B(x, m) \cap B(o, n)).
    \end{align*}
    By \eqref{thm_e2}, we obtain
    \begin{align*}
        \sum_{y \in A_r} g(x, y)\sigma(y)\mu(y)
        &\lesssim
        \Bigg[
            \sum_{m = n}^{\infty}
            \frac{m}{\mu(B(o, m))}
        \Bigg]^{1 - q}
        \lesssim r^{q - 1}.
    \end{align*}
    This completes the proof.
\end{proof}

An integral counterpart of the following equivalence was proved in
\cite[proof of Corollary~1.2]{GSV20}.  We give a direct discrete argument,
valid for every \(p>0\), and also record the resulting quantitative
comparison.
\begin{lemma}\label{lem:unweighted-tail-equivalence}
Assume \textnormal{(VD)}, fix \(o\in V\), and write
\[
V_n:=\mu(B(o,n)),
\qquad
T_n:=\sum_{m=n}^{\infty}\frac{m}{V_m},
\qquad n\ge1.
\]
Then, for every \(p>0\),
\begin{equation}\label{eq:unweighted-tail-equivalence}
\sum_{n=1}^{\infty}nT_n^p<\infty
\quad\Longleftrightarrow\quad
\sum_{n=1}^{\infty}n\left(\frac{n^2}{V_n}\right)^p<\infty.
\end{equation}
More precisely, the two quantities in \eqref{eq:unweighted-tail-equivalence}
control one another up to constants depending only on \(p\) and the doubling
constant.
\end{lemma}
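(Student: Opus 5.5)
The plan is to compare $T_n$ with $n^2/V_n$ through two elementary inequalities, one in each direction, and then sum. Doubling enters only through the reverse doubling that follows from connectedness, and through a bound on the tail of the series.

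\emph{Lower bound.} Since $V_m$ is nondecreasing,
\[
T_n\ge\sum_{m=n}^{2n}\frac{m}{V_m}\ge \frac{n\cdot n}{V_{2n}}\ge C_D^{-1}\frac{n^2}{V_n}.
\]
Hence $\sum_n n(n^2/V_n)^p\le C_D^p\sum_n nT_n^p$ for every $p>0$. This gives the implication ``$\Rightarrow$''.

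\emph{Upper bound.} Split the tail dyadically:
\[
T_n\le \sum_{k\ge0}\sum_{m=2^kn}^{2^{k+1}n}\frac{m}{V_m}\le \sum_{k\ge0}\frac{(2^{k+1}n)^2}{V_{2^kn}} = 4\sum_{k\ge0} s_{2^kn},\qquad s_j:=\frac{j^2}{V_j}.
\]
It remains to control $\sum_n n\bigl(\sum_k s_{2^kn}\bigr)^p$ by $\sum_n n s_n^p$. I would first group the integers $n$ into dyadic blocks $[2^j,2^{j+1})$. By doubling, $s_m\asymp s_{2^j}$ for $m$ in such a block, with constants depending only on $C_D$. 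Writing $\sigma_j:=s_{2^j}$, the claim reduces to
\[
\sum_j 4^j\Bigl(\sum_{i\ge j}\sigma_i\Bigr)^p\lesssim \sum_j 4^j\sigma_j^p.
\]

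The key is a geometric control of $\sigma_i$ for $i\ge j$. I would use the following fact: on an infinite connected graph with (VD), one has reverse doubling $V_{2r}\ge(1+\eta)V_r$ for some $\eta>0$. Indeed, there is a point at distance about $3r/2$ from $o$, and the ball of radius $r/2$ around it is disjoint from $B(o,r)$ and has comparable measure by (VD).

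Reverse doubling only gives $V_{2^i}\ge (1+\eta)^{i-j}V_{2^j}$, so $\sigma_i\le 4^{i-j}(1+\eta)^{-(i-j)}\sigma_j$, which is not decaying. This is the main obstacle. A direct term-by-term comparison therefore fails, and I would instead use a discrete Hardy-type argument on the weighted sum. I would handle the two ranges of $p$ separately, using $4^j=4^i\cdot 4^{j-i}$ in both.

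\emph{Case $p\le1$.} Subadditivity gives $(\sum_{i\ge j}\sigma_i)^p\le\sum_{i\ge j}\sigma_i^p$. Interchanging the order of summation,
\[
\sum_j 4^j\sum_{i\ge j}\sigma_i^p=\sum_i\sigma_i^p\sum_{j\le i}4^j\le \tfrac43\sum_i 4^i\sigma_i^p.
\]

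\emph{Case $p>1$.} Hölder's inequality with weights $2^{-\epsilon(i-j)}$ gives
\[
\Bigl(\sum_{i\ge j}\sigma_i\Bigr)^p\le C_\epsilon\sum_{i\ge j}2^{\epsilon(p-1)(i-j)}\sigma_i^p.
\]
Swapping sums then leaves $\sum_i\sigma_i^p 4^i\sum_{j\le i}4^{-(i-j)}2^{\epsilon(p-1)(i-j)}$. The inner sum is bounded once $\epsilon(p-1)<2$.

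In both cases the bound closes and yields ``$\Leftarrow$'', with constants depending only on $p$ and $C_D$. The same bookkeeping of constants gives the quantitative two-sided comparison stated in the lemma. Reverse doubling then turns out not to be needed: the factor $4^j$ together with the summation over $j\le i$ supplies the geometric decay.
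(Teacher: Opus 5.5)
Your argument is correct, and its first half is the paper's: the lower bound $T_n\ge n^2/V_{2n}\gtrsim n^2/V_n$ and the dyadic tail bound $T_n\lesssim\sum_{k\ge0}s_{2^kn}$ are exactly what the paper does.

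The two proofs part ways in how the remaining sum is controlled. The paper never groups $n$ into blocks. It reindexes along the orbits $n\mapsto 2^kn$, writing $\sum_n n\,s_{2^kn}^p=2^{-k}\sum_n (2^kn)\,s_{2^kn}^p\le 2^{-k}\sum_m m\,s_m^p$. It then sums over $k$, using subadditivity of $t\mapsto t^p$ for $p\le1$, and for $p>1$ Minkowski's inequality for the weighted norm $(\sum_n n\,|a_n|^p)^{1/p}$, which yields the convergent factor $\sum_k 2^{-k/p}$. That route needs no regularity of $s_m$. Consequently the implication ``$\Leftarrow$'' holds on every graph using only monotonicity of $V_n$, and (VD) enters only in the lower bound.

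Your route uses (VD) a second time, to make $s_m$ essentially constant on each $[2^j,2^{j+1})$. This reduces the problem to the scalar weighted Hardy inequality $\sum_j 4^j\bigl(\sum_{i\ge j}\sigma_i\bigr)^p\lesssim\sum_j 4^j\sigma_j^p$. Your proof of that inequality is fine: subadditivity, resp.\ weighted H\"older with $\epsilon(p-1)<2$, followed by interchanging the sums. This makes the mechanism transparent: the growth of the weight $4^j$ compensates for the possibly non-decaying $\sigma_i$. The cost is a slightly longer argument and a doubling-dependent constant in both directions.

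The paragraph on reverse doubling is, as you conclude yourself, unnecessary and can be deleted. Its sketch would also need more care at small radii.
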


\begin{proof}
Set
\[
b_n:=\frac{n^2}{V_n}.
\]
By \textnormal{(VD)},
\[
T_n
\ge
\sum_{m=n}^{2n-1}\frac{m}{V_m}
\ge
\frac{n^2}{V_{2n}}
\gtrsim
\frac{n^2}{V_n}
=b_n.
\]
Consequently,
\begin{equation*}
\sum_{n=1}^{\infty}n b_n^p
\lesssim
\sum_{n=1}^{\infty}nT_n^p.
\end{equation*}

For the converse estimate, decompose the tail into dyadic blocks.  Since
\(V_m\ge V_{2^jn}\) for \(2^jn\le m<2^{j+1}n\),
\begin{align*}
T_n
&=
\sum_{j=0}^{\infty}
\sum_{m=2^jn}^{2^{j+1}n-1}\frac{m}{V_m} \\
&\le
2\sum_{j=0}^{\infty}
\frac{(2^jn)^2}{V_{2^jn}}
=
2\sum_{j=0}^{\infty}b_{2^jn}.
\end{align*}
All the following estimates may first be applied to truncated tails and then
passed to the limit by monotone convergence.

If \(0<p\le1\), subadditivity of \(t\mapsto t^p\) gives
\begin{align*}
\sum_{n=1}^{\infty}nT_n^p
&\le
2^p\sum_{j=0}^{\infty}
\sum_{n=1}^{\infty}n b_{2^jn}^p \\
&=
2^p\sum_{j=0}^{\infty}2^{-j}
\sum_{n=1}^{\infty}(2^jn)b_{2^jn}^p \\
&\le
2^p\sum_{j=0}^{\infty}2^{-j}
\sum_{m=1}^{\infty}m b_m^p
\lesssim_p
\sum_{m=1}^{\infty}m b_m^p.
\end{align*}
If \(p>1\), Minkowski's inequality yields
\begin{align*}
\left(\sum_{n=1}^{\infty}nT_n^p\right)^{1/p}
&\le
2\sum_{j=0}^{\infty}
\left(\sum_{n=1}^{\infty}n b_{2^jn}^p\right)^{1/p} \\
&\le
2\left(\sum_{m=1}^{\infty}m b_m^p\right)^{1/p}
\sum_{j=0}^{\infty}2^{-j/p}.
\end{align*}
This proves the reverse inequality and hence
\eqref{eq:unweighted-tail-equivalence}.
\end{proof}

In the unweighted manifold setting, the analogous implication from the
first condition to the second was observed in
\cite[discussion preceding Corollary~1.2]{GSV20}. The next lemma gives
the corresponding discrete statement, together with a proof based on
Green-function estimates and the weak maximum principle.
\begin{lemma}\label{lem:unweighted-testing-automatic}
Assume \textnormal{(VD)}, \textnormal{(PI)}, and \textnormal{(P$_0$)}.  Fix
\(o\in V\), use the notation of
Lemma~\ref{lem:unweighted-tail-equivalence}, and let \(p>0\).  If
\begin{equation}\label{eq:unweighted-first-condition-finite}
\sum_{n=1}^{\infty}nT_n^p
<\infty,
\end{equation}
then
\begin{equation}\label{eq:unweighted-second-condition-automatic}
\sup_{n\ge1}\sup_{x\in V}
\left[
\sum_{m=1}^{\infty}
 m\frac{\mu(B(x,m)\cap B(o,n))}{\mu(B(x,m))}
\right]T_n^p
<\infty.
\end{equation}
Thus, in the unweighted case, the second condition in
Theorem~\ref{thm:vd-pi-p0-potential-criterion} follows from the first one.
\end{lemma}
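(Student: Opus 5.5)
The plan is to read the bracket in \eqref{eq:unweighted-second-condition-automatic} as a Green potential, reduce the supremum over \(x\in V\) to a supremum over \(x\in B(o,n)\) by the weak maximum principle, and then bound that potential by two elementary pieces, each controlled by \eqref{eq:unweighted-first-condition-finite}.

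First, exactly as in the proof of Theorem~\ref{thm:vd-pi-p0-potential-criterion}, Fubini's theorem together with the two-sided estimate of Lemma~\ref{lem:Green-estimate} gives
\[
\sum_{m=1}^\infty m\frac{\mu(B(x,m)\cap B(o,n))}{\mu(B(x,m))}
=\sum_{y\in B(o,n)}\mu(y)\sum_{m\ge d(x,y)}\frac{m}{\mu(B(x,m))}
\asymp G(\mathbf 1_{B(o,n)})(x).
\]
(Non-parabolicity follows from \eqref{eq:unweighted-first-condition-finite}, since this forces \(T_n<\infty\).) Lemma~\ref{lem:wmp-graph}, applied with \(f=\mathbf 1_{B(o,n)}\) and \(A=B(o,n)\), shows that \(\sup_{x\in V}G(\mathbf 1_{B(o,n)})(x)=\sup_{x\in B(o,n)}G(\mathbf 1_{B(o,n)})(x)\). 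Going back through the same comparison, it therefore suffices to bound the bracket uniformly for \(x\in B(o,n)\).

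For \(d(o,x)\le n\) I split the \(m\)-sum at \(m=2n\).
\begin{itemize}
\item For \(m\le 2n\), the ratio is at most \(1\), so this part contributes at most \(\sum_{m\le 2n}m\lesssim n^2\).
\item For \(m>2n\), the inclusions \(B(x,m)\supset B(o,m-n)\supset B(o,m/2)\) and (VD) give \(\mu(B(x,m))\gtrsim V_m\). Hence this part is \(\lesssim V_n\sum_{m>2n}m/V_m\le V_nT_n\).
\end{itemize}
Thus the whole expression in \eqref{eq:unweighted-second-condition-automatic} is \(\lesssim n^2T_n^p+V_nT_n^{p+1}\), and it remains to bound these two terms uniformly in \(n\).

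\emph{First term.} Since \(T_k\) is decreasing,
\[
n^2T_n^p\lesssim\sum_{n/2\le k\le n}kT_k^p\le\sum_k kT_k^p<\infty .
\]

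\emph{Second term.} This is the step I expect to be the main obstacle, since \(T_n/b_n\) need not be bounded and a direct comparison with \(n^2T_n^p\) fails. I would instead use the telescoping bound from the proof of Theorem~\ref{thm:vd-pi-p0-potential-criterion}:
\[
T_n^{p+1}=\sum_{m\ge n}\bigl(T_m^{p+1}-T_{m+1}^{p+1}\bigr)\le(p+1)\sum_{m\ge n}\frac{m}{V_m}T_m^{p}.
\]
Because \(V_n\le V_m\) for \(m\ge n\), this yields
\[
V_nT_n^{p+1}\le(p+1)\sum_{m\ge n}m\,T_m^p\le(p+1)\sum_{m\ge1}mT_m^p<\infty .
\]
Combining the two terms gives \eqref{eq:unweighted-second-condition-automatic}, with constants depending only on \(p\) and the structural constants.

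Finally, taking \(p=q-1\) and \(\sigma\equiv1\), Lemma~\ref{lem:unweighted-tail-equivalence} identifies \eqref{eq:unweighted-first-condition-finite} with the first condition \eqref{thm_e1} in the unweighted case. This gives the remark that, in that case, the second condition of Theorem~\ref{thm:vd-pi-p0-potential-criterion} follows from the first.
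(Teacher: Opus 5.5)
Your proof is correct and follows essentially the same route as the paper's. Both compare the bracket with \(G\mathbf 1_{B(o,n)}\) via the Green estimate, use Lemma~\ref{lem:wmp-graph} to restrict to \(x\in B(o,n)\), split at \(m=2n\) to get \(n^2+V_nT_n\), and close with the monotonicity bound for \(n^2T_n^p\) and the telescoping bound \(V_nT_n^{p+1}\le(p+1)\sum_{m\ge n}mT_m^p\). One small point on your final remark: with \(\sigma\equiv1\), condition \eqref{thm_e1} is literally \(\sum_n nT_n^{q-1}<\infty\), so Lemma~\ref{lem:unweighted-tail-equivalence} is not needed for that identification; it is only used to convert this into the volume condition \(\sum_n n^{2q-1}/\mu(B(o,n))^{q-1}<\infty\).
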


\begin{proof}
Condition \eqref{eq:unweighted-first-condition-finite} implies \(T_1<\infty\),
so the graph is non-parabolic by Lemma~\ref{lem:Green-estimate}.  Set
\[
B_n:=B(o,n),
\qquad
F_n(x):=G\mathbf 1_{B_n}(x).
\]
The lower Green estimate in Lemma~\ref{lem:Green-estimate} and Tonelli's
theorem give
\begin{equation}\label{eq:testing-sum-below-potential}
\sum_{m=1}^{\infty}
 m\frac{\mu(B(x,m)\cap B_n)}{\mu(B(x,m))}
\lesssim
F_n(x).
\end{equation}
Since \(\mathbf 1_{B_n}\) is supported in \(B_n\), the weak maximum principle,
Lemma~\ref{lem:wmp-graph}, gives
\[
F_n(x)\le \sup_{z\in B_n}F_n(z),
\qquad x\in V.
\]
Fix \(z\in B_n\).  By the upper Green estimate and Tonelli's theorem,
\begin{align*}
F_n(z)
&\lesssim
\sum_{m=1}^{\infty}
 m\frac{\mu(B(z,m)\cap B_n)}{\mu(B(z,m))} \\
&\le
\sum_{m\le2n}m
+
V_n\sum_{m>2n}\frac{m}{\mu(B(z,m))}.
\end{align*}
For \(m>2n\), one has
\[
B(o,m-n)\subset B(z,m),
\qquad m-n\ge \frac m2.
\]
By \textnormal{(VD)}, this implies
\[
\mu(B(z,m))\ge V_{m-n}\gtrsim V_m.
\]
Therefore
\begin{equation}\label{eq:ball-potential-estimate}
F_n(x)
\lesssim
n^2+V_nT_n,
\qquad x\in V.
\end{equation}

It remains to control the right-hand side after multiplication by \(T_n^p\).
Since \(T_m\) is decreasing,
\begin{equation}\label{eq:n2-tail-control}
n^2T_n^p
\lesssim
\sum_{m=\lceil n/2\rceil}^{n}mT_m^p
\le \sum_{m=1}^{\infty}mT_m^p.
\end{equation}
Also, \(T_m\downarrow0\) and
\(T_m-T_{m+1}=m/V_m\).  Hence, by the convexity inequality
\(a^{p+1}-b^{p+1}\le(p+1)a^p(a-b)\) for \(a\ge b\ge0\),
\begin{align}
V_nT_n^{p+1}
&=
V_n\sum_{m=n}^{\infty}
\bigl(T_m^{p+1}-T_{m+1}^{p+1}\bigr) \nonumber\\
&\le
(p+1)V_n\sum_{m=n}^{\infty}
T_m^p\frac{m}{V_m} \nonumber\\
&\le
(p+1)\sum_{m=n}^{\infty}mT_m^p
\le
(p+1)\sum_{m=1}^{\infty}mT_m^p.
\label{eq:volume-tail-control}
\end{align}
Combining \eqref{eq:testing-sum-below-potential},
\eqref{eq:ball-potential-estimate}, \eqref{eq:n2-tail-control}, and
\eqref{eq:volume-tail-control} proves
\eqref{eq:unweighted-second-condition-automatic}.
\end{proof}

\begin{proof}[Proof of Corollary~\ref{cor:unweighted-one-condition}]
Take \(\sigma\equiv1\), so that \(\nu=\mu\), and put \(p=q-1\).  Then the
first condition \eqref{thm_e1} in
Theorem~\ref{thm:vd-pi-p0-potential-criterion} becomes
\[
\sum_{n=1}^{\infty}nT_n^{q-1}<\infty.
\]
By Lemma~\ref{lem:unweighted-tail-equivalence}, this is equivalent to
\[
\sum_{n=1}^{\infty}
\frac{n^{2q-1}}{\mu(B(o,n))^{q-1}}<\infty.
\]
Moreover, whenever this condition holds,
Lemma~\ref{lem:unweighted-testing-automatic} shows that the second condition
\eqref{thm_e2} is automatic.  The conclusion now follows directly from
Theorem~\ref{thm:vd-pi-p0-potential-criterion}.
\end{proof}

\section{Flow decomposition and capacity estimates}
\label{sec:intrinsic-flow}
In this section, we prove Theorem~\ref{thm:weighted-liouville}.  The argument
based on flow decomposition first yields a finite-domain relative capacity
estimate; criteria based on annular conductance, capacity to infinity, and
volume growth then follow as consequences.  Throughout this section,
\(q>1\), \(0<\sigma\in\ell^+(V)\), \(\nu=\sigma\mu\), and \(\rho\) is a
\(\nu\)-adapted edge-length function such that \((V,d_\rho)\) is complete.
All balls and radial variables are defined using \(d_\rho\), while \(d\)
continues to denote the graph distance.  Corollary~\ref{cor:unweighted-liouville}
follows by taking \(\sigma\equiv1\) and \(\rho\equiv1\).


Fix a base point \(o\in V\), set \(\nu=\sigma\mu\), and, for \(r\ge0\), write
\begin{equation*}\label{eq:section6-ball-volume-notation}
B_r:=B_{d_\rho}(o,r).
\end{equation*}
By Proposition~\ref{prop:complete-finite-balls}, every ball \(B_r\) is finite.
It is also connected.  Indeed, if \(x\in B_r\), an approximating path from
\(o\) to \(x\) of length less than \(r+1\) lies in the finite ball
\(B_{r+1}\).  After loops are removed, only finitely many such simple paths
remain, so one of them realizes \(d_\rho(o,x)\).  Every initial segment of a
minimizing path has length at most \(d_\rho(o,x)\le r\), and therefore stays in
\(B_r\).

For \(r>0\), let
\begin{equation*}\label{eq:annular-conductance-profile}
\mathcal C_o(r)
:=
C_{\mathrm{eff}}\bigl(B_r,V\setminus B_{2r}\bigr).
\end{equation*}
Here effective conductance has the meaning fixed in
Subsection~\ref{subsection:finite-flows}: since \(B_{2r}\) is finite, one
retains all edges incident with \(B_{2r}\) and wires \(B_r\) and
\(V\setminus B_{2r}\) separately. 
Clearly, we have
$0<\mathcal C_o(r)<\infty$ for $r>0$.

For \(R>0\), define
\begin{equation*}\label{eq:section6-green-energy-notation}
g_R(x)
:=
\begin{cases}
g_{B_R}(o,x),&x\in B_R,\\
0,&x\notin B_R,
\end{cases}
\qquad
L_R^\nu
:=
\sum_{x\in B_R}g_R(x)^q\nu(x).
\end{equation*}
For \(f\in\ell_0(V)\), set
\begin{equation*}\label{eq:ambient-dirichlet-energy}
\mathcal E(f)
:=
\frac12\sum_{x\in V}\sum_{y\sim x}
\mu_{xy}\bigl(f(x)-f(y)\bigr)^2.
\end{equation*}
For \(0<r<R\), define the capacity of \(B_r\) relative to the killed domain
\(B_R\) by
\begin{equation}\label{eq:relative-capacity-profile}
\capacity_{B_R}(B_r)
:=
\inf\left\{
\mathcal E(f):
 f=1\text{ on }B_r,\quad
 f=0\text{ on }V\setminus B_R
\right\}.
\end{equation}
The central estimate of the section is
\begin{equation*}
L_R^\nu
\ge c_q\int_0^R r\,\capacity_{B_R}(B_r)^{1-q}\,dr,
\qquad R>0,
\end{equation*}
where \(c_q>0\) depends only on \(q\).  Its annular consequence is
\begin{equation*}
L_R^\nu
\ge c_q\int_0^{R/2}r\,\mathcal C_o(r)^{1-q}\,dr,
\qquad R>0.
\end{equation*}
The latter can be used directly as a conductance criterion.  By estimating
\(\mathcal C_o\) with an intrinsic radial cutoff, it also yields
\begin{equation}\label{eq:intrinsic-green-volume-lower}
L_R^\nu
\ge c_q\int_0^R
\frac{r^{2q-1}}{\nu(B_r)^{q-1}}\,dr,
\qquad R>0.
\end{equation}

\subsection{Unit current and flow decomposition}

Let
\[
\partial_E B_R
:=
\bigl\{e=\{x,y\}\in E:x\in B_R,\ y\notin B_R\bigr\}.
\]
For each \(e=\{x,y\}\in\partial_E B_R\), with \(x\in B_R\) and
\(y\notin B_R\), introduce a terminal vertex \(b_e\).  Let
\[
\mathcal T_R:=\{b_e:e\in\partial_E B_R\}.
\]
We form a finite weighted graph \((\widehat V_R,\widehat\mu_R)\) as
follows.  Its vertex set is
\[
\widehat V_R:=B_R\cup\mathcal T_R.
\]
We retain every edge with both endpoints in \(B_R\) and replace each boundary
edge \(e=\{x,y\}\) by a copied edge joining \(x\) to \(b_e\).  An internal
edge \(\{x,z\}\subset B_R\) retains the weight \(\mu_{xz}\), while the copied
edge \(\{x,b_e\}\), corresponding to \(e=\{x,y\}\), receives the weight
\(\mu_{xy}\).  Every retained or copied edge is labelled by its unique
original edge \(e\); we denote its weight by \(\mu_e\) and its original
intrinsic length by \(\rho_e\).  Thus \(\rho_e=\rho(x,y)\) when the original
edge is \(e=\{x,y\}\).

Define a radial label on the vertices of \(\widehat V_R\) by
\begin{equation*}\label{eq:terminal-radial-label}
\mathfrak r_R(z)
:=
\begin{cases}
d_\rho(o,z),&z\in B_R,\\
d_\rho(o,y),&z=b_e,\ e=\{x,y\}\in\partial_E B_R.
\end{cases}
\end{equation*}
In particular, \(\mathfrak r_R(b_e)>R\).  Thus the terminal copies retain the
edge weight, intrinsic length, and radial position of every original boundary
edge.  Extend \(g_R\) by zero to \(\mathcal T_R\).  By symmetry of the Green
function and \eqref{eq:green-fundamental-solution}, the zero extension of
\(g_R\) to \(V\setminus B_R\) satisfies
\begin{equation}\label{eq:intrinsic-current-equation}
\sum_{y\sim x}
\mu_{xy}\bigl(g_R(x)-g_R(y)\bigr)
=
\mathbf 1_{\{o\}}(x),
\qquad x\in B_R.
\end{equation}
Under the correspondence between original and copied edges, this is precisely
the current-conservation equation on the finite weighted graph
\((\widehat V_R,\widehat\mu_R)\).

Orient every edge with nonzero voltage drop from the larger value of \(g_R\)
to the smaller one, discard the edges with zero voltage drop, and set
\begin{equation*}\label{eq:green-current-definition}
\theta_e
:=
\mu_e\bigl(g_R(\tail(e))-g_R(\head(e))\bigr)
\end{equation*}
for every retained directed edge, with the antisymmetric extension to the
reverse orientation.  Since \(g_R>0\) on \(B_R\), every retained terminal
edge is directed into \(\mathcal T_R\).  Equation~\eqref{eq:intrinsic-current-equation} then
shows that \(\theta\) is a unit flow from \(\{o\}\) to \(\mathcal T_R\).
Since the voltage decreases strictly along its directed support, the flow is
acyclic.
Lemma~\ref{lem:finite-acyclic-flow-decomposition} therefore gives a probability
measure on directed paths
\[
\gamma=(x_0=o,e_0,x_1,\ldots,e_{m-1},x_m),
\qquad x_m\in\mathcal T_R,
\]
such that
\begin{equation}\label{eq:intrinsic-edge-marginal}
\mathbb P(\gamma\text{ uses }e)=\theta_e
\end{equation}
for every retained directed edge \(e\). Fix one such probability measure.

For a sampled path, write
\begin{equation*}\label{eq:intrinsic-path-notation}
\begin{aligned}
V_i&=g_R(x_i),&&0\le i\le m,\\
\ell_i&=\rho_{e_i},\qquad
\delta_i=V_i-V_{i+1}>0,&&0\le i\le m-1,\\
s_i&=\sum_{j=0}^{i-1}\ell_j,&&0\le i\le m.
\end{aligned}
\end{equation*}
Thus \(V_m=0\), \(s_0=0\), and the total path length is
\(L_\gamma=s_m\).  The path in the finite weighted graph
\((\widehat V_R,\widehat\mu_R)\) corresponds to an original graph path ending at the exterior vertex represented by \(x_m\).
Consequently,
\begin{equation}\label{eq:radial-label-prefix-bound}
\mathfrak r_R(x_i)\le s_i,
\qquad 0\le i\le m,
\end{equation}
and, in particular,
\begin{equation}\label{eq:path-length-exceeds-radius}
L_\gamma\ge\mathfrak r_R(x_m)>R.
\end{equation}
Define the path energy
\begin{equation*}\label{eq:intrinsic-path-energy}
H_\gamma
:=
\sum_{i=0}^{m-1}\frac{\ell_i^2V_i^q}{\delta_i}.
\end{equation*}

\begin{lemma}\label{lem:intrinsic-path-energy}
For every \(R>0\),
\begin{equation*}\label{eq:intrinsic-path-energy-expectation}
\mathbb E_\gamma H_\gamma\le L_R^\nu.
\end{equation*}
\end{lemma}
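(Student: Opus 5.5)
The plan is to compute the expectation edge by edge, using the edge-marginal identity \eqref{eq:intrinsic-edge-marginal}. Along a sampled path, each summand of \(H_\gamma\) depends only on the directed edge \(e_i=(x_i,x_{i+1})\). Indeed, \(\ell_i=\rho_e\), \(V_i=g_R(\tail(e))\) and \(\delta_i=g_R(\tail(e))-g_R(\head(e))\). Since the directed support is acyclic, a path uses each directed edge at most once. Linearity of expectation therefore gives
\[
\mathbb E_\gamma H_\gamma
=\sum_{e}\mathbb P(\gamma\text{ uses }e)\,\frac{\rho_e^2\,g_R(\tail(e))^q}{g_R(\tail(e))-g_R(\head(e))}
=\sum_{e}\theta_e\,\frac{\rho_e^2\,g_R(\tail(e))^q}{g_R(\tail(e))-g_R(\head(e))},
\]
where the sum runs over retained directed edges of \((\widehat V_R,\widehat\mu_R)\). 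Substituting \(\theta_e=\mu_e\bigl(g_R(\tail(e))-g_R(\head(e))\bigr)\), the voltage drops cancel exactly, and we obtain
\[
\mathbb E_\gamma H_\gamma=\sum_{e}\mu_e\rho_e^2\,g_R(\tail(e))^q .
\]

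Next we regroup this sum by the tail vertex. Every tail lies in \(B_R\), because terminal vertices carry \(g_R=0\) and are only heads. Each retained directed edge with tail \(x\) corresponds to a distinct original edge \(\{x,y\}\) with \(y\sim x\); this includes boundary edges, whose copies carry the original \(\mu_{xy}\) and \(\rho(x,y)\). Moreover, only one orientation of each edge is retained. Hence
\[
\mathbb E_\gamma H_\gamma\le\sum_{x\in B_R}g_R(x)^q\sum_{y\sim x}\mu_{xy}\rho(x,y)^2 .
\]
Dropping the restriction to retained edges only increases the sum, since every term is nonnegative.

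Finally, we apply the adaptedness condition \eqref{eq:adapted-weight}, \(\sum_{y\sim x}\mu_{xy}\rho(x,y)^2\le\nu(x)\). This yields \(\mathbb E_\gamma H_\gamma\le\sum_{x\in B_R}g_R(x)^q\nu(x)=L_R^\nu\).

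The only delicate point is the bookkeeping in the first step. We must check that the probability of using \(e\) equals the expected number of uses; this holds because a directed path in an acyclic support is simple. We must also check that the cancellation is legitimate, which holds because \(\delta_i>0\) on retained edges. Everything else is a direct computation.
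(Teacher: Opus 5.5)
Your proposal is correct and follows the paper's proof essentially step for step. The paper likewise applies the edge-marginal identity and cancels the voltage drop against $\theta_e$. It then regroups by tails in $B_R$, matching copied boundary edges one-to-one with original edges, and applies adaptedness. Your remark that directed paths in the acyclic support are simple, so the probability of using $e$ equals the expected number of uses, makes explicit a step the paper leaves implicit.
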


\begin{proof}
Using \eqref{eq:intrinsic-edge-marginal}, we obtain
\begin{align*}
\mathbb E_\gamma H_\gamma
&=
\sum_{e=(x,z)}
\theta_e\frac{\rho_e^2g_R(x)^q}{g_R(x)-g_R(z)}\\
&=
\sum_{e=(x,z)}\mu_e\rho_e^2g_R(x)^q,
\end{align*}
where the sums are over retained directed edges and \(x=\tail(e)\).  Every
such tail lies in \(B_R\).  For each \(x\in B_R\), the copied boundary edges
are in one-to-one correspondence with the original boundary edges incident
with \(x\).  Therefore, by \eqref{eq:adapted-weight},
\begin{align*}
\mathbb E_\gamma H_\gamma
&\le
\sum_{x\in B_R}g_R(x)^q
\sum_{y\sim x}\mu_{xy}\rho(x,y)^2\\
&\le
\sum_{x\in B_R}g_R(x)^q\nu(x)
=
L_R^\nu.
\end{align*}
\end{proof}

\subsection{A deterministic estimate along one path}

Let \(v_\gamma:[0,L_\gamma]\to[0,\infty)\) be the piecewise affine function
satisfying \(v_\gamma(s_i)=V_i\).

\begin{lemma}\label{lem:weighted-path-hardy}
Let \(q>1\).  Suppose that \(v:[0,L]\to[0,\infty)\) is continuous,
strictly decreasing, and affine on each interval of a finite partition.  Then
\begin{equation*}\label{eq:weighted-path-hardy}
\int_0^L\frac{v(s)^q}{-v'(s)}\,ds
\ge
\frac{q-1}{4\log 2}\int_0^L s\,v(s)^{q-1}\,ds.
\end{equation*}
\end{lemma}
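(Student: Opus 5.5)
The plan is to remove the nonlinearity by the substitution \(w:=v^{1-q}\), and then to get the stated constant from a Cauchy--Schwarz estimate on the intervals \([s/2,s]\) followed by Fubini.

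\emph{Substitution.} Since \(v\) is continuous, strictly decreasing and piecewise affine, we have \(v>0\) on \([0,L)\); at most \(v(L)=0\). Hence \(w\) is positive, strictly increasing and locally absolutely continuous on \([0,L)\). Off the finitely many break points,
\[
w'=(q-1)v^{-q}(-v')>0 ,
\]
and therefore
\[
\frac{v^q}{-v'}=\frac{q-1}{w'},\qquad s\,v^{q-1}=\frac{s}{w}.
\]
The lemma is thus equivalent to
\[
\int_0^L\frac{s}{w(s)}\,ds\le 4\log 2\int_0^L\frac{dt}{w'(t)} .
\]
The endpoint \(s=L\) is a single point and does not affect the integrals.

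\emph{Pointwise bound.} Fix \(s\in(0,L)\). Since \(w(s/2)>0\), Cauchy--Schwarz on \([s/2,s]\) gives
\[
w(s)\ge w(s)-w(s/2)=\int_{s/2}^s w'(t)\,dt\ge\frac{(s/2)^2}{\int_{s/2}^s\frac{dt}{w'(t)}} .
\]
Consequently
\[
\frac{s}{w(s)}\le\frac4s\int_{s/2}^s\frac{dt}{w'(t)} .
\]

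\emph{Integration.} Integrate this bound over \(s\in(0,L)\) and apply Tonelli; all integrands are nonnegative. For fixed \(t\), the variable \(s\) ranges over \([t,\min\{2t,L\}]\), so
\[
\int_0^L\frac{s}{w(s)}\,ds\le 4\int_0^L\frac{1}{w'(t)}\int_t^{\min\{2t,L\}}\frac{ds}{s}\,dt\le 4\log 2\int_0^L\frac{dt}{w'(t)} .
\]
Multiplying by \(q-1\) and returning to \(v\) yields exactly the constant \(\frac{q-1}{4\log 2}\).

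The only step needing care is the justification of the Cauchy--Schwarz and Tonelli steps. The function \(w\) is only piecewise \(C^1\), and it may blow up at \(L\) when \(v(L)=0\). Both steps take place on compact subintervals of \([0,L)\), where \(w\) is absolutely continuous with \(w'>0\) almost everywhere. If \(\int_0^L dt/w'(t)=\infty\), the inequality is trivial.
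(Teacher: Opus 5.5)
Your proof is correct and is essentially the paper's argument. The paper sets $a=-v'/v^q$ and $A(t)=\int_0^t a$, applies Cauchy--Schwarz on $[t/2,t]$, integrates against $dt/t$ with a change of order to obtain the factor $4\log 2$, and finally uses $(q-1)A(t)\le v(t)^{1-q}$. Your substitution $w=v^{1-q}$, with $w'=(q-1)a$ and $w(s)\ge w(s)-w(s/2)$, packages exactly the same steps.
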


\begin{proof}
Set
\[
a(s)=\frac{-v'(s)}{v(s)^q},
\qquad
A(t)=\int_0^t a(s)\,ds.
\]
For \(0<t<L\), the Cauchy--Schwarz inequality gives
\[
\int_{t/2}^t\frac{ds}{a(s)}
\ge
\frac{t^2}{4A(t)}.
\]
After division by \(t\), integration in \(t\), and a change of order,
\[
\int_0^L\frac{t}{A(t)}\,dt
\le
4\log 2\int_0^L\frac{ds}{a(s)}.
\]
Moreover,
\[
A(t)
=
\int_0^t\frac{-v'(s)}{v(s)^q}\,ds
\le
\frac{v(t)^{1-q}}{q-1}.
\]
Hence \(A(t)^{-1}\ge(q-1)v(t)^{q-1}\), which proves the result.
\end{proof}

On \((s_i,s_{i+1})\), one has
\(-v_\gamma'=\delta_i/\ell_i\) and \(v_\gamma\le V_i\).  Therefore,
\begin{equation}\label{eq:H-controls-path-integral}
H_\gamma
\ge
\int_0^{L_\gamma}\frac{v_\gamma(s)^q}{-v_\gamma'(s)}\,ds
\ge
\frac{q-1}{4\log 2}
\int_0^{L_\gamma}s\,v_\gamma(s)^{q-1}\,ds.
\end{equation}

For \(0<r<R\), define
\begin{equation*}\label{eq:first-exit-index}
\kappa_r
:=
\min\bigl\{0\le i\le m-1:\mathfrak r_R(x_{i+1})>r\bigr\},
\qquad
Z_r(\gamma):=V_{\kappa_r}.
\end{equation*}
The index is well-defined because \(\mathfrak r_R(x_0)=0\), whereas
\(\mathfrak r_R(x_m)>R\).  By minimality,
\begin{equation}\label{eq:first-exit-location}
d_\rho(o,x_{\kappa_r})
=
\mathfrak r_R(x_{\kappa_r})
\le r
<
\mathfrak r_R(x_{\kappa_r+1}).
\end{equation}
Thus \(Z_r>0\) is the voltage at the tail of the first edge by which the path
leaves the intrinsic ball \(B_r\).  The radial sequence
\(i\mapsto\mathfrak r_R(x_i)\) need not be monotone; only this first exit
is used.  If the head is a terminal copy, the last quantity in
\eqref{eq:first-exit-location} is the radial distance of the original exterior
endpoint.

\begin{lemma}\label{lem:radial-first-exit-hardy}
There exists \(c_q>0\), depending only on \(q\), such that
\begin{equation}\label{eq:radial-first-exit-hardy}
H_\gamma
\ge
c_q\int_0^R r\,Z_r(\gamma)^{q-1}\,dr
\end{equation}
for every sampled path.
\end{lemma}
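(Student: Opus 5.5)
We start from \eqref{eq:H-controls-path-integral}, which already gives
\[
H_\gamma\ge \frac{q-1}{4\log 2}\int_0^{L_\gamma}s\,v_\gamma(s)^{q-1}\,ds .
\]
It therefore suffices to show that
\[
\int_0^{L_\gamma}s\,v_\gamma(s)^{q-1}\,ds\ \ge\ c\int_0^R r\,Z_r(\gamma)^{q-1}\,dr
\]
for an absolute constant $c>0$. The plan is to compare the arclength variable $s$ along the path with the radial variable $r$, using the first-exit index $\kappa_r$.

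\textbf{Key monotonicity observation.} For $0<r<R$, put $i=\kappa_r$. Then $\mathfrak r_R(x_{i+1})>r$, and by \eqref{eq:radial-label-prefix-bound} this gives $s_{i+1}\ge \mathfrak r_R(x_{i+1})>r$. On the interval $[s_i,s_{i+1}]$ the function $v_\gamma$ decreases from $V_i=Z_r$ to $V_{i+1}$. This only provides $v_\gamma\ge V_{i+1}$ there, which may be much smaller than $Z_r$, so a direct pointwise comparison fails.

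\textbf{Preferred route: a distributional comparison.} Define $\sigma(r):=s_{\kappa_r}$, the arclength at the tail of the exit edge. Two facts hold:
\begin{itemize}
\item $v_\gamma(\sigma(r))=Z_r$;
\item $\kappa_r$ is nondecreasing in $r$, since the first time the path reaches radius $>r$ can only move later as $r$ grows. Hence $\sigma$ is nondecreasing and $r\mapsto Z_r$ is nonincreasing.
\end{itemize}
Moreover, $\sigma(r)+\ell_{\kappa_r}=s_{\kappa_r+1}>r$. It is also known that $\mathfrak r_R(x_{\kappa_r})\le r$, but this gives no control of $s$ from below in terms of $r$; the useful direction is only $s_{\kappa_r+1}>r$.

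The difficulty is that $v_\gamma$ equals $Z_r$ only at $s=\sigma(r)$, not on a whole interval of length comparable to $r$. To get around this, compare level sets. For $t>0$,
\[
\{r<R: Z_r>t\}=[0,r_t),
\]
where $r_t$ is the smallest $r$ with $Z_r\le t$. Let $s^*$ be the point where $v_\gamma=t$; since $v_\gamma$ is strictly decreasing, $\{s: v_\gamma(s)>t\}=[0,s^*)$. If $r<r_t$, then $V_{\kappa_r}>t$, so $s_{\kappa_r}<s^*$. The next step is to show that $s_{\kappa_r+1}\le s^*$ up to one edge, and hence that
\[
r< s_{\kappa_r+1}\le s^* + (\text{one edge}).
\]

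\textbf{Main obstacle: the last edge.} The extra edge is the real problem. Its length $\ell$ can be large, and $v_\gamma$ drops linearly across it, so the sublevel-set inclusion fails by exactly one edge. To handle this, split the exit edge $[s_i,s_{i+1}]$ at the point where $v_\gamma=V_i/2$:
\begin{itemize}
\item If this point lies beyond arclength $r/2$, then $v_\gamma\ge Z_r/2$ on $[0,r/2]$, and the comparison holds with a constant $2^{1-q}$.
\item Otherwise, the linear drop forces $\delta_i/\ell_i$ to be large. In that case, apply the original term $\ell_i^2V_i^q/\delta_i$ directly rather than the integral bound. Writing $\ell_i=s_{i+1}-s_i\ge r-s_i$, this term dominates the contribution of the radii $r$ whose exit edge is $e_i$. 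That set is an interval $I_i=(\mathfrak r_R(x_i),\mathfrak r_R(x_{i+1})]$ of length at most $\ell_i$ (one should double-check this bound on the length of $I_i$), over which $Z_r=V_i$. Its contribution is at most
\[
\int_{I_i}r\,V_i^{q-1}\,dr\lesssim s_{i+1}\ell_i V_i^{q-1}.
\]
\end{itemize}

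So the per-edge bookkeeping becomes: $\int_0^R rZ_r^{q-1}\,dr$ splits into the sum over edges $i$ of $V_i^{q-1}\int_{I_i}r\,dr$, and each term is bounded either by the Hardy integral over $[0,s_{i+1}]$ or by $\ell_i^2V_i^q/\delta_i$ plus a Hardy contribution. Since $H_\gamma$ contains both kinds of terms, adding them up yields \eqref{eq:radial-first-exit-hardy}.
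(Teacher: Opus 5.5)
Your reduction to $J_\gamma:=\int_0^{L_\gamma}s\,v_\gamma(s)^{q-1}\,ds$, together with your first bullet, already proves the lemma. The ``main obstacle'' you go on to fight does not exist. The point you missed is that the drop across an edge never exceeds the starting voltage: $\delta_i=V_i-V_{i+1}\le V_i$. So on the exit edge $v_\gamma$ has slope at least $-V_i/\ell_i$, and it stays $\ge V_i/2$ on the whole first half $[s_i,s_i+\ell_i/2]$, however steep the drop. Moreover, $r/2$ always lies before that midpoint.

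Indeed, with $i=\kappa_r$, \eqref{eq:radial-label-prefix-bound} gives $r<\mathfrak r_R(x_{i+1})\le s_{i+1}=s_i+\ell_i$. If $r/2\le s_i$, monotonicity gives $v_\gamma(r/2)\ge V_i=Z_r$. Otherwise $s_i<r/2<\tfrac12(s_i+\ell_i)\le s_i+\tfrac12\ell_i$, and then $v_\gamma(r/2)\ge V_i-\tfrac{V_i}{\ell_i}\cdot\tfrac{\ell_i}{2}=\tfrac12 Z_r$. Equivalently, your half-value point satisfies $p\ge s_i+\ell_i/2\ge s_{i+1}/2>r/2$, so your ``otherwise'' case is empty. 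Hence $Z_r\le 2v_\gamma(r/2)$ for every $0<r<R$. Since $L_\gamma>R$ by \eqref{eq:path-length-exceeds-radius},
\[
\int_0^R r\,Z_r^{q-1}\,dr\le 2^{q-1}\int_0^R r\,v_\gamma(r/2)^{q-1}\,dr=2^{q+1}\int_0^{R/2}s\,v_\gamma(s)^{q-1}\,ds\le 2^{q+1}J_\gamma ,
\]
and \eqref{eq:H-controls-path-integral} then gives the lemma with $c_q=(q-1)/(2^{q+3}\log 2)$.

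Everything after your first bullet should therefore be deleted. As written, it would not have been a valid argument had that case been real:
\begin{itemize}
\item Per-edge bounds by Hardy integrals over the nested intervals $[0,s_{i+1}]$ overcount when summed over $i$, roughly by the number of edges.
\item The ``Hardy contribution'' in your second alternative is never specified.
\item The set $\{r:\kappa_r=i\}$ is $[\max_{j\le i}\mathfrak r_R(x_j),\,\mathfrak r_R(x_{i+1}))$, possibly empty, and not $(\mathfrak r_R(x_i),\mathfrak r_R(x_{i+1})]$, because the radial labels need not be monotone along $\gamma$.
\end{itemize}
The level-set comparison is likewise unnecessary. One small correction: the constant in your reduction cannot be absolute. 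For a one-edge path $o\to b_e$ along a geodesic edge, with $R$ close to $\ell_0$, one has $J_\gamma\approx\frac{2}{q(q+1)}\int_0^R r\,Z_r^{q-1}\,dr$.

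Compared with the paper: the paper splits the radii according to whether $s_{\kappa_r}\ge r/2$. On the first set it uses $Z_r\le v_\gamma(r/2)$, as you do. On the second it derives $r<2\ell_{\kappa_r}$ and charges those radii directly to $H_\gamma$ via $2\ell_i^2V_i^{q-1}\le 2\ell_i^2V_i^q/\delta_i$; that is where it uses $\delta_i\le V_i$. Your pointwise bound $Z_r\le 2v_\gamma(r/2)$ removes the split and shows that the Hardy integral $J_\gamma$ alone controls the first-exit integral. The cost is a factor $2^{q-1}$, so your $c_q$ decays as $q\to\infty$. The paper's constant $(2+16\log 2/(q-1))^{-1}$ stays bounded below, because long exit edges are charged to $H_\gamma$ without passing through $J_\gamma$. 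Since the lemma only requires $c_q$ to depend on $q$, both routes suffice.
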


\begin{proof}
Set
\[
J_\gamma
:=
\int_0^{L_\gamma}s\,v_\gamma(s)^{q-1}\,ds.
\]
By \eqref{eq:H-controls-path-integral},
\[
H_\gamma\ge\frac{q-1}{4\log 2}J_\gamma.
\]
Split the radii into
\[
E_1=\{0<r<R:s_{\kappa_r}\ge r/2\},
\qquad
E_2=\{0<r<R:s_{\kappa_r}<r/2\}.
\]
If \(r\in E_1\), monotonicity of \(v_\gamma\) gives
\(Z_r=v_\gamma(s_{\kappa_r})\le v_\gamma(r/2)\).  Since
\(L_\gamma>R\) by \eqref{eq:path-length-exceeds-radius},
\begin{equation}\label{eq:first-exit-short-radii}
\int_{E_1}rZ_r^{q-1}\,dr
\le
\int_0^Rr\,v_\gamma(r/2)^{q-1}\,dr
=
4\int_0^{R/2}s\,v_\gamma(s)^{q-1}\,ds
\le
4J_\gamma.
\end{equation}

If \(r\in E_2\) and \(\kappa_r=i\), then
\eqref{eq:radial-label-prefix-bound} gives
\[
r
<
\mathfrak r_R(x_{i+1})
\le
s_i+\ell_i
<
\frac r2+\ell_i,
\]
and hence \(r<2\ell_i\).  Let
\(E_{2,i}=\{r\in E_2:\kappa_r=i\}\).  Since \(\delta_i\le V_i\),
\begin{equation}\label{eq:first-exit-long-radii}
\int_{E_{2,i}}rZ_r^{q-1}\,dr
\le
V_i^{q-1}\int_0^{2\ell_i}r\,dr
=
2\ell_i^2V_i^{q-1}
\le
2\frac{\ell_i^2V_i^q}{\delta_i}.
\end{equation}
Summing \eqref{eq:first-exit-long-radii} over \(i\) and using
\eqref{eq:first-exit-short-radii},
\[
\int_0^R rZ_r^{q-1}\,dr
\le
4J_\gamma+2H_\gamma
\le
\left(2+\frac{16\log 2}{q-1}\right)H_\gamma.
\]
This proves \eqref{eq:radial-first-exit-hardy}.
\end{proof}

\subsection{Capacity bounds from first-exit paths}

We first prove a capacity bound that will be applied to path segments starting
at the first exits from intrinsic balls.  All notions of flow, strength, energy, and effective
conductance are those of Subsection~\ref{subsection:finite-flows}.

\begin{lemma}\label{lem:reciprocal-stopped-flow}
Let \((W,\widetilde\mu)\) be a finite connected weighted graph.  Let
\(g: W\rightarrow [0, \infty)\), let \(\theta=i_g\) be an acyclic unit current, and let
\(\mathbb P\) be the probability measure induced by a flow decomposition of \(\theta\) satisfying the
edge-marginal identity \eqref{eq:path-edge-marginal}.  Let \(A,K\subset W\)
be nonempty and disjoint.  For each path
\[
\gamma=(x_0,e_0,x_1,\ldots,e_{m-1},x_m),
\]
choose indices
\[
0\le\tau(\gamma)<\varsigma(\gamma)\le m
\]
such that
\[
x_{\tau(\gamma)}\in A,
\qquad
x_{\varsigma(\gamma)}\in K,
\qquad
g(x_{\tau(\gamma)})>0.
\]
Then
\begin{equation}\label{eq:reciprocal-stopped-flow}
\mathbb E_\gamma\frac1{g(x_{\tau})}
\le
C_{\mathrm{eff}}^W(A,K).
\end{equation}
\end{lemma}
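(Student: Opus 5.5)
Construct a flow from $A$ to $K$ whose strength is $\mathbb E_\gamma[1/g(x_\tau)]$ and whose energy is at most that same quantity; Thomson's principle (Proposition~\ref{prop:dirichlet-thomson}) then gives \eqref{eq:reciprocal-stopped-flow}. Since $\theta=i_g$ is acyclic with $g$ strictly decreasing along each directed edge, every sampled path has $g(x_i)>0$ for $i<m$ except possibly at the end. Put
\[
\Psi:=\mathbb E_\gamma\Big[\frac{1}{g(x_\tau)}\,\mathbf 1_{[\tau,\varsigma)}\Big],
\]
that is, for each directed edge $e$, $\Psi(e)=\mathbb E_\gamma\big[g(x_\tau)^{-1}\#\{i\in[\tau,\varsigma):e_i=e\}\big]$, extended antisymmetrically. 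Each segment $(x_\tau,\dots,x_\varsigma)$ is a directed path from $A$ to $K$. It has unit strength, with divergence $+1$ at $x_\tau$ and $-1$ at $x_\varsigma$, and zero divergence elsewhere. Since a directed path in an acyclic support visits each edge at most once, the count is $0$ or $1$. Hence $\Psi$ is a flow from $A$ to $K$ in the sense of Subsection~\ref{subsection:finite-flows}, with strength $I=\mathbb E_\gamma[g(x_\tau)^{-1}]$. This is finite: there are finitely many paths in the decomposition, and $g(x_\tau)>0$ on each.

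The key step is the energy bound $\mathcal D_W(\Psi)\le I$. Edge $e=(x,y)$ lies in the directed support with $\theta_e=\widetilde\mu_e(g(x)-g(y))>0$. If $e$ is used in the segment, then $x=x_i$ with $i\ge\tau$, and since $g$ decreases along the path, $g(x_\tau)\ge g(x)$. Therefore
\[
\Psi(e)\le \frac{1}{g(x)}\,\mathbb P(\gamma\text{ uses }e)=\frac{\theta_e}{g(x)},
\qquad\text{while also}\qquad \Psi(e)=\mathbb E_\gamma\big[g(x_\tau)^{-1}\mathbf 1\{e\in\text{segment}\}\big].
\]
Write $\Psi(e)^2\le \frac{\theta_e}{g(\tail e)}\Psi(e)$. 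Then
\[
\mathcal D_W(\Psi)=\sum_{e}\frac{\Psi(e)^2}{\widetilde\mu_e}\le\sum_e \frac{g(\tail e)-g(\head e)}{g(\tail e)}\Psi(e)
=\mathbb E_\gamma\Big[\frac{1}{g(x_\tau)}\sum_{i=\tau}^{\varsigma-1}\frac{g(x_i)-g(x_{i+1})}{g(x_i)}\Big].
\]
The sum runs over directed edges in the support, one orientation each, which matches the factor $\tfrac12$ in $\mathcal D_W$.

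This inner sum is the main obstacle, because $\sum_i (g_i-g_{i+1})/g_i$ is not bounded by $1$ in general; it behaves like a logarithm. I would therefore sharpen the pointwise bound. Along the segment, bound $1/g(x_\tau)\le 1/g(x_i)$ only partially: use $\Psi(e)\le\theta_e\,\mathbb E[\,g(x_\tau)^{-1}\mid e\,]$, and apply Cauchy--Schwarz in the form $\Psi(e)^2\le\theta_e\,\mathbb E[g(x_\tau)^{-2}\mathbf 1_e]$. This gives
\[
\mathcal D_W(\Psi)\le \mathbb E_\gamma\Big[g(x_\tau)^{-2}\sum_{i=\tau}^{\varsigma-1}\big(g(x_i)-g(x_{i+1})\big)\Big]\le \mathbb E_\gamma\Big[g(x_\tau)^{-2}\big(g(x_\tau)-g(x_\varsigma)\big)\Big]\le I,
\]
using telescoping and $g\ge0$. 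This is the version I will use: $\Psi(e)^2/\widetilde\mu_e\le (\theta_e/\widetilde\mu_e)\mathbb E[g(x_\tau)^{-2}\mathbf 1_e]=(g(\tail e)-g(\head e))\,\mathbb E[g(x_\tau)^{-2}\mathbf 1_e]$. Summing over edges and exchanging sum and expectation gives the display. Thomson's principle \eqref{eq:thomson-principle} then yields $I^2/C_{\mathrm{eff}}^W(A,K)\le\mathcal D_W(\Psi)\le I$, hence $I\le C_{\mathrm{eff}}^W(A,K)$, which is \eqref{eq:reciprocal-stopped-flow}. If $I=0$ there is nothing to prove.
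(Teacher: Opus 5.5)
Your final argument is correct and is essentially the paper's proof. It uses the same reweighted superposition of the stopped segment flows, the Cauchy--Schwarz bound $\Psi(e)^2\le\theta_e\,\mathbb E_\gamma\bigl[g(x_\tau)^{-2}\mathbf 1_e\bigr]$ against the edge marginal, telescoping along the selected segment, and Thomson's principle. In a write-up, drop the first estimate $\Psi(e)\le\theta_e/g(\tail e)$, which you abandon and which plays no role in the final bound.
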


\begin{proof}
For a path \(\gamma\), let \(\chi_\gamma\) be the unit flow carried by its
selected segment from \(\tau\) to \(\varsigma\): on the directed support of
\(\theta\),
\[
\chi_\gamma(e)
=
\mathbf 1_{\{e=e_i\text{ for some }\tau\le i<\varsigma\}},
\]
and extend \(\chi_\gamma\) antisymmetrically to the reverse orientations.
Thus
\[
\Div\chi_\gamma
=
\mathbf 1_{\{x_\tau\}}-\mathbf 1_{\{x_\varsigma\}}.
\]
Define the reweighted superposition
\begin{equation}\label{eq:reweighted-stopped-flow-definition}
\Phi(e)
:=
\int
\frac{\chi_\gamma(e)}{g(x_\tau)}\,d\mathbb P(\gamma).
\end{equation}
It follows that
\begin{equation*}\label{eq:reweighted-stopped-flow-divergence}
\Div\Phi(x)
=
\int
\frac{\mathbf 1_{\{x_\tau=x\}}-\mathbf 1_{\{x_\varsigma=x\}}}
{g(x_\tau)}\,d\mathbb P(\gamma).
\end{equation*}
Hence \(\Phi\) is a flow from \(A\) to \(K\), and its strength is
\begin{equation*}\label{eq:reweighted-stopped-flow-strength}
I(\Phi)
=
\mathbb E_\gamma\frac1{g(x_\tau)}
=:I.
\end{equation*}

Orient each edge in the directed support of \(\theta\), and sum each
unoriented edge once in that orientation.  Put
\[
\delta_e=g(\tail(e))-g(\head(e))>0,
\qquad
\theta_e=\widetilde\mu_e\delta_e.
\]
Let \(\Gamma_e\) be the set of paths whose selected segment uses \(e\).
By \eqref{eq:reweighted-stopped-flow-definition}, the Cauchy--Schwarz inequality, and
the edge-marginal identity,
\begin{equation*}
\Phi(e)^2=
\left(\int_{\Gamma_e}\frac{d\mathbb P}{g(x_\tau)}\right)^2
\le
\mathbb P(\Gamma_e)
\int_{\Gamma_e}\frac{d\mathbb P}{g(x_\tau)^2}
\le
\theta_e
\int_{\Gamma_e}\frac{d\mathbb P}{g(x_\tau)^2}.
\end{equation*}
Consequently, by Tonelli's theorem and telescoping along the selected segment,
\begin{equation*}
\mathcal D_W(\Phi)
=
\sum_e\frac{\Phi(e)^2}{\widetilde\mu_e}
\le
\sum_e\delta_e
\int_{\Gamma_e}\frac{d\mathbb P}{g(x_\tau)^2}=
\mathbb E_\gamma\left[
\frac{g(x_\tau)-g(x_\varsigma)}{g(x_\tau)^2}
\right]\\
\le
\mathbb E_\gamma\frac1{g(x_\tau)}
=I.
\end{equation*}
Proposition~\ref{prop:dirichlet-thomson} gives
\[
\frac{I^2}{C_{\mathrm{eff}}^W(A,K)}
\le
\mathcal D_W(\Phi)
\le I,
\]
and \eqref{eq:reciprocal-stopped-flow} follows.
\end{proof}

We first apply the lemma to the terminal-copy weighted graph
\((\widehat V_R,\widehat\mu_R)\) with the terminal set as the outer plate.
For \(0<r<R\), the variational definition
\eqref{eq:relative-capacity-profile} agrees with the corresponding effective
conductance:
\begin{equation}\label{eq:relative-capacity-terminal-identification}
\capacity_{B_R}(B_r)
=
C_{\mathrm{eff}}^{\widehat V_R}(B_r,\mathcal T_R).
\end{equation}

\begin{lemma}\label{lem:relative-capacity-reciprocal}
For every \(0<r<R\),
\begin{equation}\label{eq:relative-capacity-reciprocal}
\mathbb E_\gamma Z_r^{-1}
\le
\capacity_{B_R}(B_r),
\end{equation}
and consequently
\begin{equation}\label{eq:relative-capacity-positive-moment}
\mathbb E_\gamma Z_r^{q-1}
\ge
\capacity_{B_R}(B_r)^{1-q}.
\end{equation}
\end{lemma}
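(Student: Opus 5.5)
The plan is to apply Lemma~\ref{lem:reciprocal-stopped-flow} on the finite terminal-copy graph \(W=\widehat V_R\) with \(g=g_R\), \(\theta\) the acyclic unit Green current, and \(\mathbb P\) the fixed decomposition. We take \(A=B_r\) and \(K=\mathcal T_R\). These sets are nonempty and disjoint: \(o\in B_r\), and \(\mathcal T_R\) is nonempty because \(V\) is infinite, connected, and \(B_R\) is finite.

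For each sampled path \(\gamma\) we choose the indices \(\tau(\gamma):=\kappa_r\) and \(\varsigma(\gamma):=m\). By \eqref{eq:first-exit-location}, \(x_{\kappa_r}\) is a vertex of \(B_R\) with \(d_\rho(o,x_{\kappa_r})\le r\), so \(x_\tau\in B_r\). Moreover \(x_m\in\mathcal T_R\) and \(\kappa_r\le m-1<m\). Since \(x_\tau\in B_R\), we have \(g_R(x_\tau)>0\) by Proposition~\ref{gu-lemma} applied to the connected finite set \(B_R\). Also \(g(x_\tau)=V_{\kappa_r}=Z_r\) by definition. The hypotheses of Lemma~\ref{lem:reciprocal-stopped-flow} therefore hold, and it yields \(\mathbb E_\gamma Z_r^{-1}\le C_{\mathrm{eff}}^{\widehat V_R}(B_r,\mathcal T_R)\). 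Combined with the identification \eqref{eq:relative-capacity-terminal-identification}, this gives \eqref{eq:relative-capacity-reciprocal}.

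The main point to check is \eqref{eq:relative-capacity-terminal-identification} itself. Given \(f\) admissible in \eqref{eq:relative-capacity-profile}, we restrict it to \(B_R\) and set \(f(b_e)=0\). The energy \(\mathcal E(f)\) counts exactly the edges inside \(B_R\) and the boundary edges, with the same weights, since \(f\) vanishes outside \(B_R\) on both sides. Conversely, any \(f\) on \(\widehat V_R\) with \(f=1\) on \(B_r\) and \(f=0\) on \(\mathcal T_R\) extends by zero to an admissible function with the same energy. The two infima therefore coincide.

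For \eqref{eq:relative-capacity-positive-moment}, we use Jensen's inequality. The map \(t\mapsto t^{-(q-1)}\) is convex on \((0,\infty)\), so
\[
\mathbb E_\gamma Z_r^{q-1}=\mathbb E_\gamma\bigl(Z_r^{-1}\bigr)^{-(q-1)}\ge\bigl(\mathbb E_\gamma Z_r^{-1}\bigr)^{-(q-1)}\ge\capacity_{B_R}(B_r)^{1-q}.
\]
The last step uses \eqref{eq:relative-capacity-reciprocal} together with the fact that \(t\mapsto t^{1-q}\) is decreasing, and it requires \(\capacity_{B_R}(B_r)>0\). Positivity holds because admissible functions must drop from \(1\) to \(0\) across some edge of the connected finite graph. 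The expectations are finite sums, since the decomposition has finitely many paths and \(Z_r>0\) on each.
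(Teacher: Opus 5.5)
Your proof is correct and matches the paper's argument: the same application of Lemma~\ref{lem:reciprocal-stopped-flow} with $\tau=\kappa_r$, $\varsigma=m$, $A=B_r$, $K=\mathcal T_R$, followed by \eqref{eq:relative-capacity-terminal-identification}. Your Jensen step for \eqref{eq:relative-capacity-positive-moment} is equivalent to the paper's H\"older step, and your explicit check of the capacity identification, which the paper only asserts, is sound.
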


\begin{proof}
For each path, choose \(\tau=\kappa_r\) and \(\varsigma=m\).  By
\eqref{eq:first-exit-location}, the initial vertex lies in \(B_r\) and has
voltage \(Z_r>0\), while \(x_m\in\mathcal T_R\).  Applying
Lemma~\ref{lem:reciprocal-stopped-flow} with
\[
A=B_r,
\qquad
K=\mathcal T_R,
\]
and using \eqref{eq:relative-capacity-terminal-identification} gives
\eqref{eq:relative-capacity-reciprocal}.

H\"older's inequality gives
\[
1
=
\mathbb E_\gamma
\left[Z_r^{(q-1)/q}Z_r^{-(q-1)/q}\right]
\le
\bigl(\mathbb E_\gamma Z_r^{q-1}\bigr)^{1/q}
\bigl(\mathbb E_\gamma Z_r^{-1}\bigr)^{(q-1)/q}.
\]
Since \(1-q<0\), \eqref{eq:relative-capacity-reciprocal} implies
\eqref{eq:relative-capacity-positive-moment}.
\end{proof}

\begin{remark}\label{rem:returning-relative-capacity}
After the selected segment begins at the first exit from \(B_r\), the path may
revisit \(B_r\).  This causes no difficulty: every intermediate visit has zero
divergence, so the superposition is still a flow from \(B_r\) to
\(\mathcal T_R\).
\end{remark}

We next recover the annular estimate.  For \(0<r<R/2\), set
\begin{equation*}\label{eq:annular-outer-plate-terminal-graph}
K_{r,R}
:=
\bigl\{z\in B_R\cup\mathcal T_R:\mathfrak r_R(z)>2r\bigr\}.
\end{equation*}
After wiring \(B_r\) and \(K_{r,R}\) separately, the resulting
two-terminal weighted graph consists of \(B_{2r}\), all original edges
incident with \(B_{2r}\), and one wired exterior vertex.  Therefore
\begin{equation*}
C_{\mathrm{eff}}^{\widehat V_R}(B_r,K_{r,R})
=
\mathcal C_o(r).
\end{equation*}
Since every terminal vertex has radial label larger than \(R>2r\), one has
\(\mathcal T_R\subset K_{r,R}\).  Enlarging the zero plate in Dirichlet's
principle and using \eqref{eq:relative-capacity-terminal-identification} gives
\begin{equation}\label{eq:relative-annular-comparison}
\capacity_{B_R}(B_r)
\le
\mathcal C_o(r),
\qquad 0<r<R/2.
\end{equation}

\subsection{The relative-capacity estimate and its consequences}
We first record the following consequence of Lemma~\ref{lemma6.1}, which will be used repeatedly to convert divergence of the Green energy into a Liouville conclusion.
\begin{lemma}\label{lem:green-energy-blowup}
Assume \(q>1\), \(0<\sigma\in\ell^+(V)\), and let \(d_\rho\) be
a complete \(\nu\)-adapted path metric. Fix \(o\in V\), and define
\(L_R^\nu\) as above. Suppose that
\(L_{R_j}^\nu\to\infty\) for some \(R_j\uparrow\infty\).
Then every nonnegative solution of \eqref{di-2} is identically zero.
\end{lemma}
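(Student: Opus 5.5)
The plan is to apply Lemma~\ref{lemma6.1} with \(\Omega=B_{R_j}\), which is finite by Proposition~\ref{prop:complete-finite-balls} and connected by the remark at the start of this section. This yields a bound on \(u(o)\). Propagation along edges, as in the proof of Theorem~\ref{thm:green-level-criterion}, then shows \(u\) vanishes identically.

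Let \(u\ge0\) solve \eqref{di-2}. For each \(j\), the ball \(B_{R_j}\) contains \(o\) and satisfies the hypotheses of Lemma~\ref{lemma6.1}. Moreover \(L^\nu_{B_{R_j}}(o)=\sum_{x\in B_{R_j}}g_{B_{R_j}}(o,x)^q\nu(x)=L_{R_j}^\nu\). Hence
\[
u(o)^q\nu(o)\le\Bigl(\frac{q}{q-1}\Bigr)^{\frac q{q-1}}\bigl(L^\nu_{R_j}\bigr)^{-\frac1{q-1}}.
\]
Since \(L^\nu_{R_j}\to\infty\) and \(q>1\), the right-hand side tends to \(0\). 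Because \(\nu(o)=\sigma(o)\mu(o)>0\) (here \(\sigma>0\) everywhere is used), we get \(u(o)=0\).

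Next we spread the zero. At \(o\) we have \(0\le\sigma(o)u(o)^q\le-\Delta u(o)=-\mu(o)^{-1}\sum_{y\sim o}\mu_{oy}u(y)\le0\), so \(u(y)=0\) for all \(y\sim o\). To iterate, we cannot simply repeat the same computation at each neighbour. Instead we observe that the hypothesis is base-point independent. By Lemma~\ref{lem_pole}, applied to finite domains, \(g_{B_R}(o',\cdot)\ge a\,g_{B_R}(o,\cdot)\), where \(a>0\) is the probability of following a fixed path from \(o'\) to \(o\) inside \(B_R\) for all large \(R\). Consequently \(\sum_{x\in B_{R_j}}g_{B_{R_j}}(o',x)^q\nu(x)\to\infty\) as well. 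Note that \(B_{R_j}\) is still centered at \(o\), but Lemma~\ref{lemma6.1} only needs some finite connected \(\Omega\ni o'\). So \(u(o')=0\) for every \(o'\), and \(u\equiv0\). Alternatively, the elementary edge-propagation argument of Remark~\ref{rem:positive-nonnegative} already gives \(u\equiv0\) once \(u(o)=0\).

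The only mildly delicate point is the uniformity of the constant \(a\) in the pole comparison, since the domains \(B_{R_j}\) vary with \(j\). This is handled by fixing a path from \(o'\) to \(o\); it lies in \(B_{R_j}\) for all large \(j\), and the transition probabilities \(P(x,y)\) along it do not depend on \(R_j\). In any case, the maximum-principle propagation makes this step unnecessary, so I expect no real obstacle.
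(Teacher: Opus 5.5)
Your proposal is correct and matches the paper's proof: Lemma~\ref{lemma6.1} applied on $B_{R_j}$ forces $u(o)=0$ (using $\nu(o)>0$), and then $u\equiv0$ follows by propagating zeros along edges using connectedness. One remark of yours is mistaken: the neighbour computation can simply be repeated, since at any vertex $z$ with $u(z)=0$ the same inequality $0\le-\Delta u(z)=-\mu(z)^{-1}\sum_{y\sim z}\mu_{zy}u(y)$ kills all neighbours of $z$; so your base-point-independence detour, though valid, is unnecessary.
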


\begin{proof}
By Lemma~\ref{lemma6.1},
\[
\nu(o)u(o)^q
\le
\left(\frac q{q-1}\right)^{q/(q-1)}
\bigl(L_{R_j}^\nu\bigr)^{-1/(q-1)}.
\]
Hence \(u(o)=0\). Since \(u\ge0\) and \(-\Delta u(o)\ge0\),
every neighbor of \(o\) also vanishes. Connectedness gives \(u\equiv0\).
\end{proof}
\begin{proposition}\label{prop:relative-annular-estimates}
There exists \(c_q>0\), depending only on \(q\), such that, for every
\(R>0\),
\begin{equation}\label{eq:relative-capacity-estimate}
L_R^\nu
\ge
c_q\int_0^R
r\,\capacity_{B_R}(B_r)^{1-q}\,dr.
\end{equation}
Consequently,
\begin{equation}\label{eq:annular-conductance-estimate}
L_R^\nu
\ge
c_q\int_0^{R/2}
r\,\mathcal C_o(r)^{1-q}\,dr.
\end{equation}
\end{proposition}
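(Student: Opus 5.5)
The plan is to chain the three preceding lemmas through the expectation over the fixed random-path measure. Fix \(R>0\), and fix the flow decomposition of the unit Green current \(\theta\) on \((\widehat V_R,\widehat\mu_R)\) constructed above.

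\textbf{Step 1.} Lemma~\ref{lem:intrinsic-path-energy} gives \(L_R^\nu\ge\mathbb E_\gamma H_\gamma\).

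\textbf{Step 2.} Lemma~\ref{lem:radial-first-exit-hardy} holds for every sampled path. Taking expectations and applying Tonelli's theorem (all integrands are nonnegative, and the path measure is finitely supported) gives
\[
\mathbb E_\gamma H_\gamma\ge c_q\int_0^R r\,\mathbb E_\gamma\bigl[Z_r^{q-1}\bigr]\,dr .
\]
Measurability in \(r\) is not an issue: for each path, \(r\mapsto Z_r\) is a step function with finitely many jumps.

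\textbf{Step 3.} For each \(0<r<R\), apply \eqref{eq:relative-capacity-positive-moment} from Lemma~\ref{lem:relative-capacity-reciprocal} to get \(\mathbb E_\gamma Z_r^{q-1}\ge\capacity_{B_R}(B_r)^{1-q}\). Substituting this into Step 2 and combining with Step 1 yields \eqref{eq:relative-capacity-estimate}, with the same constant \(c_q\) as in Lemma~\ref{lem:radial-first-exit-hardy}.

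\textbf{Step 4.} Restrict the integral to \(0<r<R/2\); this is allowed because the integrand is nonnegative. On that range, \eqref{eq:relative-annular-comparison} gives \(\capacity_{B_R}(B_r)\le\mathcal C_o(r)\). Since \(1-q<0\), this means \(\capacity_{B_R}(B_r)^{1-q}\ge\mathcal C_o(r)^{1-q}\), where \(0<\mathcal C_o(r)<\infty\). This gives \eqref{eq:annular-conductance-estimate}.

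\textbf{Main obstacle.} Almost all the work sits in the earlier lemmas, so the remaining subtlety is mild. One point to check is that the relative capacity is positive and finite, so that the negative power makes sense. It is finite because \(B_R\) is finite, and positive because \(B_r\ni o\) and \(\mathcal T_R\) are nonempty in a connected finite graph. One should also check that the pathwise bound of Lemma~\ref{lem:radial-first-exit-hardy} uses only the deterministic constant \(c_q\), so that it survives averaging over paths.
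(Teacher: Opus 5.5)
Your proposal is correct and follows the paper's proof essentially step for step. You chain Lemma~\ref{lem:intrinsic-path-energy} and Lemma~\ref{lem:radial-first-exit-hardy}, exchange the expectation and the integral by Tonelli, and insert \eqref{eq:relative-capacity-positive-moment}. You then restrict to \(0<r<R/2\) and use \eqref{eq:relative-annular-comparison} together with \(1-q<0\). Your remarks on measurability and on the relative capacity being positive and finite are sound additions that the paper leaves implicit.
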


\begin{proof}
By Lemmas~\ref{lem:intrinsic-path-energy} and
\ref{lem:radial-first-exit-hardy}, followed by Tonelli's theorem,
\[
L_R^\nu
\ge
\mathbb E_\gamma H_\gamma
\ge
c_q\,
\mathbb E_\gamma
\int_0^R r\,Z_r(\gamma)^{q-1}\,dr
=
c_q\int_0^R
r\,\mathbb E_\gamma\!\left[Z_r^{q-1}\right]\,dr.
\]
For every \(0<r<R\), Lemma~\ref{lem:relative-capacity-reciprocal} gives
\[
\mathbb E_\gamma\!\left[Z_r^{q-1}\right]
\ge
\capacity_{B_R}(B_r)^{1-q}.
\]
Substituting this estimate into the preceding inequality proves
\eqref{eq:relative-capacity-estimate}.

Now let \(0<r<R/2\). By \eqref{eq:relative-annular-comparison},
\[
\capacity_{B_R}(B_r)\le \mathcal C_o(r).
\]
Since \(1-q<0\), it follows that
\[
\capacity_{B_R}(B_r)^{1-q}
\ge
\mathcal C_o(r)^{1-q}.
\]
Restricting the integral in
\eqref{eq:relative-capacity-estimate} to \(0<r<R/2\) therefore yields
\[
L_R^\nu
\ge
c_q\int_0^{R/2}
r\,\capacity_{B_R}(B_r)^{1-q}\,dr
\ge
c_q\int_0^{R/2}
r\,\mathcal C_o(r)^{1-q}\,dr,
\]
which proves \eqref{eq:annular-conductance-estimate}.
\end{proof}

The first consequence is a direct conductance criterion.

\begin{corollary}\label{cor:annular-conductance-liouville}
Let \(q>1\), let \(0<\sigma\in\ell^+(V)\), and set \(\nu=\sigma\mu\).  Let
\(\rho\) be a \(\nu\)-adapted edge-length function and assume that
\((V,d_\rho)\) is complete.  For \(o\in V\), let
\[
\mathcal C_o(r)
=
C_{\mathrm{eff}}\bigl(B_{d_\rho}(o,r),
V\setminus B_{d_\rho}(o,2r)\bigr).
\]
If
\begin{equation*}\label{eq:annular-conductance-divergence}
\int_1^\infty r\,\mathcal C_o(r)^{1-q}\,dr
=\infty,
\end{equation*}
then every nonnegative solution of \eqref{di-2} is identically zero.
\end{corollary}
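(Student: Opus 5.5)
The plan is to combine the annular estimate \eqref{eq:annular-conductance-estimate} of Proposition~\ref{prop:relative-annular-estimates} with Lemma~\ref{lem:green-energy-blowup}. Fix \(o\in V\) and suppose that \(\int_1^\infty r\,\mathcal C_o(r)^{1-q}\,dr=\infty\). Note that \(0<\mathcal C_o(r)<\infty\) for every \(r>0\). Because of this, the integrand \(r\,\mathcal C_o(r)^{1-q}\) is a positive function on \((0,\infty)\), and \(r\mapsto \mathcal C_o(r)\) is measurable. Measurability follows because \(B_r\) takes only countably many values as \(r\) varies; one can also pass to a lower integral if preferred.

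Next take any sequence \(R_j\uparrow\infty\), say \(R_j=j\). For \(R_j>2\), the estimate \eqref{eq:annular-conductance-estimate} gives
\[
L_{R_j}^\nu\ge c_q\int_0^{R_j/2} r\,\mathcal C_o(r)^{1-q}\,dr\ge c_q\int_1^{R_j/2} r\,\mathcal C_o(r)^{1-q}\,dr .
\]
The right-hand side is nondecreasing in \(j\). By monotone convergence and the divergence hypothesis, it tends to \(+\infty\). Hence \(L_{R_j}^\nu\to\infty\).

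Lemma~\ref{lem:green-energy-blowup} then applies directly, since its hypotheses are \(q>1\), \(\sigma>0\), completeness of \(d_\rho\), and divergence of \(L_{R_j}^\nu\) along some \(R_j\uparrow\infty\). It shows that every nonnegative solution of \eqref{di-2} vanishes identically.

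The only point needing care, and the main obstacle, is the measurability of the integrand, that is, that the integral in the hypothesis is meaningful. For this I would observe the following. The ball \(B_r=B_{d_\rho}(o,r)\) is right-continuous in \(r\) and piecewise constant between the countably many distance values \(\{d_\rho(o,x)\}\). The same holds for \(V\setminus B_{2r}\). Therefore \(\mathcal C_o\) is a step function with countably many jumps, and in particular it is Borel. Everything else is a direct citation of earlier results.
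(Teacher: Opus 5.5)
Your proposal is correct and matches the paper's proof: apply \eqref{eq:annular-conductance-estimate} for $R>2$, let $R\to\infty$ to get $L_R^\nu\to\infty$, and conclude with Lemma~\ref{lem:green-energy-blowup}. The paper does not discuss measurability, and your remark on it is valid only because finiteness of the balls (Proposition~\ref{prop:complete-finite-balls}) makes the distance values $d_\rho(o,x)$ locally finite, so $\mathcal C_o$ is a step function with locally finitely many jumps; countably many jumps alone would not be enough.
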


\begin{proof}
For \(R>2\), Proposition~\ref{prop:relative-annular-estimates} gives
\[
L_R^\nu
\ge
c_q\int_1^{R/2}r\,\mathcal C_o(r)^{1-q}\,dr
\longrightarrow\infty
\qquad\text{as }R\to\infty.
\]
The conclusion follows from Lemma~\ref{lem:green-energy-blowup}.
\end{proof}

The second consequence is obtained by estimating the annular conductance in
terms of intrinsic volume.

\begin{lemma}\label{lem:intrinsic-annular-cutoff}
For every \(r>0\),
\begin{equation*}\label{eq:intrinsic-annular-cutoff}
\mathcal C_o(r)
\le
\frac{\nu(B_{2r})}{r^2}.
\end{equation*}
\end{lemma}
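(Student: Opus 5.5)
Since $\mathcal C_o(r)$ is defined as an infimum, I will plug in one explicit test function: the intrinsic radial cutoff
\[
f(x):=\min\Bigl\{1,\Bigl(\frac{2r-d_\rho(o,x)}{r}\Bigr)_+\Bigr\},\qquad x\in V.
\]
It equals $1$ on $B_r$, vanishes on $V\setminus B_{2r}$, and is finitely supported because $B_{2r}$ is finite (Proposition~\ref{prop:complete-finite-balls}). Hence it is admissible for the effective conductance between the separately wired plates $B_r$ and $V\setminus B_{2r}$, with all edges incident with $B_{2r}$ kept. This gives $\mathcal C_o(r)\le\mathcal E(f)$, and it remains to show $\mathcal E(f)\le \nu(B_{2r})/r^2$.

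First, $f$ is $(1/r)$-Lipschitz with respect to $d_\rho$: the map $t\mapsto\min\{1,((2r-t)/r)_+\}$ is $1/r$-Lipschitz, and $d_\rho(o,\cdot)$ is $1$-Lipschitz by the triangle inequality. Moreover $d_\rho(x,y)\le\rho(x,y)$ for $x\sim y$, because the single edge is an admissible path. Hence $|f(x)-f(y)|\le \rho(x,y)/r$ for every edge.

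Next, I will charge each edge with nonzero gradient to an endpoint lying in $B_{2r}$. If $f(x)\ne f(y)$, then at least one endpoint has $f>0$, and that endpoint lies in $B_{2r}$. For each such unordered edge, choose one endpoint $x\in B_{2r}$ with $f(x)>0$. This gives
\[
\mathcal E(f)=\sum_{\{x,y\}}\mu_{xy}(f(x)-f(y))^2\le \sum_{x\in B_{2r}}\sum_{y\sim x}\mu_{xy}\frac{\rho(x,y)^2}{r^2}.
\]
The right-hand side overcounts each edge at most once, so this is only a harmless overcount and not a factor-$2$ problem. The $\nu$-adapted condition \eqref{eq:adapted-weight} bounds the inner sum by $\nu(x)$. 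Summing over $x\in B_{2r}$ yields $\nu(B_{2r})/r^2$.

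The only delicate point is the normalization. $\mathcal E$ carries the factor $\tfrac12$ over ordered pairs, which is exactly the sum over unordered edges, and assigning each edge to a single endpoint is what avoids a spurious factor $2$. Edges leaving $B_{2r}$ are already included, since their inner endpoint lies in $B_{2r}$.
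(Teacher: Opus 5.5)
Your proposal is correct and follows the paper's proof essentially line by line. You use the same intrinsic radial cutoff, the same bound $|\eta_r(x)-\eta_r(y)|\le d_\rho(x,y)/r\le\rho(x,y)/r$ from the path-metric property, the same restriction to edges with an endpoint in $B_{2r}$, and the adaptedness condition \eqref{eq:adapted-weight}. Your explicit charging of each edge to a single endpoint in $B_{2r}$, with the $\tfrac12$ normalization written out, just spells out what the paper compresses into ``only edges having at least one endpoint in $B_{2r}$ can contribute.''
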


\begin{proof}
Use the radial cutoff
\[
\eta_r(x)=
\begin{cases}
1, & d_\rho(o,x)\le r,\\[1mm]
\dfrac{2r-d_\rho(o,x)}r, & r<d_\rho(o,x)<2r,\\[2mm]
0, & d_\rho(o,x)\ge2r.
\end{cases}
\]
The scalar cutoff is \(r^{-1}\)-Lipschitz, and the path-metric property gives
\[
|\eta_r(x)-\eta_r(y)|
\le
\frac{d_\rho(x,y)}r
\le
\frac{\rho(x,y)}r,
\qquad x\sim y.
\]
By Dirichlet's principle, only edges having at least one endpoint in
\(B_{2r}\) can contribute.  Hence, using \eqref{eq:adapted-weight},
\begin{align*}
\mathcal C_o(r)
&\le
\sum_{\{x,y\}\in E}\mu_{xy}
\bigl(\eta_r(x)-\eta_r(y)\bigr)^2\\
&\le
\frac1{r^2}
\sum_{x\in B_{2r}}\sum_{y\sim x}\mu_{xy}\rho(x,y)^2\\
&\le
\frac1{r^2}\sum_{x\in B_{2r}}\nu(x)
=
\frac{\nu(B_{2r})}{r^2}.
\end{align*}
\end{proof}

\begin{proposition}\label{prop:intrinsic-green-volume-lower}
There exists \(c_q>0\), depending only on \(q\), such that
\eqref{eq:intrinsic-green-volume-lower} holds for every \(R>0\).
\end{proposition}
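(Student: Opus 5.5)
The plan is to combine the annular conductance estimate \eqref{eq:annular-conductance-estimate} of Proposition~\ref{prop:relative-annular-estimates} with the intrinsic cutoff bound of Lemma~\ref{lem:intrinsic-annular-cutoff}, and then rescale the radial variable.

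First, for every \(r>0\), Lemma~\ref{lem:intrinsic-annular-cutoff} gives \(\mathcal C_o(r)\le \nu(B_{2r})/r^2\). Since \(1-q<0\), this inverts to
\[
\mathcal C_o(r)^{1-q}\ge \frac{r^{2(q-1)}}{\nu(B_{2r})^{q-1}} .
\]
Note that \(\nu(B_{2r})\ge\nu(o)>0\) because \(\sigma>0\), so the right-hand side is finite. Substituting into \eqref{eq:annular-conductance-estimate} yields
\[
L_R^\nu\ge c_q\int_0^{R/2}\frac{r^{2q-1}}{\nu(B_{2r})^{q-1}}\,dr .
\]

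Second, substitute \(t=2r\). Then \(r^{2q-1}\,dr=2^{-2q}t^{2q-1}\,dt\), and the range \(0<r<R/2\) becomes \(0<t<R\). Hence
\[
L_R^\nu\ge c_q2^{-2q}\int_0^{R}\frac{t^{2q-1}}{\nu(B_t)^{q-1}}\,dt,
\]
which is \eqref{eq:intrinsic-green-volume-lower} after absorbing \(2^{-2q}\) into the constant \(c_q\). The constant depends only on \(q\).

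The only point needing care is measurability and finiteness of the integrand. The function \(r\mapsto\nu(B_r)\) is nondecreasing, hence measurable, and it is bounded below by \(\nu(o)\), so the integrand is finite. If \(r\mapsto\mathcal C_o(r)\) raises measurability questions in \eqref{eq:annular-conductance-estimate}, I would argue from the relative-capacity form instead: combine \eqref{eq:relative-capacity-estimate} with \eqref{eq:relative-annular-comparison} pointwise in \(r\). Each step uses only a pointwise inequality between nonnegative functions, so there is no real obstacle; the argument is a direct chaining of results already proved.
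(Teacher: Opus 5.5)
Your proposal is correct and follows essentially the same route as the paper: it combines the annular estimate \eqref{eq:annular-conductance-estimate} with the cutoff bound of Lemma~\ref{lem:intrinsic-annular-cutoff} (using \(\nu(B_{2r})\ge\nu(o)>0\)) and then substitutes \(t=2r\), which produces the factor \(2^{-2q}\) that you absorb into \(c_q\). Your extra remark on measurability is harmless, and your fallback is exactly how the paper derives the annular estimate from \eqref{eq:relative-capacity-estimate} and \eqref{eq:relative-annular-comparison}.
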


\begin{proof}
Since \(\sigma(o)>0\), one has \(\nu(B_r)\ge\nu(o)>0\) for every \(r>0\).
Proposition~\ref{prop:relative-annular-estimates} and
Lemma~\ref{lem:intrinsic-annular-cutoff} give
\[
L_R^\nu
\ge
c_q\int_0^{R/2}
 r\left(\frac{r^2}{\nu(B_{2r})}\right)^{q-1}\,dr
=
c_q\int_0^{R/2}
\frac{r^{2q-1}}{\nu(B_{2r})^{q-1}}\,dr.
\]
The change of variables \(s=2r\), followed by a change of the constant,
proves \eqref{eq:intrinsic-green-volume-lower}.
\end{proof}

We are ready to establish the integral volume-growth criterion.
\begin{proof}[Proof of Theorem~\ref{thm:weighted-liouville}]
Let \(u\ge0\) satisfy \eqref{di-2}.  For \(R>1\),
Proposition~\ref{prop:intrinsic-green-volume-lower} gives
\[
L_R^\nu
\ge
c_q\int_1^R
\frac{r^{2q-1}}{\nu(B_r)^{q-1}}\,dr.
\]
By \eqref{eq:weighted-volume-divergence}, the right-hand side tends to
infinity as \(R\to\infty\). The preceding estimate implies \(L_R^\nu\to\infty\);
the conclusion follows from Lemma~\ref{lem:green-energy-blowup}. 
\end{proof}

\begin{proof}[Proof of Corollary~\ref{cor:unweighted-liouville}]
Take \(\sigma\equiv1\) and \(\rho\equiv1\).  Then \(\nu=\mu\), and
\[
\sum_{y\sim x}\mu_{xy}\rho(x,y)^2
=
\mu(x)
=
\nu(x),
\]
so \(\rho\) is \(\mu\)-adapted.  Moreover, \(d_\rho\) is the graph distance,
which is complete.

For every integer \(n\ge1\) and every \(r\in[n,n+1)\),
\(B_{d_\rho}(o,r)=B(o,n)\).  Therefore
\[
\frac{n^{2q-1}}{\mu(B(o,n))^{q-1}}
\le
\int_n^{n+1}
\frac{r^{2q-1}}{\mu(B_{d_\rho}(o,r))^{q-1}}\,dr
\le
2^{2q-1}\frac{n^{2q-1}}{\mu(B(o,n))^{q-1}}.
\]
It follows that the integral in
\eqref{eq:weighted-volume-divergence} diverges if and only if the series in
\eqref{eq:unweighted-volume-divergence} diverges.  The conclusion follows
from Theorem~\ref{thm:weighted-liouville}.
\end{proof}

\subsection{Capacity to infinity and  Nash--Williams type criteria}

For a finite set \(A\subset V\), define its capacity to infinity by
\begin{equation*}\label{eq:global-capacity-definition}
\capacity_V(A)
:=
\inf\left\{
\mathcal E(f):
 f\in\ell_0(V),\quad f\ge1\text{ on }A
\right\}.
\end{equation*}
By truncation, one may equivalently require \(f=1\) on \(A\) and
\(0\le f\le1\).  Thus \(\capacity_V(B_r)\) may be interpreted as the effective conductance from \(B_r\) to infinity.

\begin{lemma}\label{lem:capacity-exhaustion}
Fix \(r>0\).  As \(R\to\infty\) through any increasing sequence of radii
with \(R>r\),
\begin{equation*}\label{eq:capacity-exhaustion}
\capacity_{B_R}(B_r)
\downarrow
\capacity_V(B_r).
\end{equation*}
\end{lemma}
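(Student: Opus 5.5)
The proof has two parts: monotonicity in \(R\), which is immediate from the variational definitions, and identification of the limit, which is an approximation argument using finitely supported test functions.

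\emph{Monotonicity and lower bound.} Let \(r<R<R'\). Every admissible \(f\) for \(\capacity_{B_R}(B_r)\) in \eqref{eq:relative-capacity-profile} vanishes outside the finite set \(B_R\), so it lies in \(\ell_0(V)\). It is also admissible for \(\capacity_{B_{R'}}(B_r)\), since \(V\setminus B_{R'}\subset V\setminus B_R\). Hence \(R\mapsto\capacity_{B_R}(B_r)\) is nonincreasing. The same \(f\) equals \(1\) on \(B_r\), so it is admissible for \(\capacity_V(B_r)\), and therefore \(\capacity_V(B_r)\le\capacity_{B_R}(B_r)\) for every \(R>r\). Consequently, along any increasing sequence \(R_j>r\), the limit \(\lim_j\capacity_{B_{R_j}}(B_r)\) exists and is at least \(\capacity_V(B_r)\).

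\emph{Upper bound.} Assume also that \(R_j\to\infty\), which is the intended meaning of the statement. Fix \(\varepsilon>0\). By the definition of \(\capacity_V(B_r)\), and by the truncation remark preceding the lemma, choose \(f\in\ell_0(V)\) with \(f=1\) on \(B_r\), \(0\le f\le1\), and \(\mathcal E(f)\le\capacity_V(B_r)+\varepsilon\). The support of \(f\) is finite. Every vertex has finite \(d_\rho\)-distance from \(o\), so the support is contained in some ball \(B_{R_0}\). For \(R_j\ge R_0\), \(f\) vanishes on \(V\setminus B_{R_j}\) and equals \(1\) on \(B_r\), so it is admissible for \(\capacity_{B_{R_j}}(B_r)\). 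Thus
\[
\capacity_{B_{R_j}}(B_r)\le\capacity_V(B_r)+\varepsilon \qquad\text{for all large } j.
\]
Since \(\varepsilon>0\) is arbitrary, this gives the reverse inequality and proves the claimed monotone convergence.

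\emph{Where care is needed.} The only delicate point is making sure the two variational problems use the same energy \(\mathcal E\). The relative capacity counts all edges, including those joining \(B_R\) to its complement, and so does \(\mathcal E\) on \(\ell_0(V)\); the two problems are therefore consistent. Also, the convergence must be understood as \(R_j\to\infty\): if the sequence were bounded, the limit would only be \(\capacity_{B_{\sup R_j}}(B_r)\) or the corresponding limit, not \(\capacity_V(B_r)\). Completeness is not needed for this argument. It is used only to guarantee that the balls \(B_R\) are finite, so that \(\capacity_{B_R}(B_r)\) is a minimum over finitely supported functions, consistent with \eqref{eq:relative-capacity-terminal-identification}.
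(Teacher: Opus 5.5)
Your proof is correct and follows essentially the same route as the paper: zero extension gives monotonicity, every relative admissible function is finitely supported and hence admissible for \(\capacity_V(B_r)\), and a truncated near-minimizer for \(\capacity_V(B_r)\) with finite support lies in \(B_R\) for large \(R\). One wording point: your closing remark that completeness is not needed conflicts with your own first step. You use finiteness of \(B_R\), which is where completeness enters, to place relative admissible functions in \(\ell_0(V)\).
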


\begin{proof}
If \(R_2>R_1>r\), every function admissible for
\(\capacity_{B_{R_1}}(B_r)\), extended by zero, is admissible for
\(\capacity_{B_{R_2}}(B_r)\).  Hence the relative capacities decrease in
\(R\).  Every relative admissible function is finitely supported and
admissible for \(\capacity_V(B_r)\), so
\[
\capacity_{B_R}(B_r)\ge\capacity_V(B_r).
\]

Conversely, let \(\varepsilon>0\).  Choose \(f\in\ell_0(V)\), with
\(f\ge1\) on \(B_r\), such that
\[
\mathcal E(f)\le\capacity_V(B_r)+\varepsilon.
\]
After truncation, we may assume that \(f=1\) on \(B_r\) and
\(0\le f\le1\).  For all sufficiently large \(R\), one has
\(\operatorname{supp}f\subset B_R\), and therefore
\[
\capacity_{B_R}(B_r)
\le
\mathcal E(f)
\le
\capacity_V(B_r)+\varepsilon.
\]
Letting \(\varepsilon\downarrow0\) proves the assertion.
\end{proof}

Let
\[
g(o,x):=\lim_{R\to\infty}g_R(x)\in(0,\infty]
\]
and define the extended Green energy
\begin{equation*}\label{eq:global-green-energy}
L^\nu(o)
:=
\sum_{x\in V}g(o,x)^q\nu(x)
\in[0,\infty].
\end{equation*}
On a non-parabolic graph this is the ordinary whole-graph Green energy.  On a
parabolic graph it is understood in the extended sense.

\begin{proposition}\label{prop:global-green-capacity-lower}
With the convention \(0^{1-q}=+\infty\),
\begin{equation*}\label{eq:global-green-capacity-lower}
L^\nu(o)
\ge
c_q\int_0^\infty r\,\capacity_V(B_r)^{1-q}\,dr.
\end{equation*}
\end{proposition}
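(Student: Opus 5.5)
The plan is to pass to the limit \(R\to\infty\) in the finite-domain estimate \eqref{eq:relative-capacity-estimate} of Proposition~\ref{prop:relative-annular-estimates}. Two monotone limits are needed: one on the left-hand side, using \(g_R\uparrow g(o,\cdot)\), and one on the right-hand side, using \(\capacity_{B_R}(B_r)\downarrow\capacity_V(B_r)\) from Lemma~\ref{lem:capacity-exhaustion}.

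\emph{Left-hand side.} Since \(g_R(x)=g_{B_R}(o,x)\) is nondecreasing in \(R\) and extended by zero off \(B_R\), monotone convergence gives
\[
L_R^\nu=\sum_{x\in V}g_R(x)^q\nu(x)\uparrow L^\nu(o),
\]
the limit taken in \([0,\infty]\). In particular \(L^\nu(o)\ge L_R^\nu\) for every \(R\).

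\emph{Right-hand side.} Fix \(R_0>0\). For every \(R>R_0\), restricting the integral in \eqref{eq:relative-capacity-estimate} to \((0,R_0)\) gives
\[
L^\nu(o)\ge L_R^\nu\ge c_q\int_0^{R_0}r\,\capacity_{B_R}(B_r)^{1-q}\,dr.
\]
Fix \(r\in(0,R_0)\). Lemma~\ref{lem:capacity-exhaustion} gives \(\capacity_{B_R}(B_r)\downarrow\capacity_V(B_r)\), and since \(1-q<0\), the integrand \(r\,\capacity_{B_R}(B_r)^{1-q}\) increases to \(r\,\capacity_V(B_r)^{1-q}\). Here the convention \(0^{1-q}=+\infty\) handles the case \(\capacity_V(B_r)=0\); note that \(\capacity_{B_R}(B_r)>0\) for every finite \(R\). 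The monotone convergence theorem, applied along a sequence \(R_j\uparrow\infty\), then yields
\[
L^\nu(o)\ge c_q\int_0^{R_0}r\,\capacity_V(B_r)^{1-q}\,dr.
\]
Letting \(R_0\to\infty\) gives the claim.

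\emph{Measurability.} The map \(r\mapsto\capacity_V(B_r)\) is monotone nondecreasing in \(r\), because the balls increase and admissible classes shrink. Hence it is measurable, and so are the functions \(r\mapsto\capacity_{B_R}(B_r)\) for fixed \(R\), which are monotone in \(r\) for the same reason.

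The only delicate point is the infinite case. If \(\capacity_V(B_r)=0\) on a set of positive measure, which happens exactly in the parabolic case, the right-hand side is \(+\infty\). The argument above then forces \(L_R^\nu\to\infty\), consistent with \(L^\nu(o)=\infty\) in the extended sense. No further input beyond Proposition~\ref{prop:relative-annular-estimates} and Lemma~\ref{lem:capacity-exhaustion} should be needed, so I expect no real obstacle; the main care is just to use monotone convergence rather than dominated convergence, since no integrable majorant is available.
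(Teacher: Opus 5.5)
Your proposal is correct and follows essentially the same route as the paper: you pass to the limit in \eqref{eq:relative-capacity-estimate}, using $L_{R_j}^\nu\uparrow L^\nu(o)$ on the left, and Lemma~\ref{lem:capacity-exhaustion} together with $1-q<0$ and monotone convergence on the right. The paper merges your two limits into one by applying monotone convergence directly to $F_j(r)=\mathbf 1_{\{r<R_j\}}\,r\,\capacity_{B_{R_j}}(B_r)^{1-q}$, whereas you first truncate at a fixed $R_0$ and then let $R_0\to\infty$; your measurability remark (via monotonicity in $r$) is a harmless extra detail the paper leaves implicit.
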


\begin{proof}
Choose \(R_j\uparrow\infty\).  The killed Green functions increase pointwise,
\[
g_{B_{R_j}}(o,x)\uparrow g(o,x),
\]
and hence \(L_{R_j}^\nu\uparrow L^\nu(o)\) by monotone convergence.  For
\(r>0\), set
\[
F_j(r)
:=
\mathbf 1_{\{r<R_j\}}\,
r\,\capacity_{B_{R_j}}(B_r)^{1-q}.
\]
Lemma~\ref{lem:capacity-exhaustion} and \(1-q<0\) show that \(F_j(r)\)
increases pointwise to
\[
r\,\capacity_V(B_r)^{1-q}.
\]
Apply Proposition~\ref{prop:relative-annular-estimates} and then
monotone convergence in \(r\).
\end{proof}

\begin{corollary}\label{cor:global-capacity-liouville}
If
\begin{equation*}\label{eq:global-capacity-divergence}
\int_1^\infty r\,\capacity_V(B_r)^{1-q}\,dr
=\infty,
\end{equation*}
then every nonnegative solution of \eqref{di-2} is identically zero.
\end{corollary}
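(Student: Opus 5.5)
The plan is to show that the finite Green energies $L_R^\nu$ blow up along a sequence of radii, and then invoke Lemma~\ref{lem:green-energy-blowup}, exactly as in the proofs of Corollary~\ref{cor:annular-conductance-liouville} and Theorem~\ref{thm:weighted-liouville}.

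Fix $o\in V$ and choose radii $R_j\uparrow\infty$ with $R_j>1$. By Proposition~\ref{prop:relative-annular-estimates},
\[
L_{R_j}^\nu\ge c_q\int_1^{R_j} r\,\capacity_{B_{R_j}}(B_r)^{1-q}\,dr .
\]
Lemma~\ref{lem:capacity-exhaustion} shows that, for each fixed $r$, the relative capacity $\capacity_{B_R}(B_r)$ decreases to $\capacity_V(B_r)$ as $R\to\infty$. Since $1-q<0$, this means $\capacity_{B_R}(B_r)^{1-q}$ increases to $\capacity_V(B_r)^{1-q}$, with the convention $0^{1-q}=+\infty$.

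Now set
\[
F_j(r)=\mathbf 1_{\{1<r<R_j\}}\, r\,\capacity_{B_{R_j}}(B_r)^{1-q}.
\]
These functions are nonnegative and increase pointwise to $\mathbf 1_{\{r>1\}}\, r\,\capacity_V(B_r)^{1-q}$. By monotone convergence,
\[
\liminf_j L_{R_j}^\nu\ \ge\ c_q\int_1^\infty r\,\capacity_V(B_r)^{1-q}\,dr=\infty .
\]
One could equally cite Proposition~\ref{prop:global-green-capacity-lower} restricted to $r\ge1$, using $L_{R_j}^\nu\uparrow L^\nu(o)$. In either case $L_{R_j}^\nu\to\infty$.

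Lemma~\ref{lem:green-energy-blowup} then shows that every nonnegative solution of \eqref{di-2} vanishes identically. No step is a real obstacle. The only care needed is with $\capacity_V(B_r)=0$ (the parabolic case), where the integrand is $+\infty$; the monotone-convergence argument handles this without change, since the $F_j$ are finite and increase to $+\infty$ there.
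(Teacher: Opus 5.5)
Your proposal is correct and matches the paper's argument. The paper cites Proposition~\ref{prop:global-green-capacity-lower} to get $L^\nu(o)=\infty$, hence $L_R^\nu\to\infty$, and then applies Lemma~\ref{lem:green-energy-blowup}. You instead inline that proposition's proof: the relative-capacity estimate, the monotone decrease $\capacity_{B_R}(B_r)\downarrow\capacity_V(B_r)$, and monotone convergence in $r$.
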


\begin{proof}
Proposition~\ref{prop:global-green-capacity-lower} gives
\(L^\nu(o)=\infty\), and therefore \(L_R^\nu\to\infty\).
Then the conclusion follows from Lemma~\ref{lem:green-energy-blowup}. 
\end{proof}

\begin{remark}\label{rem:capacity-comparison-warning}
For \(0<2r<R\),
\begin{equation*}\label{eq:global-relative-annular-chain}
\capacity_V(B_r)
\le
\capacity_{B_R}(B_r)
\le
\mathcal C_o(r).
\end{equation*}
Thus the global capacity criterion is formally stronger than the annular one
and may detect bottlenecks far beyond the scale \(r\). 
\end{remark}

We finally specialize to the graph distance.  Assume \(\sigma\equiv1\) and
\(\rho\equiv1\), write \(B_n=B(o,n)\), and set
\begin{equation*}\label{eq:metric-cut-conductance}
b_k
:=
\sum_{\substack{x\in B_k,\ y\notin B_k\\x\sim y}}\mu_{xy}.
\end{equation*}
We have the following Nash--Williams estimate.
\begin{lemma}\label{lem:nash-williams-capacity}
For integers \(1\le n< R\),
\begin{equation}\label{eq:relative-capacity-nash-williams}
\capacity_{B_R}(B_n)^{-1}
\ge
\sum_{k=n}^R\frac1{b_k}.
\end{equation}
Consequently,
\begin{equation}\label{eq:global-capacity-nash-williams}
\capacity_V(B_n)^{-1}
\ge
\sum_{k=n}^\infty\frac1{b_k}.
\end{equation}
\end{lemma}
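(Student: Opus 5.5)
We prove \eqref{eq:relative-capacity-nash-williams} by Thomson's principle in its dual (Dirichlet) form. We exhibit a unit flow from \(B_n\) to \(V\setminus B_R\) and bound its energy from below by the series of cut resistances. Equivalently, we bound \(\capacity_{B_R}(B_n)\) from above through the Cauchy--Schwarz argument applied to an arbitrary admissible function, which is cleaner here.

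Fix \(f\) admissible for \(\capacity_{B_R}(B_n)\): \(f=1\) on \(B_n\) and \(f=0\) off \(B_R\). For \(k\) with \(n\le k\le R\), let \(\Pi_k\) be the edge cut between \(B_k\) and \(V\setminus B_k\). Since \(d\) is the graph distance, each edge \(\{x,y\}\) with \(x\in B_k\), \(y\notin B_k\) has \(d(o,x)=k\) and \(d(o,y)=k+1\). Hence the cuts \(\Pi_n,\dots,\Pi_R\) are pairwise disjoint. Note that \(\Pi_R\) consists of edges from \(B_R\) to outside, which are counted in \(\mathcal E(f)\).

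We first show that each cut carries a full unit drop. Let \(m_k\) be the minimum of \(f\) over the sphere \(\{d(o,\cdot)=k\}\) and \(M_{k+1}\) the maximum over the sphere at level \(k+1\). A cleaner route avoids these extremes and uses the level-set/coarea form instead. For each \(t\in(0,1)\), the set \(\{f>t\}\) contains \(B_n\) and is contained in \(B_R\). We would rather use the standard Nash--Williams step: sum over cuts and apply Cauchy--Schwarz per cut. Each radial path from the sphere of level \(n\) to level \(R+1\) crosses every \(\Pi_k\) exactly once. So we pass to the unit flow formulation. Take the equilibrium current \(\theta\) (the unit flow from \(B_n\) to the terminal set achieving \(R_{\mathrm{eff}}\)). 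Every \(\Pi_k\) separates \(B_n\) from \(\mathcal T_R\) in \((\widehat V_R,\widehat\mu_R)\), so the net flow across \(\Pi_k\) is \(1\): summing \(\Div\theta\) over \(B_k\) gives \(\sum_{e\in\Pi_k}|\theta(e)|\ge1\).

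Cauchy--Schwarz then gives
\[
\sum_{e\in\Pi_k}\frac{\theta(e)^2}{\mu_e}\ge\frac{\bigl(\sum_{e\in\Pi_k}|\theta(e)|\bigr)^2}{\sum_{e\in\Pi_k}\mu_e}\ge\frac1{b_k}.
\]
By disjointness of the cuts, \(\mathcal D(\theta)\ge\sum_{k=n}^R 1/b_k\). Proposition~\ref{prop:dirichlet-thomson} identifies \(\mathcal D(\theta)\) with \(R_{\mathrm{eff}}=C_{\mathrm{eff}}^{-1}\), and \eqref{eq:relative-capacity-terminal-identification} identifies \(C_{\mathrm{eff}}\) with \(\capacity_{B_R}(B_n)\). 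This yields \eqref{eq:relative-capacity-nash-williams}.

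Finally, let \(R\to\infty\) in \eqref{eq:relative-capacity-nash-williams}. Lemma~\ref{lem:capacity-exhaustion} gives \(\capacity_{B_R}(B_n)^{-1}\uparrow\capacity_V(B_n)^{-1}\), while the right-hand side increases to the full series, which gives \eqref{eq:global-capacity-nash-williams}.

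The main point requiring care is the bookkeeping in the terminal-copy graph. Edges of \(\Pi_R\) become copied edges into \(\mathcal T_R\), and we must check both that \(B_k\) (for \(k\le R\)) separates \(B_n\) from the terminals and that the cuts are genuinely disjoint. Both follow from the fact that graph-distance labels change by at most one along each edge.
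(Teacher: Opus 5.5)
Your final argument is correct and is essentially the paper's proof. You apply Cauchy--Schwarz on each of the disjoint graph-distance cuts $\Pi_n,\dots,\Pi_R$ to a unit flow from $B_n$ to $\mathcal T_R$, then use Thomson's principle, the terminal-graph identification and Lemma~\ref{lem:capacity-exhaustion}; the paper does the same with an arbitrary unit flow followed by the infimum, while you use the optimal flow directly. Delete the abandoned function-side detour in your opening paragraphs: Cauchy--Schwarz applied to an \emph{arbitrary} admissible $f$ would bound $\capacity_{B_R}(B_n)$ from below, not from above, so that route was pointed in the wrong direction.
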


\begin{proof}
For \(k\ge n\), we define the cutset
\[
\Pi_k
:=
\bigl\{\{x,y\}\in E:x\in B_k,\ y\notin B_k\bigr\}.
\]
It is clear that \(\Pi_k\) are pairwise disjoint.
Let \(\Phi\) be any unit flow from \(B_n\) to \(\mathcal T_R\), and orient
each edge of \(\Pi_k\) outward from \(B_k\).  Summing \(\Div\Phi\) over
\(B_k\) shows that the net flux through \(\Pi_k\) is one.  Hence the Cauchy--Schwarz inequality gives
\[
\sum_{e\in\Pi_k}\frac{\Phi(e)^2}{\mu_e}
\ge
\frac{\bigl(\sum_{e\in\Pi_k}\Phi(e)\bigr)^2}
{\sum_{e\in\Pi_k}\mu_e}
=
\frac1{b_k}.
\]
Since the cutsets are disjoint,
\[
\mathcal D_{\widehat V_R}(\Phi)
\ge
\sum_{k=n}^R\frac1{b_k}.
\]
Taking the infimum over unit flows and using Thomson's principle (Proposition \ref{prop:dirichlet-thomson}) together with
\eqref{eq:relative-capacity-terminal-identification} proves
\eqref{eq:relative-capacity-nash-williams}.  Letting \(R\to\infty\) and using
Lemma~\ref{lem:capacity-exhaustion} proves
\eqref{eq:global-capacity-nash-williams}.
\end{proof}

\begin{corollary}\label{cor:nash-williams-cut-liouville}
If
\begin{equation*}\label{eq:nash-williams-cut-tail-divergence}
\sum_{n=1}^\infty
n\left(\sum_{k=n}^\infty\frac1{b_k}\right)^{q-1}
=\infty,
\end{equation*}
then every nonnegative solution of \eqref{sdi} is identically zero.
\end{corollary}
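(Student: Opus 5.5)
The plan is to feed the Nash--Williams bound of Lemma~\ref{lem:nash-williams-capacity} into the relative-capacity estimate \eqref{eq:relative-capacity-estimate}, and then conclude with Lemma~\ref{lem:green-energy-blowup}. We are in the special case \(\sigma\equiv1\), \(\rho\equiv1\), so \(\nu=\mu\), \(d_\rho=d\) is complete, and \eqref{sdi} is \eqref{di-2}.

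\emph{Step 1: a lower bound for \(L_R^\mu\) by cut sums.} Fix an integer \(R\ge2\). For \(r\in[n,n+1)\) with \(1\le n\le R-1\) we have \(B_r=B_n\). Hence \(\capacity_{B_R}(B_r)=\capacity_{B_R}(B_n)\). By \eqref{eq:relative-capacity-nash-williams},
\[
\capacity_{B_R}(B_n)^{1-q}=\bigl(\capacity_{B_R}(B_n)^{-1}\bigr)^{q-1}\ge\Bigl(\sum_{k=n}^{R}\frac1{b_k}\Bigr)^{q-1}.
\]
Here \(b_k>0\) for every \(k\), since the graph is infinite and connected, so the cut out of each finite ball is nonempty. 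Integrating \(r\) over \([n,n+1)\) gives a factor \(\int_n^{n+1}r\,dr\ge n\). Proposition~\ref{prop:relative-annular-estimates} then yields
\[
L_R^\mu\ge c_q\sum_{n=1}^{R-1}n\Bigl(\sum_{k=n}^{R}\frac1{b_k}\Bigr)^{q-1}.
\]

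\emph{Step 2: letting \(R\to\infty\).} Each summand is nondecreasing in \(R\) and converges to \(n\bigl(\sum_{k\ge n}b_k^{-1}\bigr)^{q-1}\in(0,\infty]\). By monotone convergence for series, the right-hand side tends to the divergent sum in the hypothesis. One point needs care: if some tail \(\sum_{k\ge n}b_k^{-1}\) is infinite, the corresponding term alone already forces \(L_R^\mu\to\infty\). No separate case analysis is needed, because the truncated tails simply increase without bound.

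\emph{Step 3: conclusion.} We now have \(L_{R}^\mu\to\infty\) along the integers \(R\to\infty\). Lemma~\ref{lem:green-energy-blowup} applies with \(\sigma\equiv1\) and \(\rho\equiv1\), so every nonnegative solution of \eqref{sdi} vanishes identically.

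The only delicate point is the exchange of the limit \(R\to\infty\) with the sum over \(n\), and monotonicity handles it. As an alternative route, Proposition~\ref{prop:global-green-capacity-lower} combined with \eqref{eq:global-capacity-nash-williams} gives \(L^\mu(o)=\infty\) directly, followed by Corollary~\ref{cor:global-capacity-liouville}. I would present this alternative as the main proof, since it avoids truncation altogether.
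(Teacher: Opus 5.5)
Your proposal is correct, and the alternative you say you would present as the main proof is exactly the paper's argument: the paper bounds \(\int_1^\infty r\,\capacity_V(B_r)^{1-q}\,dr\ge\sum_{n\ge1}n\,\capacity_V(B_n)^{1-q}\), inserts \eqref{eq:global-capacity-nash-williams}, and then invokes Corollary~\ref{cor:global-capacity-liouville}. Your Steps 1--3 are also valid; they take the limit \(R\to\infty\) after applying the truncated bound \eqref{eq:relative-capacity-nash-williams}, whereas the paper has already built that limit into Lemma~\ref{lem:capacity-exhaustion} and Proposition~\ref{prop:global-green-capacity-lower}.
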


\begin{proof}
For \(r\in[n,n+1)\), one has \(B_r=B_n\).  Therefore
\eqref{eq:global-capacity-nash-williams} gives
\begin{equation*}
\int_1^\infty r\,\capacity_V(B_r)^{1-q}\,dr
\ge
\sum_{n=1}^\infty n\,\capacity_V(B_n)^{1-q}
\ge
\sum_{n=1}^\infty
n\left(\sum_{k=n}^\infty\frac1{b_k}\right)^{q-1}.
\end{equation*}
The conclusion follows from Corollary~\ref{cor:global-capacity-liouville}.
\end{proof}

The strict positivity assumption in
Theorem~\ref{thm:weighted-liouville} is tied to its symmetric
intrinsic-metric formulation. The following oriented formulation treats degenerate
potentials without forcing the geometry to collapse.

\begin{remark}\label{rem:degenerate-potentials}
Let \(0\not\equiv\sigma:V\to[0,\infty)\), set \(\nu=\sigma\mu\), and assign a
possibly asymmetric length \(\lambda(x,y)\ge0\) to every ordered adjacent pair,
subject to
\begin{equation*}\label{eq:oriented-adapted-summary}
\sum_{y\sim x}\mu_{xy}\lambda(x,y)^2\le\nu(x),
\qquad x\in V.
\end{equation*}
Define the forward path distance and its balls by
\[
d_\lambda^+(x,y)
:=
\inf_{x=x_0\sim\cdots\sim x_m=y}
\sum_{i=0}^{m-1}\lambda(x_i,x_{i+1}),
\qquad
B_\lambda^+(o,r):=\{x:d_\lambda^+(o,x)\le r\}.
\]
Assume that all forward balls are finite and choose \(o\in\supp\sigma\).
It can be shown that \[
\int_1^\infty
\frac{r^{2q-1}}
{\nu(B_\lambda^+(o,r))^{q-1}}\,dr=\infty
\]
forces every nonnegative solution of \eqref{di-2} to vanish identically.
 More
basically, with
\[
\mathcal C_{o,\lambda}^+(r)
:=C_{\mathrm{eff}}\bigl(B_\lambda^+(o,r),
V\setminus B_\lambda^+(o,2r)\bigr),
\]
the divergence condition
\[
\int_1^\infty r\,\mathcal C_{o,\lambda}^+(r)^{1-q}\,dr=\infty
\]
has the same consequence.  A canonical exactly adapted choice is
\[
\lambda_\sigma(x,y)=\sqrt{\sigma(x)}.
\]
The proofs follow the same scheme of this section and we omit the details.
\end{remark}

\section[Green energies and \texorpdfstring{$L^q$}{Lq}-Liouville properties]{Green energies and \texorpdfstring{$L^q$}{Lq}-Liouville properties}\label{sec-example}

In this section, we explain how the Green quantities used in the preceding
sections are related to the \(L^q\)-Liouville property.  We also point out a
basic distinction: the \(L^q\)-Liouville property is controlled by a diagonal
Green energy, whereas the existence of positive solutions to the semilinear
inequality is controlled by a pointwise testing condition.

Let \(p=q/(q-1)\).  In the terminology of Hao and Sun \cite{HS2025}, a graph
has the \(L^q\)-Liouville property if every nonnegative superharmonic function
in \(\ell^q(V,\mu)\) is identically zero.  They introduce the \(L^q\)-Green
function
\[
	g_q(x,y):=
	\sum_{z\in V}g(x,z)g(z,y)^{q-1}\mu(z).
\]
In particular,
\[
	g_q(o,o)=\sum_{z\in V}g(o,z)^q\mu(z).
\]
By \cite[Theorem~4.2]{HS2025}, for \(1<p<\infty\), the following conditions are
equivalent: \(L^p\)-parabolicity, the \(L^q\)-Liouville property, and
\(g_q(x,y)=\infty\) for some, equivalently all, \(x,y\in V\).

In the unweighted specialization of Section~\ref{sec:intrinsic-flow}, the
finite-domain Green energy has the following potential-theoretic interpretation.
Indeed, when \(\sigma\equiv1\) and \(B_R=B(o,R)\),
\begin{equation}\label{eq:LR-gq-approx}
	L_R^\mu
	:=
	\sum_{x\in B_R}g_{B_R}(o,x)^q\mu(x)
\end{equation}
is the finite-domain approximation of the diagonal energy \(g_q(o,o)\).
Allowing the value \(+\infty\), one has
\(g_{B_R}(o,x)\uparrow g(o,x)\) for every \(x\in V\), and hence
\(L_R^\mu\uparrow g_q(o,o)\) by monotone convergence.

As a consequence, Corollary~\ref{cor:unweighted-liouville} has the following interpretation.

\begin{corollary}\label{cor:Lq-Liouville-volume}
	Let \(1<q<\infty\), and set \(p=q/(q-1)\).  If, for some \(o\in V\),
	\begin{equation}\label{eq:Lq-Liouville-volume-condition}
		\sum_{n=1}^{\infty}
		\frac{n^{2q-1}}{\mu(B(o,n))^{q-1}}
		=\infty,
	\end{equation}
	then \((V,\mu)\) has the \(L^q\)-Liouville property.  Equivalently, in the
	terminology of \cite{HS2025}, \((V,\mu)\) is \(L^p\)-parabolic.
\end{corollary}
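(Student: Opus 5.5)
The plan is to combine the volume lower bound \eqref{eq:intrinsic-green-volume-lower} for the finite-domain Green energy with the identification of its limit as the diagonal \(L^q\)-Green energy \(g_q(o,o)\), and then invoke the characterization of \cite[Theorem~4.2]{HS2025}.

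First, specialize the setting of Section~\ref{sec:intrinsic-flow} to \(\sigma\equiv1\) and \(\rho\equiv1\), as in the proof of Corollary~\ref{cor:unweighted-liouville}. Then \(\nu=\mu\), \(d_\rho=d\) is complete, and \(B_R=B(o,R)\). For \(R>1\), Proposition~\ref{prop:intrinsic-green-volume-lower} gives
\[
L_R^\mu\ge c_q\int_1^R\frac{r^{2q-1}}{\mu(B(o,r))^{q-1}}\,dr .
\]
The comparison between this integral and the series, established in the proof of Corollary~\ref{cor:unweighted-liouville}, shows that \eqref{eq:Lq-Liouville-volume-condition} forces the right-hand side to tend to \(+\infty\) as \(R\to\infty\). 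Hence \(L_R^\mu\to\infty\).

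Second, pass to the limit. As noted after \eqref{eq:LR-gq-approx}, \(g_{B_R}(o,x)\uparrow g(o,x)\) in \([0,\infty]\), so monotone convergence gives \(L_R^\mu\uparrow g_q(o,o)\). Therefore \(g_q(o,o)=\infty\). This also covers the parabolic case: there \(g\equiv\infty\), the graph is trivially \(L^p\)-parabolic, and every nonnegative superharmonic function is constant, so a nonzero one cannot lie in \(\ell^q(V,\mu)\) because \(\mu(V)=\infty\).

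Third, in the non-parabolic case, \cite[Theorem~4.2]{HS2025} says that \(g_q(x,y)=\infty\) for some pair \(x,y\) is equivalent to \(L^p\)-parabolicity and to the \(L^q\)-Liouville property. Applying it with \(x=y=o\) gives both conclusions.

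The main point to check is whether \cite{HS2025} covers the parabolic case, that is, whether \(g_q=\infty\) is meant in the extended sense. I handle this separately as above rather than rely on it. Otherwise the argument is only an assembly of earlier results.
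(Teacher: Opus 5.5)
Your non-parabolic argument is the paper's proof. The paper applies Proposition~\ref{prop:intrinsic-green-volume-lower} with \(\sigma\equiv1\), \(\rho\equiv1\) to get \(L_R^\mu\to\infty\), uses monotone convergence to obtain \(g_q(o,o)=\infty\), and then cites \cite[Theorem~4.2]{HS2025}. It does not treat the parabolic case separately.

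Your supplementary treatment of the parabolic case has a genuine gap: it assumes \(\mu(V)=\infty\), and nothing in the standing assumptions (infinite, connected, locally finite) guarantees this. For example, \(V=\{0,1,2,\dots\}\) with \(\mu_{n,n+1}=2^{-n}\) has \(\mu(V)<\infty\). Whenever \(\mu(V)<\infty\) the following all hold:
\begin{itemize}
\item The graph is automatically parabolic. The cutoff \(\mathbf 1_{B(o,n)}\) has energy equal to the total weight of the edges leaving \(B(o,n)\). These edge sets are disjoint in \(n\), so the energies tend to zero while the cutoffs equal \(1\) at \(o\).
\item Hypothesis \eqref{eq:Lq-Liouville-volume-condition} holds trivially, because \(\mu(B(o,n))\le\mu(V)\).
\item The constant \(1\) is a nonnegative harmonic function in \(\ell^q(V,\mu)\).
\end{itemize}
So in this case the \(L^q\)-Liouville property, in the sense quoted in the paper, fails. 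This step cannot be repaired; the statement itself needs \(\mu(V)=\infty\), or must be read within the setting where \cite[Theorem~4.2]{HS2025} applies. In the non-parabolic case \(\mu(V)=\infty\) is automatic, by the same cutoff argument. Under \(\mu(V)=\infty\) your parabolic-case reasoning is correct: nonnegative superharmonic functions are constant, and positive constants are not in \(\ell^q\). So your instinct to check this case was right, but the outcome is that the scope of the corollary (and the paper's proof, which leaves this implicit) must be restricted to \(\mu(V)=\infty\).
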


\begin{proof}
	By Proposition~\ref{prop:intrinsic-green-volume-lower}, applied with
	\(\sigma\equiv1\) and \(\rho\equiv1\), condition
	\eqref{eq:Lq-Liouville-volume-condition} implies that
\(L_R^\mu\to\infty\).  By
	\eqref{eq:LR-gq-approx}, this gives \(g_q(o,o)=\infty\).  The conclusion follows
	from \cite[Theorem~4.2]{HS2025}.
\end{proof}

We next compare the \(L^q\)-Liouville property with the nonexistence of positive
solutions to the semilinear inequality
\begin{equation}\label{eq:unweighted-Lane-Emden-for-section}
	-\Delta u\ge u^q.
\end{equation}
For the unweighted inequality \eqref{eq:unweighted-Lane-Emden-for-section}, the
implication from the \(L^q\)-Liouville property to semilinear nonexistence is
already contained in Theorem \ref{thm:green-level-criterion}.  Indeed, by the
equivalence quoted above, the \(L^q\)-Liouville property gives
\(g_q(o,o)=\infty\) for every \(o\in V\), that is,
\[
\sum_{x\in V}g(o,x)^q\mu(x)=\infty.
\]
On the other hand, if \eqref{eq:unweighted-Lane-Emden-for-section} admitted a
positive solution, then Theorem \ref{thm:green-level-criterion}, applied with
\(\sigma\equiv1\) and \(\nu=\mu\), would force this same diagonal Green energy to
be finite.  This contradiction rules out positive solutions.

The reverse implication is false  for the unweighted inequality
\eqref{eq:unweighted-Lane-Emden-for-section}. See the following example.

\begin{example}
	\label{ex:unweighted-nonexistence-not-Lq}
	Fix $q>1$ and choose a number
	\[
	\frac{q-1}{2}<\beta<q.
	\]
	Let $b\ge 2$, and let $T_b$ be the rooted regular tree with branching number $b$.
	Let $o$ be its root, and let $(z_n)_{n\ge1}$ be a sequence with $o\sim z_1\sim \cdots\sim z_n\sim z_{n+1}\sim \cdots$, so that $d(o,z_n)=n$.  For each $n$, attach to $z_n$ a path
	\[
	z_n=w_{n,0}\sim w_{n,1}\sim\cdots\sim w_{n,\ell_n},
	\qquad
	\ell_n:=\lfloor b^{\beta n}\rfloor.
	\]
	All edges have weight one.  Denote the resulting graph by $(V,\mu)$. In this case
	$\mu(x)$ is the degree of $x$.
	
	Let
	\[
	h(x):=g(o,x).
	\]
	Note that $h$ is constant
	on each attached path by harmonicity, and on the tree part it coincides with the usual Green
	function of the regular tree. In particular,
	\[
		h(w_{n,j})=h(z_n)\asymp b^{-n},
		\qquad 0\le j\le \ell_n.
	\]
	It follows that the diagonal $L^q$-Green energy is finite.  Indeed,
	\[
	\sum_{x\in T_b}h(x)^q\mu(x)
	\lesssim
	\sum_{m=0}^\infty b^m b^{-qm}<\infty,
	\]
	and the contribution of the attached paths is bounded by
	\[
2	\sum_{n=1}^\infty \ell_n b^{-qn}
	\lesssim
	\sum_{n=1}^\infty b^{-(q-\beta)n}<\infty.
	\]
	Thus
	\[
		\sum_{x\in V}g(o,x)^q\mu(x)<\infty.
	\]
	By the equivalence between the $L^q$-Liouville property and divergence of the
	$L^q$-Green function, this graph does not have the $L^q$-Liouville property.
	
	On the other hand, the pointwise testing condition fails.  Set
	$x_n:=w_{n,\ell_n}$.  Since $h$ is constant on the attached path at $z_n$, we have
	$h(y)=h(z_n)$ for all $y\in\{w_{n,1},\ldots,w_{n,\ell_n}\}$.  Therefore
	\[
	G(h^q)(x_n)
	\ge
	h(z_n)^q
	\sum_{j=1}^{\ell_n}g(x_n,w_{n,j})\mu(w_{n,j}).
	\]
	The last sum is the expected time spent on the attached path by the random walk
	started at $x_n$.  Before hitting $z_n$, this is just the simple random walk on
	$\{0,1,\ldots,\ell_n\}$, started at $\ell_n$, with $0$ absorbing and $\ell_n$
	reflecting.  Its expected hitting time of $0$ is $\ell_n^2$.  Hence
	\[
	G(h^q)(x_n)
	\gtrsim
	b^{-qn}\ell_n^2.
	\]
	Dividing by $h(x_n)\asymp b^{-n}$ gives
	\[
	\frac{G(h^q)(x_n)}{h(x_n)}
	\gtrsim
	b^{-(q-1)n}\ell_n^2
	\asymp
	b^{(2\beta-q+1)n}\longrightarrow\infty,
	\]
	by the choice of $\beta$.  Therefore there is no constant $C$ such that
	\[
	G(g(o,\cdot)^q)(x)\le Cg(o,x),\qquad x\in V.
	\]
	By Theorem \ref{thm:green-testing-equivalence}, the inequality
	\[
	-\Delta u\ge u^q
	\]
	has no positive solution on $V$.
\end{example}

\begin{remark}
	The regular tree part already has exponential
	volume growth and fails \textnormal{(VD)}.  Thus there is no conflict with Corollary \ref{cor:unweighted-one-condition}.
\end{remark}

As mentioned in Remark \ref{rem:relation-Green-energy}, it is also natural to ask whether, under \textnormal{$(3G)$}, the Green energy
condition \[
\sum_{x\in V}g(o,x)^q\nu(x)<\infty
\] implies the full testing condition
\begin{equation}\label{eq:section-pointwise-testing}
        G\big(\sigma g(o,\cdot)^q\big)(x)
        \lesssim g(o,x),
        \qquad x\in V.
\end{equation}
The following example shows that this is not the case even if we assume \textnormal{(VD)},
\textnormal{(PI)}, and \textnormal{(P$_0$)}.

\begin{example}\label{ex:diagonal-energy-not-testing}
	Let \(d\ge3\), and consider \(\mathbb Z^d\) with the standard nearest-neighbor
	weights. Clearly \textnormal{(VD)},
	\textnormal{(PI)}, and \textnormal{(P$_0$)} all hold. Fix
	\[
	q>\frac{d}{d-2}.
	\]
	Note that 	\(g(0,x)\asymp (1+|x|)^{2-d}\).

	Choose vertices \(z_k\to\infty\) so rapidly that
	\[
	\sum_k g(0,z_k)^{1/2}<\infty.
	\]
	Define a measure \(\lambda\) by
	\[
	\lambda:=\mathbf{1}_{\{0\}}+
	\sum_k g(0,z_k)^{1/2}\mathbf{1}_{\{z_k\}}.
	\]
	Let \(\nu\) be defined by
	\[
\nu(x)=\frac{\lambda(x)}{	g(0,x)^q},
	\]
and	\(\sigma(x)=\nu(x)/\mu(x)\).  Then \(\sigma\in\ell^+(V)\), \(\sigma(0)>0\), and
	\[
	\sum_{x\in V}g(0,x)^q\nu(x)
	=\lambda(V)<\infty.
	\]
	However, the pointwise testing condition fails.  Indeed, at \(x=z_k\),
	\[
	G\bigl(\sigma g(0,\cdot)^q\bigr)(z_k)
	=\sum_{y\in V}g(z_k,y)\lambda(y)
	\ge g(z_k,z_k)\lambda(z_k)
	\asymp g(0,z_k)^{1/2}.
	\]
	Since \(g(0,z_k)\to0\), we have
	\[
	\frac{G\bigl(\sigma g(0,\cdot)^q\bigr)(z_k)}{g(0,z_k)}
	\gtrsim g(0,z_k)^{-1/2}\to\infty.
	\]
	Thus \eqref{eq:section-pointwise-testing} fails.
\end{example}

The example isolates the main obstruction.  A diagonal Green energy condition such as
\(\sum_x g(o,x)^q\nu(x)<\infty\)  does not prevent the potential from concentrating near sparse points.
  To
deduce condition~\eqref{e1}, one needs an additional assumption controlling
such concentration.  Condition~\eqref{et2} provides one such assumption: it
rules out the concentration exhibited in
Example~\ref{ex:diagonal-energy-not-testing}.  A systematic study of other
hypotheses with this property lies beyond the scope of this paper.


\section[Existence and Nonexistence in Zd]{Existence and Nonexistence in \texorpdfstring{$\mathbb{Z}^d$}{Zd}}\label{sec7}
Throughout this section and the next, \(|\cdot|\) and \(\|\cdot\|_1\) denote the Euclidean norm and $\ell^1$-norm on $\mathbb{Z}^d$, respectively. We equip \(\mathbb Z^d\) with the standard
nearest-neighbor weights
\[
\mu_{xy}=
\begin{cases}
	1, & \|x-y\|_1=1,\\
	0, & \text{otherwise}.
\end{cases}
\]
The graph distance 
\(
d(x,y)\) coincides with the $\ell^1$-norm \(\|x-y\|_1
\). 

In this section, we study positive solutions on \(\mathbb Z^d\) of
\begin{equation}\label{eq-z}
	-\Delta u\ge \frac{1}{(1+|x|)^\alpha}u^q.
\end{equation}


A two-sided estimate for the Green function in $\mathbb{Z}^d$ is well known; see, for example, \cite{U98}. For completeness, we recall how it follows from the
general estimates above.
\begin{lemma}\label{lem:green-Zd}
In $\mathbb{Z}^d$ with $d\ge  3$, we have for any $x, y\in \mathbb{Z}^d$
\begin{align}\label{eq:green-Zd}
g(x,y)\asymp (1+d(x,y))^{2-d}.
\end{align}
\end{lemma}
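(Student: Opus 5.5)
The plan is to obtain the estimate as a direct specialization of Lemma~\ref{lem:Green-estimate}. First, $\mathbb Z^d$ with unit nearest-neighbor weights satisfies \textnormal{(VD)}, \textnormal{(PI)}, and \textnormal{(P$_0$)}.
\begin{itemize}
\item For \textnormal{(P$_0$)}, every vertex has $\mu(x)=2d$, so $\mu_{xy}/\mu(x)=1/(2d)$ and one may take $p_0=2d$.
\item For \textnormal{(VD)}, the graph ball $B(x,n)$ is the $\ell^1$-ball of radius $n$. Its cardinality satisfies $\#B(x,n)\asymp (1+n)^d$, with constants depending only on $d$, uniformly in $x$ (for instance by comparing it with the cubes $[-n/d,n/d]^d\cap\mathbb Z^d$ and $[-n,n]^d\cap\mathbb Z^d$). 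Hence $\mu(B(x,n))=2d\,\#B(x,n)\asymp n^d$ for $n\ge1$, and doubling follows.
\item For \textnormal{(PI)}, I would cite the standard discrete Poincaré inequality on lattice boxes. It can be proved by averaging $|f(x)-f(y)|^2$ along coordinate-monotone lattice paths between pairs of points of the box, and then passing between $\ell^1$-balls and cubes using doubling.
\end{itemize}

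Second, I check non-parabolicity and compute the tail sum. Since $\mu(B(o,n))\asymp n^d$, we get $\sum_n n/\mu(B(o,n))\asymp\sum_n n^{1-d}<\infty$ for $d\ge3$, so $\mathbb Z^d$ is non-parabolic by Lemma~\ref{lem:Green-estimate}. The lemma then gives
\[
g(x,y)\asymp\sum_{n=d(x,y)}^\infty\frac{n}{\mu(B(x,n))}\asymp\sum_{n=\max\{d(x,y),1\}}^\infty n^{1-d}.
\]
The $n=0$ term is harmless: $B(x,0)=\{x\}$ has positive measure, and the term itself is $0$. The standard integral comparison gives $\sum_{n\ge k}n^{1-d}\asymp k^{2-d}$ for $k\ge1$. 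Since $\max\{k,1\}\asymp 1+k$, this yields \eqref{eq:green-Zd}. On the diagonal, the estimate reads $g(x,x)\asymp1$, which is consistent because $g(x,x)$ is a positive finite constant independent of $x$ by translation invariance.

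The only step that is not pure bookkeeping is verifying \textnormal{(PI)}. It is classical for $\mathbb Z^d$, so I would quote it rather than reprove it. All remaining constants depend only on $d$, which gives the uniform comparison for all $x,y\in\mathbb Z^d$.
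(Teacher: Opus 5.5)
Your proposal is correct and follows the paper's proof: verify \textnormal{(VD)}, \textnormal{(PI)} (the paper also just quotes this from the literature) and \textnormal{(P$_0$)} on $\mathbb Z^d$, then specialize Lemma~\ref{lem:Green-estimate} using $\mu(B(x,n))\asymp(1+n)^d$. Your explicit check of \textnormal{(P$_0$)}, of non-parabolicity, and of the tail sum $\sum_{n\ge k}n^{1-d}\asymp k^{2-d}$ (including the diagonal case) fills in bookkeeping that the paper leaves implicit.
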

\begin{proof}
From \cite{CG98}, we know that \textnormal{(PI)} holds in $\mathbb{Z}^d$.
Since in $\mathbb{Z}^d$ we have $\mu(B(x,n))\asymp (1+n)^d$, condition \textnormal{(VD)} is also satisfied. Lemma \ref{lem:Green-estimate} completes the proof.
\end{proof}
\begin{remark}
Since every nonnegative superharmonic function on $\mathbb{Z}^1$ or $\mathbb{Z}^2$ is constant, there is no positive solution to
(\ref{eq-z}) in $\mathbb{Z}^1$ and $\mathbb{Z}^2$. In what follows, we only consider the case $d\ge 3$.
\end{remark}

For completeness, we record the following observation outside the standing
superlinear range; it is not used below.
\begin{lemma}
If $\alpha\le 0$ and $0<q\le  1$, there is no positive solution to (\ref{eq-z}) in $\mathbb{Z}^d$.
\end{lemma}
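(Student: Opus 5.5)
Note first that for \(\alpha\le0\) the potential satisfies \(\sigma(x)=(1+|x|)^{-\alpha}\ge1\). The proof will therefore reduce to showing that a positive function \(u\) on \(\mathbb Z^d\) with \(-\Delta u\ge u^q\) and \(0<q\le1\) cannot exist. Suppose such a \(u\) exists. Then \(u\) is a positive superharmonic function. When \(d\le 2\) the graph is parabolic, so \(u\) is constant, as remarked above. A positive constant violates the inequality, because the left-hand side vanishes while the right-hand side equals \(\sigma c^q>0\). So we may assume \(d\ge3\).

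The idea is to use a lower bound on \(u\) that is uniform at infinity. By Lemma~\ref{lem_rev}, \(u\ge G(\sigma u^q)\). Set \(m:=\inf_{\mathbb Z^d}u\ge0\). We will rule out both \(m>0\) and \(m=0\).

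Case \(m>0\). Then \(\sigma u^q\ge m^q\) everywhere, so
\[
u(o)\ge m^q\sum_y g(o,y)\mu(y)=\infty,
\]
because \(g(o,y)\asymp(1+|y|)^{2-d}\) by Lemma~\ref{lem:green-Zd} and \(\sum_y(1+|y|)^{2-d}\) diverges. This contradicts the finiteness of \(u(o)\).

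Case \(m=0\). Here \(u\ge u(o)\,g(o,\cdot)/g(o,o)\cdot c\). This follows by comparison: since \(u\ge G(\sigma u^q)\ge \sigma(o)u(o)^q\mu(o)g(\cdot,o)\), we get \(u(x)\gtrsim(1+|x|)^{2-d}\). Bootstrap this bound. If \(u(x)\ge c(1+|x|)^{-\beta}\), then
\[
u(x)\ge G\bigl(c^q(1+|\cdot|)^{-q\beta}\bigr)(x).
\]
For \(q\beta<2\) this sum diverges, giving a contradiction. For \(q\beta\in(2,d)\), the standard Riesz-potential estimate on \(\mathbb Z^d\) gives a lower bound of order \((1+|x|)^{2-q\beta}\), so the new exponent is \(\beta'=q\beta-2\). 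Since \(q\le1\), we have \(\beta'\le\beta-2\), so the exponent decreases by at least \(2\) at each step. Starting from \(\beta_0=d-2\), after finitely many steps we reach an exponent with \(q\beta_k\le2\), and then \(G\) of the corresponding function is infinite.

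Boundary cases need care. If \(q\beta_k=2\) exactly, the sum is \(\sum_y(1+|y|)^{2-d-2}\) summed over volume \(r^{d-1}\,dr\), which diverges logarithmically, so it is still infinite. If \(q\beta=d\) occurs at the first step (for example \(q=1\), where \(\beta=d-2<d\) so this is fine), the Green-potential lower bound at least yields the decay of \(g\) itself, and for \(q<1\) we have \(q(d-2)<d\), so no trouble arises. Each step is justified by restricting the sum to \(|y|\le|x|/2\), using \(g(x,y)\gtrsim(1+|x|)^{2-d}\) there, which gives \(u(x)\gtrsim(1+|x|)^{2-d}\cdot(1+|x|)^{d-q\beta}\).

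The main obstacle I expect is keeping track of these Riesz-potential lower bounds uniformly, including the logarithmic borderline. An alternative that avoids the iteration entirely is a test-function argument. For \(q<1\) and large \(u\) it is weaker, so I would use the iteration as the primary route.
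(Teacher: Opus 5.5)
Your proof is correct, but it takes a different route from the paper. The paper does no analysis on $\mathbb Z^d$ here. Since $\alpha\le0$ gives $(1+|x|)^{-\alpha}\ge1$, a positive solution of \eqref{eq-z} solves $-\Delta u\ge u^q$, and the lemma is read off from the sublinear nonexistence theorem \cite[Theorem~1.1]{GHS23} for weighted graphs.

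You instead argue from the paper's own tools. Lemma~\ref{lem_rev} gives $u\ge G(\sigma u^q)$, and Lemma~\ref{lem:green-Zd} gives $g\asymp(1+|x-y|)^{2-d}$. Your recursion $\beta\mapsto q\beta-2$ from $\beta_0=d-2$ is sound:
\begin{itemize}
\item $q\beta_k\le q(d-2)<d$ at every step, so the lower bound from restricting to $|y|\le|x|/2$ always applies, and your aside about $q\beta=d$ is moot;
\item the recursion terminates at $q\beta_k\le2$, where the shell sum $\sum_r r^{1-q\beta_k}$ diverges.
\end{itemize}
This is self-contained, but it is tied to the $\mathbb Z^d$ Green asymptotics, with $d\le2$ handled separately. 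The citation is not specific to $\mathbb Z^d$.

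Your case split on $m=\inf u$ is redundant, because the iteration never uses $m=0$. In fact the whole iteration can be bypassed by a feature of the normalized Laplacian. For $u>0$ one has $-\Delta u=u-Pu<u$ at every vertex, hence $u^q\le\sigma u^q<u$. For $q=1$ this is already a contradiction. For $q<1$ it forces $u>1$ everywhere, so $m\ge1$ and your first case applies. More directly, $-\Delta u\ge1$ together with the telescoping identity $\sum_{n=0}^{N}P_n(-\Delta u)=u-P_{N+1}u\le u$ from the proof of Lemma~\ref{lem_rev} gives $u\ge N+1$ for every $N$. This needs no Green estimates and no assumption on $d$ or on the underlying graph.
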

\begin{proof}
This follows from \cite[Theorem 1.1]{GHS23}.
\end{proof}

\begin{theorem}\label{th-z-1}
	Let \(d\ge3\) and \(\alpha<2\).  If
	\[
	1<q\le \frac{d-\alpha}{d-2},
	\]
	then \eqref{eq-z} has no positive solution on \(\mathbb Z^d\).
\end{theorem}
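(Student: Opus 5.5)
We prove Theorem~\ref{th-z-1} by contradiction, using the necessary Green-energy condition \eqref{et1} from Theorem~\ref{thm:green-level-criterion}. Here \(\sigma(x)=(1+|x|)^{-\alpha}\) is strictly positive, and \(\mathbb Z^d\) with \(d\ge3\) is non-parabolic. So if \eqref{eq-z} had a positive solution, Theorem~\ref{thm:green-level-criterion} would give
\[
\sum_{y\in\mathbb Z^d} g(0,y)^q\,\sigma(y)\mu(y)<\infty .
\]
By Lemma~\ref{lem:green-Zd}, \(g(0,y)\asymp(1+\|y\|_1)^{2-d}\). Moreover \(|y|\asymp\|y\|_1\), \(\mu\equiv 2d\), and the sphere \(\{\|y\|_1=n\}\) has \(\asymp n^{d-1}\) points. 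Hence the sum is comparable to
\[
\sum_{n\ge1} n^{d-1}\,n^{-q(d-2)}\,n^{-\alpha}=\sum_{n\ge1} n^{\,d-1-\alpha-q(d-2)} .
\]
This series diverges exactly when \(d-\alpha-q(d-2)\ge0\), that is, when \(q\le (d-\alpha)/(d-2)\). That is precisely the hypothesis, so we get a contradiction. The endpoint case \(q=(d-\alpha)/(d-2)\) gives the harmonic series, which still diverges, so the critical exponent is covered without extra work.

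As an alternative for the subcritical range, one could check the divergence \eqref{eq:weighted-volume-divergence} for a \(\nu\)-adapted metric and apply Theorem~\ref{thm:weighted-liouville}. For example, take \(\rho(x,y)=\sqrt{\min\{\sigma(x),\sigma(y)\}}\). Then \(d_\rho(0,x)\asymp |x|^{1-\alpha/2}\), which is complete since \(\alpha<2\), and \(\nu(B_{d_\rho}(0,r))\asymp r^{(d-\alpha)/(1-\alpha/2)}\). The energy route above is more direct, however, so we use it.

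The main point to check is that the hypotheses of Theorem~\ref{thm:green-level-criterion} hold. We need \(\sigma\not\equiv0\), non-parabolicity (for \(d\ge3\), from Lemma~\ref{lem:Green-estimate}, since \(\sum n/\mu(B(0,n))\asymp\sum n^{1-d}<\infty\)), and the uniformity of the two-sided Green bound near the origin. All of these are routine.

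We also need the assumption \(\alpha<2\). If \(\alpha\ge2\), then \((d-\alpha)/(d-2)\le1\), so the range of \(q\) in the theorem is empty. Thus \(\alpha<2\) is needed only for the statement to be nontrivial, and the argument itself does not use it.
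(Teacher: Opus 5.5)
Your proof is correct, but it takes a different route from the paper. The paper argues through the pointwise Green test \eqref{e1} of Theorem~\ref{thm:green-testing-equivalence}. It writes $G(\sigma g(o,\cdot)^q)(x)$ as a sum $I$ and splits it into the region $2d(y,o)>d(x,o)$ and its complement. It then handles three cases separately. For $q\le(2-\alpha)/(d-2)$ the outer sum $I_1$ is infinite. In the intermediate range, $I_1\gtrsim(1+d(x,o))^{2-\alpha-q(d-2)}$ decays more slowly than $g(x,o)$. At the endpoint, the inner sum $I_2$ produces the factor $\ln(1+d(x,o))$.

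You instead use only the diagonal energy condition \eqref{et1}. Its necessity in Theorem~\ref{thm:green-level-criterion} rests on the test-function estimate of Lemma~\ref{lemma6.1} and needs neither (3G) nor the testing characterization. Everything then collapses to one radial series, and the critical exponent is handled as a harmonic series, with no case split.

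What the paper's route buys is reuse. The same decomposition of $I$ is carried over almost verbatim to the converse, Theorem~\ref{th-z-2}, so the sharp threshold comes from estimating a single quantity from both sides. Moreover, \eqref{e1} is in general strictly stronger than \eqref{et1} (see Section~\ref{sec-example}), so testing is the route that can detect nonexistence when the diagonal energy is finite. For this radial weight the two thresholds coincide, which is why your shortcut works. Your quantity is essentially the one the paper's flow-decomposition remark bounds from below via $L_R^\nu$.

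One small correction to your aside: calling that alternative ``subcritical'' undersells it. The intrinsic volume integral also diverges at $q=(d-\alpha)/(d-2)$.
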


\begin{proof}
In $\mathbb{Z}^d$, for all $x\in\mathbb{Z}^d$ and $n\in\mathbb{N}_{+}$, we have
\[\mu(B(x,n))\asymp (n+1)^d,\quad \mu(x)=2d.\]
Let $o$ be the origin. Using \eqref{eq:green-Zd} and \(1+|y|\asymp 1+d(y,o)\),
 we obtain
\begin{align}\label{est-I}
G(\sigma g(o,\cdot)^q)(x)&=\sum_{y\in \mathbb{Z}^d}g(x,y)\sigma(y)g(y,o)^q\mu(y)\nonumber\\
&=2d\sum_{y\in \mathbb{Z}^d}g(x,y)\frac{1}{(1+d(y,o))^{\alpha}}g(y,o)^q\nonumber\\
&\asymp \sum_{y\in \mathbb{Z}^d}\frac{1}{(1+d(x,y))^{d-2}}\frac{1}{(1+d(y,o))^{\alpha+q(d-2)}}:=I.
\end{align}
Note that
\begin{align*}
I&=\sum_{y\in\mathbb{Z}^d,\ 2d(y,o)> d(x,o)}
\frac{1}{(1+d(x,y))^{d-2}}\frac{1}{(1+d(y,o))^{\alpha+q(d-2)}}\nonumber\\
&\quad+\sum_{y\in\mathbb{Z}^d,\ 2d(y,o)\le  d(x,o)}
\frac{1}{(1+d(x,y))^{d-2}}\frac{1}{(1+d(y,o))^{\alpha+q(d-2)}}\nonumber\\
&:=I_1+I_2.\nonumber
\end{align*}
For $I_1$, since $d(x,y)\le  3d(y,o)$, we have
\begin{align}\label{I1-est-1}
I_1&\gtrsim \sum _{y\in\mathbb{Z}^d, 2d(y,o)> d(x,o)}\frac{1}{(1+d(y,o))^{\alpha+(d-2)(q+1)}}\nonumber\\
&\gtrsim  \sum_{n=d(x,o)+1}^{\infty}\frac{1}{(1+n)^{\alpha+(d-2)(q+1)-d+1}},
\end{align}
For $I_2$, since $d(x,y)\le  2d(x,o)$ when $d(x,o)>0$, we have
\begin{align}\label{I2-est-1}
I_2&\gtrsim \frac{1}{(1+d(x,o))^{d-2}}\sum_{y\in\mathbb{Z}^d, 2d(y,o)\le  d(x,o)}\frac{1}{(1+d(y,o))^{\alpha+q(d-2)}}\nonumber\\
&\gtrsim \frac{1}{(1+d(x,o))^{d-2}}\sum_{n=0}^{\frac{d(x,o)}{2}}\frac{1}{(1+n)^{\alpha+q(d-2)-d+1}}.
\end{align}

On the other hand, by Theorem \ref{thm:green-testing-equivalence}, if there exists a positive solution to (\ref{eq-z}), we must have
\begin{equation}\label{ne-1}
G(\sigma g(o,\cdot)^q)(x)\lesssim g(x,o).
\end{equation}

We now split the argument into three cases:  $1<q\le  \frac{2-\alpha}{d-2}$, $\frac{2-\alpha}{d-2}<q<\frac{d-\alpha}{d-2}$
and $q=\frac{d-\alpha}{d-2}$.

\textbf{Case 1.} If $1<q\le  \frac{2-\alpha}{d-2}$, the series in (\ref{I1-est-1}) diverges,
hence $I_1\equiv+\infty$, which contradicts (\ref{ne-1}).

\textbf{Case 2.} If $\frac{2-\alpha}{d-2}<q<\frac{d-\alpha}{d-2}$, it follows from (\ref{I1-est-1}) that
\[
I_1\gtrsim (1+d(x,o))^{-\alpha-q(d-2)+2},
\]
Since $g(x,o)\asymp (1+d(x,o))^{2-d}$, it follows from (\ref{ne-1})
that, necessarily, for all $x\in\mathbb{Z}^d$,
\[
(1+d(x,o))^{-\alpha-q(d-2)+2}\lesssim (1+d(x,o))^{2-d}
\]
This contradicts $q<\frac{d-\alpha}{d-2}$ as $d(x,o)\to\infty$.

\textbf{Case 3.} If $q=\frac{d-\alpha}{d-2}$, it follows from (\ref{I2-est-1}) that
\[
I_2\gtrsim \frac{1}{(1+d(x,o))^{d-2}}\ln (1+d(x,o)).
\]
This contradicts (\ref{ne-1}) as $d(x,o)\to\infty$.
This completes the proof.
\end{proof}

\begin{remark}
	The preceding proof uses Green testing.  Alternatively, flow decomposition
	gives the same nonexistence result through
	Theorem~\ref{thm:weighted-liouville}.  Let $o=0$, let
$\sigma(x)=(1+|x|)^{-\alpha}$, and let $\nu=\sigma\mu$.  For adjacent vertices
$x\sim y$, choose
\[
        \rho(x,y):=\min\{\sigma(x),\sigma(y)\}^{1/2}.
\]
Then
\[
        \sum_{y\sim x}\mu_{xy}\rho(x,y)^2
        \le 2d\,\sigma(x)=\nu(x),
\]
so $\rho$ is $\nu$-adapted.  Since $\rho(x,y)\asymp (1+d(o,x))^{-\alpha/2}$ for
$x\sim y$, one has
\[
        d_\rho(o,x)\asymp (1+d(o,x))^{1-\alpha/2},
        \qquad \alpha<2.
\]
Consequently $(\mathbb Z^d,d_\rho)$ is complete and
\[
        \nu(B_{d_\rho}(o,r))
        \asymp r^{\frac{2(d-\alpha)}{2-\alpha}}.
\]
The intrinsic volume integral in Theorem \ref{thm:weighted-liouville} is
therefore comparable to
\[
        \int_1^\infty
        r^{2q-1-\frac{2(d-\alpha)(q-1)}{2-\alpha}}\,dr,
\]
which diverges exactly when $q\le (d-\alpha)/(d-2)$.  Thus Theorem
\ref{thm:weighted-liouville} yields Theorem \ref{th-z-1}.
\end{remark}

\begin{theorem}\label{th-z-2}
	Suppose \(d\ge3\).  If either
	\[
	\alpha<2
	\qquad\text{and}\qquad
	q>\frac{d-\alpha}{d-2},
	\]
	or \(\alpha\ge2\) and \(q>1\), then \eqref{eq-z} admits a positive solution
	on \(\mathbb Z^d\).
\end{theorem}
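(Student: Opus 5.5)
By Theorem~\ref{thm:green-testing-equivalence}, applied with \(\Omega=\mathbb Z^d\) (non-parabolic for \(d\ge3\)), it suffices to verify the pointwise test \eqref{e1} with \(o=0\):
\[
G\bigl(\sigma g(o,\cdot)^q\bigr)(x)\lesssim g(o,x)\asymp(1+d(x,o))^{2-d},\qquad \sigma(y)=(1+|y|)^{-\alpha}.
\]
By Lemma~\ref{lem:green-Zd} and \eqref{est-I}, the left side is comparable to
\[
I(x)=\sum_{y}(1+d(x,y))^{2-d}(1+d(y,o))^{-\beta},\qquad \beta:=\alpha+q(d-2).
\]
In both cases of the statement we have \(\beta>d\). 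When \(\alpha<2\) and \(q>\frac{d-\alpha}{d-2}\), this is immediate. When \(\alpha\ge2\) and \(q>1\), we get \(\beta>2+(d-2)=d\). The problem is therefore to show \(I(x)\lesssim(1+|x|)^{2-d}\) whenever \(\beta>d\). This is the standard convolution estimate, and it is also the key property of \(\beta\) that I expect to be needed throughout.

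Write \(D=d(x,o)\). For \(D\) bounded, \(I(x)\le\sum_y(1+d(y,o))^{-\beta}<\infty\), so we may assume \(D\) is large and split \(\mathbb Z^d\) into three regions.

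\textbf{Region (a): \(d(y,o)\le D/2\).} Here \(d(x,y)\ge D/2\), so the contribution is at most \((1+D)^{2-d}\sum_y(1+d(y,o))^{-\beta}\lesssim(1+D)^{2-d}\), because \(\beta>d\).

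\textbf{Region (b): \(d(x,y)\le D/2\).} Here \(d(y,o)\ge D/2\), and counting lattice points in shells gives a contribution at most
\[
(1+D)^{-\beta}\sum_{k\le D/2}k^{d-1}k^{2-d}\lesssim(1+D)^{2-\beta}\le(1+D)^{2-d}.
\]

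\textbf{Region (c): \(d(y,o)>D/2\) and \(d(x,y)>D/2\).} Here \(d(x,y)\le 3\,d(y,o)\), so the summand is at most \((1+D)^{2-d}(1+d(y,o))^{-\beta}\), and summing over \(y\) again gives \(\lesssim(1+D)^{2-d}\) because \(\beta>d\).

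Adding the three regions yields \eqref{e1}, and Theorem~\ref{thm:green-testing-equivalence} then produces the positive solution; explicitly, \(U=c\,G(\sigma g(o,\cdot)^q)\) works for small \(c\). The argument is routine, and the only point requiring care is the exponent bookkeeping, namely confirming \(\beta>d\) in both parameter ranges, which is what makes the sums in regions (a) and (c) converge. The weight \(\sigma\) is bounded in both cases, so no separate issue arises near the origin.
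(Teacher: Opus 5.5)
Your proof is correct and follows the paper's route. You reduce via Theorem~\ref{thm:green-testing-equivalence} to the convolution bound $I(x)\lesssim(1+d(x,o))^{2-d}$, use $\beta=\alpha+q(d-2)>d$, and split the sum into three regions; your near-$o$/near-$x$/far-from-both split is only a mild rearrangement of the paper's regions $I_3,I_4,I_5$. There are two harmless slips: $\sigma$ is unbounded when $\alpha<0$, but your argument only uses $\beta>d$; and in region (c) the summand bound comes from $d(x,y)>D/2$, not from $d(x,y)\le 3\,d(y,o)$.
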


\begin{proof}
By Theorem \ref{thm:green-testing-equivalence}, it suffices to prove (\ref{ne-1}). As in the proof of Theorem \ref{th-z-1}, we begin with (\ref{est-I})
\begin{align*}
I&=\sum_{y\in \mathbb{Z}^d,\ d(y,o)>2d(x,o)}
\frac{1}{(1+d(x,y))^{d-2}}\frac{1}{(1+d(y,o))^{\alpha+q(d-2)}}\\
&\quad+\sum_{y\in \mathbb{Z}^d,\ \frac{d(x,o)}{2}<d(y,o)\le 2d(x,o)}
\frac{1}{(1+d(x,y))^{d-2}}\frac{1}{(1+d(y,o))^{\alpha+q(d-2)}}\\
&\quad+\sum_{y\in \mathbb{Z}^d,\ d(y,o)\le \frac{d(x,o)}{2}}
\frac{1}{(1+d(x,y))^{d-2}}\frac{1}{(1+d(y,o))^{\alpha+q(d-2)}}\\
&:=I_3+I_4+I_5.
\end{align*}
For $I_3$, since $d(x,y)\ge \frac{d(y,o)}{2}$ and $q>\frac{d-\alpha}{d-2}$, we obtain
\begin{align}\label{I3}
I_3&\lesssim \sum_{n=2d(x,o)}^{\infty}\frac{(1+n)^{d-1}}{(1+n)^{\alpha+(q+1)(d-2)}}\nonumber\\
&\lesssim\frac{1}{(1+d(x,o))^{(q+1)(d-2)-d+\alpha}}\nonumber\\
&\lesssim \frac{1}{(1+d(x,o))^{d-2}}.
\end{align}
For $I_4$, since $d(y,o)\asymp d(x,o)$, we have
\begin{align}\label{I4}
I_4&\lesssim \frac{1}{(1+d(x,o))^{\alpha+q(d-2)}}
\sum_{d(x,y)\le  3d(x,o)}\frac{1}{(1+d(x,y))^{d-2}}\nonumber\\
&\lesssim\frac{1}{(1+d(x,o))^{\alpha+q(d-2)-2}}\nonumber\\
&\lesssim \frac{1}{(1+d(x,o))^{d-2}}.
\end{align}
For $I_5$, since $d(x,y)\ge \frac{d(x,o)}{2}$ and $q>\frac{d-\alpha}{d-2}$, we derive
\begin{align}\label{I5}
I_5&\lesssim \frac{1}{(1+d(x,o))^{d-2}}\sum_{y\in \mathbb{Z}^d,  d(y,o)\le \frac{d(x,o)}{2}}\frac{1}{(1+d(y,o))^{\alpha+q(d-2)}}\nonumber\\
&\lesssim \frac{1}{(1+d(x,o))^{d-2}}\sum_{n=0}^{\frac{d(x,o)}{2}}\frac{(1+n)^{d-1}}{(1+n)^{\alpha+q(d-2)}}\nonumber\\
&\lesssim \frac{1}{(1+d(x,o))^{d-2}}.
\end{align}
Combining (\ref{I3})-(\ref{I5}) with \eqref{eq:green-Zd}, we obtain (\ref{ne-1}). By Theorem \ref{thm:green-testing-equivalence}, there exist positive solutions to (\ref{eq-z}) in $\mathbb{Z}^d$.
\end{proof}

In summary, if \(d\ge3\) and \(q>1\), then
\[
-\Delta u\ge(1+|x|)^{-\alpha}u^q
\]
has a positive solution on \(\mathbb Z^d\) if and only if
\[
\alpha+q(d-2)>d.
\]

\section[Existence and Nonexistence in k-orthant]{Existence and Nonexistence in the \texorpdfstring{$k$}{k}-orthant}\label{sec-zk}

Fix integers \(d\ge3\) and \(1\le k\le d\). Here \(d\) is the ambient
dimension of the lattice and \(k\) is the number of constrained coordinates.
We consider the discrete \(k\)-orthant
\[
A_{d,k}:=\{x\in\mathbb Z^d:x_1\ge1,\dots,x_k\ge1\},
\]
and study the Lane--Emden type inequality \eqref{di-1} with Dirichlet boundary condition (cf. \cite{CHZ25} for related work on equations, where Serrin-type indices are also studied).
The case \(k=1\) is the discrete half-space, while \(k=d\) is the positive
orthant. When $d$ and $k$ are fixed, we write $A=A_{d,k}$ for short, and denote the corresponding Dirichlet Green function by $g_A(x,y)$.
We first establish the two-sided Green function estimate, then derive its
fixed-pole form, and finally apply Green testing to determine the critical
exponent.

First, we give a two-sided estimate of $g_A(x,y)$ when $|x-y|$ is large.
\begin{theorem}\label{thm:large}
There exists a constant $R_0 > 0$, depending only on $d$ and $k$, such that for all $x,y \in A$ with $|x-y| \ge R_0$,
\[
g_A(x,y)
\asymp |x-y|^{2-d}\prod_{i=1}^k\left(1\wedge \frac{x_i y_i}{|x-y|^2}\right).
\]
\end{theorem}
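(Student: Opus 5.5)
We plan to reduce to known asymptotics of the free lattice Green function, combined with a discrete reflection principle. The orthant is a product-type domain, so the first idea is the method of images. Write \(\tilde x^{(S)}\) for the point obtained from \(x\) by reflecting the coordinates \(i\in S\subset\{1,\dots,k\}\) across the hyperplane \(x_i=0\), i.e.\ \(x_i\mapsto -x_i\). Since the walk must be killed upon reaching \(x_i=0\), the exact discrete image formula is
\[
g_A(x,y)=\frac{\mu(y)}{2d}\sum_{S\subset\{1,\dots,k\}}(-1)^{|S|}\,\tilde g\bigl(x,\tilde y^{(S)}\bigr),
\]
where \(\tilde g\) is the whole-lattice Green function (normalized suitably; here \(\mu\equiv 2d\) on \(\mathbb Z^d\), so the prefactor is just a constant). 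This identity holds because the alternating sum vanishes on each hyperplane \(\{x_i=0\}\), \(i\le k\), and is harmonic in \(A\) away from \(y\) with the correct unit source at \(y\). Uniqueness follows from the decay of \(\tilde g\) at infinity (for \(d\ge3\)) together with the maximum principle on exhaustions, in the spirit of Lemma~\ref{lem:wmp-graph}.

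The next step is to extract the size of the alternating sum. We use the well-known refined asymptotic \(\tilde g(x,z)=c_d|x-z|^{2-d}+O(|x-z|^{-d})\) (see \cite{U98}). The alternating sum of the main term, \(\sum_S(-1)^{|S|}|x-\tilde y^{(S)}|^{2-d}\), is a \(k\)-fold finite difference of the radial function \(F(t_1,\dots,t_k)=(|x'-y'|^2+\sum_{i\le k}t_i)^{(2-d)/2}\), with increments \((x_i+y_i)^2-(x_i-y_i)^2=4x_iy_i\) in the variables \(t_i=(x_i\mp y_i)^2\). Here \(x',y'\) denote the unconstrained coordinates. Because \(F\) is completely monotone in each \(t_i\), each difference factor contributes \(\asymp 1\wedge x_iy_i/|x-y|^2\) times the base size. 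Carrying this out coordinate by coordinate, for instance inductively in \(k\) using the mean value theorem when \(x_iy_i\lesssim|x-y|^2\) and monotonicity otherwise, gives the continuum bound
\[
|x-y|^{2-d}\prod_{i=1}^k\Bigl(1\wedge \frac{x_iy_i}{|x-y|^2}\Bigr).
\]

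The main obstacle is controlling the error terms. The \(O(|x-z|^{-d})\) remainders are not differenced, so they can swamp the main term when \(x_iy_i\ll |x-y|^2\), since then the main term is much smaller than \(|x-y|^{2-d}\). To avoid this I would not use the image formula for the lower bound near the boundary. Instead I would prove the estimate in the comparable-boundary-distance regime directly, and then use a discrete boundary Harnack principle or gambler's-ruin factorization. Concretely, the plan is:
\begin{itemize}
\item[(a)] When all \(x_i,y_i\gtrsim |x-y|\), the estimate \(g_A\asymp|x-y|^{2-d}\) follows from comparison with \(\tilde g\) and interior Harnack (Lemma~\ref{lem:green-Zd}).
\item[(b)] When some \(x_i\ll|x-y|\), the walk from \(x\) must first reach distance \(\asymp|x-y|\) from the face \(\{x_i=0\}\) before being killed. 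In a box of side \(r=|x-y|\), the probability of this is \(\asymp x_i/r\), by the one-dimensional gambler's ruin in coordinate \(i\) together with a product or independence argument over the \(k\) coordinates (the coordinates of the lazy-projected walk decouple up to constants).
\item[(c)] Apply the strong Markov property at the exit of that box, and use symmetry of \(g_A\) to treat \(y\) in the same way.
\end{itemize}
This yields both the upper and the lower bound with the factors \(1\wedge x_iy_i/r^2\). The constant \(R_0\) is chosen so that the box constructions are nondegenerate and the lattice effects are absorbed.
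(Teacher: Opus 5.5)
Your argument takes a genuinely different route from the paper's. The paper never leaves the image formula. It removes exactly the obstruction you diagnose by replacing the first-order asymptotic with Uchiyama's expansion of $\Gamma$ to order $2k$. Reflection invariance makes every term of that expansion even in $z_1,\dots,z_k$, so by Whitney's lemma the truncated expansion equals $\Phi(z_1^2,\dots,z_k^2,z')$. Its full alternating sum is therefore the integral of $(-1)^k\partial_{s_1}\cdots\partial_{s_k}\Phi$ over $\prod_i[0,4x_iy_i]$, and Lemma~\ref{lem:Jalpha} shows this is $\asymp r^{2-d}\prod_i(1\wedge x_iy_i/r^2)$. Only the remainder is left undifferenced. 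It is $o(r^{2-d-2k})$, which is negligible because $x_i,y_i\ge1$ on the lattice forces the main term to be $\gtrsim r^{2-d-2k}$; this is also why the statement is restricted to $|x-y|\ge R_0$. Your boundary-Harnack factorization needs neither high-order asymptotics nor Whitney's lemma, would give the bound at all scales, and is robust under perturbations of the lattice. The price is that all the real content sits in step (b), which you only assert.

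To turn (b)--(c) into a proof you need three things.

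(i) Pass to the continuous-time walk. It has the same Green function, and its coordinates are exactly independent one-dimensional walks, so the killing time is the minimum of $k$ independent hitting times of $0$. The coordinates of the discrete-time walk do not decouple.

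(ii) For the upper bound, the forced event is not that the walk reaches distance $\asymp r$ from the face; if $y_i\ll r$ it need not do so. What is forced is that it exits a box $Q(x)$ of side $\asymp r$. Proving $\mathbb P_x(\tau_{Q(x)}<\tau_A)\lesssim\prod_i(1\wedge x_i/r)$ requires controlling lateral and early exits. One way is to split according to which coordinate triggers the exit, and combine the survival probability $\asymp 1\wedge x_i/\sqrt t$ of the other coordinates with the sub-Gaussian cost of a displacement $\asymp r$ in time $t\ll r^2$. Then $g_A(x,y)\le\mathbb P_x(\tau_{Q(x)}<\tau_A)\sup_{z\in\partial Q(x)}g_A(y,z)$, together with the same step at $y$, gives the bound $r^{2-d}\prod_i(1\wedge x_i/r)(1\wedge y_i/r)$. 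This matches the stated product because $|x_i-y_i|\le r$.

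(iii) For the lower bound, step (c) must use exit through the part of $\partial Q(x)$ where all constrained coordinates are $\gtrsim r$, so that (a) applies between the two good points. This follows from (ii) and optional stopping for the discrete harmonic function $\prod_{i\le k}z_i$, which vanishes on $\partial A$. Alternatively, use the Markov property at a fixed time $t\asymp r^2$.
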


\begin{proof}
     Let $\iota_i:\mathbb{Z}^d \rightarrow \mathbb{Z}^d$ denote the reflection mapping in the $i$-th coordinate:
    \[
      \iota_i y = (y_1, \dots, -y_i, \dots, y_d).
   \]
  Set $\Gamma(z)=g(0, z)$ for $z\in \mathbb{Z}^d$. We use the asymptotic expansion of $\Gamma(z)$ of order $2k$ from \cite[Theorem 2]{U98}:
    \begin{align*}
    \Gamma(z)&= \left( \frac{c_d}{|z|^{d-2}} + \sum_{m=1}^{2k} \frac{U_m(z/|z|)}{|z|^{d-2+m}} \right) + o\left(\frac{1}{|z|^{d-2+2k}}\right)\\
    &:=F_{2k}(z) + E_{2k}(z),\qquad |z| \to \infty,
    \end{align*}
where $U_m$ are polynomials.

Since \(\Gamma\) is invariant under each coordinate reflection \(\iota_i\), we have
\[
    \Gamma(z)-\Gamma(\iota_i z)\equiv0.
\]
Comparing the asymptotic expansion term by term, using the uniqueness of
asymptotic expansions in powers of \(|z|^{-1}\), we obtain
\[
    U_m(z/|z|)=U_m(\iota_i z/|z|)
    \qquad 1\le m\le 2k.
\]
Hence  \(F_{2k}\in C^\infty(\mathbb R^d\setminus\{0\})\) is even in each of the first \(k\) variables. By the smooth even-function lemma (see \cite{Whit1943}), it follows
that there exists a function
\[\Phi\in C^\infty\!\left(
\bigl([0,\infty)^k\times\mathbb R^{d-k}\bigr)\setminus\{0\}
\right)\]
such that
\begin{align}\label{fandphi}
F_{2k}(z) = \Phi(z_1^2,\dots,z_k^2,z_{k+1},\dots,z_d).
\end{align}
Define
\[
\theta := \frac{d-2}{2}.
\]
The leading term of $F_{2k}$ as $|z|\to\infty$ is given by
\[
\frac{c_d}{|z|^{d-2}} = \frac{c_d}{(|z|^2)^{\theta}}=\frac{c_d}{(|w|^2+s_1+\cdots+s_k)^{\theta}}.
\]
Since \(F_{2k}\) is a finite sum of smooth homogeneous terms, its
representation in \eqref{fandphi} may be differentiated term by term; the
leading term gives the principal contribution, while the remaining terms have
the stated lower order. Consequently,
\begin{align}\label{eq:mixed-deriv-asym}
(-1)^k\partial_{s_1}\cdots\partial_{s_k}\Phi(s,w)
=& \frac{c_{d,k}}{\Bigl(|w|^2+s_1+\cdots+s_k\Bigr)^{\theta+k}}\nonumber\\
&+ O\Bigl(\frac{1}{(|w|^2+s_1+\cdots+s_k)^{\theta+k+\frac12}}\Bigr),
\end{align}
uniformly as $|w|^2+s_1+\cdots+s_k \to \infty$, where
\[
c_{d,k} = c_d\times(\theta)_k > 0,
\quad\text{and}\quad
(\theta)_k := \theta(\theta+1)\cdots(\theta+k-1).
\]

Next, for $1\le i\le k$, we introduce the following notation
\[
\ell_i := 4x_i y_i,
\qquad
a_i := (x_i-y_i)^2,
\]
and
\[
r := |x-y|,
\qquad w := (x_{k+1}-y_{k+1},\dots,x_d-y_d) \in \mathbb{R}^{d-k}.
\]

Let \(\mathcal R_k\) denote the abelian reflection group generated by $\{\iota_1, \dots, \iota_k\}$. Note that the reflections commute and $\iota_i^2 = \mathrm{id}$; hence every subset of indices $E \subseteq \{1, \dots, k\}$ uniquely determines an element $\iota \in \mathcal R_k$.
We denote this element by
\[
\iota_E = \prod_{i \in E} \iota_i,
\]
where $\iota_\emptyset = \mathrm{id}$. 

Applying the one-coordinate reflection identity successively in the \(k\)
commuting coordinates gives the \(k\)-orthant Green function
\[
    g_A(x,y) = \sum_{\iota_{E} \in \mathcal R_k} (-1)^{|E|} \Gamma(x- \iota_{E} y).
\]
Indeed, for \(k=1\), recall that \(2d\,g_A(x,y)\) is the expected number of
visits to \(y\) by the random walk starting from \(x\) before it exits \(A\).
By the symmetry of \(\mathbb Z^d\), we have
\begin{align*}
	g_A(x,y)
	=& g(x,y)-\mathbb E_x[g(X_{\tau_A},y)]\\
	=& g(x,y)-\mathbb E_x[g(X_{\tau_A},\iota_{\{1\}}y)]\\
	=& g(x,y)-g(x,\iota_{\{1\}}y).
\end{align*}
Here the second equality uses reflection symmetry on the boundary, whereas
the third follows from the strong Markov property. Thus the formula is
the probabilistic reflection principle for the Green function.

Furthermore, let us write $g_A(x,y)$ as
\[
g_A(x,y) = I_1 + I_2,
\]
where
\[
I_1 := \sum_{\iota_{E} \in \mathcal R_k}(-1)^{|E|}F_{2k}(x-\iota_Ey),
\qquad
I_2 := \sum_{\iota_{E} \in \mathcal R_k}(-1)^{|E|}E_{2k}(x-\iota_Ey).
\]

Note that the squared distance between $x$ and the reflected point $\iota_{E}y$ is
\[
|x-\iota_{E}y|^2 = r^2 + \sum_{i\in E}\ell_i.
\]
For $1 \le i \le k$, let us define the difference operator $\nabla_i$ by
\[
\nabla_i  F_{2k}(x-y) = F_{2k}(x-y)-F_{2k}(x-\iota_iy).
\]
Clearly,
\[
I_1 = \left( \prod_{i=1}^k \nabla_i \right)  F_{2k}(x-y).
\]

Using \eqref{fandphi} and applying the Newton-Leibniz formula, we rewrite $I_1$ as
\begin{equation}\label{eq:I1-integral}
I_1
= \int_{[0,\ell_1]\times\cdots\times[0,\ell_k]}
(-1)^k\partial_{s_1}\cdots\partial_{s_k}
\Phi(a_1+t_1,\dots,a_k+t_k,w)
\,dt_1\cdots dt_k.
\end{equation}
Observing that
\[
|w|^2+(a_1+t_1)+\cdots+(a_k+t_k) = r^2+t_1+\cdots+t_k,
\]
we combine \eqref{eq:mixed-deriv-asym} and \eqref{eq:I1-integral} to obtain
\begin{equation}\label{eq:I1-main-error}
I_1
= c_{d,k}\,J_\theta(r^2;\ell_1,\dots,\ell_k)
+ O\bigl(r^{-1}J_\theta(r^2;\ell_1,\dots,\ell_k)\bigr),
\end{equation}
where the function $J_\theta$ is defined by
\begin{align}\label{def-ja}
J_\theta(a;\ell_1,\dots,\ell_k)
:= \int_{[0,\ell_1]\times\cdots\times[0,\ell_k]}
(a+t_1+\cdots+t_k)^{-\theta-k}
\,dt_1\cdots dt_k.
\end{align}
The term $r^{-1}$ in the error term follows from the observation that
\[
(r^2+t_1+\cdots+t_k)^{-\theta-k-\frac12}
\le r^{-1}(r^2+t_1+\cdots+t_k)^{-\theta-k}.
\]
Thus the bound for $I_1$ reduces to obtaining a good estimate of $J_\theta$, which is given in Lemma \ref{lem:Jalpha} below.

Applying Lemma~\ref{lem:Jalpha} below with \eqref{eq:I1-main-error} and $\ell_i = 4x_i y_i$, we obtain that there exists $R_0\ge 1$ such that for $r\ge  R_0$
\begin{equation}\label{eq:I1-good}
I_1 \asymp r^{2-d}\prod_{i=1}^k\left(1\wedge \frac{x_i y_i}{r^2}\right),
\end{equation}

We now turn to the error term $I_2$. Since $|x-\iota_Ey| \ge |x-y| = r$, we have
\[
|I_2|
\le \sum_{E\subset\{1,\dots,k\}}|E_{2k}(x-\iota_Ey)|
\le 2^k\sup_{|z|\ge r}|E_{2k}(z)|
= o(r^{2-d-2k}).
\]
Conversely, because $x, y\in A$, we have $x_i, y_i \ge 1$ for $1\le  i\le  k$, and hence
\[
1\wedge \frac{x_i y_i}{r^2} \ge \frac{1}{r^2}
\qquad (r \ge 1).
\]
This guarantees that the right-hand side of \eqref{eq:I1-good} is bounded strictly below by a constant multiple of $r^{2-d-2k}$. Consequently,
\[
|I_2| = o(I_1)
\qquad (r \to \infty).
\]
Therefore, we can choose $R_0$ large enough such that
\[
\frac12 I_1 \le g_A(x,y) = I_1 + I_2 \le \frac32 I_1
\qquad \text{when } |x-y| \ge R_0,
\]
and the theorem follows directly from the estimates of $I_1$ established in \eqref{eq:I1-good}.
\end{proof}

Now we give the estimate of $J_\theta(a;\ell_1,\dots,\ell_k)$ defined in (\ref{def-ja}).
\begin{lemma}\label{lem:Jalpha}
Let $\beta > 0$ and $\ell_1,\dots,\ell_k \ge 0$. There exist positive constants $c_{\beta,k}$ and $C_{\beta,k}$ such that for every $a > 0$,
\[
c_{\beta,k}a^{-\beta}\prod_{i=1}^k\left(1\wedge \frac{\ell_i}{a}\right)
\le J_\beta(a;\ell_1,\dots,\ell_k)
\le C_{\beta,k}a^{-\beta}\prod_{i=1}^k\left(1\wedge \frac{\ell_i}{a}\right).
\]
\end{lemma}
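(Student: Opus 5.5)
The plan is to reduce to the unit-scale case by scaling and then split the cube into coordinates with small and large $\ell_i$. Substituting $t_i=a u_i$ in \eqref{def-ja} gives
\[
J_\beta(a;\ell_1,\dots,\ell_k)=a^{-\beta}\,J_\beta(1;\lambda_1,\dots,\lambda_k),\qquad \lambda_i:=\ell_i/a,
\]
so it suffices to show that $J_\beta(1;\lambda)\asymp\prod_{i=1}^k(1\wedge\lambda_i)$ for all $\lambda\in[0,\infty)^k$. Note that the integrand is $(1+u_1+\cdots+u_k)^{-\beta-k}$, whose exponent is strictly larger than $k$. If some $\lambda_i=0$, both sides vanish, so we may assume all $\lambda_i>0$.

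For the lower bound, restrict the integral to the sub-box $\prod_i[0,\lambda_i\wedge1]$. On this box $1+\sum u_i\le 1+k$, so the integrand is at least $(1+k)^{-\beta-k}$. Hence
\[
J_\beta(1;\lambda)\ge (1+k)^{-\beta-k}\prod_i(1\wedge\lambda_i).
\]

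For the upper bound, let $S=\{i:\lambda_i\le1\}$ and $L=S^c$. For the coordinates in $S$, bound the integrand by its supremum over $u_i\ge0$ in those variables, which is $(1+\sum_{i\in L}u_i)^{-\beta-k}$. This gives
\[
J_\beta(1;\lambda)\le \prod_{i\in S}\lambda_i\int_{[0,\infty)^{|L|}}\Bigl(1+\sum_{i\in L}u_i\Bigr)^{-\beta-k}\,du .
\]
The remaining integral is finite because $\beta+k>|L|$: in polar/simplex coordinates it is comparable to $\int_0^\infty (1+s)^{-\beta-k}s^{|L|-1}\,ds<\infty$. Since $1\wedge\lambda_i=1$ for $i\in L$, this yields $J_\beta(1;\lambda)\le C_{\beta,k}\prod_i(1\wedge\lambda_i)$, with $C_{\beta,k}$ the maximum of the constants over the finitely many possible sizes of $L$.

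The only point requiring care is the integrability in the upper bound, namely checking that the exponent $\beta+k$ exceeds the number $|L|\le k$ of unbounded directions. This holds precisely because $\beta>0$, so I expect no real obstacle. All constants depend only on $\beta$ and $k$, as the statement requires.
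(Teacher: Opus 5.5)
Your proposal is correct and follows the paper's proof essentially step for step: the rescaling $t_i=au_i$, the lower bound on the sub-box $\prod_i[0,\lambda_i\wedge 1]$ with constant $(1+k)^{-\beta-k}$, and the upper bound that splits indices by $\lambda_i\le 1$ and extends the remaining directions to $[0,\infty)$. Your explicit check that $\beta+k>|L|$, which makes the leftover integral finite, is exactly where the paper uses $\beta>0$.
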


\begin{proof}
Applying the change of variables $t_i = au_i$, we can scale the integral to
\[
J_\beta(a;\ell_1,\dots,\ell_k)
= a^{-\beta}
\int_{[0,\eta_1]\times\cdots\times[0,\eta_k]}
(1+u_1+\cdots+u_k)^{-\beta-k}
\,du_1\cdots du_k,
\]
where $\eta_i := \ell_i/a$. Partition the index set for the integration by defining
\[
M := \{i : \eta_i \le 1\}.
\]
For the lower bound, we restrict the domain of integration to the hypercube
\[
0 \le u_i \le \eta_i\wedge 1,
\qquad 1 \le i \le k.
\]
On this restricted hypercube, the integrand satisfies $1+u_1+\cdots+u_k \le 1+k$. Therefore,
\[
J_\beta(a;\ell_1,\dots,\ell_k)
\ge (1+k)^{-\beta-k}a^{-\beta}
\prod_{i=1}^k(\eta_i\wedge 1).
\]
For the upper bound, we integrate over exactly $[0,\eta_i]$ for indices $i \in M$, and extend the integration domain to $[0,\infty)$ for indices $i \notin M$. This yields
\[
J_\beta(a;\ell_1,\dots,\ell_k)
\le a^{-\beta}\Bigl(\prod_{i\in M}\eta_i\Bigr)
\int_{[0,\infty)^{k-|M|}}(1+v_1+\cdots+v_{k-|M|})^{-\beta-k}\,dv.
\]
Because $\beta > 0$, this remaining integral converges to a finite constant, which concludes the proof of the lemma.
\end{proof}

\begin{theorem}\label{thm:global}
For all $x,y \in A$, we have
\begin{align}\label{est-ga}
g_A(x,y)
\asymp (1+|x-y|)^{2-d}\prod_{i=1}^k\left(1\wedge \frac{x_i y_i}{(1+|x-y|)^2}\right).
\end{align}
\end{theorem}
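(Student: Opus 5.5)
Theorem~\ref{thm:large} already gives the estimate in the regime \(|x-y|\ge R_0\), so the plan is to extend it to all pairs by handling the bounded-distance regime \(|x-y|<R_0\) separately. For \(|x-y|\ge R_0\ge1\), \(|x-y|\asymp 1+|x-y|\); each factor \(1\wedge \frac{x_iy_i}{r^2}\) changes by at most a bounded multiplicative constant when \(r\) is replaced by a comparable quantity. Hence \eqref{est-ga} follows directly from Theorem~\ref{thm:large} in this case.

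For \(|x-y|<R_0\), the right-hand side of \eqref{est-ga} is comparable to \(\prod_{i=1}^k(1\wedge x_iy_i)\asymp 1\), because \(x_i,y_i\ge1\) and \(1+|x-y|\le 1+R_0\). So I must show that \(g_A(x,y)\asymp1\) uniformly over all such pairs. The upper bound is immediate: \(g_A(x,y)\le g(x,y)\le g(0,0)<\infty\), by monotonicity of Green functions in the domain and translation invariance of \(\mathbb Z^d\) with \(d\ge3\).

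The lower bound is the main point, since \(x\) and \(y\) may both sit near the boundary of \(A\), where the Green function could a priori degenerate. I will use a path argument as in Lemma~\ref{lem_pole}. Since \(A\) is a product of half-lines and lines, for \(|x-y|<R_0\) there is a path inside \(A\) from \(x\) to \(y\) of length at most \(\|x-y\|_1\le\sqrt d R_0\): move each coordinate monotonically, and coordinates never drop below \(\min(x_i,y_i)\ge1\). The killed walk follows this path with probability at least \((2d)^{-\sqrt dR_0}\), and by the strong Markov property
\[
g_A(x,y)\ge (2d)^{-\sqrt d R_0}g_A(y,y)\ge (2d)^{-\sqrt dR_0}\frac{1}{\mu(y)}=(2d)^{-\sqrt dR_0-1},
\]
where \(g_A(y,y)\ge p_0^A(y,y)=1/\mu(y)\) comes from the \(n=0\) term.

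Combining the two regimes gives \eqref{est-ga} for all \(x,y\in A\). The constants depend only on \(d\) and \(k\), through \(R_0\). The only delicate step is ensuring that the near-diagonal lower bound is uniform up to the boundary; the explicit monotone path inside \(A\) handles this.
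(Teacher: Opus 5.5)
Your proposal is correct and follows the paper's proof essentially step by step. The large-distance regime is read off from Theorem~\ref{thm:large}. In the bounded regime you use the same three ingredients as the paper: the upper bound $g_A(x,y)\le g(0,0)$, a lower bound from the probability that the killed walk follows an explicit path in $A$ of length at most $\|x-y\|_1\le\sqrt d R_0$, and the observation that the right-hand side of \eqref{est-ga} is bounded above and below there. Your monotone coordinate path of length exactly $\|x-y\|_1$, in place of the paper's path of length $m+2$, is if anything slightly cleaner and makes explicit why the path stays inside $A$.
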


\begin{proof}
For the large-distance regime $|x-y| \ge R_0$, this result follows immediately from Theorem~\ref{thm:large}, since
\[
1+|x-y| \asymp |x-y|
\qquad (|x-y| \ge R_0).
\]
It remains to verify the bound in the short-distance regime $|x-y| \le R_0$.

For the upper bound, we use the trivial inequality that the orthant Green function cannot exceed the full-space Green function evaluated at the origin:
\[
0 < g_A(x,y) \le g(x,y) \le g(0,0).
\]
For the corresponding lower bound, let $m := d(x,y) = \|x-y\|_1$, where $\|\cdot\|_1$ denotes the $\ell^1$-distance on $\mathbb{Z}^d$. Then the probability of the simple random walk reaching $y$ without exiting $A$ provides a lower bound:
\[
g_A(x,y)
= \frac1{2d}\sum_{n=0}^\infty \mathbb{P}_x(X_n=y,\ n<\tau_{A})
\ge \frac1{2d}\,\mathbb{P}_x(X_{m+2}=y,\ m+2<\tau_{A})
\ge (2d)^{-(m+3)},
\]
where we used the fact that for $m\ge0$, if $d(x,y)=m$, there is a path in $A$ of length $m+2$ connecting $x$ and $y$.

Because \(m\) is comparable to the Euclidean distance,
\[
|x-y| \le m \le \sqrt{d}\,|x-y| \le \sqrt{d}\,R_0,
\]
we obtain a uniform local lower bound:
\[
g_A(x,y) \ge (2d)^{-(\sqrt{d} R_0+3)}> 0
\qquad (|x-y| \le R_0).
\]
Finally, define the comparison function
\[
\Theta(x,y) := (1+|x-y|)^{2-d}\prod_{i=1}^k\left(1\wedge \frac{x_i y_i}{(1+|x-y|)^2}\right).
\]
If $|x-y| \le R_0$, then because $x_i y_i \ge 1$ for all $1 \le i \le k$, we know that
\[
(1+R_0)^{2-d-2k} \le \Theta(x,y) \le 1.
\]
This shows $\Theta(x,y)$ is uniformly bounded between positive constants when $|x-y| \le R_0$. Combining this with the uniform upper and lower bounds for $g_A(x,y)$ concludes the proof.
\end{proof}

Fix a reference point $o\in A$ and define the coordinate product function
\[
\HH(x) := \prod_{i=1}^k x_i, \qquad x\in A.
\]

\begin{proposition}\label{prop:fixed-pole}
For all $y\in A$, we have
\begin{equation}
\label{eq:fixed-pole}
 g_A(o,y)
 \asymp \HH(y)\,|y|^{-(d+2k-2)}.
\end{equation}
\end{proposition}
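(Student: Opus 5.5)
The plan is to specialize the global estimate of Theorem~\ref{thm:global} to $x=o$ and simplify the resulting expression uniformly in $y\in A$. Since $o$ is fixed, all quantities depending only on $o$ (its coordinates $o_i\ge1$ and $|o|$) are absorbed into the implicit constants. By \eqref{est-ga},
\[
g_A(o,y)\asymp (1+|o-y|)^{2-d}\prod_{i=1}^k\left(1\wedge \frac{o_i y_i}{(1+|o-y|)^2}\right),
\]
so it suffices to show that the right-hand side is comparable to $\HH(y)|y|^{-(d+2k-2)}$.

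I would split into two regimes. In the far regime $|y|\ge 2|o|+1$ (say), one has $1+|o-y|\asymp |y|$, which gives $(1+|o-y|)^{2-d}\asymp |y|^{2-d}$. For each $i\le k$ one also has $o_iy_i\le |o|\,|y|$, so
\[
\frac{o_iy_i}{(1+|o-y|)^2}\lesssim \frac{|o|}{|y|}\le 1 .
\]
Consequently each minimum equals, up to constants, the ratio $o_iy_i/|y|^2\asymp y_i/|y|^2$. The only care needed is that the constant in "$\wedge 1$" is harmless: if $a\le C$ then $1\wedge a\asymp a$ with constants depending on $C$. Multiplying the $k$ factors then yields
\[
|y|^{2-d}\cdot \HH(y)\,|y|^{-2k}=\HH(y)\,|y|^{-(d+2k-2)}.
\]

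In the near regime $|y|<2|o|+1$, only finitely many $y\in A$ occur. For these $g_A(o,y)>0$ by Proposition~\ref{gu-lemma}, since $A$ is connected. The right-hand side $\HH(y)|y|^{-(d+2k-2)}$ is also positive and finite, because $y_i\ge1$ and $|y|\ge1$ on $A$. Hence the two are comparable with constants depending on $o$, $d$, $k$.

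The main obstacle, which is mild, is making the far-regime comparison $1\wedge (o_iy_i/(1+|o-y|)^2)\asymp y_i/|y|^2$ uniform. This rests on the bound $y_i\le |y|$ together with $o$ being fixed. All remaining steps are direct bookkeeping of constants depending on $o$.
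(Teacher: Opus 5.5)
Your proposal is correct and follows the paper's own proof essentially verbatim. You specialize Theorem~\ref{thm:global} to $x=o$, use $1+|o-y|\asymp|y|$ and $1\wedge\bigl(o_iy_i/(1+|o-y|)^2\bigr)\asymp y_i/|y|^2$ in the far regime, and absorb the finitely many vertices with $|y|$ comparable to $|o|$ into the constants.
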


\begin{proof}
By Theorem \ref{thm:global},
\[
g_A(o,y)\asymp (1+|o-y|)^{2-d}
\prod_{i=1}^k \left(1\wedge \frac{o_i y_i}{(1+|o-y|)^2}\right).
\]
Since $o$ is a fixed point, we clearly have $1+|o-y| \asymp |y|$ for $|y|>2|o|$. Also, each factor inside the product satisfies
\[
1\wedge \frac{o_i y_i}{(1+|o-y|)^2}\asymp \frac{y_i}{|y|^2},
\qquad 1\le i\le k,
\]
because $o_i \ge 1$ is a fixed constant and $y_i \le |y|$. Hence for $|y|>2|o|$,
\[
g_A(o,y)\asymp |y|^{2-d}\prod_{i=1}^k \frac{y_i}{|y|^2}
= \HH(y)\,|y|^{-(d+2k-2)}.
\]

The case $|y|\le 2|o|$ involves only finitely many vertices and is absorbed into the constants. This proves \eqref{eq:fixed-pole}.
\end{proof}
The power \(d+k-2\) appearing below comes from \eqref{eq:fixed-pole}.
When $y_i\asymp|y|$ for each $1\le i \le k$, one has \(\HH(y)\asymp |y|^k\), so the pole Green
function decays like \(|y|^{-(d+k-2)}\), as in a space of effective dimension
\(d+k\).


\begin{theorem}\label{thm:orthant-critical-exponent}
Fix $d\ge 3$, $1\le k\le d$, $o\in A$, $q>1$, and $\alpha\ge0$. Then the following are equivalent:
\begin{itemize}
\item[(i)] there exists $C>0$ such that
\begin{equation}
	\label{eq:goal}
\sum_{y\in A} g_A(x,y)\,|y|^{-\alpha}\,\bigl(g_A(o,y)\bigr)^q
\le C\,g_A(o,x)
\end{equation}
for all $x\in A$;
\item[(ii)]
\begin{equation}
\label{eq:criterion}
\alpha+q(d+k-2)>d+k.
\end{equation}
\end{itemize}
Equivalently,
\[
q>\frac{d+k-\alpha}{d+k-2}.
\]
\end{theorem}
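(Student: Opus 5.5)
We work directly with the two-sided estimates of Theorem~\ref{thm:global} and Proposition~\ref{prop:fixed-pole}. Substituting them into the left-hand side of \eqref{eq:goal}, the test reduces to estimating
\[
S(x):=\sum_{y\in A}(1+|x-y|)^{2-d}\prod_{i=1}^k\Bigl(1\wedge\frac{x_iy_i}{(1+|x-y|)^2}\Bigr)\,|y|^{-\alpha}\,\HH(y)^q|y|^{-q(d+2k-2)}
\]
and comparing it with $\HH(x)|x|^{-(d+2k-2)}$. Set $\gamma:=\alpha+q(d+2k-2)-qk$. On a cone $\{y_i\ge c|y|,\ i\le k\}$ the weight $|y|^{-\alpha}g_A(o,y)^q$ behaves like $|y|^{-\alpha-q(d+k-2)}$. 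In general it is at most $|y|^{-\alpha-q(d+2k-2)}\HH(y)^q$.

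\textbf{Necessity ((i)$\Rightarrow$(ii)).} Take $x$ on the diagonal ray, $x=(n,\dots,n,0,\dots,0)$ (adjusted so that $x\in A$), and let $n\to\infty$. Then $g_A(o,x)\asymp n^{-(d+k-2)}$.

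First bound $S(x)$ below by the contribution from $y$ in the cone with $|y|\le n/2$. There $|x-y|\asymp n$ and $x_iy_i/n^2\asymp y_i/n$, so
\[
S(x)\gtrsim n^{2-d-2k}\sum_{|y|\le n/2,\ \text{cone}}|y|^{k}\,|y|^{-\alpha-q(d+k-2)}
\asymp n^{2-d-2k}\sum_{m\le n/2}m^{d-1+k-\alpha-q(d+k-2)}.
\]
If $\alpha+q(d+k-2)=d+k$, the sum is $\asymp\log n$. Then $S(x)/g_A(o,x)\gtrsim\log n\to\infty$, contradicting \eqref{eq:goal}.

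Next take the contribution from $y$ in the cone with $|y|\ge 2n$. There $|x-y|\asymp|y|$ and $x_iy_i/|y|^2\asymp n/|y|$, giving
\[
S(x)\gtrsim n^{k}\sum_{m\ge 2n}m^{d-1+2-d-2k-\alpha-q(d+k-2)}.
\]
If $\alpha+q(d+k-2)<d+k$, the sum over the region $n/2\le|y|\le 2n$ near $x$ gives instead $n^{2-\alpha-q(d+k-2)}$, which is $\gg n^{-(d+k-2)}$. The argument follows Cases 2 and 1 in Theorem~\ref{th-z-1}: the exponent $d+1-2k-\alpha-q(d+k-2)$ in the far sum and the near-region estimate behave exactly as in the full-space proof. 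Hence failure of \eqref{eq:criterion} contradicts (i).

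\textbf{Sufficiency ((ii)$\Rightarrow$(i)).} Fix arbitrary $x\in A$ and split $y$ into three regions, as for $I_3,I_4,I_5$ in Theorem~\ref{th-z-2}: $|y|>2|x|$, $|x|/2<|y|\le 2|x|$, and $|y|\le|x|/2$.

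\emph{Far region, $|y|>2|x|$.} Here $g_A(x,y)\asymp|y|^{2-d}\prod(x_iy_i/|y|^2)\le \HH(x)\HH(y)|y|^{2-d-2k}$. The sum becomes $\HH(x)\sum_{|y|>2|x|}\HH(y)^{q+1}|y|^{\cdots}$. Bounding $\HH(y)\le|y|^k$ gives a convergent tail of order $\HH(x)|x|^{-(d+2k-2)}\cdot|x|^{d+k-\alpha-q(d+k-2)}$, which is acceptable under \eqref{eq:criterion}.

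\emph{Near-origin region, $|y|\le|x|/2$.} Here $g_A(x,y)\lesssim|x|^{2-d}\prod(1\wedge x_iy_i/|x|^2)\le \HH(x)\HH(y)|x|^{2-d-2k}$. The remaining sum $\sum_{|y|\le|x|/2}\HH(y)^{q+1}|y|^{-\alpha-q(d+2k-2)}\le\sum m^{d-1+k-\alpha-q(d+k-2)}$ is bounded precisely when \eqref{eq:criterion} holds.

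\emph{Middle region, $|x|/2<|y|\le 2|x|$.} This is the main obstacle, because $x$ may lie near the boundary faces and the factor $\HH(x)$ must be recovered. Here $|y|\asymp|x|=:n$, so $g_A(o,y)^q|y|^{-\alpha}\asymp n^{-\alpha-q(d+2k-2)}\HH(y)^q$. Using $1\wedge t\le t^{1/2}$ or $t$ coordinatewise is too crude.

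The first thing to try is to write $\HH(y)^q\le \HH(y)\,n^{k(q-1)}$. This reduces the bound to $n^{-\alpha-q(d+2k-2)+k(q-1)}\,G_A(\HH\mathbf 1_{B(0,2n)})(x)$. Then show $G_A(\HH\mathbf 1_{B(0,2n)})(x)\lesssim n^2\HH(x)$. This holds because $\HH$ is harmonic in $A$ (each $y_i$ is harmonic, and the product of functions of distinct coordinates is harmonic) and vanishes on $\partial A$. One compares with the supersolution $\HH(x)(Cn^2-|x|^2)$, or equivalently uses the exit-time bound for $h$-transformed walks killed on leaving $B(0,4n)$, together with the maximum principle (Lemma~\ref{lem:wmp-graph}) and Lemma~\ref{lem_finite_rep}. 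This yields a middle-region contribution of order $\HH(x)\,n^{2-\alpha-q(d+2k-2)+k(q-1)}=\HH(x)n^{-(d+2k-2)}\cdot n^{d+k-\alpha-q(d+k-2)}$, which is $\lesssim g_A(o,x)$ by \eqref{eq:criterion}. Bounded $|x|$ is absorbed into constants. Combining the three regions proves (i).
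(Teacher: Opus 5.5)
\textbf{Gap in the middle region of sufficiency.} The problem is your claim $G_A(\HH\mathbf 1_{B(0,2n)})(x)\lesssim n^2\HH(x)$. The function $v=\HH(x)(Cn^2-|x|^2)$ does satisfy $-\Delta v=(1+2k/d)\HH$. But $v$ is negative once $|x|>\sqrt C\,n$, so the comparison (Lemma~\ref{lem_finite_rep} plus the minimum principle) can only be run on a bounded set such as $U=A\cap B(0,4n)$. What it proves is $G_U(\HH\mathbf 1_{B(0,2n)})\le Cn^2\HH$ on $U$, which is exactly the exit-time bound for the $h$-walk killed on leaving $B(0,4n)$, as you phrase it. Since $g_U\le g_A$, this inequality points the wrong way. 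The untruncated potential $G_A$ also counts time spent in $B(0,2n)$ after the walk leaves $B(0,4n)$ and returns, possibly many times, and your sketch has no estimate for these returns. Lemma~\ref{lem:wmp-graph} does not supply one. Even applied to the $h$-transformed chain, it only lets you restrict the starting point to $B(0,2n)$, and there the quantity is still the full occupation time.

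\textbf{How to close it.} The claim is true, and the paper's own tools suffice.
\begin{itemize}
\item \emph{Strong Markov repair.} Put $f=\HH\mathbf 1_{B(0,2n)}$ and use the strong Markov property at $\tau_U$: $G_Af(x)=G_Uf(x)+\mathbb E_x[G_Af(X_{\tau_U})]$, with $G_Af=0$ off $A$. For $z\in A$ with $|z|>4n$, Theorem~\ref{thm:global} gives $g_A(z,y)\lesssim \HH(z)\HH(y)|z|^{2-d-2k}$ when $|y|\le 2n$. Hence $G_Af(z)\lesssim n^{d+2k}|z|^{2-d-2k}\HH(z)\lesssim n^2\HH(z)$. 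Optional stopping for the harmonic function $\HH$ on the finite set $U$ gives $\mathbb E_x[\HH(X_{\tau_U})]=\HH(x)$, and the claim follows.
\item \emph{The paper's route.} The paper avoids potential theory altogether. It proves the elementary inequality $y_i\bigl(1\wedge x_iy_i/r^2\bigr)\le 3x_i$ with $r=1+|x-y|$, by splitting into the cases $x_i\ge r/2$ and $x_i<r/2$. Combined with Theorem~\ref{thm:global}, this gives the pointwise bound $g_A(x,y)\HH(y)\lesssim \HH(x)(1+|x-y|)^{2-d}$ for all $x,y\in A$. Summing over $|x-y|\le 3n$ yields your claim directly.
\end{itemize}
Your far and near-origin regions are the paper's $\Omega_4$ and $\Omega_1$.

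\textbf{Necessity.} This is essentially the paper's argument: the same test point and the same near-origin region, which the paper splits into dyadic boxes. Fix the bookkeeping, though. For $|y|\le n/2$ in the cone, the correct bound is $g_A(x,y)\gtrsim n^{2-d-k}\HH(y)$, not $n^{2-d-2k}|y|^k$; the far-region exponent is also off. With the correct prefactor, the near-origin region alone gives $S(x)/g_A(o,x)\gtrsim\sum_{m\le n/2}m^{\,d+k-1-\alpha-q(d+k-2)}$. This is unbounded in both the equality and the strict case, so the separate far and near-$x$ analysis is unnecessary.
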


\begin{remark}
	When \(k=1\), the domain \(A_{d,1}\) is the discrete half-space in
	\(\mathbb Z^d\). For \(\alpha=0\), the criterion becomes
	\[
	q>\frac{d+1}{d-1},
	\]
	which agrees with the classical Euclidean half-space exponent in
	\(\mathbb R^d_+\).
\end{remark}

\begin{proof}
We divide the proof into necessity and sufficiency.

\medskip
\noindent
\textbf{Necessity.}
Assume \eqref{eq:goal} holds for some $C>0$.
We prove \eqref{eq:criterion}.

For $R\ge 1$ define
\[
x^{(R)}:=(\underbrace{R,\dots,R}_{k},0,\dots,0)\in A.
\]
Choose $R=2^{M+4}$ with $M\in \mathbb N$ large.
By Proposition \ref{prop:fixed-pole},
\begin{equation}
\label{eq:KxR}
g_A(o,x^{(R)})\asymp \HH(x^{(R)})\,R^{-(d+2k-2)}=R^kR^{-(d+2k-2)}=R^{2-d-k}.
\end{equation}

For each $m=0,1,\dots,M$, define the dyadic box
\[
B_m:=\Bigl\{y\in A:\ 2^m\le y_i<2^{m+1}\ (1\le i\le k),\quad |y_j|\le 2^m\ (k+1\le j\le d)\Bigr\}.
\]
Then, uniformly in $m$,
\begin{equation}
\label{eq:box-size}
\#B_m\asymp 2^{md},
\qquad
\HH(y)\asymp 2^{mk},
\qquad
|y|\asymp 2^m,
\qquad y\in B_m.
\end{equation}
Moreover, since $2^{m+1}\le 2^{M+1}=R/8$, every $y\in B_m$ satisfies
\[
|x^{(R)}-y|\asymp R.
\]
Also, for $1\le i\le k$,
\[
\frac{x^{(R)}_i y_i}{|x^{(R)}-y|^2}
\asymp \frac{R\,y_i}{R^2}
\asymp \frac{y_i}{R}
\le \frac18<1.
\]
Hence the global Green estimate \eqref{est-ga} gives, for $y\in B_m$,
\[
g_A(x^{(R)},y)
\gtrsim R^{2-d}\prod_{i=1}^k \frac{R y_i}{R^2}
= \HH(y)\,R^{2-d-k}.
\]
Combining this with \eqref{eq:KxR}, we obtain
\[
g_A(x^{(R)},y)\gtrsim g_A(o,x^{(R)})\,\HH(y),
\qquad y\in B_m,\ 0\le m\le M.
\]

Therefore, from \eqref{eq:goal},
\begin{align*}
C
&\ge \frac1{g_A(o,x^{(R)})}
\sum_{y\in A} g_A(x^{(R)},y)\,|y|^{-\alpha}\,\bigl(g_A(o,y)\bigr)^q \\
&\ge c\sum_{m=0}^{M}\sum_{y\in B_m}
\HH(y)\,|y|^{-\alpha}\,\bigl(g_A(o,y)\bigr)^q.
\end{align*}
Now Proposition \ref{prop:fixed-pole} and \eqref{eq:box-size} yield, for $y\in B_m$,
\[
g_A(o,y)\asymp \HH(y)\,|y|^{-(d+2k-2)}
\asymp 2^{mk}2^{-m(d+2k-2)}
=2^{-m(d+k-2)}.
\]
Hence
\[
\HH(y)\,|y|^{-\alpha}\,\bigl(g_A(o,y)\bigr)^q
\asymp 2^{mk}\,2^{-m\alpha}\,2^{-mq(d+k-2)}.
\]
Summing over $B_m$ and using \eqref{eq:box-size}, we obtain
\[
\sum_{y\in B_m}
\HH(y)\,|y|^{-\alpha}\,\bigl(g_A(o,y)\bigr)^q
\asymp 2^{m[d+k-\alpha-q(d+k-2)]}.
\]
Therefore
\[
C\ge c\sum_{m=0}^{M}2^{m[d+k-\alpha-q(d+k-2)]}
\qquad \text{for all }M\ge 0.
\]
The left-hand side is bounded independently of $M$, so the geometric sum on the right must remain bounded as $M\to\infty$.
This is possible only if
\[
d+k-\alpha-q(d+k-2)<0,
\]
which is exactly \eqref{eq:criterion}.
This proves necessity.

\medskip
\noindent
\textbf{Sufficiency.}
Assume now that \eqref{eq:criterion} holds.
We prove \eqref{eq:goal}.

For the sufficiency proof only, set
\[
W(y):=|y|^{-\alpha}\,\bigl(g_A(o,y)\bigr)^q,
\qquad y\in A,
\]
and
\[
L(x):=\sum_{y\in A}g_A(x,y)W(y).
\]
It remains to show that $L(x)\lesssim g_A(o,x)$ for all $x\in A$.

\smallskip
\noindent
\emph{Step 1: a summability estimate.}
We claim that
\begin{equation}
\label{eq:J-def}
\sum_{y\in A}\HH(y)\,W(y)<\infty.
\end{equation}
Indeed, by Proposition \ref{prop:fixed-pole},
\[
\HH(y)W(y)
\lesssim \HH(y)^{q+1}|y|^{-\alpha-q(d+2k-2)}.
\]
Since $\HH(y)\le |y|^k$, we obtain
\[
\HH(y)W(y)
\lesssim |y|^{k(q+1)-\alpha-q(d+2k-2)}
=|y|^{k-\alpha-q(d+k-2)}.
\]
Therefore, by shell counting,
\[
\sum_{y\in A}\HH(y)\,W(y)\lesssim \sum_{r=1}^{\infty} r^{d-1}r^{k-\alpha-q(d+k-2)}
=\sum_{r=1}^{\infty} r^{d+k-1-\alpha-q(d+k-2)}.
\]
This series converges because \eqref{eq:criterion} is equivalent to
\[
d+k-1-\alpha-q(d+k-2)<-1.
\]
So \eqref{eq:J-def} holds.

\smallskip
\noindent
\emph{Step 2: reduction to \(|x|>10\).}
For each fixed $x$, the series $L(x)$ converges.
Indeed, for large $|y|$, Theorem \ref{thm:global} gives
\[
g_A(x,y)\lesssim \HH(x)\HH(y)|y|^{-d-2k+2},
\]
and Proposition \ref{prop:fixed-pole} gives
\[
W(y)\lesssim \HH(y)^q|y|^{-\alpha-q(d+2k-2)}.
\]
Hence
\[
g_A(x,y)W(y)
\lesssim \HH(x)\,\HH(y)^{q+1}|y|^{-\alpha-(q+1)(d+2k-2)}
\lesssim \HH(x)|y|^{-d-k+2-\alpha-q(d+k-2)}.
\]
The shell exponent is
\[
d-1+\bigl(-d-k+2-\alpha-q(d+k-2)\bigr)=1-k-\alpha-q(d+k-2),
\]
which is strictly less than $-1$ under \eqref{eq:criterion}. Thus the tail is summable.
Since there are only finitely many $x\in A$ with $|x|\le 10$, it suffices to prove the desired estimate for $|x|>10$.

Fix now $x\in A$ with $|x|>10$, and set
\[
R:=1+|x|.
\]
We split $A$ into four regions:
\begin{align*}
\Omega_1&:=\{y\in A:\ |y|\le R/2\},\\
\Omega_2&:=\{y\in A:\ |y|>R/2,\ |x-y|\le R/2\},\\
\Omega_3&:=\{y\in A:\ R/2<|y|\le 2R,\ |x-y|>R/2\},\\
\Omega_4&:=\{y\in A:\ |y|>2R\}.
\end{align*}
Write accordingly
\[
L(x)=I_1+I_2+I_3+I_4,
\qquad
I_j:=\sum_{y\in \Omega_j}g_A(x,y)W(y).
\]

\smallskip
\noindent
\emph{Estimate on $\Omega_1$.}
If $y\in \Omega_1$, then $|x-y|\asymp R$. By Theorem \ref{thm:global},
\[
g_A(x,y)
\lesssim R^{2-d}\prod_{i=1}^k \frac{x_i y_i}{R^2}
= \HH(x)\HH(y)R^{-d-2k+2}.
\]
By Proposition \ref{prop:fixed-pole},
\[
g_A(o,x)\asymp \HH(x)R^{-d-2k+2}.
\]
Hence
\[
g_A(x,y)\lesssim g_A(o,x)\,\HH(y),
\qquad y\in \Omega_1.
\]
Therefore, using \eqref{eq:J-def},
\[
I_1\lesssim g_A(o,x)\sum_{y\in \Omega_1}\HH(y)W(y)
\le g_A(o,x)\sum_{y\in A}\HH(y)W(y)
\lesssim g_A(o,x).
\]

\smallskip
\noindent
\emph{Estimate on $\Omega_2$.}
If $y\in \Omega_2$, then $|y|\asymp R$. Hence Proposition \ref{prop:fixed-pole} gives
\[
W(y)
\lesssim R^{-\alpha}\,\HH(y)^q R^{-q(d+2k-2)}.
\]
Since $\HH(y)\le (CR)^k$, we have
\[
\HH(y)^q\le \HH(y)(CR^k)^{q-1}\lesssim \HH(y)R^{k(q-1)}.
\]
Thus
\begin{equation}
\label{eq:W-near-diagonal}
W(y)\lesssim \HH(y)\,R^{-\alpha-q(d+k-2)-k},
\qquad y\in \Omega_2.
\end{equation}

We now claim that if
\[
r:=1+|x-y|,
\]
then for every $1\le i\le k$,
\begin{equation}
\label{eq:1d-claim}
y_i\left(1\wedge \frac{x_i y_i}{r^2}\right)\lesssim x_i.
\end{equation}
Indeed, there are two cases.

If $x_i\ge r/2$, then
\[
y_i\le x_i+|x_i-y_i|\le x_i+(r-1)\le 3x_i,
\]
so the left-hand side of \eqref{eq:1d-claim} is at most $y_i\le 3x_i$.

If $x_i<r/2$, then
\[
y_i\le x_i+|x_i-y_i|<\frac r2 +(r-1)<\frac{3r}{2},
\]
and also
\[
\frac{x_i y_i}{r^2}\le \frac{x_i(3r/2)}{r^2}<\frac34<1.
\]
Hence the minimum is attained by the second term and
\[
y_i\left(1\wedge \frac{x_i y_i}{r^2}\right)
=\frac{x_i y_i^2}{r^2}
\le \frac{x_i(3r/2)^2}{r^2}
=\frac94 x_i.
\]
This proves \eqref{eq:1d-claim}.

Multiplying \eqref{eq:1d-claim} for $i=1,\dots,k$ and using Theorem \ref{thm:global}, we obtain
\begin{equation}
\label{eq:G-times-H}
g_A(x,y)\,\HH(y)\lesssim \HH(x)\,r^{2-d},
\qquad y\in \Omega_2.
\end{equation}
Combining \eqref{eq:W-near-diagonal} and \eqref{eq:G-times-H},
\begin{align*}
I_2
&\lesssim R^{-\alpha-q(d+k-2)-k}
\sum_{y\in \Omega_2} g_A(x,y)\HH(y) \\
&\lesssim \HH(x)R^{-\alpha-q(d+k-2)-k}
\sum_{|x-y|\le R/2}(1+|x-y|)^{2-d}.
\end{align*}
By shell counting,
\[
\sum_{|z|\le R/2}(1+|z|)^{2-d}
\lesssim \sum_{m=0}^{\lfloor R/2\rfloor}(1+m)^{d-1}(1+m)^{2-d}
\lesssim \sum_{m=0}^{\lfloor R/2\rfloor}(1+m)
\lesssim R^2.
\]
Therefore
\[
I_2\lesssim \HH(x)R^{2-k-\alpha-q(d+k-2)}.
\]
Since \eqref{eq:criterion} implies
\[
2-k-\alpha-q(d+k-2)\le 2-d-2k,
\]
we conclude, using Proposition \ref{prop:fixed-pole},
\[
I_2\lesssim \HH(x)R^{-d-2k+2}\asymp g_A(o,x).
\]

\smallskip
\noindent
\emph{Estimate on $\Omega_3$.}
If $y\in \Omega_3$, then $|y|\asymp R$ and $|x-y|\asymp R$.
For each $1\le i\le k$, one has $y_i\le |y|\lesssim R$, hence
\[
1\wedge \frac{x_i y_i}{|x-y|^2}\lesssim \frac{x_i y_i}{R^2}\lesssim \frac{x_i}{R}.
\]
Using Theorem \ref{thm:global}, we get
\[
g_A(x,y)\lesssim R^{2-d}\prod_{i=1}^k \frac{x_i}{R}=\HH(x)R^{2-d-k}.
\]
Also, because $|y|\asymp R$,
\[
W(y)
\lesssim R^{-\alpha}\,\HH(y)^q R^{-q(d+2k-2)}
\lesssim R^{-\alpha}R^{kq}R^{-q(d+2k-2)}
=R^{-\alpha-q(d+k-2)}.
\]
Since $\#\Omega_3\lesssim R^d$, it follows that
\[
I_3\lesssim \HH(x)R^{2-d-k}\cdot R^{-\alpha-q(d+k-2)}\cdot R^d
=\HH(x)R^{2-k-\alpha-q(d+k-2)}.
\]
As above, \eqref{eq:criterion} implies
\[
I_3\lesssim \HH(x)R^{-d-2k+2}\asymp g_A(o,x).
\]

\smallskip
\noindent
\emph{Estimate on $\Omega_4$.}
If $y\in \Omega_4$, then $|y|>2R>2|x|$, and so $|x-y|\asymp |y|$.
Using Theorem \ref{thm:global},
\[
g_A(x,y)
\lesssim |y|^{2-d}\prod_{i=1}^k \frac{x_i y_i}{|y|^2}
=\HH(x)\HH(y)|y|^{-d-2k+2}.
\]
Therefore
\[
I_4
\lesssim \HH(x)\sum_{|y|>2R} \HH(y)^{q+1}|y|^{-\alpha-(q+1)(d+2k-2)}.
\]
Since $\HH(y)\le |y|^k$, shell counting gives
\begin{align*}
I_4
&\lesssim \HH(x)\sum_{r>2R} r^{d-1}r^{k(q+1)}r^{-\alpha-(q+1)(d+2k-2)} \\
&= \HH(x)\sum_{r>2R} r^{1-k-\alpha-q(d+k-2)}.
\end{align*}
Under \eqref{eq:criterion}, the exponent $1-k-\alpha-q(d+k-2)$ is strictly less than $-1$, so the tail sum is bounded by
\[
\sum_{r>2R}r^{1-k-\alpha-q(d+k-2)}
\lesssim R^{2-k-\alpha-q(d+k-2)}.
\]
Hence
\[
I_4\lesssim \HH(x)R^{2-k-\alpha-q(d+k-2)}
\lesssim \HH(x)R^{-d-2k+2}
\asymp g_A(o,x).
\]

Combining the estimates on $I_1,I_2,I_3,I_4$, we obtain
\[
L(x)\lesssim g_A(o,x)
\qquad (|x|>10).
\]
As explained earlier, after enlarging the constant, this remains true for the finitely many $x\in A$ with $|x|\le 10$.
Thus \eqref{eq:goal} holds for all $x\in A$.
This proves sufficiency.
\end{proof}

\begin{corollary}\label{cor:k-orthant-existence}
Fix $d\ge 3$, $1\le k\le d$, $q>1$, and $\alpha\ge0$. Let
\[
A := \{x \in \mathbb{Z}^d : x_1 \ge 1, \dots, x_k \ge 1\}.
\]
The inequality
\[
\begin{cases}
-\Delta u\ge |x|^{-\alpha}u^q, & x\in A,\\
u=0, & x\in A^c,
\end{cases}
\]
admits a positive solution if and only if
\[
\alpha+q(d+k-2)>d+k.
\]
\end{corollary}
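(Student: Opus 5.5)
The corollary should follow by combining Theorem~\ref{thm:green-testing-equivalence} with Theorem~\ref{thm:orthant-critical-exponent}. I will take $\Omega=A$, which is a connected proper subset of $\mathbb Z^d$; it is connected because any two points of $A$ can be joined by a lattice path that keeps the first $k$ coordinates $\ge1$. I will also take $\sigma(x)=|x|^{-\alpha}$ on $A$. Since every $x\in A$ has $x_1\ge1$, we have $|x|\ge1$, so $\sigma$ is finite, positive, and nonzero, i.e. $0\not\equiv\sigma\in\ell^+(A)$. By Proposition~\ref{gu-lemma}, $g_A$ is finite and positive because $A\ne\mathbb Z^d$. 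Hence all hypotheses of Theorem~\ref{thm:green-testing-equivalence} hold with $\Omega=A$, and the Dirichlet problem \eqref{di-1} is exactly the system in the corollary.

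By the equivalence (I)$\Leftrightarrow$(III) of Theorem~\ref{thm:green-testing-equivalence}, a positive solution exists if and only if, for some (equivalently every) pole $o\in A$, there is $C>0$ with
\[
G_A\bigl(\sigma g_A(o,\cdot)^q\bigr)(x)\le C g_A(o,x),\qquad x\in A.
\]
The next step is to write out the left-hand side. Since $\mu(y)=2d$ for every vertex of $\mathbb Z^d$ with nearest-neighbour weights, it equals
\[
2d\sum_{y\in A}g_A(x,y)|y|^{-\alpha}g_A(o,y)^q .
\]
This is the left-hand side of \eqref{eq:goal} multiplied by the harmless constant $2d$. The testing condition is therefore equivalent to statement (i) of Theorem~\ref{thm:orthant-critical-exponent}, with the constant changed by the factor $2d$.

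Theorem~\ref{thm:orthant-critical-exponent} then says that (i) holds if and only if $\alpha+q(d+k-2)>d+k$, which is the claim. That theorem is stated for a fixed but arbitrary pole $o\in A$, and the "some/every" clause in (III) removes any dependence on the choice of pole; Lemma~\ref{lem_pole} gives the same independence directly.

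There is no real obstacle here. The only points that need checking are that $\sigma$ is finite on $A$, which holds because $0\notin A$, and that the measure factor $\mu(y)=2d$ is constant so it only rescales the constant $C$. All the substantive work is already done in Theorem~\ref{thm:orthant-critical-exponent}.
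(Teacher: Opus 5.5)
Your proposal is correct and is exactly the paper's argument. The paper also applies Theorem~\ref{thm:green-testing-equivalence} with $\Omega=A$ and $\sigma(x)=|x|^{-\alpha}$, uses $\mu\equiv 2d$ to identify \eqref{e1} with \eqref{eq:goal} up to a constant, and then invokes Theorem~\ref{thm:orthant-critical-exponent}. Your additional checks are the hypotheses the paper leaves implicit: connectedness of $A$, finiteness of $\sigma$ because $0\notin A$, and finiteness of $g_A$ because $A\neq\mathbb Z^d$.
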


\begin{proof}
This follows from Theorem~\ref{thm:orthant-critical-exponent} and Theorem~\ref{thm:green-testing-equivalence}, since
\eqref{eq:goal} is equivalent, up to the harmless constant $2d$, to the condition \eqref{e1}   with \(\Omega=A\) and
\(\sigma(x)=|x|^{-\alpha}\).
\end{proof}

\medskip

	\paragraph{\textbf{Acknowledgments}.}
Part of this work was carried out while Y. Sun was visiting the University of Bielefeld and the University of Perugia. Y. Sun would like to thank both institutions for their hospitality, and Professors Grigor'yan and Filippucci for their invitations. The authors acknowledge the use of OpenAI's ChatGPT (GPT-5.5 Pro) as an interactive tool during the exploratory stage of this project. Examples \ref{ex:unweighted-nonexistence-not-Lq} and \ref{ex:diagonal-energy-not-testing} were found with the help of GPT-5.5 Pro. The authors take full responsibility for all mathematical statements, proofs,
and final wording.

\noindent\textbf{Data availability.} Data sharing is not applicable to this article, as no datasets were generated or analyzed during
the current study.

\noindent\textbf{Declaration.} On behalf of all authors, the corresponding author states that there is no conflict of interest.

\end{document}